\newtheorem{theorem}{Theorem}[section]
\newtheorem{lemma}[theorem]{Lemma}
\newtheorem{cor}[theorem]{Corollary}
\newtheorem{definition}[theorem]{Definition}
\newtheorem{example}[theorem]{Example}
\newtheorem{remark}[theorem]{Remark}
\newtheorem{proposition}[theorem]{Proposition}
\def\pagenumber{1}
\begin{document}
\setcounter{page}{\pagenumber}
\newcommand{\T}{\mathbb{T}}
\newcommand{\R}{\mathbb{R}}
\newcommand{\Q}{\mathbb{Q}}
\newcommand{\N}{\mathbb{N}}
\newcommand{\Z}{\mathbb{Z}}
\newcommand{\tx}[1]{\quad\mbox{#1}\quad}
\parindent=0pt
\def\SRA{\hskip 2pt\hbox{$\joinrel\mathrel\circ\joinrel\to$}}
\def\tbox{\hskip 1pt\frame{\vbox{\vbox{\hbox{\boldmath$\scriptstyle\times$}}}}\hskip 2pt}
\def\circvert{\vbox{\hbox to 8.9pt{$\mid$\hskip -3.6pt $\circ$}}}
\def\IM{\hbox{\rm im}\hskip 2pt}
\def\ES{\vbox{\hbox to 8.9pt{$\big/$\hskip -7.6pt $\bigcirc$\hfil}}}
\def\TR{\hbox{\rm tr}\hskip 2pt}
\def\GRAD{\hbox{\rm grad}\hskip 2pt}
\def\bull{\vrule height .9ex width .8ex depth -.1ex}
\def\VLLA{\hbox to 25pt{\leftarrowfill}}
\def\VLRA{\hbox to 25pt{\rightarrowfill}}
\def\DU{\mathop{\bigcup}\limits^{.}}
\setbox2=\hbox to 25pt{\rightarrowfill}
\def\DRA{\vcenter{\copy2\nointerlineskip\copy2}}
\def\RANK{\hbox{\rm rank}\hskip 2pt}

\font\rsmall=cmr7 at 7truept \font\bsmall=cmbx7 at 7truept
\newfam\Slfam
\font\tenSl=cmti10 \font\neinSl=cmti9 \font\eightSl=cmti8
\font\sevenSl=cmti7 \textfont\Slfam=\tenSl
\scriptfont\Slfam=\eightSl \scriptscriptfont\Slfam=\sevenSl
\def\Sl{\fam\Slfam\tenSl}

\def\CODIM{\hbox{\rm codim}\hskip 2pt}
\def\CODIMS{\hbox{\rsmall codim}\hskip 2pt}
\def\SRA{\hskip 2pt\hbox{$\joinrel\mathrel\circ\joinrel\to$}}
\def\tbox{\hskip 1pt\frame{\vbox{\vbox{\hbox{\boldmath$\scriptstyle\times$}}}}\hskip 2pt}
\def\circvert{\vbox{\hbox to 8.9pt{$\mid$\hskip -3.6pt $\circ$}}}
\def\IM{\hbox{\rm im}\hskip 2pt}
\def\ES{\vbox{\hbox to 8.9pt{$\big/$\hskip -7.6pt $\bigcirc$\hfil}}}
\def\TR{\hbox{\rm tr}\hskip 2pt}
\def\TRS{\hbox{\rsmall tr}\hskip 2pt}
\def\DIV{\hbox{\rm div}\hskip 2pt}
\def\DIVS{\hbox{\rsmall div}\hskip 2pt}
\def\GRAD{\hbox{\rm grad}\hskip 2pt}
\def\GRADS{\hbox{\rsmall grad}\hskip 2pt}
\def\bull{\vrule height .9ex width .8ex depth -.1ex}
\def\VLLA{\hbox to 25pt{\leftarrowfill}}
\def\VLRA{\hbox to 25pt{\rightarrowfill}}
\def\DU{\mathop{\bigcup}\limits^{.}}
\setbox2=\hbox to 25pt{\rightarrowfill}
\def\DRA{\vcenter{\copy2\nointerlineskip\copy2}}
\def\RANK{\hbox{\rm rank}\hskip 2pt}
\def\ROT{\hbox{\rm rot}\hskip 2pt}
\def\ROTS{\hbox{\rsmall rot}\hskip 2pt}
\def\NOTSUBSET{\hskip 2pt\hbox{$\subset\hskip -9pt\backslash$}\hskip 2pt}
\def\SRA{\hskip 2pt\hbox{$\joinrel\mathrel\circ\joinrel\to$}}
\def\SLA{\hskip 2pt\hbox{$\leftarrow\joinrel\mathrel\circ$}}

\vskip -1cm

\title[Quantum Extended Crystal Super PDE's]{QUANTUM EXTENDED CRYSTAL SUPER PDE's}
\author{Agostino Pr\'astaro}
\maketitle
\vspace{-.5cm}

{\footnotesize
\begin{center}
Department SBAI - Mathematics, University of Rome ''La Sapienza'', Via A.Scarpa 16,
00161 Rome, Italy. \\
E-mail: {\tt agostino.prastaro@uniroma1.it}
\end{center}
\vspace{.5cm}

\vspace{.5cm} {\bsmall ABSTRACT.} We generalize our geometric theory on extended crystal PDE's and their stability, to the category
$\mathfrak{Q}_S$ of quantum supermanifolds. By using algebraic topologic techniques, obstructions to the
existence of global quantum smooth solutions for such equations are
obtained. Applications are given to encode quantum dynamics of nuclear nuclides,
identified with graviton-quark-gluon plasmas, and study their stability. We prove that such quantum dynamical
systems are encoded by suitable quantum extended crystal Yang-Mills super PDE's. In this way stable
nuclear-charged plasmas and nuclides are characterized as suitable stable quantum solutions of such quantum Yang-Mills
super PDE's. An existence theorem of local and global solutions with mass-gap, is given for quantum super Yang-Mills
PDE's, $\widehat{(YM)}$, by identifying a suitable constraint, $\widehat{(Higgs)}\subset \widehat{(YM)}$,
{\em Higgs quantum super PDE}, bounded by a quantum super partial differential relation
$\widehat{(Goldstone)}\subset \widehat{(YM)}$, {\em quantum Goldstone-boundary}. A global solution
$V\subset\widehat{(YM)}$, crossing the
quantum Goldstone-boundary acquires (or loses) mass. Stability properties of such solutions are
characterized.\footnote{Work partially supported by Italian grants MIUR ''PDE's Geometry and
Applications''.}} \vspace{.5cm}

\vspace{.5cm}
{\bsmall AMS (MOS) MS CLASSIFICATION. 57R90, 53C99,
81Q99.}

{\rsmall KEY WORDS AND PHRASES. Integral bordisms in quantum super PDE's.
Existence of local and global solutions in quantum super PDE's.
Conservation laws. Quantum (super)gravity. (Un)stable quantum super PDE's.
(Un)stable quantum solutions. (Un)stable quantum extended crystal super PDE's. Quantum singular super PDE's.}

\vskip 0.5cm

\section{\bf Introduction}

The mathematical heritage of the last century is essentially that
Physics is Geometry and nothing else. This is, in fact, the main
issue by the Einstein's General Relativity theory \cite{AEIN},
(previously supported also by the Maxwell's theory of
electromagnetism \cite{MAXW1, MAXW2}). Unfortunately this message
was not well understood at the quantum level! The principal
motivation was, we believe, that Mathematics was not ready to extend
such a philosophy in the formulation of a geometric quantum theory.
In fact, at the beginning of the last century the Ricci-Curbastro's
tensor calculus \cite{RICCURB1, RICCURB2, SCHOUT} was just enough
developed to allow to Einstein his formulation of general
gravitation. Instead, in order to formulate a ''quantum general
relativity'', it was necessary yet to build a new geometric theory
of quantum PDE's!

At the beginning of last century, however, was just well understood
that the concept of ''mass'' is synonymous of concentred energy. In
fact, the General Relativity Theory proved that big masses are able to deform space-time geometry.

Nowadays we can state that also at quantum level, high concentration of energy modifies geometry and
produces noncommutative geometry. Thus the mass-energy, is nothing
else that a property of the geometry involved to describe
''particles''. This becomes conjectured also after the first middle
of the last century, thanks to the famous formula $E=mc^2$ and
nuclear energy production experiments. To this purpose, let us
recall the well known J. A. Wheeler's slogans: ''{\em mass without
mass, charge without charge, field without field}'' and his
pioneering works on the ''geometrodynamics''. (See, e.g.,
Refs.\cite{WHE1, WHE2, WHE3, MIS-THO-WHE}.)

Our PDE's Algebraic Topology, developed in the category of
(non)commutative manifolds, aims to follow this philosophy, building
a new mathematics just able to allow a full geometrization of
Physics.

In some previous works we have characterized PDE's as extended
crystals, in the sense that their integral bordism groups can be
considered as extensions of suitable crystallographic subgroups. For
such structures a geometric formulation of stability theory has been
developed also.  (See Refs.\cite{PRA24, PRA25, PRA26, PRA27, PRA28, PRA29}.)

Aim of the present paper is to extended to
quantum (super) PDE's, i.e., PDE's built in the category
$\mathfrak{Q}_S$ of quantum supermanifolds, above formulations, and to
study in some details quantum
supergravity Yang-Mills PDE's (quantum SG-Yang-Mills PDE's). This
type of equations have been previously introduced by us in some
recent works and appears very useful to encode quantum dynamics
unifying, just at quantum level, gravity with the other fundamental
forces of Nature, i.e., electromagnetic, weak and strong
forces.\cite{PRA9, PRA10, PRA13, PRA14, PRA15, PRA16, PRA20, PRA21,
PRA22, PRA23, PRA30, PRA31, PRA32, PRA37, PRA38}. These equations extend,
at quantum level, some superclassical ones, well known in literature
about supergravity. (See, e.g., Refs.\cite{DEW, G-S-W, VNI, W-B,
WES, WIT}.) In fact supergravity, as has been usually considered, is
a classical field theory, that, in some sense comes from a
generalization of Charles Ehresmann and \`Elie Cartan's differential
geometry \cite{SHA}.\footnote{It is well known that the mathematical
foundation of all gauge theories is to ascribe to C.Ehresmann, when
he was a E. Cartan's student \cite{EHR}.} Then classical
supergravity requires to be quantized. But in this way one discards
nonlinear phenomena. In fact this quantization is obtained by means
of so-called quantum propagators, that are just associated to
linearizations of classical PDE's. Our formulation, instead, works
directly on noncommutative manifolds ({\em quantum supermanifolds}),
and the quantization is not more necessary. In fact, whether it is
performed in this noncommutative framework, it can bee seen as a
linear approximation of a more general nonlinear integration. In
some previous papers this important aspect has been carefully
proved. (See Refs.\cite{PRA23, PRA32}.)

This paper, after Introduction, splits in two more sections.
Section 2. Here we characterize quantum super PDE's like extended
crystals, in the sense that their integral bordism groups can be
considered as extensions of crystallographic subgroups. This
approach generalizes our previous one for commutative PDE's, and
allows us to identify an algebraic topologic obstruction to the
existence of global smooth solutions for PDE's in the category
$\mathfrak{Q}_S$. Furthermore, for such solutions we study their
stability properties from a geometric point of view. Section 3. Here we
consider ''quantum gravity'' in the category $\mathfrak{Q}_S$, and
encoded by suitable quantum Yang-Mills equations ({\em quantum
SG-Yang-Mills PDE's}), say $\widehat{(YM)}$. In this way we are able
to characterize quantum (super)gravity like a secondary object,
associated to some geometric fundamental objects (fields), solutions
of $\widehat{(YM)}$. Then mass properties of such solutions are
directly pointed-out, without the necessity to assume symmetry
breaking Higgs-mechanisms. However, we recognize a constraint in $\widehat{(YM)}$, that gives a pure quantum geometrodynamic
mechanism able to justify mass acquisition (or loss) to a quantum solution of $\widehat{(YM)}$.
Furthermore, nuclear particles and
nuclides can be seen as suitable $p$-chain solutions of
$\widehat{(YM)}$, and their energy-thermodynamic contents and stability
properties characterized.

The main results of this paper are the following. Theorem
\ref{crystal-structure-quantum-super-pdes} characterizes the
crystal structure of quantum super PDE's. Corollary \ref{main7}
identifies the algebraic topologic obstruction to the existence of
global smooth solutions of PDE's in the category $\mathfrak{Q}_S$.
Theorem \ref{criteria-fun-stab} and Theorem
\ref{finite-stable-extended-crystal-PDE} characterize the
stability of quantum super PDE's and their solutions. Theorem
\ref{criterion-average-asymptotic-stability} gives a criterion
to average stability. Theorem \ref{dynamic-equation} and Theorem
\ref{sg-ym-pde-cartan-geometry} encode dynamic
for quantum (super)gravity and characterize the quantum Cartan
geometry induced by solutions of quantum SG-Yang-Mills PDE's,
shortly denoted by $\widehat{(YM)}$. Theorem
\ref{quantum-Levi-Civita-Higgs-ym} gives a criterion, founded
on the quantum Higgs-symmetry breaking mechanism, to recognize
solutions of $\widehat{(YM)}$ whose quantum Levi-Civita connections
induce zero covariant derivative on the corresponding quantum
metric. Theorem \ref{quantum-crystal-structure-ym} and Theorem
\ref{quantum-crystal-structure-ym[i]} characterize the quantum
crystal structure of $\widehat{(YM)}$. Theorem
\ref{local-mass-formula-theorem} gives a local mass-formula for
solutions of $\widehat{(YM)}$. Theorem
\ref{observed-objects-and-splitting-formulas-theorem}
identifies important quantum observed fields by means of a quantum
relativistic frame. Theorem \ref{stability-properties-ym}
characterizes the stability properties of $\widehat{(YM)}$ and its
solutions. Theorem \ref{observed-quantum-system-thermodynamics}
identifies thermodynamic functions and thermodynamic equations
associated to observed solutions of $\widehat{(YM)}$. Theorem \ref{existence-mass-gap-solutions}, and
Corollary \ref{goldstone-piece-characterization} prove existence of a formally integrable and
completely integrable quantum super PDE, $\widehat{(Higgs)}\subset\widehat{(YM)}$, where for
any point, initial condition, there exist solutions with mass-gap.\footnote{Here and in the following, talking about
quantum super PDE's, we simply say formally
integrable, (resp. completely integrable), instead of formally quantum superintegrable,
(resp. completely quantum superintegrable). (Compare with previous works on the same subjects.)} We call such a constraint {\em Higgs quantum super PDE}.
A global solution, $V\subset\widehat{(YM)}$, crossing the boundary, (denoted $\widehat{(Goldstone)}$),
of $\widehat{(Higgs)}$ in $\widehat{(YM)}$, {\em quantum Goldstone-boundary}, acquires (resp. loses) mass going inside,
(resp. outside), $\widehat{(Higgs)}$. So that the quantum Goldstone-boundary
can be considered as the quantum integral situs for mass-creation, or, vice versa, mass-destruction, according that one
 considers the solution going inside $\widehat{(Higgs)}$, or outgoing from  $\widehat{(Higgs)}$. The stability of
 such solutions are studied and identified the corresponding stabilized quantum extended crystal super PDE,
where all the smooth solutions have mass-gap and are stable at finite-times.

\section{\bf EXTENDED CRYSTAL SUPER PDE's STABILITY IN {\boldmath$\mathfrak{Q}_S$}}
\vskip 0.5cm

In this section we extend to quantum super PDE's our previous
results on the algebraic topological crystal characterization of
commutative PDE's.\footnote{For general informations on Algebraic (Co)homology see, e.g., the following
Refs.\cite{ABE, ATI, BOARD, BOU, C-R-R, GOL-GUIL, HIR, LEV1, LEV2,
M-M, MATH1, MATH2, MATH3, MATH4, MATH5, MCC2, MCCO, M-S, QUIL, ROT,
RUDY, STO, SULL, SWE, SWI, THO1, THO2, THO3, WAL1, WAL2}. For
general informations on crystallography, as used in this paper, see,
e.g., Refs.\cite{FED, HAH, PLE-PEST, RAGH, SCHO, SCHW, SUN}. For the
geometric theory of PDE's, see, e.g., Refs.\cite{B-C-G-G-G, GOL,
GOL-SPE, GRO, KRA-LYC-VIN, KUR1, KUR2, KUR3, KUR4, LIB, L-P, PAL,
PRA2, PRA3, PRA4, PRA5, PRA-REGGE, SPE1, SPE2, TAK}. For the
Algebraic Topology of PDE's, super PDE's, quantum PDE's and quantum
super PDE's see Refs.\cite{PRA6, PRA7, PRA8, PRA9, PRA10, PRA11,
PRA12, PRA13, PRA14, PRA15, PRA16, PRA16, PRA17, PRA18, PRA19,
PRA19, PRA20, PRA21, PRA22, PRA23, PRA24, PRA25, PRA26, PRA27,
PRA28, PRA29, PRA30, PRA31, PRA32, PRA33, PRA34, PRA35, PRA36, PRA37, PRA38}. See also the following
Refs.\cite{AG-PRA1, AG-PRA2, AG-PRA3, AG-PRA4, PRA-RAS1, PRA-RAS2,
PRA-RAS3}, where interesting applications of the PDE's Algebraic
Topology are given.} Furthermore, we shall consider the stability of
quantum super PDE's in the framework of the geometric theory of
quantum super PDE's. We will follow the line just drawn in some our
previous papers on this subject for commutative PDE's, where we have
interpreted stability of PDE's on the ground of their integral
bordism groups and related the quantum bordism of PDE's to Ulam
stability too.

Here and in the following we shall denote the boundary $\partial V$
of a compact quantum supermanifold $V$, of dimension $m|n$, with
respect to a quantum superalgebra $A$, split in the form $\partial
V=N_0\bigcup P\bigcup N_1$, where $N_0$ and $N_1$ are two disjoint
$(m-1|n-1)$-dimensional quantum sub-supermanifolds of $V$, that are
not necessarily closed, and $P$ is another $(m-1|n-1)$-dimensional
quantum sub-supermanifold of $V$. For example, if $V=\hat
D{}^{m|n}\times \hat D{}^{1|1}$, where $\hat D{}^{r|s}\subset \hat
S{}^{r|s}$ is the $(r|s)$-dimensional quantum superdisk, contained
in the $(r|s)$-dimensional quantum supersphere, one has that $\dim
V=(m+1|n+1)$ and  $N_0=\hat D{}^{m|n}\times\{0\}$, $N_1=\hat
D{}^{m|n}\times\{1\}$, $P=\partial \hat D{}^{m|n}\times \hat
D{}^{1|1}\cong \hat S{}^{m-1|n-1}\times \hat
D{}^{1|1}$.\footnote{Recall that a {\em$(m|n)$-dimensional quantum
supersphere}, $\hat S^{m|n}$, over a quantum superalgebra $A$, is
the Alexandrov compactification of $A^{m|n}$, i.e., $\hat
S^{m|n}=A^{m|n}\bigcup\{\infty\}$. A {\em$(m|n)$-dimensional quantum
superdisk} over a quantum superalgebra $A$, is a connected compact
sub-supermanifold  $\hat D^{m|n}\subset \hat S^{m|n}$, of dimension
$m|n$ over $A$, such that $\partial\hat D^{m|n}\cong \hat
S^{m-1|n-1}$, i.e., with boundary $\partial\hat D^{m|n}$
diffeomorphic to $\hat S^{m-1|n-1}$. For details on such quantum
supermanifolds see \cite{PRA31}.} Therefore $\dim N_0=\dim N_1=\dim
P=(m|n)$. Note that as $\hat D{}^{m|n}\times \hat D{}^{1|1}=\hat
D{}^{m+1|n+1}$, we can also write $\partial V=\partial\hat
D{}^{m+1|n+1}=\hat S{}^{m|n}=\hat S{}^{m-1|n-1}\times\hat
S{}^{1|1}$, since

\begin{equation}
\begin{array}{ll}
\partial V&=\partial(\hat D{}^{m|n}\times \hat D{}^{1|1})
      =(\partial \hat D{}^{m|n})\times \hat D{}^{1|1}\bigcup \hat
      D{}^{m|n}\times\partial\hat D{}^{1|1}\\
      &=\hat S{}^{m-1|n-1}\times \hat D{}^{1|1}\bigcup \hat
      D{}^{m|n}\times\hat S{}^{0|0}.\\
      \end{array}
    \end{equation}
Therefore, $\partial V$ is obtained by means of the quantum
surgering removing $\hat S{}^{m-1|n-1}\times \hat
D{}^{1|1}\subset\hat S{}^{m|n}$. (For details on quantum surgering
see \cite{PRA31, PRA32}.) Of course if $V=\hat S{}^{m|n}\times \hat
D{}^{1|1}$, then $P=\emptyset$, hence $\partial V=\hat
S{}^{m|n}\times\{0\}\DU S{}^{m|n}\times\{1\}$.

This example shows that if $V$ is a solution of a quantum super PDE,
then it can be obtained by propagating an initial Cauchy
hypersurface $X\subset V$, $\dim X=(m|n)$, by means of an integrable
full quantum vector field $\zeta:V\to \widehat{TV}\equiv
Hom_Z(A;TV)$, $\partial\phi=\zeta$, where $\phi:A\times V\to V$.
(See Fig.1.)
\begin{figure}[h]
\centerline{\includegraphics[width=5cm]{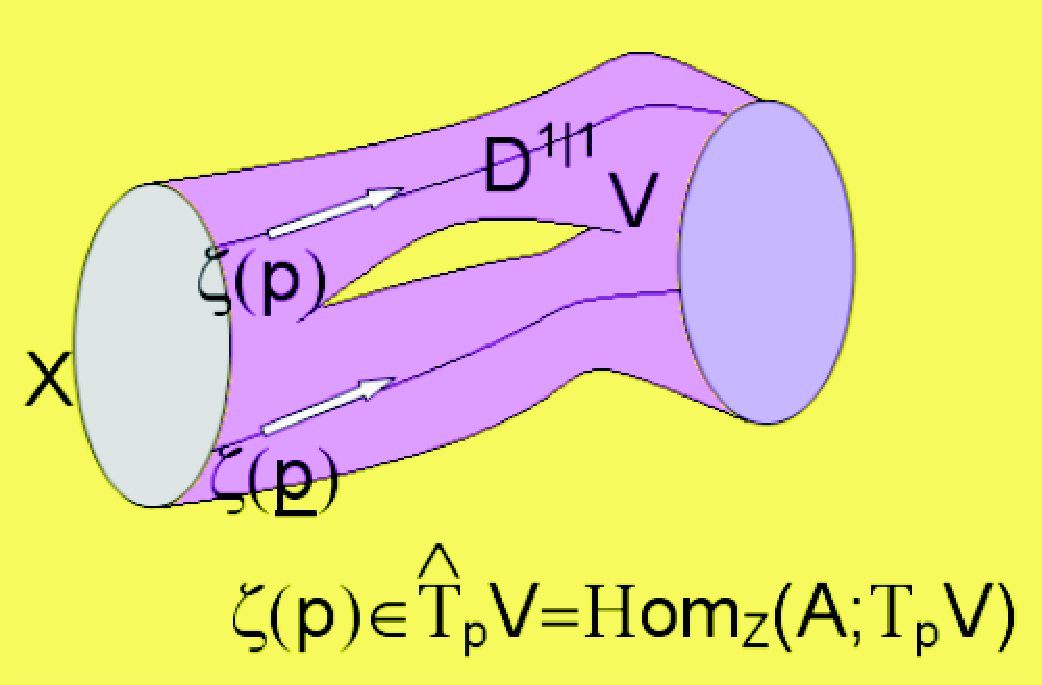}} \caption{Quantum
solution $V$, of dimension $(m+1|n+1)$ over a quantum superalgebra
$A$, propagating $X$, $\dim
X=(m|n)$.\label{quantum-supermanifold-solution}}
\end{figure}

Let us emphasize also, that in some cases solutions can be obtained
also by flows of integrable vector fields $\zeta:V\to TV$, i.e.,
$\zeta=\partial\psi$, with $\psi:\mathbb{R}\times V\to V$. For
example if $V=\hat D{}^{m|n}\times I$, with
$I\equiv[0,1]\subset\mathbb{R}$, then $\dim V=m+1|n$, and $\partial
V=M_0\bigcup P\bigcup M_1$, with $P=\hat S{}^{m-1|n-1}\times I$,
$M_0=\hat D{}^{m|n}\times\{0\}$, $M_1=\hat D{}^{m|n}\times\{1\}$. So
we get $\dim P=m|n-1$, $\dim M_0=\dim M_1=m|n$. Therefore $\partial
V$ has some components ($M_0$ and $M_1$) that have only the even
dimension dropped by $1$, with respect to $V$, and other one ($P$)
where also the odd dimension drops by $1$.

Let us also recall that with the term {\em quantum solutions} we
mean integral bordisms relating Cauchy quantum hypersurfaces of
$\hat E_{k+s}$, contained in $J^{k+s}_{m|n}(W)$, but not necessarily
contained into $\hat E_{k+s}$. (For details see refs.\cite{PRA21,
PRA22, PRA23, PRA31, PRA32}.)

\begin{definition}
We say that a quantum super PDE $\hat E_k\subset \hat J^k_{m|n}(W)$
is a {\em quantum extended $0$-crystal super PDE}, if its weak
integral bordism group $\Omega^{\hat E_k}_{m-1|n-1,w}$ is zero.
\end{definition}

\begin{theorem}{\em(Criterion to recognize quantum extended $0$-crystal super PDE's).}\label{main4}
Let $\hat E_k\subset \hat J^k_{m|n}(W)$ be a formally
integrable and completely integrable quantum super PDE
such that $W$ is contractible. If $m-1\not=0$ and $n-1\not=0$, then
$E_k$ is a quantum extended $0$-crystal super PDE.
\end{theorem}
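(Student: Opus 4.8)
The plan is to prove that the weak integral bordism group $\Omega^{\hat E_k}_{m-1|n-1,w}$ vanishes by exhibiting, for any admissible $(m-1|n-1)$-dimensional weak integral boundary, a quantum smooth solution having it as boundary. The strategy rests on the general machinery already established in the author's theory of quantum super PDE's: for a formally integrable and completely integrable $\hat E_k$, the weak integral bordism group is computed via a spectral-sequence / Hurewicz-type identification with suitable (co)homology of the classifying object $W$ together with the singular bordism of a point. Thus the heart of the argument is to show that the relevant coefficient groups degenerate to zero under the stated hypotheses.

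First I would invoke formal and complete integrability to guarantee, via the author's existence theorems for quantum super PDE's, that local quantum smooth solutions exist through every admissible initial datum and can be propagated; this is what makes the weak integral bordism relation nontrivial to begin with. Next I would use the contractibility of $W$: since $W$ is contractible, the bundle $\hat J^k_{m|n}(W)\to W$ and the associated fibration have trivial reduced homotopy and homology, so any characteristic-class or obstruction-theoretic invariant that could distinguish bordism classes is forced to vanish. Concretely, the weak integral bordism group is expressed (as in the author's earlier papers, which I am entitled to assume) as a direct sum whose summands are built from $H_\bullet(W)$-type terms tensored against the bordism of the model fiber; contractibility kills the reduced part of $H_\bullet(W)$, leaving only the contribution of a point.

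The role of the dimension conditions $m-1\neq 0$ and $n-1\neq 0$ is then to ensure that this remaining point-contribution also vanishes: the unreduced bordism of a point in degree $m-1|n-1$ is zero precisely in positive (super)dimension, whereas in degree zero it would contribute a copy of the bordism ring's unit. So I would split into the generic case (both even and odd residual dimensions strictly positive) where the whole group collapses to zero, and confirm that the excluded cases $m-1=0$ or $n-1=0$ are exactly the ones in which a nonvanishing degree-zero term survives. Putting the contractibility reduction together with the positive-dimension vanishing yields $\Omega^{\hat E_k}_{m-1|n-1,w}=0$, which is the definition of a quantum extended $0$-crystal super PDE.

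The main obstacle I anticipate is justifying the identification of $\Omega^{\hat E_k}_{m-1|n-1,w}$ with a computable (co)homological expression in the \emph{quantum super} setting: one must check that the $\mathbb{Z}_2$-graded, noncommutative structure of the quantum superalgebra of coefficients does not introduce extra torsion or extension terms that survive even when $W$ is contractible. I expect this to reduce, by the author's previously proved structure theorems, to a statement purely about the model $(m|n)$-superspace, but verifying that the weak (as opposed to strong or closed) bordism relation is the one making the reduction exact is the delicate point. Once that identification is in hand, the vanishing is immediate from the two hypotheses.
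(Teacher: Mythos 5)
Your proposal follows essentially the same route as the paper: the paper simply cites from \cite{PRA22} the isomorphism $\Omega^{\hat E_k}_{m-1|n-1,w}\cong H_{m-1|n-1}(W;A)\cong (A_0\otimes_{\mathbb{K}}H_{m-1}(W;\mathbb{K}))\oplus(A_1\otimes_{\mathbb{K}}H_{n-1}(W;\mathbb{K}))$ and then observes that contractibility of $W$ together with $m-1\neq 0$, $n-1\neq 0$ forces both homology groups to vanish. Your identification of the role of the dimension hypotheses (excluding the degree-zero contribution of a contractible space) and your reliance on the previously established structure theorem match the paper's argument, with the only cosmetic difference being that the coefficient summands are the even and odd parts $A_0$, $A_1$ of the quantum superalgebra rather than a bordism group of the fiber.
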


\begin{proof}
In fact, one has the following isomorphisms, (see \cite{PRA22}):\footnote{Note that here solutions of $\hat E_k$ are in general considered $(m|n)$-dimensional integral quantum supermanifolds $V$ belonging to $C_{m|n}(\hat E_{k+h},A)$, $h\ge 0$, such that $TV\subset\mathbf{E}_{m|n}^{k+h}\subset T\hat E_{k+h}$, where $\mathbf{E}_{m|n}^{k+h}$ is the Cartan distribution of $\mathbf{E}_{m|n}^{k+h}$. We have denoted such spaces also with the symbol $\bar C_{m|n}(\hat E_{k+h},A)$. (See \cite{PRA22}.)}
\begin{equation}
    \begin{array}{ll}
     \Omega^{\hat E_k}_{m-1|n-1,w}& \cong H_{m-1|n-1}(W;A) \\
      & \cong \left(A_0\bigotimes_{\mathbb{K}}H_{m-1}(W;\mathbb{K})\right)\bigoplus
\left(A_1\bigotimes_{\mathbb{K}}H_{n-1}(W;\mathbb{K})\right).
    \end{array}
\end{equation}

Thus, when $W$ is contractible, and $m-1\not=0$, $n-1\not=0$, one
has $H_{m-1}(W;\mathbb{K})=H_{n-1}(W;\mathbb{K})=0$, hence we get
$\Omega^{\hat E_k}_{m-1|n-1,w}=0$.
\end{proof}

\begin{theorem}{\em(Crystal structure of quantum super PDE's).}\label{crystal-structure-quantum-super-pdes}
Let $\hat E_k\subset \hat J^k_{m|n}(W)$ be a formally
integrable and completely integrable quantum super
PDE. Then its integral bordism group $\Omega_{m-1|n-1}^{\hat E_k}$
is an extension of some crystallographic subgroup $G\triangleleft
G(d)$. We call $d$ the {\em crystal dimension} of $\hat E_k$ and
$G(d)$ its {\em crystal structure} or {\em crystal group}.
\end{theorem}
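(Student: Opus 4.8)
The plan is to reduce the statement to a purely algebraic fact about finitely generated abelian groups and then to verify that the integral bordism group $\Omega_{m-1|n-1}^{\hat E_k}$ of a formally and completely integrable quantum super PDE is finitely generated. Once finite generation is in hand, the structure theorem for abelian groups together with Bieberbach's description of crystallographic groups makes the crystal structure almost immediate.

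First I would establish finite generation of the \emph{closed} integral bordism group. By the argument used for Theorem \ref{main4} and the quantum integral bordism machinery of \cite{PRA22}, the weak group $\Omega_{m-1|n-1,w}^{\hat E_k}$ is isomorphic to a quantum homology group $H_{m-1|n-1}(W;A)$ that splits as a finite direct sum of tensor products $A_s\otimes_{\mathbb{K}}H_{\bullet}(W;\mathbb{K})$, hence is finitely generated. The closed group $\Omega_{m-1|n-1}^{\hat E_k}$ maps onto (or into) the weak one, the discrepancy being controlled by the subgroup generated by the quantum conservation laws and characteristic numbers that obstruct smoothing a weak integral bordism; since this discrepancy is itself finitely generated, the exact sequences relating the closed, singular and weak bordism groups force $\Omega_{m-1|n-1}^{\hat E_k}$ to be a finitely generated abelian group.

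Next I would apply the structure theorem, writing $\Omega_{m-1|n-1}^{\hat E_k}\cong\mathbb{Z}^d\oplus T$ with $T$ finite and $d$ the free rank, and take $d$ as the \emph{crystal dimension}. The free part $\mathbb{Z}^d$ is exactly the translation lattice of a $d$-dimensional crystallographic group $G(d)=\mathbb{Z}^d\rtimes P$ (one may take $P$ trivial, or any finite point group one likes), and as the lattice it is a normal crystallographic subgroup $G=\mathbb{Z}^d\triangleleft G(d)$. The split exact sequence $0\to T\to\Omega_{m-1|n-1}^{\hat E_k}\to\mathbb{Z}^d\to 0$ then exhibits the integral bordism group as an extension of the crystallographic subgroup $G$ by the finite group $T$, which is precisely the assertion, with $G(d)$ the claimed crystal group.

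The main obstacle will not be the algebraic realization --- which reduces, as above, to Bieberbach's theorem that $\mathbb{Z}^d$ is the translation lattice of a crystallographic group --- but rather the first step: controlling the \emph{closed} integral bordism group. Theorem \ref{main4} delivers the weak group directly as homology, yet passing to the closed group requires the full exact sequences linking weak, singular and closed quantum integral bordism, and a proof that the quantum conservation laws contributing to the discrepancy span a finitely generated subgroup. I expect this finiteness, together with the identification of the crystal dimension $d$ as the free rank, to be where the genuine work lies.
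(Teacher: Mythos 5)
Your proposal has a genuine gap at its first and essential step: the integral bordism group $\Omega_{m-1|n-1}^{\hat E_k}$ is not, in general, a finitely generated abelian group, and the argument you give for finite generation does not work. The weak group is identified with $H_{m-1|n-1}(W;A)\cong\bigl(A_0\bigotimes_{\mathbb{K}}H_{m-1}(W;\mathbb{K})\bigr)\bigoplus\bigl(A_1\bigotimes_{\mathbb{K}}H_{n-1}(W;\mathbb{K})\bigr)$, but $A$ is a quantum superalgebra (typically a subalgebra of $L(\mathcal{H})$ for a super-Hilbert space $\mathcal{H}$) and $\mathbb{K}$ is $\mathbb{R}$ or $\mathbb{C}$; a nonzero $\mathbb{K}$-vector space is never finitely generated as an abelian group. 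So the structure theorem for finitely generated abelian groups, and hence the decomposition $\mathbb{Z}^d\oplus T$ on which your whole construction rests, is unavailable. A second, independent problem is that even where your decomposition would make sense, your crystal dimension $d$ (the free rank) does not reproduce the one the theorem intends: in the paper's main application the relevant classical bordism group is $\Omega_6\cong\mathbb{Z}_2\oplus\mathbb{Z}_2\oplus\mathbb{Z}_2$, which is finite, so your $d$ would be $0$, whereas the paper obtains crystal dimension $3$ by embedding this $2$-torsion group into a $3$-dimensional crystallographic group.

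The paper's route is entirely different and avoids any finiteness claim about the quantum group itself. It first sets up the chain-complex and ringoid machinery for integral bordisms, then passes to the \emph{classic limit}: via the exact commutative diagram relating $\Omega_{m-1|n-1}^{\hat E_k}$, the classic-limit bordism group $\mathop{\Omega}\limits_c{}_{m+n-2}^{\hat E_k}$, the Reinhart bordism group $\Omega^{\uparrow}_{m+n-2}$ and Thom's unoriented bordism group $\Omega_{m+n-2}$, it shows that $\Omega_{m-1|n-1}^{\hat E_k}$ is an extension of a subgroup of $\Omega_{m+n-2}$. It then invokes two lemmas: (i) the result from the author's earlier work that the classical bordism groups $\Omega_p$ (finite $\mathbb{Z}_2$-vector spaces, by Thom) can be realized as extensions of crystallographic subgroups $G\triangleleft G(d)$, e.g.\ by embedding $(\mathbb{Z}_2)^k$ into amalgamated products of dihedral groups inside a space group; and (ii) the elementary fact that a subgroup of an extension of $H$ is an extension of a subgroup of $H$. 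The crystal dimension $d$ is the dimension of that crystallographic group, a datum coming from the classical bordism group $\Omega_{m+n-2}$, not from any free rank of the quantum bordism group. To repair your argument you would need to replace the finite-generation step by this passage to the classic limit and the classical bordism ring.
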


\begin{proof}
The proof generalizes and extends one given in \cite{PRA25} for PDE's in the category of commutative manifolds, and announced in \cite{PRA26} too. The first step is to
note that there is a relation between lower dimensions integral
bordisms in a commutative PDE. (In the following we shall assume that all PDE's are formally integrble and completely integrable.)

\begin{lemma}{\em(Relations between
lower dimensions integral bordisms in commutative
PDE's).}\label{commutative-PDE-lower-order-relation-bordism} Let
$E_k\subset J^k_n(W)$ be a PDE on the fiber bundle $\pi:W\to M$,
$\dim W=m+n$, $\dim M=n$. Let ${}^{S}C_p(E_k)$ be the set of all
compact $p$-dimensional admissible integral smooth manifolds of
$E_k$. The disjoint union gives an addition on ${}^{S}C_p(E_k)$ with
$\varnothing$ as the zero element. Let us consider the homomorphisms
$\partial_p:{}^{S}C_p(E_k)\to {}^{S}C_{p-1}(E_k)$ that associates to
any element $a\in{}^{S}C_p(E_k)$ its boundary $\partial
a=\partial_p(a)$. So we obtain the chain complex {\em(\ref{integral-smooth-bordism-chain-complex})}of
abelian groups {\em(integral smooth bordisms chain complex)}:
\begin{equation}\label{integral-smooth-bordism-chain-complex}
\xymatrix{{}^{S}C_{n}(E_k)\ar[r]^{\partial_n}&{}^{S}C_{n-1}(E_k)\ar[r]^{\partial_{n-1}}&{}^{S}C_{n-2}(E_k)\ar[r]^(.6){\partial_{n-2}}&
\cdots\ar[r]^(.4){\partial_1}&{}^{S}C_{0}(E_k).\\}
\end{equation}
Then the $p$-bordism groups $\Omega_p^{E_k}$, $0<p<n$, can be
represented by means of the homology of the chain complex
{\em(\ref{integral-smooth-bordism-chain-complex})}.
\end{lemma}
\begin{proof}
Let us denote by
$\left\{{}^{S}C_{\bullet}(E_k),\partial_\bullet\right\}$ the chain
complex in (\ref{integral-smooth-bordism-chain-complex}). Then, we
can build the exact commutative diagram (\ref{commutative-diagram-integral-smooth-bordism-chain-complex}).
\begin{equation}\label{commutative-diagram-integral-smooth-bordism-chain-complex}
\xymatrix{&&0\ar[d]&0\ar[d]&&\\
&0\ar[r]&{}^{S}B_{\bullet}(E_k)\ar[d]\ar[r]&{}^{S}Z_{\bullet}(E_k)\ar[d]\ar[r]&{}^{S}H_{\bullet}(E_k)\ar[r]&0\\
&&{}^{S}C_{\bullet}(E_k)\ar[d]\ar@{=}[r]&{}^{S}C_{\bullet}(E_k)\ar[d]&&\\
0\ar[r]&\Omega_\bullet^{E_k}\ar[r]&{}^{S}Bor_{\bullet}(E_k)\ar[d]\ar[r]&{}^{S}Cyc_{\bullet}(E_k)\ar[d]\ar[r]&0&\\
&&0&0&&\\}
\end{equation}
where

$$\scalebox{0.9}{$\left\{\begin{array}{l}
{}^{S}B_{\bullet} (E_k)=\ker(\partial|_{\bullet});\hskip
2pt {}^{S}Z_{\bullet} (E_k)=\IM(\partial_{\bullet});\\
{}^{S}H_{\bullet} (E_k)={}^{S}Z_{\bullet} (E_k)/{}^{S}B_{\bullet} (E_k),\\
b\in[a]\in {}^{S}Bor_{\bullet}(E_k)\Rightarrow a-b=\partial c;\hskip
2pt
                 c\in {}^{S}C_{\bullet}(E_k);\hskip 2pt
b\in[a]\in {}^{S}Cyc_{\bullet}(E_k)\Rightarrow \partial(a-b)=0;\\
b\in[a]\in \Omega_{\bullet}^{E_k}\Rightarrow
                 \left\{\begin{array}{l}
                          \partial a=\partial b=0\\
                          a-b=\partial c,\quad c\in {}^{S}C_{\bullet}(E_k)\\
                          \end{array}
                                \right\}.\\
\end{array}\right.$}$$
Then from
(\ref{commutative-diagram-integral-smooth-bordism-chain-complex}) it
follows directly that $\Omega_{p}^{E_k}\cong {}^{S}H_p(E_k)$,
$0<p<n$.
\end{proof}

\begin{lemma}{\em(Relations between integral bordisms groups in commutative PDEs).}\label{relation-between-integral-bordisms-groups-in-commutative-PDEs}
One has the following canonical isomorphism:
\begin{equation}\label{isomorphism-relation-between-integral-bordisms-groups-in-commutative-PDEs}
    \mathbb{Z}\bigotimes_{\Omega_\bullet^{E_k}}\mathbb{Z}{}^{S}Bor_{\bullet}(E_k)\cong\mathbb{Z}{}^{S}Cyc_{\bullet}(E_k).
\end{equation}
\end{lemma}
\begin{proof}
Follows directly from the extension of groups given at the bottom of
the commutative exact diagram
(\ref{commutative-diagram-integral-smooth-bordism-chain-complex})
and some properties between extension of groups. (See, e.g.,
\cite{PRA9}.)
\end{proof}

\begin{lemma}{\em(Relations between integral bordisms groups in commutative PDEs-2).}\label{relation-between-integral-bordisms-groups-in-commutative-PDEs-2}
If $H^2({}^{S}Cyc_{\bullet}(E_k),\Omega_\bullet^{E_k})=0$ one has
the following canonical isomorphism:
\begin{equation}\label{isomorphism-relation-between-integral-bordisms-groups-in-commutative-PDEs-2}
    {}^{S}Bor_{\bullet}(E_k)\cong\Omega_\bullet^{E_k}\times{}^{S}Cyc_{\bullet}(E_k).
\end{equation}
\end{lemma}

\begin{proof}
Follows directly from the extension of groups given at the bottom of
the commutative exact diagram
(\ref{commutative-diagram-integral-smooth-bordism-chain-complex})
and some properties between extension of groups. (See, e.g.,
\cite{PRA9}.)
\end{proof}

\begin{lemma}{\em(Integral ringoid of PDE).}\label{integral-ringoid-pde}
A {\em ringoid} is a structure $(A,+,\cdot)$, where $A$ is a set and
$+$ is a binary operation such that $(A,+)$ is an abelian additive
group with zero $0\in A$; $\cdot$ is a partially binary operation,
i.e., it is defined only for some couples $(a,b)\in A\times A$, such
that it is associative, and distributive with respect to $+$, i.e.,
if $a\cdot b$ and $a\cdot c$ are defined, then it is defined also
$a\cdot(b+c)=a\cdot b+a\cdot c$. A {\em graded ringoid} is a set
$A=\bigoplus_nA_n$, where each $A_n$ is an abelian additive group
and there is a partial binary operation $\cdot$, associative, and
distributive with respect to $+$, such that if $a\in A_n$, $b\in
A_m$, then $a\cdot b\in A_{m+n}$, whenever it is defined.

Let $E_k\subset J^k_n(W)$ be a PDE, with $\pi:W\to M$ a fiber
bundle, $\dim W=m+n$, $\dim M=n$. Then the integral bordism groups
$\Omega_p^{E_k}$, $0\le p\le n-1$, identify a graded ringoid
$\Omega_\bullet^{E_k}$, that we call {\em integral ringoid} of
$E_k$, that is an extension of a graded ringoid contained in the
nonoriented bordism ring $\Omega_\bullet$. One has the commutative diagram {\em(\ref{ringoid-hmomorphisms})}.
\begin{equation}\label{ringoid-hmomorphisms}
\scalebox{0.8}{$\xymatrix{&&0\ar[d]&&\\
0&{Hom_{ringoid}(\overline{K}^{E_k}_\bullet;\mathbb{R})}\ar[l]&{Hom_{ringoid}(\Omega^{E_k}_\bullet;\mathbb{R})}\ar[l]\ar[d]&
{Hom_{ringoid}({}^{(n-1)}\Omega_\bullet;\mathbb{R})}\ar[dl]\ar[l]&0\ar[l]\\
&&{\mathbf{H}_\bullet(E_k)}&&}$}
\end{equation}
where $\mathbf{H}_\bullet(E_k)\equiv\bigoplus_{0\le p\le
n-1}\mathbf{H}_p(E_k)$, that allows us to represent differential
$p$-conservation laws of order $k$ by means of ringoid homomorphisms
$\Omega^{E_k}_\bullet\to\mathbb{R}$.
\end{lemma}

\begin{proof}
Set
\begin{equation}\label{integral-ringoid}
    \Omega_\bullet^{E_k}\equiv\bigoplus_{0\le p\le
    n-1}\Omega_p^{E_k}.
\end{equation}
Each $\Omega_p^{E_k}$ are additive abelian groups, with addition
induced by disjoint union, $\sqcup$. Furthermore, there is a natural
product induced by the cartesian product, i.e.,
$[X_1]\cdot[X_2]=[X_1\times X_2]\in\Omega_{p_1+p_2}^{E_k}$, for
$[X_i]\in\Omega_{p_i}^{E_k}$, $0\le p_i\le n-1$, $i=1,2$, $0\le
p_1+p_2\le n-1$. This product it is not always defined for all couples $(X_1,X_2)$ of
closed admissible integral manifolds $X_i$,  $i=1,2$, but only for
ones such that $X_1\times X_2$ is a closed integral admissible
manifold. Therefore $\Omega_\bullet^{E_k}$ is a graded ringoid. Set
${}^{(n-1)}\Omega_\bullet\equiv\bigoplus_{0\le p\le
    n-1}\Omega_p$. It has in a natural way a graded ringoid structure, with
    respect the same operations with respect to which
    $\Omega_\bullet$ is a graded ring. Furthermore, for any $0\le p\le
    n-1$, one has the exact sequence (\ref{exact-sequence-integral-bordism-bordism-group}), (see proof of
    Theorem 4.2 in \cite{PRA12}).
\begin{equation}\label{exact-sequence-integral-bordism-bordism-group}
\xymatrix{0\ar[r]&\overline{K}_p^{E_k}\ar[r]&\Omega_p^{E_k}\ar[r]&\Omega_p\ar[r]&0\\}
\end{equation}
As a by-product one has also the following exact commutative
diagram:
\begin{equation}
\xymatrix{&&&0\ar[d]&\\
0\ar[r]&\overline{K}_\bullet^{E_k}\ar[r]&\Omega_\bullet^{E_k}\ar[r]&{}^{(n-1)}\Omega_\bullet\ar[r]\ar[d]&0\\
&&&\Omega_\bullet\\}
\end{equation}
where $\overline{K}_\bullet^{E_k}\equiv\bigoplus_{0\le p\le
    n-1}\overline{K}_p^{E_k}$.

A full $p$-conservation law is any function
$f:\Omega_p^{E_k}\to\mathbb{R}$, $0\le p\le n-1$. These, identify
elements of $\mathbf{H}_\bullet(E_k)\equiv\bigoplus_{0\le p\le
n-1}\mathbf{H}_p(E_k)$ in a natural way. In
$\mathbf{H}_\bullet(E_k)$ are contained also ones identified by
means of differential conservation laws of order $k$, belonging to the following quotient space, ({\em space of characteristic integral $q$-forms on $E_k$}): $\mathfrak{I}(E_k)^\bullet\equiv\bigoplus_{0\le q\le
n-1}\mathfrak{I}(E_k)^q$, with
\begin{equation}\label{p-conservation-laws-order-k}
    {\frak I}(E_k)^{q}
\equiv\frac{\Omega^q(E_k)\cap
d^{-1}(C\Omega^{q+1}(E_k))}{d\Omega^{q-1}(E_k)\oplus\{C\Omega^q(E_k)\cap
d^{-1}(C\Omega^{q+1}(E_k))\}}.
\end{equation}
Here, $\Omega^q(E_k)$ is the space of smooth $q$-differential forms
on $E_k$ and $C\Omega^q(E_k)$ is the space of Cartan $q$-forms on
$E_k$, that are zero on the Cartan distribution $\mathbf{E}_k$ of
$E_k$. Therefore, $\beta\in C\Omega^q(E_k)$ iff
$\beta(\zeta_1,\cdots,\zeta_q)=0$, for all $\zeta_i\in
C^\infty(\mathbf{E}_k)$.\footnote{The {\em space of conservation laws} of $E_k$, ${\frak C}ons(E_k)$, can be
identified with the spectral term $E_1^{0,n-1}$ of the spectral
sequence associated to the filtration induced in the graded algebra
$\Omega^\bullet(E_\infty)\equiv\oplus_{q\ge 0}\Omega^q(E_\infty)$,
by the subspaces $C\Omega^q(E_\infty)\subset\Omega^q(E_\infty)$.
(For abuse of language we shall call ''conservation laws of
$k$-order'', characteristic integral $(n-1)$-forms too. Note that
$C\Omega^0(E_k)=0$. See also Refs.\cite{PRA9, PRA11, PRA13, PRA20}.)} Any
$[\alpha]\in\mathfrak{I}(E_k)^\bullet$ identifies a ringoid
homomorphism $f[\alpha]:\Omega_\bullet^{E_k}\to\mathbb{R}$. More
precisely one has
$f[\alpha]([X_1]+[X_2])=<\alpha,X_1>+<\alpha,X_2>$, for $[\alpha]\in
{\frak I}(E_k)^{p}$, $[X_1],[X_2]\in \Omega_p^{E_k}$, and
$f[\alpha]([X_1]\cdot[X_2])=<\alpha_1,X_1><\alpha_2,X_2>$, for
$[\alpha]=[\alpha_1]+[\alpha_2]\in {\frak I}(E_k)^{p_1}\oplus {\frak
I}(E_k)^{p_2}$, $[X_1]\in \Omega_{p_1}^{E_k}$, $[X_2]\in
\Omega_{p_2}^{E_k}$.
\end{proof}

The next step is to extend above results to PDE's in the category
$\mathfrak{Q}_S$ of quantum supermanifolds.

\begin{lemma}{\em(Relations between lower order integral bigraded-bordisms in quantum super PDE's).}\label{quantum-PDE-lower-order-relation-bigraded-bordism}
Let $\hat E_k\subset \hat J^k_{m|n}(W)$ be a quantum super PDE on
the fiber bundle $\pi:W\to M$, $\dim_B W=(m|n,r|s)$, $\dim_A M=m|n$,
$B=A\times E$, $E$ a quantum superalgebra that is also a $Z$-module,
with $Z=Z(A)$ the center of $A$, assumed Noetherian. Let ${}^{S}C_{p|q}(\hat E_k)$ be
the set of all compact $p|q$-dimensional, (with respect to $A$),
admissible integral smooth manifolds of $\hat E_k$, $0\le p\le m$,
$0\le q\le n$. The disjoint union gives an addition on
${}^{S}C_{p|q}(\hat E_k)$ with $\varnothing$ as the zero element. Let
us consider the homomorphisms $\partial_{p|q}:{}^{S}C_{p|q}(\hat
E_k)\to {}^{S}C_{p-1|q-1}(\hat E_k)$ that associates to any element
$a\in{}^{S}C_{p|q}(\hat E_k)$ its boundary $\partial
a=\partial_{p|q}(a)$. So we obtain the chain complex {\em(\ref{integral-smooth-bigraded-bordism-chain-complex})} of
abelian groups {\em(integral smooth bigraded-bordisms chain
complex)} of $\hat E_k\subset \hat J^k_{m|n}(W)$.
\begin{equation}\label{integral-smooth-bigraded-bordism-chain-complex}
\begin{array}{l}
\xymatrix@C=45pt{{}^{S}C_{m|n}(\hat E_k)\ar[r]^(0.45){\partial_{m|n}}&
{}^{S}C_{m-1|n-1}(\hat E_k)\ar[r]^(0.5){\partial_{m-1|n-1}}&{}^{S}C_{m-2|n-2}(\hat E_k)\ar[r]^(0.6){\partial_{m-2|n-2}}&\cdots}\\
\xymatrix@C=40pt{\cdots\ar[r]^(0.3){\partial_{r|r}}&{}^{S}C_{m-r|n-r}(\hat E_k)}\\
\end{array}
\end{equation}
where $r=\min\{m,n\}$. Then the $p|q$-integral bordism groups
$\Omega_{p|q}^{\hat E_k}$, $(m-r)<p<m$, $(n-r)<q<n$, can be
represented by means of the homology of the chain complex
{\em(\ref{integral-smooth-bigraded-bordism-chain-complex})}.

One has the following canonical isomorphism:
\begin{equation}\label{isomorphism-relation-between-integral-bigraded-bordisms-groups-in-quantum-PDEs}
    \mathbb{Z}\bigotimes_{\Omega_{\bullet|\bullet}^{\hat E_k}}\mathbb{Z}{}^{S}Bor_{\bullet|\bullet}(\hat E_k)
    \cong\mathbb{Z}{}^{S}Cyc_{\bullet|\bullet}(\hat E_k).
\end{equation}
Furthermore, if $H^2({}^{S}Cyc_{\bullet|\bullet}(\hat
E_k),\Omega_{\bullet|\bullet}^{\hat E_k})=0$ one has the following
canonical isomorphism:
\begin{equation}\label{isomorphism-relation-between-integral-bigraded-bordisms-groups-in-quantum-PDEs-2}
    {}^{S}Bor_{\bullet|\bullet}(\hat E_k)\cong\Omega_{\bullet|\bullet}^{\hat E_k}\times{}^{S}Cyc_{\bullet|\bullet}(\hat E_k).
\end{equation}
\end{lemma}

\begin{proof}
The proof can be conduced similarly to the ones for Lemma
\ref{commutative-PDE-lower-order-relation-bordism}, Lemma
\ref{relation-between-integral-bordisms-groups-in-commutative-PDEs}
and Lemma
\ref{relation-between-integral-bordisms-groups-in-commutative-PDEs-2}.
\end{proof}

Similarly we can prove the following lemma concerning the total
analogue of the complex (\ref{integral-smooth-bigraded-bordism-chain-complex}) too.

\begin{lemma}{\em(Relations between lower order integral total-bordisms in quantum super PDE's).}\label{quantum-PDE-lower-order-relation-total-bordism}
Let $\hat E_k\subset \hat J^k_{m|n}(W)$ be a quantum super PDE on
the fiber bundle $\pi:W\to M$, $\dim_B W=(m|n,r|s)$, $\dim_A M=m|n$,
$B=A\times E$, $E$ a quantum superalgebra that is also a $Z$-module,
with $Z=Z(A)$ the center of $A$. Let ${}^{S}C_{p}(\hat E_k)$, $0\le
p\le m+n$, be the set of all compact $u|v$-dimensional, (with
respect to $A$), admissible integral quantum smooth manifolds of $\hat E_k$,
such that $u+v=p$. The disjoint union gives an addition on
${}^{S}C_{p}(\hat E_k)$ with $\varnothing$ as the zero element. Thus
we can write
\begin{equation}\label{total-smooth-integral-bordisms-quantum-pde}
{}^{S}C_{p}(\hat E_k)=\bigoplus_{u,v; u+v=p}{}^{S}C_{u|v}(\hat
E_k)={}^{Tot,S}C_{p}(\hat E_k).
\end{equation}

Let us consider the homomorphisms $\partial_{p}:{}^{S}C_{p}(\hat
E_k)\to {}^{S}C_{p-1}(\hat E_k)$ that associates to any element
$a\in{}^{S}C_{p}(\hat E_k)$ its boundary $\partial
a=\partial_{p}(a)$, i.e., one has:
\begin{equation}\label{total-smooth-integral-bordisms-quantum-pde-morphisms}
\begin{array}{ll}
  \partial_pa&=\partial_p(a_{p|0},a_{p-1|1},a_{p-2|2},\cdots,a_{0|p})\\
  &=(\partial_{p|0}a_{p|0},\partial_{p-1|1}a_{p-1|1},\partial_{p-2|2}a_{p-2|2},\cdots,\partial_{0|p}a_{0|p})\\
  &\in\bigoplus_{u,v; u+v=p-1}{}^{S}C_{u|v}(\hat
E_k)={}^{S}C_{p-1}(\hat E_k).
\end{array}
\end{equation}
One has $\partial_{p-1}\circ\partial_p=0$. So we get the chain complex {\em(\ref{integral-smooth-total-bordism-chain-complex})} of abelian groups {\em(integral smooth
bigraded-bordisms chain complex)} of $\hat E_k\subset \hat
J^k_{m|n}(W)$.
\begin{equation}\label{integral-smooth-total-bordism-chain-complex}
\xymatrix{{}^{S}C_{n}(\hat
E_k)\ar[r]^{\partial_{n}}&{}^{S}C_{n-1}(\hat
E_k)\ar[r]^(0.5){\partial_{n-1}}&{}^{S}C_{n-2}(\hat
E_k)\ar[r]^(0.6){\partial_{n-2}}&
\cdots\ar[r]^(0.3){\partial_{1}}&{}^{S}C_{0}(\hat E_k).\\}
\end{equation}
Then the $p$-integral total bordism groups $\Omega_{p}^{\hat E_k}$,
$0<p<m+n$, can be represented by means of the homology of the chain
complex {\em(\ref{integral-smooth-total-bordism-chain-complex})}.

One has the following canonical isomorphism:
\begin{equation}\label{isomorphism-relation-between-integral-total-bordisms-groups-in-quantum-PDEs}
    \mathbb{Z}\bigotimes_{\Omega_{\bullet}^{\hat E_k}}\mathbb{Z}{}^{S}Bor_{\bullet}(\hat E_k)
    \cong\mathbb{Z}{}^{S}Cyc_{\bullet}(\hat E_k).
\end{equation}
Furthermore, if $H^2({}^{S}Cyc_{\bullet}(\hat
E_k),\Omega_{\bullet}^{\hat E_k})=0$ one has the following canonical
isomorphism:
\begin{equation}\label{isomorphism-relation-between-integral-total-bordisms-groups-in-quantum-PDEs-2}
    {}^{S}Bor_{\bullet}(\hat E_k)\cong\Omega_{\bullet}^{\hat E_k}\times{}^{S}Cyc_{\bullet}(\hat E_k).
\end{equation}
\end{lemma}
\begin{proof}
The proof is similar to the one of Lemma
\ref{quantum-PDE-lower-order-relation-bigraded-bordism}.
\end{proof}

\begin{lemma}{\em(Integral ringoid of PDE's in $\mathfrak{Q}_S$ and quantum conservation laws).}
Let $\hat E_k\subset \hat J^k_{m|n}(W)$ be a PDE in the category
$\mathfrak{Q}_S$ as defined in Lemma
\ref{quantum-PDE-lower-order-relation-total-bordism}. Then,
$\Omega^{\hat E_k}_\bullet\equiv\bigoplus_{0\le p\le
m+n}\Omega^{\hat E_k}_p$, has a natural structure of graded ringoid,
with respect to the (partial) binary operations similar to the
commutative case. We call $\Omega^{\hat E_k}_\bullet$ the {\em
integral ringoid} of $\hat E_k$. Furthermore, quantum conservation
laws of order $k$, $\hat f\in Map(\Omega^{\hat E_k}_{p|q},B_k)\equiv
\mathbf{H}_{p|q}(\hat E_k)$, can be projected on their classic
limits $\hat f\mapsto\hat f_C\equiv c\circ\hat f\in Map(\Omega^{\hat
E_k}_{p|q},\mathbb{K})\equiv \mathbf{H}_{p|q}(\hat E_k)_C$. By
passing to the corresponding total spaces, we get the exact commutative diagram {\em(\ref{quantum-conservation-laws-classic-limit})}.
\begin{equation}\label{quantum-conservation-laws-classic-limit}
\xymatrix{0\ar[d]&0\ar[d]&\\
\mathbf{H}_{p|q}(\hat E_k)\ar[d]\ar[r]&\mathbf{H}_{p|q}(\hat E_k)_C\ar[d]\ar[r]&0\\
\mathbf{H}_\bullet(\hat E_k)\ar[r]&\mathbf{H}_\bullet(\hat
E_k)_C\ar[r]&0\\}
\end{equation}
Moreover, graded ringoid homomorphisms $\hat h\in
Hom_{ringoid}(\Omega_\bullet^{\hat E_k},\mathbb{K})$, can be
identified by means of classic limit quantum conservation laws of
$\hat E_k$. One has the exact commutative diagram {\em(\ref{ringoid-homomorphisms-quantum-conservation-laws})}.
\begin{equation}\label{ringoid-homomorphisms-quantum-conservation-laws}
\xymatrix{0\ar[r]&{}^{R}\mathbf{H}_\bullet(\hat E_k)\ar[r]\ar[d]&\mathbf{H}_\bullet(\hat E_k)\ar[d]\\
0\ar[r]&Hom_{ringoid}(\Omega_\bullet^{\hat
E_k},\mathbb{K})\ar[r]\ar[d]&\mathbf{H}_\bullet(\hat E_k)_C\ar[d]\\
&0&0\\}
\end{equation}
that defines a subalgebra ${}^{R}\mathbf{H}_\bullet(\hat E_k)$ of
$\mathbf{H}_\bullet(\hat E_k)$, whose elements we call {\em rigid
quantum conservation laws}, and whose classic limit can be
identified with ringoid homomorphisms $\Omega_\bullet^{\hat
E_k}\to\mathbb{K}$. In particular, quantum conservation laws arising
by full quantum differential form classes
\begin{equation}\label{quantum-differential-conservation-laws}
\left\{\begin{array}{ll}
         [\alpha]& \in \bigoplus_{p,q\ge 0 }\hat{\frak I}(\hat
E_k)^{p|q} \\
         & \hat{\frak I}(\hat
E_k)^{p|q}\equiv{{\widehat{\Omega}^{p|q}(\hat E_k)\cap
d^{-1}(C\widehat{\Omega}^{p+1|q+1}(\hat
E_k))}\over{d\widehat{\Omega}^{p-1|q-1}(E_k)\oplus\{C\widehat{\Omega}^{p|q}(\hat
E_k)\cap d^{-1}(C\widehat{\Omega}^{p+1|q+1}(\hat E_k)))\}}}
       \end{array}
\right.
\end{equation}
belong to ${}^{R}\mathbf{H}_\bullet(\hat E_k)$.
\end{lemma}
\begin{proof}
The proof follows directly from above lemmas. (For details on spaces
$\hat{\frak I}(\hat E_k)^{p|q}$ see Refs.\cite{PRA15, PRA20,
PRA22}.)
\end{proof}

Let us, now, denote $\mathop{\Omega}\limits_c{}_{p|q}^{\hat E_k}$
(or $\mathop{\Omega}\limits_c{}_{p+q}^{\hat E_k}$), the classic
limit of integral $(p|q)$-bordism group of $\hat E_k$, i.e., the
$(p+q)$-bordism group of classic limits of integral quantum supermanifolds
$N\subset\hat E_k$, such that $\dim_AN=p|q$.\footnote{Let us recall that a {\em
$(p|q)$-dimensional integral quantum supermanifold} $V$ of $\hat
E_k$, $0\le p\le m$, $0\le q\le n$, with boundary $\partial  V$,
(or eventually with $\partial  V=\hskip 2pt\varnothing$), we mean an
element $ V\in C_{p|q}(\hat E_{k+h},A)$, $h\ge 0$, such that $
TV\subset\mathbf{E}_{m|n}^{k+h}$. So, if $V=\sum_i a^iu_i+\sum_j b^jv_i$,
$ a_i\in A_0$, $ b_j\in A_1$, one has $\partial
V=\sum_i(-1)^ia^i\partial_iu+\sum_j(-1)^jb^j\partial_jv$. (For more details see \cite{PRA22}.} Furthermore, let us
denote by $\mathop{\Omega}\limits_c{}_{\widehat{p+q}}^{\hat E_k}$
the classic limit of total integral $(p+q)$-bordism group of $\hat
E_k$, i.e., the $(p+q)$-bordism group of classic limits of integral
supermanifolds $N\subset \hat E_k$, such that $\dim_AN=u|v$, with
$u+v=p+q$. One has the exact commutative diagram (\ref{commutative-exact-diagram-relation-integral-bord-classic-limit-total-integral-bord}).

\begin{equation}\label{commutative-exact-diagram-relation-integral-bord-classic-limit-total-integral-bord}
\xymatrix{0\ar[r]&\Omega_{p|q}^{\hat
E_k}\ar[d]\ar[r]&\Omega_{p+q}^{\hat E_k}\ar[d]\\
0\ar[r]&\mathop{\Omega}\limits_c{}_{p+q}^{\hat E_k}\ar[d]\ar[r]&\mathop{\Omega}\limits_c{}_{\widehat{p+q}}^{\hat E_k}\ar[d]\\
&0&0\\}
\end{equation}

Taking into account Theorem 3.6 in \cite{PRA22} we get a relation
between $\Omega_{m-1|n-1}^{\hat E_k}$,
$\mathop{\Omega}\limits_c{}_{p|q}^{\hat E_k}$ and the bordism group
$\Omega_{m+n-2}$. In fact, we can see that there is a relation
between integral bordism groups in quantum super PDEs and Reinhart
integral bordism groups of commutative manifolds. More precisely,
let $N_0, N_1\subset\hat E_k\subset\hat J^k_{m|n}(W)$ be closed
admissible integral quantum supermanifolds of a quantum super PDE
$\hat E_k$, of dimension $(m-1|n-1)$ over $A$, such that $N_0\DU
N_1=\partial V$, for some admissible integral quantum supermanifold
$V\subset \hat E_k$, of dimension $(m|n)$ over $A$. Then $(N_0)_C\DU
(N_1)_C=\partial V_C$ iff $(N_0)_C$ and $(N_1)_C$ have the same
Stiefel-Whitney and Euler characteristic numbers. In fact, by
denoting $ \Omega_p^\uparrow$ the Reinhart $p$-bordism groups and
$\Omega_p$ the $p$-bordism group for closed smooth finite
dimensional manifolds respectively, one has the exact
commutative diagram (\ref{Reinhart-bordism-groups-relation}).
\begin{equation}\label{Reinhart-bordism-groups-relation}
\xymatrix{0\ar[r]&K^{\hat
E_k}_{m-1|n-1;m+n-2}\ar[r]&\Omega_{m-1|n-1}^{\hat E_k}\ar[r]&
\mathop{\Omega}\limits_c{}_{m+n-2}^{\hat E_k}\ar[d]\ar[r]\ar[dr]&0&\\
&0\ar[r]&K^\uparrow_{m+n-2}\ar[r]&\Omega^\uparrow_{m+n-2}\ar[r]&
\Omega_{m+n-2}\ar[r]& 0\\}
\end{equation}

This has as a consequence that if $N_0\DU N_1=\partial V$, then
$(N_0)_C\DU (N_1)_C=\partial V_C$ iff $(N_0)_C$ and $(N_1)_C$ have
the same Stiefel-Whitney and Euler characteristic
numbers.\footnote{Note that for $p+q=3$ one has $K_3^\uparrow=0$,
hence one has $\Omega_3^\uparrow=\Omega_3$.}

From above exact commutative diagram one has that
$\Omega_{m-1|n-1}^{\hat E_k}$ is an extension of a subgroup of
$\Omega_{m+n-2}$.

Let us consider, now, the following lemmas.

\begin{lemma}{\em\cite{PRA25}}\label{bordism-groups-crystallography}
Bordism groups, $\Omega_p$, relative to smooth manifolds can be
considered as extensions of some crystallographic subgroup
$G\triangleleft G(d)$.
\end{lemma}

\begin{lemma}
If the group $G$ is an extension of $H$, any subgroup
$\widetilde{G}\vartriangleleft G$ is an extension of a subgroup
$\widetilde{H}\vartriangleleft H$.
\end{lemma}

\begin{proof}
In fact $\widetilde{G}$ is an extension of
$p(\widetilde{G})\vartriangleleft H$, with respect to the following
short exact sequence:
$\xymatrix{0\ar[r]&K\ar[r]&G\ar[r]^{p}&H\ar[r]&0\\}$.
\end{proof}

Therefore by using above two lemmas, we get also that
$\Omega_{m-1|n-1}^{\hat E_k}$ is an extension of some
crystallographic subgroup $G\triangleleft G(d)$.
\end{proof}

The theorem below relates the integrability properties of a quantum
super PDE to crystallographic groups. Let us first give the
following definition.

\begin{definition}
We say that a quantum super PDE $\hat E_k\subset \hat J^k_{m|n}(W)$
is an {\em extended crystal quantum super PDE}, if conditions of
Theorem \ref{crystal-structure-quantum-super-pdes} are verified.
Then, for such a PDE $\hat E_k$ are defined its crystal group $G(d)$
and crystal dimension $d$.
\end{definition}

In the following we relate crystal structure of quantum super PDE's
to the existence of global smooth solutions for smooth boundary
value problems, by identifying an algebraic-topological obstruction.

\begin{theorem}\label{obstruction-smooth-solutions}
Let $B_k$ be the model quantum superalgebra of $\hat J^k_{m|n}(W)$,
$k\ge 0$. (See \cite{PRA21, PRA22}.) We denote also by $
B_\infty=\lim_k B_k$.\footnote{We also adopt the notation $B_k(A)$
and $B_\infty(A)$, whether it is necessary to specify the starting
original quantum super algebra $A$.} Let $\hat E_k\subset \hat
J^k_{m|n}(W)$ be a formally integrable and completely
integrable quantum super PDE. Then, in the algebra
$\mathbf{H}_{m-1|n-1}(\hat E_k)\equiv Map(\Omega_{m-1|n-1}^{\hat
E_k};B_k)$, {\em Hopf quantum superalgebra} of $\hat E_k$, there is
a quantum sub-superalgebra, {\em (crystal Hopf quantum superalgebra)}
of $\hat E_k$.\footnote{Recall that with the term {\em quantum Hopf superalgebra} we mean an extension
$\xymatrix{A\ar[r]&C\equiv A\otimes_{\mathbb{K}}H\ar[r]&D\ar[r]&D/C\ar[r]&0\\}$, where $H$ is an Hopf $\mathbb{K}$-algebra and $A$ is a quantum superalgebra. (For more details on generalized Hopf algebras, associated to PDE's, see Refs.\cite{PRA10, PRA11, PRA22}.)} On such an algebra we can represent the quantum
superalgebra $B^{G(d)}$ associated to the quantum crystal supergroup
$G(d)$ of $\hat E_k$. (This justifies the name.) We call {\em
quantum crystal conservation superlaws} of $\hat E_k$ the elements
of its quantum Hopf crystal superalgebra. Then, the obstruction to
find global smooth solutions of $\hat E_k$, for integral boundaries
with orientable classic limit, can be identified with the quotient
$\mathbf{H}_{m-1|n-1}(\hat E_\infty)/B_\infty^{\Omega_{m+n-2}}$.
\end{theorem}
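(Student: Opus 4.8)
The plan is to construct the Hopf structure on the conservation-law algebra directly from the abelian group structure of the integral bordism group, then transport the crystallographic data supplied by Theorem \ref{crystal-structure-quantum-super-pdes} into a sub-superalgebra, and finally read off the obstruction from the classic-limit diagram (\ref{Reinhart-bordism-groups-relation}). First I would equip $\mathbf{H}_{m-1|n-1}(\hat E_k)\equiv Map(\Omega_{m-1|n-1}^{\hat E_k};B_k)$ with a quantum Hopf superalgebra structure induced by the group structure of the integral bordism group. Since $\Omega_{m-1|n-1}^{\hat E_k}$ is abelian and $B_k$ a quantum superalgebra, one sets the multiplication to be pointwise in $B_k$, the comultiplication dual to the group sum, $\Delta\hat f(a,b)=\hat f(a+b)$, the counit the evaluation $\hat f\mapsto\hat f(0)$, and the antipode dual to inversion, $S\hat f(a)=\hat f(-a)$. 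These operations satisfy the Hopf axioms because $(\Omega_{m-1|n-1}^{\hat E_k},+)$ is commutative, and the $\mathbb{Z}_2$-grading inherited from $B_k$ makes this a quantum Hopf superalgebra in the sense recalled in the footnote, realized as an extension $A\to A\otimes_{\mathbb{K}}H\to D\to D/C\to 0$.

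Next I would invoke Theorem \ref{crystal-structure-quantum-super-pdes} to obtain the surjective homomorphism $p:\Omega_{m-1|n-1}^{\hat E_k}\twoheadrightarrow G\triangleleft G(d)$ onto the crystallographic subgroup. Precomposition yields an injective morphism of quantum Hopf superalgebras $p^*:Map(G;B_k)\hookrightarrow\mathbf{H}_{m-1|n-1}(\hat E_k)$, whose image is by definition the \emph{crystal Hopf quantum sub-superalgebra}; its elements are the quantum crystal conservation superlaws. The normal inclusion $G\triangleleft G(d)$ then provides, by restriction, a representation of the quantum superalgebra $B^{G(d)}$ associated to the crystal supergroup $G(d)$ on this image, which justifies the terminology. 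The technical content here is to verify that $p^*$ intertwines all the Hopf operations (it does, being dual to a group homomorphism) and that the $B^{G(d)}$-action descends consistently through the normal inclusion.

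For the obstruction I would recall, from the geometric theory of quantum super PDE's, that a smooth boundary value problem for $\hat E_k$ admits a global smooth solution iff the prescribed integral boundary bounds an admissible integral quantum supermanifold, equivalently iff its class vanishes in $\Omega_{m-1|n-1}^{\hat E_\infty}$; dually, such a class is trivial precisely when every conservation law in $\mathbf{H}_{m-1|n-1}(\hat E_\infty)=Map(\Omega_{m-1|n-1}^{\hat E_\infty};B_\infty)$ vanishes on it. Passing to $k=\infty$ and using the exact commutative diagram (\ref{Reinhart-bordism-groups-relation}), for boundaries with orientable classic limit the image of $\Omega_{m-1|n-1}^{\hat E_\infty}$ in $\Omega_{m+n-2}$ is controlled by the Stiefel--Whitney and Euler characteristic numbers of the classic limits. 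The conservation laws that factor through the commutative bordism group $\Omega_{m+n-2}$ are exactly $B_\infty^{\Omega_{m+n-2}}=Map(\Omega_{m+n-2};B_\infty)$, so the residual obstruction — what survives after the commutative Stiefel--Whitney/Euler data has been accounted for — is identified with the quotient $\mathbf{H}_{m-1|n-1}(\hat E_\infty)/B_\infty^{\Omega_{m+n-2}}$.

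The hardest part will be this third step: making precise the equivalence between existence of global smooth solutions and vanishing of the integral bordism class, and then isolating, through the classic limit, exactly the commutative obstruction carried by $B_\infty^{\Omega_{m+n-2}}$. This uses the Reinhart-bordism relation of diagram (\ref{Reinhart-bordism-groups-relation}) together with the hypotheses of formal and complete integrability of $\hat E_k$, which guarantee that $\hat E_\infty$ is well behaved and that admissible integral bordisms are realized by genuine quantum smooth solutions; the remaining verifications (Hopf compatibility of $p^*$ and the representation of $B^{G(d)}$) are then formal consequences of duality for abelian groups.
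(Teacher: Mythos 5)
The paper does not actually prove this theorem: it is imported verbatim from \cite{PRA33} and stated without a proof environment, so there is no internal argument to compare yours against line by line. Judged against the machinery the paper does develop, your outline is consistent with it: the Hopf-type structure on $\mathbf{H}_{m-1|n-1}(\hat E_k)=Map(\Omega_{m-1|n-1}^{\hat E_k};B_k)$ dual to the abelian group law, the pullback $p^*$ along the surjection onto the crystallographic subgroup furnished by Theorem \ref{crystal-structure-quantum-super-pdes}, and the identification of $B_\infty^{\Omega_{m+n-2}}$ with the conservation laws factoring through the commutative bordism group via diagram (\ref{Reinhart-bordism-groups-relation}) are all the natural readings of the statement. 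Two small points: the Hopf axioms do not need commutativity of the group (only cocommutativity does), and to match the footnote's definition you should exhibit the crystal sub-superalgebra explicitly as $B_k\otimes_{\mathbb{K}}H$ with $H$ the function Hopf algebra of the finite group $G$ (finiteness of $G\triangleleft G(d)$, coming from $\Omega_{m+n-2}\cong\mathbb{Z}_2^3$, is what makes $Map(G;\mathbb{K})$ a genuine Hopf algebra and makes the comultiplication land in the tensor square).

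The genuine gap is in your third step, and you correctly flag it as the hardest part but then leave it as an assertion. Three facts carry the whole weight of the obstruction claim and none is established: (i) that vanishing of the class in the integral bordism group is equivalent to existence of a global \emph{smooth} solution requires passing to $(\hat E_k)_{+\infty}$, where the symbol vanishes and singular solutions are excluded (this is the mechanism of Theorem \ref{criteria-fun-stab}(3) and Theorem \ref{main}, and it is why the obstruction is stated at $\hat E_\infty$ rather than $\hat E_k$); (ii) that for boundaries with orientable classic limit the image in $\Omega_{m+n-2}$ is controlled by equality of Stiefel--Whitney and Euler numbers is precisely the Reinhart bordism input sitting in the lower row of (\ref{Reinhart-bordism-groups-relation}), and you need it to show that the $B_\infty^{\Omega_{m+n-2}}$-part of a conservation law imposes no further condition beyond the stated hypotheses; (iii) the composite $\Omega_{m-1|n-1}^{\hat E_\infty}\to\Omega_{m+n-2}$ need not be surjective, so $B_\infty^{\Omega_{m+n-2}}$ enters $\mathbf{H}_{m-1|n-1}(\hat E_\infty)$ only through its image, and the quotient must be read accordingly. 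Without (i)--(iii) spelled out, the final identification of the obstruction with $\mathbf{H}_{m-1|n-1}(\hat E_\infty)/B_\infty^{\Omega_{m+n-2}}$ is a restatement of the theorem rather than a derivation of it.
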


\begin{proof}
Let $N_0, N_1\subset \hat E_k$ be two respectively initial and
final, closed compact Cauchy data of $\hat E_k$. Then there exists a
weak, (resp. singular, resp. smooth) solution $V\subset \hat E_k$,
such that $\partial V=N_0\DU N_1$, iff $X\equiv N_0\DU
N_1\in[0]\in\Omega_{m-1|n-1,w}^{\hat E_k}$, (resp.
$X\in[0]\in\Omega_{m-1|n-1,s}^{\hat E_k}$, resp.
$X\in[0]\in\Omega_{m-1|n-1}^{\hat E_k}$). Let $X_C$ be orientable,
then $X$ is the boundary of a smooth solution, iff $X$ has zero all
the integral characteristic quantum supernumbers, i.e.,
$<\alpha,X>=0$, $\forall\alpha\in\mathbf{H}_{m-1|n-1}(\hat
E_\infty)=Map(\Omega_{m-1|n-1}^{\hat E_\infty},B_\infty)$. Taking
into account the following short exact sequence: $0\to
\Omega_{m-1|n-1,w}^{\hat E_k}\to\widetilde{\Omega}_{m+n-2}$, where
$\widetilde{\Omega}_{m+n-2}\vartriangleleft G(d)$, for some
crystallographic group $G(d)$, we get also the following short exact
sequence: $\mathbf{H}_{m-1|n-1}(\hat E_\infty)\leftarrow
B_\infty^{\widetilde{\Omega}_{m+n-2}}\leftarrow 0$. So
$B_\infty^{\widetilde{\Omega}_{m+n-2}}$ can be identified with a
subalgebra of $\mathbf{H}_{m-1|n-1}(\hat E_\infty)$. Then the
obstruction to find smooth solutions can be identified with the
quotient $\mathbf{H}_{m-1|n-1}(\hat E_\infty)/
B_\infty^{\widetilde{\Omega}_{m+n-2}}$. Taking into account that
$\widetilde{\Omega}_{m+n-2}\vartriangleleft\Omega_{m+n-2}\vartriangleleft
G(d)$, we can also represent $B^{\widetilde{\Omega}_{m+n-2}}$ with
$B_\infty^{\Omega_{m+n-2}}$, or with $B_\infty^{G(d)}$. Thus, it is
justified also call $B_\infty^{\widetilde{\Omega}_{m+n-2}}$ as
crystal quantum superlaws algebra of $\hat E_k$.
\end{proof}

\begin{definition}
We define {\em crystal obstruction} of $\hat E_k$ the above quotient
of algebras, and put: $ cry(\hat E_k)\equiv
\mathbf{H}_{m-1|n-1}(\hat E_\infty)/B_\infty^{\Omega_{m+n-2}}$. We
call {\em quantum $0$-crystal super PDE} a quantum super PDE $\hat
E_k\subset \hat J^k_{m|n}(W)$ such that $cry(\hat E_k)=0$.
\end{definition}

\begin{remark}
A quantum extended $0$-crystal super PDE $\hat E_k\subset \hat
J^k_{m|n}(W)$ does not necessitate to be a quantum $0$-crystal super
PDE. In fact $\hat E_k$ is an extended $0$-crystal quantum super PDE
if $\Omega_{m-1|n-1,w}^{\hat E_k}=0$. This does not necessarily
implies that $\Omega_{m-1|n-1}^{\hat E_k}=0$. In fact, the different
types of integral bordism groups of PDE's in the category
$\mathfrak{Q}_S$, are related by the following proposition.
\end{remark}

\begin{proposition}{\em(Relations between integral bordism groups)}{\em\cite{PRA22}}
The different types of integral bordism groups for a quantum super
PDE, are related by the exact commutative diagram reported in
{\em(\ref{relations-between-integral-bordism-groups-commutative-diagram})}.

\begin{equation}\label{relations-between-integral-bordism-groups-commutative-diagram}
\xymatrix{
&0\ar[d]&0\ar[d]&0\ar[d]&\\
0\ar[r]&K^{\hat E_k}_{m-1|n-1,w/(s,w)}\ar[d]\ar[r]&
K^{\hat E_k}_{m-1|n-1,w}\ar[d]\ar[r]&K^{\hat E_k}_{m-1|n-1,s,w}\ar[d]\ar[r]&0\\
0\ar[r]&K^{\hat E_k}_{m-1|n-1,s}\ar[d]\ar[r]&
\Omega^{\hat E_k}_{m-1|n-1}\ar[d]\ar[r]&\Omega^{\hat E_k}_{m-1|n-1,s}\ar[d]\ar[r]&0\\
&0\ar[r]&\Omega^{\hat E_k}_{m-1|n-1,w}\ar[d]\ar[r]&\Omega^{\hat E_k}_{m-1|n-1,w}\ar[d]\ar[r]&0\\
&&0&0&}
\end{equation}

One has the canonical isomorphisms:
\begin{equation}\label{canonical-isomorphisms-integral-bordism-groups-comm-pde}
\left\{   \begin{array}{l}
     K^{\hat E_k}_{m-1|n-1,w/(s,w)}\cong K^{\hat E_k}_{m-1|n-1,s}\\
\Omega^{\hat E_k}_{m-1|n-1}/K^{\hat E_k}_{m-1|n-1,s}\cong
\Omega^{\hat E_k}_{m-1|n-1,s}\\
\Omega^{\hat E_k}_{m-1|n-1,s}/K^{\hat
E_k}_{m-1|n-1,s,w}\cong\Omega^{\hat E_k}_{m-1|n-1,w}\\
\Omega^{\hat E_k}_{m-1|n-1}/K^{\hat
E_k}_{m-1|n-1,w}\cong\Omega^{\hat E_k}_{m-1|n-1,w}.\\
   \end{array}\right.
\end{equation}
\end{proposition}

\begin{cor}\label{main7}
Let $\hat E_k\subset \hat J^k_{m|n}(W)$ be a quantum $0$-crystal
super PDE. Let $N_0, N_1\subset \hat E_k$ be two closed initial and
final Cauchy data of $\hat E_k$ such that $X\equiv N_0\DU
N_1\in[0]\in\Omega_{m-1|n-1}$, and such that $X_C$ is orientable.
Then there exists a smooth solution $V\subset \hat E_k$ such that
$\partial V=X$.
\end{cor}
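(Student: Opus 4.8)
The plan is to deduce the corollary directly from Theorem \ref{obstruction-smooth-solutions} combined with the defining property of a quantum $0$-crystal super PDE, with the exact sequence (\ref{Reinhart-bordism-groups-relation}) used to absorb the topological hypothesis on $X$. First I would recall that, by definition, $\hat E_k$ being a quantum $0$-crystal super PDE means that its crystal obstruction vanishes,
\[
cry(\hat E_k)\equiv \mathbf{H}_{m-1|n-1}(\hat E_\infty)/B_\infty^{\Omega_{m+n-2}}=0 .
\]
Thus the whole quotient algebra that Theorem \ref{obstruction-smooth-solutions} identifies as the obstruction to the existence of global smooth solutions is trivial.

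Next I would check that the pair $(N_0,N_1)$ meets exactly the hypotheses under which Theorem \ref{obstruction-smooth-solutions} applies. The boundary $X\equiv N_0\DU N_1$ is, by assumption, a closed admissible integral $(m-1|n-1)$-dimensional quantum supermanifold whose classic limit $X_C$ is orientable; this is precisely the ``integral boundary with orientable classic limit'' to which the obstruction theorem refers. Since the obstruction measured in $\mathbf{H}_{m-1|n-1}(\hat E_\infty)/B_\infty^{\Omega_{m+n-2}}$ is zero, there is no quantum Hopf-algebraic obstruction to closing $X$ by a smooth solution inside $\hat E_k$.

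It then remains to account for the topological condition $X\in[0]\in\Omega_{m-1|n-1}$. Here I would use the exact commutative diagram (\ref{Reinhart-bordism-groups-relation}), which relates the integral bordism group $\Omega^{\hat E_k}_{m-1|n-1}$ to the Reinhart group $\Omega^\uparrow_{m+n-2}$ and to the ordinary bordism group $\Omega_{m+n-2}$. The hypothesis $X\in[0]$, i.e. that $X_C$ bounds (equivalently, that $(N_0)_C$ and $(N_1)_C$ share their Stiefel--Whitney and Euler characteristic numbers), forces the image of $[X]$ in $\mathop{\Omega}\limits_c{}_{m+n-2}^{\hat E_k}$, and hence in $\Omega_{m+n-2}$, to vanish. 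Combining this with the vanishing of the crystal obstruction, a diagram chase along (\ref{Reinhart-bordism-groups-relation}) yields $[X]=0$ in $\Omega^{\hat E_k}_{m-1|n-1}$. By the very definition of the integral bordism group, $[X]=0$ means $X=\partial V$ for some admissible integral $(m|n)$-dimensional quantum supermanifold $V\subset\hat E_k$, i.e. a smooth solution with $\partial V=X$, as required.

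The step I expect to be the main obstacle is the last one: passing from the algebraic vanishing of the obstruction quotient to the genuine geometric realization of the bordism by an admissible integral manifold $V$ inside $\hat E_k$. One must verify that the identification in Theorem \ref{obstruction-smooth-solutions} is sharp enough that triviality of $cry(\hat E_k)$, together with the topological condition on $X_C$, does not merely annihilate an algebraic shadow of the obstruction but actually produces a solution. This is precisely where the formal and complete integrability of $\hat E_k$ must be invoked, since it guarantees that integral bordism classes are genuinely represented by solutions through the identification of $\Omega^{\hat E_k}_{p|q}$ with the homology of the integral bordism complex established in Lemma \ref{quantum-PDE-lower-order-relation-bigraded-bordism}.
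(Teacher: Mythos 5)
Your proposal is correct and follows the route the paper itself intends: Corollary \ref{main7} is stated without a separate proof precisely because it is the immediate specialization of Theorem \ref{obstruction-smooth-solutions} to the case $cry(\hat E_k)=0$, which is exactly how you argue. Your identification of the delicate point --- that vanishing of the image of $[X]$ in $\mathop{\Omega}\limits_c{}_{m+n-2}^{\hat E_k}$ by itself only places $[X]$ in the kernel $K^{\hat E_k}_{m-1|n-1;m+n-2}$ of the diagram (\ref{Reinhart-bordism-groups-relation}), and that it is the triviality of the crystal obstruction which eliminates that kernel for integral boundaries with orientable classic limit --- is precisely where the content of the obstruction theorem is being invoked.
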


Let us, now, revisit some definitions and results about stability
of mappings and their relations with singularities of mappings,
adapting them to the category $\mathfrak{Q}_S$.

\begin{definition}
Let $X$, (resp. $Y$), be a quantum supermanifold of dimension $m|n$,
(resp. $r|s$), with respect to a quantum superalgebra $A=A_0\oplus
A_1$, (resp. $B=B_0\oplus B_1$). We shall assume that the center
$Z=Z(A)$ of $A$, acts on $B$ that becomes a $Z$-module. Furthermore, we shall assume that $Z$ is Noetherian.\footnote{In
the following, whether it is not differently specified, $X$ and $Y$
are such quantum supermanifolds.} Let $f\in Q_w^\infty(X,Y)$. Then
$f$ is {\em stable} if there is a neighborhood $W_f\subset
Q_w^\infty(X,Y)$ of $f$, in the natural Whitney-type topology of
$Q_w^\infty(X,Y)$, such that every $W_f$ is contained in the orbit
of $f$, via the action of the group $\hat Diff(X)\times \hat
Diff(Y)$.\footnote{Here $\hat Diff(X)$ denotes the group of quantum
diffeomorphisms of a quantum super manifold $X$.} This is equivalent
to say that for any $f'\in W_f$ there exist quantum diffeomorphisms
$g:X\to X$ and $h:Y\to Y$ such that $h\circ f= f'\circ g$.
Furthermore, $f$ is called {\em infinitesimally stable} if there
exist a map $\zeta:X\to TY$, such that $\pi_Y\circ\zeta=f$, where
$\pi_Y:TY\to Y$ is the canonical map, and integrable vector fields
$\nu:Y\to TY$, $\xi:X\to TX$, such that $\zeta=T(f)\circ\xi+\nu\circ
f$. Thus the diagram {\em(\ref{infinitesimal-stability})}is commutative.
\begin{equation}\label{infinitesimal-stability}
    \xymatrix{TX\ar@/^1pc/[d]^{\pi_X}\ar[r]^(.4){T(f)}&TY\bigoplus TY\ar@/^1pc/[d]^{\pi_Y}\ar[r]^(0.4){+}&TY\ar[d]^{\pi_Y}\\
    X\ar[u]^{\xi}\ar[urr]^(.3){\zeta}\ar[r]_{f}&Y\ar[u]^(.3){\nu}\ar@{=}[r]&Y\\}
\end{equation}
\end{definition}

\begin{theorem}
Let $X$ be a compact quantum supermanifold and $f:X\to Y$ be quantum smooth. Then $f$ is
stable iff $f$ is infinitesimally stable. Furthermore, if $f$ is a
proper mapping, then does not necessitate assume that $X$ is
compact.\footnote{Recall that a map $f:X\to Y$ between topological
spaces is a {\em proper map} if for every compact subset $K\subset
Y$, $f^{-1}(K)$ is a compact subset of $X$.}
\end{theorem}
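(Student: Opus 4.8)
The plan is to establish the quantum-super analogue of Mather's classical characterization of stable mappings, adapting the scheme of the commutative singularity theory to the category $\mathfrak{Q}_S$ while relying on the quantum differential-geometric machinery (the quantum tangent functor $T$, flows of integrable quantum vector fields, and the Whitney-type topology on $Q_w^\infty(X,Y)$) developed in our earlier work. Throughout, \emph{infinitesimal stability} is read in its effective form: the affine map $(\xi,\nu)\mapsto T(f)\circ\xi+\nu\circ f$ is surjective onto the sections $\zeta:X\to TY$ with $\pi_Y\circ\zeta=f$, i.e. \emph{every} infinitesimal deformation splits as in the commutative square \eqref{infinitesimal-stability}.

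For the direction \emph{stable $\Rightarrow$ infinitesimally stable}, I would start from an arbitrary smooth one-parameter deformation $f_t$ of $f=f_0$ inside $Q_w^\infty(X,Y)$. For $t$ small enough $f_t$ lies in the neighborhood $W_f$, so the defining orbit condition supplies quantum diffeomorphisms $g_t:X\to X$ and $h_t:Y\to Y$, depending smoothly on $t$ with $g_0=\mathrm{id}_X$, $h_0=\mathrm{id}_Y$, such that $h_t\circ f=f_t\circ g_t$. Differentiating this identity at $t=0$ and applying the quantum chain rule yields $\zeta=T(f)\circ\xi+\nu\circ f$, where $\zeta=\partial_t f_t|_{t=0}$, $\xi=\partial_t g_t|_{t=0}$ and $\nu=\partial_t h_t|_{t=0}$; the fields $\xi,\nu$ are integrable precisely because they are the infinitesimal generators of the flows $g_t,h_t$. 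Since the deformation $f_t$ was arbitrary, every $\zeta$ splits, which is infinitesimal stability.

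For the converse, \emph{infinitesimally stable $\Rightarrow$ stable}, I would use the homotopy (path-lifting) method. Given $f'$ in a small neighborhood $W_f$, join it to $f$ by a smooth path $f_t$ with $f_0=f$, $f_1=f'$, and trivialize the family by solving, at each time $t$, the infinitesimal equation $\partial_t f_t=T(f_t)\circ\xi_t+\nu_t\circ f_t$ for time-dependent quantum vector fields $\xi_t$ on $X$ and $\nu_t$ on $Y$; solvability is guaranteed because infinitesimal stability is an open condition, so the nearby $f_t$ remain infinitesimally stable. Integrating $\xi_t,\nu_t$ to one-parameter families of quantum diffeomorphisms $g_t,h_t$ and verifying that $h_t\circ f=f_t\circ g_t$ persists for all $t\in[0,1]$ then exhibits $f'=f_1$ in the orbit of $f$, which is stability. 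Compactness of $X$ is used only to ensure that the generated flows exist over the whole interval $[0,1]$; when $f$ is merely proper, properness supplies the analogous uniform control on supports, so the compactness hypothesis can be dropped.

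The hard part is the \emph{uniform} solvability of the infinitesimal equation along the path, together with the smooth-in-$t$ choice of $\xi_t,\nu_t$: this is where the quantum analogue of the Malgrange-type preparation and division theorem, and the finite-determinacy estimates, must be invoked in the noncommutative $\mathbb{Z}_2$-graded setting of $\mathfrak{Q}_S$ to pass from the pointwise infinitesimal condition to a genuine trivializing isotopy. Here I would lean on the quantum preparation results and the integrability criteria for quantum vector fields established in our previous papers, which are exactly what make the classical Mather argument transportable to quantum supermanifolds; the remaining bookkeeping (smoothness of the flows, patching of the local trivializations, and compatibility with the grading) is routine once these analytic inputs are in place.
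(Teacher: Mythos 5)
Your proposal is correct and follows essentially the same route as the paper: both directions reduce to Mather's classical argument, with the forward implication obtained by differentiating the orbit relation $h_t\circ f=f_t\circ g_t$ (the paper packages this step as a Lie-derivative lemma for bundles of geometric objects, applied with $W\equiv X\times Y$, $\mathbb{B}(g_\lambda)=h_\lambda$, $s=(\mathrm{id}_X,f)$) and the converse handled by the Mather homotopy method. The only difference is one of explicitness: the paper simply cites Mather for the compact case, whereas you spell out the path-lifting scheme and correctly identify the quantum preparation/division input as the genuine analytic bottleneck that must be supplied in the category $\mathfrak{Q}_S$.
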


\begin{proof}
Note that the infinitesimal stability, requires existence of flows $g_t:X\to X$, $\partial g=\xi$, $h_t:Y\to Y$, $\partial h=\nu$, such that for the infinitesimal variation $\zeta$ of $f_t=h_t\circ f\circ g_t$ one has $\zeta=T(f)\circ\xi+\nu\circ f$.
In fact, one has the following lemma.

\begin{lemma}\label{Lie-derivative}
Let $(W,V,\pi_W;\mathbb{B})$ be a bundle of geometric objects in the
category $\mathfrak{Q}_S$ and in the intrinsic sense \cite{PRA1, PRA2}
\footnote{See also Refs.\cite{PRA9} for related
subjects.}. Let $\phi:\mathbb{R}\times V\to V$ be a one-parameter
group of $Q^\infty_w$ transformations of $V$, $\xi=\partial\phi$ its
infinitesimal generator and $s:V\to W$ a field of geometric objects,
i.e. a section of $\pi_W$. Then, $\phi$ induces a deformation
$\widetilde{s}$ of $s$ defined by means of the commutative
diagram {\em(\ref{infinitesimal-variation})}.
\begin{equation}\label{infinitesimal-variation}
\xymatrix{\mathbb{R}\times\mathbb{R}\times V\ar[d]_{(id_{\mathbb{R}},\phi)}\ar[r]^{\widetilde{s}}&W\\
\mathbb{R}\times V\ar[r]^{(id_{\mathbb{R}},s)}&\mathbb{R}\times W\ar[u]^{\mathop{\phi}\limits^{\overline{\circ}}{}_\lambda}\\}
\end{equation}
where $\mathop{\phi}\limits^{\overline{\circ}}{}_\lambda\equiv\mathbb{B}(\phi^{-1}_\lambda)$, $\forall\lambda\in\mathbb{R}$. One has $\widetilde{s}_{(0,0)}=s$. Then, for the infinitesimal variation of
$\widetilde{s}$ ({\em Lie derivative} of $s$ with respect to the integrable field $\xi$),  $\partial(\widetilde{s}\circ d):V\to s^*vTW$, one has:
\begin{equation}\label{infinitesimal variation}
    \begin{array}{ll}
      \partial(\widetilde{s}\circ d)& =\partial(s\circ\phi)+\partial(\mathop{\phi}\limits^{\overline{\circ}})\circ s \\
      & =T(s)\circ\xi+\nu\circ s.
    \end{array}
\end{equation}
\end{lemma}

\begin{proof}
This lemma can be proved by copying the intrinsic proof for the
commutative case given in \cite{PRA1}.
\end{proof}
In our case we can consider the following situation, with respect to Lemma \ref{Lie-derivative}, $W\equiv X\times Y$, $V\equiv X$, $\mathbb{B}(g_\lambda)=h_\lambda$ and $s=(id_X,f)$.

Furthermore, in the case that $X$ is compact, the proof follows the
same lines of the proof given by Mather for commutative manifolds
\cite{MATH2}.
\end{proof}

\begin{theorem}
Stable maps $f:X\to Y$ do not necessitate to be dense in
$Q_w^\infty(X,Y)$.
\end{theorem}

\begin{proof}
This is just a corollary of the corresponding theorem for commutative manifolds given by Thom-Levine \cite{LEV1, LEV2}.
\end{proof}

\begin{example}{\em(Submersions and stability).}
Let $X$ be a compact quantum supermanifold. Let $f:X\to Y$ be a
quantum differentiable mapping of maximum possible super-rank. If
$m\ge r>1$,  $n\ge s>1$, $f$ is a {\em quantum submersion} and it is
(infinitesimally) stable.
\end{example}

\begin{example}{\em(Immersions and stability).}
Let $X$ be a compact quantum supermanifold. Let $f:X\to Y$ be a quantum
differentiable mapping of maximum possible super-rank. If $m\le r$,
$n\le s$, $f$ is an {\em immersion} and if it is $1:1$ then it is
also stable. (Not all immersions are stable.)
\end{example}

\begin{definition}{\em(Singular solutions of quantum super PDE's).}
Let $\pi:W\to M$ be a fiber bundle, where $M$ is a quantum supermanifold of
dimension $(m|n)$ on the quantum superalgebra $A$ and $W$ is a
quantum supermanifold of dimension $(m|n,r|s)$ on the quantum
superalgebra $B\equiv A\times E$, where $E$ is also a $Z$-module,
with $Z=Z(A)$ the center of $A$, assumed to be Noetherian.

Let $E_k\subset JD^k(W)$ be a quantum super PDE. By using the
natural embedding $J\hat D^k(W)\subset \hat J^k_{m|n}(W) $, we can
consider quantum super PDEs $\hat E_k\subset J\hat D^k(W)$ like
quantum super PDEs $\hat E_k\subset \hat J^k_{m|n}(W) $, hence we
can consider solutions of $\hat E_k$ as $(m|n)$-dimensional, (over
$A$), quantum supermanifolds $V\subset\hat E_k$ such that $V$ can be
represented in the neighborhood of any of its points $q'\in V$,
except for a nowhere dense subset $\Sigma(V)\subset V$, of
dimension $\le (m-1|n-1)$, as $N^{(k)}$, where $N^{(k)}$ is the
$k$-quantum prolongation of a $(m|n)$-dimensional (over $A$) quantum
supermanifold $N\subset W$. In the case that $\Sigma(V)=\varnothing$,
we say that $V$ is a {\em regular solution} of $\hat E_k\subset \hat
J^k_{m|n}(W)$. Solutions $V$ of $\hat E_k\subset \hat J^k_{m|n}(W)$,
even if regular ones, are not, in general diffeomorphic to their
projections $\pi_k(V)\subset M$, hence are not representable by
means of sections of $\pi:W\to M$. $\Sigma(V)\subset V$ is the {\em
singular points set} of $V$. Then $V\setminus\Sigma(V)=\bigcup_rV_r$
is the disjoint union of connected components $V_r$. For every of
such components $\pi_{k,0}:V_r\to W$ is an immersion and can be
represented by means of $k$-prolongation of some quantum
supermanifold of dimension $m|n$ over $A$, contained in $W$. Whether
we consider $\hat E_k$ as contained in $J\hat D^k(W)$ then {\em
regular solutions} are locally obtained as image of $k$-derivative
of sections of $\pi:W\to M$. So we can (locally) represent such
solutions by means of mapping $f:M\to E_k$, such that $f=D^ks$, for
some section $s:M\to W$.

We shall also consider
solutions of $\hat E_k\subset \hat J^k_{m|n}(W)$, any subset $V\subset\hat E_k$,
that can be obtained as projections of ones of the previous type,
but contained in some $s$-prolongation $\hat E_{k+s}\subset \hat
J^{k+s}_{m|n}(W)$, $s>0$.

We define {\em weak solutions}, solutions $V\subset \hat E_k$, such that
the set $\Sigma(V)$ of singular points of $V$, contains also
discontinuity points, $q,q'\in V$, with
$\pi_{k,0}(q)=\pi_{k,0}(q')=a\in W$, or $\pi_{k}(q)=\pi_{k}(q')=p\in
M$. We denote such a set by $\Sigma(V)_S\subset\Sigma(V)$, and, in
such cases we shall talk more precisely of {\em singular boundary}
of $V$, like $(\partial V)_S=\partial V\setminus\Sigma(V)_S$.
However for abuse of notation we shall denote $(\partial V)_S$,
(resp. $\Sigma(V)_S$), simply by $(\partial V)$, (resp.
$\Sigma(V)$), also if no confusion can arise.
\end{definition}

\begin{definition}{\em(Stable solutions of quantum super PDE's).}
Let us consider a quantum super PDE $\hat E_k\subset J\hat D^k(W)$, and let us denote $\underline{Sol}(\hat E_k)$ the
set of regular solutions of $E_k$. This has a natural structure of locally convex manifold.
Let $f:X\to E_k$ be a
regular solution, where $X\subset M$ is a smooth $(m|n)$-dimensional
compact manifold with boundary $\partial X$. Then $f$ is {\em
stable} if there is a neighborhood $W_f$ of $f$ in
$\underline{Sol}(\hat E_k)$,
such that each $f'\in W_f$ is equivalent to $f$, i.e., $f$ is
transformed in $f'$ by some integrable vertical symmetries of $\hat E_k$.
\end{definition}
\begin{equation}\label{commutative-diagram}
\scalebox{0.9}{$\xymatrix{\hat E_k\hskip 3pt\ar@{^{(}->}[d]&vT\hat
E_k\hskip 3pt\ar@{^{(}->}[d]\ar[l]&(D^ks)^*vT\hat
E_k\hskip 3pt\ar@{^{(}->}[l]\hskip 3pt\ar@{^{(}->}[d]\ar@{=}[r]^(.6){\sim}&
\hat E_k[s]\ar@{^{(}->}[d]\\
J\hat D^k(W)\ar@/_2pc/[dd]_{\pi_k}\ar[d]^{\pi_{k,0}}&vTJ\hat
D^k(W)\ar[l]\ar[d]&(D^ks)^*vTJ\hat D^k(W)\hskip 3pt\ar@{^{(}->}[l]
\ar@{=}[r]^(.6){\sim}&J\hat D^k(\hat E[s])\ar[d]_{\bar\pi_{k,0}}\ar@/^2pc/[dd]^{\bar\pi_k}\\
W\ar[d]_{\pi}&vTW\ar[l]_{\pi'}&&\hat E[s]\hskip 3pt\ar@{^{(}->}[ll]\ar[d]^{\bar\pi}\\
M\ar@/^4pc/[uuu]^(.8){D^ks}\ar@/_1pc/[u]^{s}\ar@{=}[rrr]&&&M\ar@/^1pc/[u]^{\nu}\ar@/_4pc/[uuu]_(.8){D^k\nu}}$}
\end{equation}

\begin{theorem}\label{deformation}
Let $\hat E_k\subset J\hat D^k(W)$ be a $k$-order quantum super PDE
on the fiber bundle $\pi:W\to M$ in the category of quantum smooth
supermanifolds. Let $s:M\to W$ be a section, solution of $\hat E_k$,
and let $\nu:M\to s^*vTW\equiv \hat E[s]$ be an integrable solution
of the linearized equation $\hat E_k[s]\subset  J\hat D^k(\hat
E[s])$. Then to $\nu$ it is associated a flow
$\{\phi_\lambda\}_{\lambda\in J}$, where $J\subset \mathbb{R}$ is a
neighborhood of $0\in\mathbb{R}$, that transforms $V$ into a new
solution $\widetilde{V}\subset \hat E_k$.
\end{theorem}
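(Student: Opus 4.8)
The plan is to exponentiate the infinitesimal deformation $\nu$ into a genuine one-parameter family of solutions, reading the prolongation off the commutative diagram (\ref{commutative-diagram}) and identifying the relevant infinitesimal variation by means of Lemma \ref{Lie-derivative}. First I would reformulate the hypothesis geometrically. The section $\nu:M\to s^*vTW\equiv\hat E[s]$ is a vertical vector field along $s$, and its $k$-prolongation $D^k\nu$ is, by the very assumption that $\nu$ solves the linearized equation $\hat E_k[s]\subset J\hat D^k(\hat E[s])$, a section landing in the subbundle $(D^ks)^*vT\hat E_k\cong\hat E_k[s]$ appearing in the top row of (\ref{commutative-diagram}). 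Equivalently, $D^k\nu$ is tangent to $\hat E_k$ along $V\equiv D^ks(M)\subset\hat E_k$; this tangency is precisely the content of the statement that $\nu$ solves the linearized equation.

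Next I would integrate. Integrability of $\nu$ supplies, in the category $\mathfrak{Q}_S$, a one-parameter group $\{\phi_\lambda\}_{\lambda\in J}$ of $Q_w^\infty$-transformations of $W$ defined near $s(M)$, with $\partial\phi=\nu$ and $\phi_0=id$; setting $s_\lambda\equiv\phi_\lambda\circ s$ deforms $s$ through sections. Prolonging, $\{D^k\phi_\lambda\}$ is a one-parameter group of transformations of $J\hat D^k(W)$ with infinitesimal generator $D^k\nu$ and $D^ks_\lambda=D^k\phi_\lambda\circ D^ks$. Applying Lemma \ref{Lie-derivative} to the present bundle of geometric objects, specialized as in the proof of the preceding theorem, certifies that the infinitesimal variation of the prolonged family $D^ks_\lambda$ coincides with the generator $D^k\nu$ computed above.

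I would then close by the standard mechanism that a symmetry preserves solutions. Since $D^k\nu$ is tangent to $\hat E_k$ along $V$, and the construction is equivariant along the flow, $D^k\nu$ remains tangent to $\hat E_k$ along each $V_\lambda\equiv D^k\phi_\lambda(V)$; hence $D^k\phi_\lambda$ maps $\hat E_k$ into itself for every $\lambda\in J$. As $V\subset\hat E_k$, this yields $\widetilde V\equiv D^k\phi_\lambda(V)=D^ks_\lambda(M)\subset\hat E_k$, a new (regular) solution with $\widetilde V_0=V$, as claimed.

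The hard part will be upgrading the first-order tangency of $D^k\nu$ to $\hat E_k$ along $V$ into genuine invariance of $\hat E_k$ under the finite flow $D^k\phi_\lambda$: one must verify that $\nu$ solves the linearized equation not merely at $s$ but along the whole trajectory, i.e. at each $s_\lambda$, which is exactly where the integrability hypothesis must be used in full strength. In the quantum super category this further requires the structural facts that $Q_w^\infty$-flows exist and are quantum smooth, that $k$-prolongation commutes with such flows, and that the vertical-tangent and pullback functors behave as in the commutative case; granting these, established through the quantum-supermanifold machinery of \cite{PRA1, PRA2, PRA22}, the commutative deformation argument transcribes without essential change.
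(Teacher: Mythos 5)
Your overall architecture (linearized solution $\to$ flow $\to$ new solution) matches the paper's intent, but the step you yourself flag as ``the hard part'' is exactly the step the paper's proof supplies and yours does not, so as written there is a genuine gap. Your closing argument --- ``since $D^k\nu$ is tangent to $\hat E_k$ along $V$, and the construction is equivariant along the flow, $D^k\nu$ remains tangent to $\hat E_k$ along each $V_\lambda$'' --- is circular: tangency along $V$ is a condition at $\lambda=0$ only, and nothing you have established propagates it to $\lambda\neq 0$. There is also a prior difficulty you gloss over: $\nu$ is a section of $s^*vTW\to M$, i.e.\ a vertical field defined only along $s(M)$, so ``the flow of $\nu$ on $W$'' requires choosing an extension of $\nu$ off $s(M)$, and whether the prolonged flow preserves $\hat E_k$ depends entirely on that choice.

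The paper resolves both points at once by a different mechanism. Writing the infinitesimal vertical symmetries of $\hat E_k$ in local prolonged form $\zeta=\sum_{0\le|\alpha|\le k}\partial y_j^\alpha(Y^j_\alpha)$ with $Y^j_\alpha=Z^{(0)}_\alpha(Y^j)$, etc., subject to $\zeta.F^I=0$ on $\hat E_k$, it identifies the solutions of $\hat E_k[s]$ as precisely those $\nu$ whose prolongation satisfies $D^k\nu=\zeta\circ D^ks$ for some such vertical symmetry $\zeta$ of the \emph{whole equation} $\hat E_k$. The flow asserted in Theorem \ref{deformation} is then the flow of $\zeta$ on $\hat E_k$, which preserves $\hat E_k$ by construction because $\zeta$ is tangent to $\hat E_k$ everywhere, not merely along $V$; the linearized solution $\nu$ serves only as the initial datum selecting $\zeta$. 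This extension of the first-order data along $V$ to a symmetry of $\hat E_k$ is the missing idea in your write-up: without it (or an equivalent argument that the tangency condition propagates along the trajectory $s_\lambda$), the finite-time invariance you need cannot be concluded. Your appeal to Lemma \ref{Lie-derivative} and to diagram (\ref{commutative-diagram}) is consistent with the paper's setup, but it only recovers the infinitesimal statement, not the finite one.
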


\begin{proof}
Let $(x^\alpha,y^j)$ be fibered coordinates on $W$. Let
$\nu=\partial y_j(\nu^j):M\to s^*vTW$ a vertical vector field on $W$
along the section $s:M\to W$. Then $\nu$ is a solution of $\hat
E_k[s]$ iff the diagram (\ref{commutative-diagram}) is commutative.
Then $D^k\nu(p)$ identifies, for any $p\in M$, a vertical vector on
$\hat E_k$ in the point $q=D^ks(p)\in V=D^ks(M)\subset \hat E_k$. On
the other hand infinitesimal vertical symmetries on $E_k$ are
locally written in the form
\begin{equation}
\left\{\begin{array}{l} \zeta=\sum_{0\le|\alpha|\le k}\partial
y_j^\alpha(Y^j_\alpha),\quad 0=\zeta.F^I=<dF^I,\left(
\sum_{0\le r\le k}\partial y_j^{\alpha_1\cdots\alpha_r}(Y^j_{\alpha_1\cdots\alpha_r})\right)>\\
{}\\
Y^j_\alpha=Z^{(0)}_\alpha( Y^j),\quad
Z^{(0)}_\alpha=\partial x_\alpha+\partial y_sy^s_\alpha\\
{}\\
Y^j_{\alpha_1\cdots \alpha_ri}=Z^{(r)}_i(Y^j_{\alpha_1\cdots
\alpha_r}),\quad Z^{(r)}_i=Z_i^{(r-1)}+\partial y_s^{\gamma_1\cdots
\gamma_r}y^s_{\gamma_1\cdots
\gamma_ri}\\
\end{array}\right.
\end{equation}
where $Y^j_\alpha\in Q^\infty_w(U\subset J\hat
D^k(W);\mathop{\widehat{A}}\limits^{|\alpha|}(E))$,
$\mathop{\widehat{A}}\limits^{|\alpha|}(E)\equiv
Hom_Z(\mathop{\overbrace{A\otimes_Z\cdots\otimes_ZA}}\limits^{|\alpha|};E)$,
$0\le|\alpha|\le k$. $\partial y_j^\alpha(q)\in
Hom_Z(\mathop{\widehat{A}}\limits^{|\alpha|}(E);T_qJ\hat D^k(W))$,
$y^j_{\alpha_1\cdots\alpha_r}\in
Q^\infty_w(U;\mathop{\widehat{A}}\limits^{r}(E))$. Then we can see
that solutions of $\hat E_k[s]$ are vertical vector fields $\nu:M\to
s^*vTW\equiv \hat E[s]$, such that their prolongations
$D^k\nu=\zeta\circ D^ks$, for some vertical symmetry $\zeta$ of
$\hat E_k$.

Therefore, the flows of above integrable vertical vector fields, transform
regular solutions $V$ of $\hat E_k$ into new solutions of $\hat
E_k$. Solutions of the linearized equation $\hat E_k[s]$ give
initial conditions for the determination of such vertical flows.
\end{proof}
The following lemmas are also important to understand how the
structure of solutions of $\hat E_k[s]$ are related to the vertical
symmetries of $\hat E_k$. (For complementary informations on the contact structure of $\hat J^k_{m|n}(W)$, see \cite{PRA21}.)

\begin{lemma}{\em(Symmetries of horizontal $k$-order contact ideals).}\label{Horiz-symm}
Let  $\rceil:\hat J^k_{m|n}(W)\to \hat J^{k+1}_{m|n}(W)$  be a {\em
quantum $(k+1)$-connection} on $W$, i.e.,  a $Q^\infty_w$-section of
$\pi_{k+1,k}$. (The restriction of $\rceil$ to $\hat J^k(W)\subset
\hat J^k_{m|n}(W)$ is also called quantum $(k+1)$-connection). Let
$\widehat{\mathfrak{H}}_k(\rceil)$ be the {\em quantum horizontal
$k$-order contact ideal} of $\widehat{\Omega}^\bullet(\hat
J^k_{m|n}(W))$ given by
$\widehat{\mathfrak{H}}_k(\rceil)\equiv\rceil^*\widehat{\mathfrak{C}}_{k+1}(W)$,
where $\widehat{\mathfrak{C}}_{k+1}(W)$ is the contact ideal of
$\hat J^{k+1}_{m|n}(W)$. Locally one can write
$\widehat{\mathfrak{H}}_k(\rceil)=
<\omega^j,\dots,\omega^j_{\alpha_1\dots\alpha_{k-1}},\hat
H^j_{\alpha_1\dots\alpha_k}>$, where
$$\left\{\begin{array}{ll}
           \hat H^j_{\alpha_1\dots\alpha_k}& \equiv
            \rceil^*\omega^j_{\alpha_1\dots\alpha_k}
            =\rceil^*(dy^j_{\alpha_1\dots\alpha_k}-
            y^j_{\alpha_1\dots\alpha_k\beta}dx^\beta)\\
            &\\
            &=dy^j_{\alpha_1\dots\alpha_k}-
            \rceil^j_{\alpha_1\dots\alpha_k\beta}dx^\beta\in\widehat{\Omega}^1(\hat J^k_{m|n}(W)),\\
         \end{array}\right.$$
with $\rceil^j_{\alpha_1\dots\alpha_k\beta}\equiv
y^j_{\alpha_1\dots\alpha_k\beta}\circ\rceil\in
\widehat{\Omega}^0(J^k_{m|n}(W))$. $\widehat{\mathfrak{C}}_k(W)$ is
a subideal of $\widehat{\mathfrak{H}}$. Then the {\em quantum
horizontal $k$-order Cartan distribution}
$\mathbf{H}_k(\rceil)\subset TJ^k_{m|n}(W)$ (identified by a
$(k+1)$-connection $\rceil$) is the Cauchy characteristic
distribution associated to $\mathfrak{H}_k(\rceil)$.
$\mathfrak{d}(\mathbf{H}_k(\rceil))$ admits the following local
{\em(canonical basis):}\footnote{For a distribution
$\mathbf{E}\subset TX$ on a manifold $X$, we denote by
$\mathfrak{d}(\mathbf{E})$ the vector space of vector fields on $X$
belonging to $\mathbf{E}$.}

$$\left\{\begin{array}{ll}
\zeta_\alpha=&\partial x_\alpha+\partial y_jy^j_\alpha\\
&\\
&+\cdots+\partial
y_j^{\alpha_1\dots\alpha_{k-1}}y^j_{\alpha_1\dots\alpha_{k-1}\alpha}
+\partial
y_j^{\alpha_1\dots\alpha_k}\rceil^j_{\alpha_1\dots\alpha_k\alpha}.\\
\end{array}\right.$$
For any quantum $(k+1)$-connection $ \rceil$ on $W$, one has the
following direct sum decompositions:
\begin{equation}\label{splitting-quantum-Cartan-distribution}
   \left\{
\begin{array}{l}
 \mathbf{E}^k_{m|n}(W)_q
\cong\mathbf{H}_k(\rceil)_q \bigoplus Hom_Z(S^k(T_aN);\nu_a) \\
 \widehat{\Omega}^1(\hat J^k_{m|n}(W))\cong\widehat{\Omega}^1(\hat
J^k_{m|n}(W))_v \bigoplus\widehat{\mathfrak{H}}_k(\rceil)^1
\end{array}
\right.
\end{equation}
with $a\equiv\pi_{k,0(q)}\in W$, $\rceil(q)=[N]^{k+1}_a$, and
$\mathbf{H}_k(\rceil)_q\equiv T_qN^{(k)}$,
$\widehat{\mathfrak{H}}_k(\rceil)^1\equiv\widehat{\mathfrak{H}}_k(\rceil)\cap
\widehat{\Omega}^1(\hat J^k_{m|n}(W))$. The connection $\rceil$ is
{\em flat}, i.e., with zero curvature, iff the differential ideal
$\widehat{\mathfrak{H}}_k(\rceil)$ is closed, or equivalently, iff
$\mathbf{H}_k(\rceil)$ is involutive. If $\rceil$ is a flat quantum
$(k+1)$-connection on $W$, then one has the
following:\footnote{$\mathbf{C}_{har}(\widehat{\mathfrak{H}}_k(\rceil))$
denotes the characteristic distribution of
$\widehat{\mathfrak{H}}_k(\rceil)$, and
$\mathfrak{char}(\widehat{\mathfrak{H}}_k(\rceil))$ the
corresponding vector space of its vector fields. Furthermore,
$\mathfrak{s}(\widehat{\mathfrak{H}}_k(\rceil))$ denotes the vector
space of infinitesimal symmetries of the ideal
$\widehat{\mathfrak{H}}_k(\rceil)$. $\mathfrak{v}_k(\rceil)$ is the
vector space of vertical infinitesimal symmetries of the ideal
$\widehat{\mathfrak{H}}_k(\rceil)$.}

\begin{equation}\label{quantum-contact-structure-connection}
   \left\{
\begin{array}{l}
\overline{\widehat{\mathfrak{C}}}_k(W)\subset
\widehat{\mathfrak{H}}_k(\rceil) \quad\hbox{\rm as a closed subideal}\\
\mathbf{H}_k(\rceil)\cong\mathbf{C}_{har}(\widehat{\mathfrak{H}}_k(\rceil)); \quad
\mathfrak{char}(\widehat{\mathfrak{H}}_k(\rceil))
\subset\mathfrak{s}(\widehat{\mathfrak{H}}_k(\rceil)). \\
\end{array}
\right.
\end{equation}

$\hat J^k_{m|n}(W)$ is foliated by regular solutions $Z$ such that
$\widehat{\mathfrak{H}}_k(\rceil)|_Z=0$. The leaves of the foliation
are given in implicit form by the following equations:
$f^I(x^\alpha,y^j,\dots,y^j_{\alpha_1\dots\alpha_k})=\kappa^I\in
B_k$, $ 1\le I\le p+q$, $\dim J^k_{m|n}(W)-(p|q)=m|n$, where $f^I$
represent a complete independent system of primitive integrals of
the linear system of PDEs $(\zeta_\alpha.f)=0$, $1\le\alpha\le m+n$,
where $\zeta_\alpha$ is a basis (e.g., the canonical basis) of the
horizontal distribution $\mathbf{H}_k(\rceil)$.\footnote{A (local)
section $s$ of $\pi$ identifies a flat (local) $ (k+1)$-connection
$\rceil^j_{\alpha_1\dots\alpha_{k+1}}\equiv (\partial
x_{\alpha_1}\dots\partial x_{\alpha_{k+1}}.s^j)$.} Any
$\zeta\in\mathfrak{s}((\mathbf{H}_k(\rceil))$ has the following
local representation:
\begin{equation}\label{symmetries-horizontal-k-contact-ideal}
    \left\{
\begin{array}{ll}
  \zeta=& \zeta_\alpha(X^\alpha)+\partial y_j(Y^j)+
\partial y_j^\alpha(\zeta_\alpha.Y^j)+
\partial y_j^{\alpha_1\alpha_2}(\zeta_{\alpha_1}\zeta_{\alpha_2}.Y^j)\\
 &\\
 &+\cdots+
\partial y_j^{\alpha_1\dots\alpha_k}(\zeta_{\alpha_1}\dots\zeta_{\alpha_k}.Y^j),\\
\end{array}\right.
\end{equation}

for any choice of $x^\alpha\in Q^\infty_w(U\subset \hat
J^k_{m|n}(W),A)$, ${1\le \alpha\le m+n}$, and $Y^j\in
Q^\infty_w(U\subset \hat J^k_{m|n}(W),E)$, ${1\le j\le r+s}$, such
that
\begin{equation}\label{cond-symmetries-horizontal-k-contact-ideal}
\left\{
\begin{array}{ll}
(\zeta_{\alpha_1}\dots\zeta_{\alpha_k}.Y^j)=& (\partial
y_i.\rceil^j_{\alpha_1\dots\alpha_k})Y^i+ (\partial
y_i^\gamma.\rceil^j_{\alpha_1\dots\alpha_k})(\zeta_\gamma.Y^i)\\
&\\
&+\cdots+ (\partial y_i^{\gamma_1\dots\gamma_k}.
\rceil^j_{\alpha_1\dots\alpha_k})
(\zeta_{\gamma_k}\dots\zeta_{\gamma_1}.Y^i).\\
\end{array}\right.
\end{equation}

The space $\mathfrak{s}(\mathfrak{H}_k(\rceil))$ admits the following
direct sum decomposition:
$$\mathfrak{s}(\mathfrak{H}_k(\rceil))\cong\mathfrak{d}(\mathbf{H}_k(\rceil))
\bigoplus\mathfrak{v} _k(\rceil),$$ where $\mathfrak{v} _k(\rceil)$
is the collection of all vectors of the form
\begin{equation}\label{}
\left\{
\begin{array}{ll}
\xi=&\zeta-\zeta_\alpha(X^\alpha) =\partial y_j(Y^j)+
\partial y_j^\alpha(\zeta_\alpha.Y^j)+
\partial y_j^{\alpha\beta}(\zeta_\alpha\zeta_\beta.Y^j)\\
&\\
&+\dots+
\partial y_j^{\alpha_1\dots\alpha_k}(\zeta_{\alpha_1}\dots\zeta_{\alpha_k}.Y^j),\\
\end{array}\right.
\end{equation}

for any choice of $Y^j\in Q^\infty_w(U\subset \hat J^k_{m|n}(W),E)$,
${1\le j\le r+s}$, such that conditions
$(\ref{cond-symmetries-horizontal-k-contact-ideal})$ are satisfied.
${\frak s}(\mathfrak{H}_k(\rceil))$ is a Lie algebra that admits the
subalgebra $\mathfrak{d}(\mathbf{H}_k(\rceil))$ as an ideal.
\end{lemma}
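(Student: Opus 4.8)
The plan is to reduce the entire statement to an explicit local computation in the fibered coordinates $(x^\alpha,y^j,y^j_\alpha,\dots,y^j_{\alpha_1\dots\alpha_k})$ on $\hat J^k_{m|n}(W)$, exploiting the fact that a quantum $(k+1)$-connection $\rceil$ is a $Q^\infty_w$-section of $\pi_{k+1,k}$ which leaves the coordinates up to order $k$ untouched while prescribing the top ones through $\rceil^j_{\alpha_1\dots\alpha_k\beta}=y^j_{\alpha_1\dots\alpha_k\beta}\circ\rceil$. First I would compute $\rceil^*\widehat{\mathfrak{C}}_{k+1}(W)$ directly: the contact forms $\omega^j_{\alpha_1\dots\alpha_r}$ with $r<k$ pull back to themselves, while the top-order contact forms pull back to the $\hat H^j_{\alpha_1\dots\alpha_k}$ displayed in the statement. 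This establishes the local generators of $\widehat{\mathfrak{H}}_k(\rceil)$ and the inclusion of $\widehat{\mathfrak{C}}_k(W)$ as a subideal.

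Next I would identify $\mathbf{H}_k(\rceil)$ as the Cauchy characteristic distribution of $\widehat{\mathfrak{H}}_k(\rceil)$ by solving, over $Z$, the annihilation conditions that the generators impose on a general vector field; the solution compatible with the horizontal directions is exactly the canonical basis $\{\zeta_\alpha\}$, and pointwise $\mathbf{H}_k(\rceil)_q=T_qN^{(k)}$ for the germ $N$ fixed by $\rceil(q)=[N]^{k+1}_a$. The two direct-sum decompositions in (\ref{splitting-quantum-Cartan-distribution}) are then dual to one another: the tangent splitting follows from comparing $T_qN^{(k)}$ with the vertical complement $Hom_Z(S^k(T_aN);\nu_a)$, and the $1$-form splitting from the corresponding annihilators. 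For the flatness equivalences I would compute $d\hat H^j_{\alpha_1\dots\alpha_k}$ modulo the lower-order generators; the obstruction to closedness of the differential ideal is precisely the curvature of $\rceil$, whence closedness of $\widehat{\mathfrak{H}}_k(\rceil)$, flatness of $\rceil$, and involutivity of $\mathbf{H}_k(\rceil)$ are mutually equivalent. The relations (\ref{quantum-contact-structure-connection}) and the foliation statement then follow from the quantum Frobenius theorem applied to the involutive $\mathbf{H}_k(\rceil)$, with leaves cut out in implicit form by a complete system of primitive integrals $f^I$ of the linear system $(\zeta_\alpha.f)=0$.

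Finally, for the symmetries I would take a general $\zeta\in\mathfrak{s}(\widehat{\mathfrak{H}}_k(\rceil))$ and impose $\mathcal{L}_\zeta\widehat{\mathfrak{H}}_k(\rceil)\subset\widehat{\mathfrak{H}}_k(\rceil)$ generator by generator. The conditions coming from the lower-order forms recursively force the coefficient of $\partial y_j^{\alpha_1\dots\alpha_r}$ to equal the iterated derivative $(\zeta_{\alpha_1}\dots\zeta_{\alpha_r}.Y^j)$, which yields (\ref{symmetries-horizontal-k-contact-ideal}), while the condition coming from the top-order form $\hat H^j_{\alpha_1\dots\alpha_k}$ yields precisely (\ref{cond-symmetries-horizontal-k-contact-ideal}). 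Splitting off the horizontal part $\zeta_\alpha(X^\alpha)$ gives the decomposition $\mathfrak{s}(\mathfrak{H}_k(\rceil))\cong\mathfrak{d}(\mathbf{H}_k(\rceil))\oplus\mathfrak{v}_k(\rceil)$, and checking that brackets preserve the ideal while $\mathfrak{d}(\mathbf{H}_k(\rceil))$ is stable under bracketing with all of $\mathfrak{s}$ furnishes the Lie-algebra and ideal structure. The main obstacle I expect is not the shape of these formulas, which mirror the commutative theory, but the noncommutative and graded bookkeeping: the coefficients live in the graded Hom-modules $\mathop{\widehat{A}}\limits^{|\alpha|}(E)=Hom_Z(\mathop{\overbrace{A\otimes_Z\cdots\otimes_ZA}}\limits^{|\alpha|};E)$, so one must verify with care that interior products, Lie derivatives, and the Frobenius argument remain valid over the noncommutative algebra $A$ with its $Z$-module action on $E$, and that the signs introduced by the odd coordinates do not disturb the derivation identities used to collapse the symmetry conditions.
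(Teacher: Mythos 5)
The paper itself gives no proof of this lemma: it is stated bare, immediately after the proof of Theorem \ref{deformation}, with only a pointer to \cite{PRA21} for the contact structure of $\hat J^k_{m|n}(W)$. So there is no internal argument to compare yours against; what can be judged is whether your outline is the natural way to supply the omitted proof, and it is. The coordinate computation of $\rceil^*\widehat{\mathfrak{C}}_{k+1}(W)$, the identification of the canonical basis $\zeta_\alpha$ as the annihilator of the generators, the curvature-as-obstruction-to-closedness argument for the flatness equivalences, the Frobenius/primitive-integral description of the foliation, and the recursive determination of the symmetry coefficients from $\mathcal{L}_\zeta\widehat{\mathfrak{H}}_k(\rceil)\subset\widehat{\mathfrak{H}}_k(\rceil)$ are exactly the steps that the commutative version of this lemma (and Pr\'astaro's earlier quantum versions) rests on. Your closing caveat about the graded, noncommutative bookkeeping over $\mathop{\widehat{A}}\limits^{|\alpha|}(E)$ is the right place to locate the only genuinely nontrivial verification.

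One point deserves more care than your sketch gives it. Solving the annihilation conditions $\omega^j(\zeta)=\cdots=\hat H^j_{\alpha_1\dots\alpha_k}(\zeta)=0$ produces the annihilator of the generating $1$-forms, which is $\mathbf{H}_k(\rceil)$; but the \emph{Cauchy characteristic} distribution of the ideal additionally requires $\zeta\rfloor d\hat H^j_{\alpha_1\dots\alpha_k}\equiv 0$ modulo the ideal, and that extra condition is exactly where the curvature of $\rceil$ enters. This is consistent with the lemma asserting $\mathbf{H}_k(\rceil)\cong\mathbf{C}_{har}(\widehat{\mathfrak{H}}_k(\rceil))$ only in display (\ref{quantum-contact-structure-connection}), i.e.\ under the flatness hypothesis; in your write-up you should either restrict the Cauchy-characteristic identification to the flat case or make explicit that you are first computing the annihilator of the Pfaffian generators and only then, for flat $\rceil$, upgrading it to the characteristic distribution. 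With that adjustment the outline is sound.
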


The general local expression for the symmetries of the
$(m|n)$-dimensional involutive Cartan distribution
$\mathbf{E}_{m|n}^{\infty}(W)\subset T \hat J^\infty_{m|n}(W)$, can be
also obtained by equations
{\em(\ref{symmetries-horizontal-k-contact-ideal})} with all $k>0$,
and forgetting conditions
(\ref{cond-symmetries-horizontal-k-contact-ideal}).\footnote{In fact
the Cartan distribution on $\hat J^\infty_{m|n}(W)$ can be considered an
horizontal distribution induced by the canonical connection
identified by the local canonical basis $\zeta_\alpha=\partial
x_\alpha+\sum_{|\beta|\ge 0}y^j_{\alpha\beta}\partial y_j^\beta$
just generating $\mathbf{E}^{\infty}_{m|n}(W)$.} So we get the following
expression for $\zeta\in\mathfrak{s}(\mathbf{E}^\infty_{m|n}(W))$:
\begin{equation}\label{infty-contact-symmetries}
\left\{\begin{array}{l}
  \zeta=\partial_\alpha(X^\alpha)+\sum_{r\ge 0}\partial y_j^{\alpha_1\cdots\alpha_r}(Y^j_{\alpha_1\cdots\alpha_r})\\
  \partial_\alpha=\partial x_\alpha+\sum_{r\ge 0}\partial y_j^{\alpha_1\cdots\alpha_r}(y^j_{\alpha\alpha_1\cdots\alpha_r})\\
  Y^j_{\alpha_1\cdots\alpha_r}=(\partial_{\alpha_1}\cdots\partial_{\alpha_r}.Y^j),
  \quad Y^j\in Q^\infty_w(U\subset \hat J^\infty_{m|n}(W),E),
{1\le j\le r+s}.\\
\end{array}\right.
    \end{equation}
Then the canonical splitting
$T_q\hat J^\infty_{m|n}(W)\cong(\mathbf{E}^\infty_{m|n}(W))_q\bigoplus
vT_q\hat J^\infty_{m|n}(W)$, $q\in \hat J^\infty_{m|n}(W)$, gives the following
splitting in
$\mathfrak{s}(\mathbf{E}^\infty_{m|n}(W))=\mathfrak{d}(\mathbf{E}^\infty_{m|n}(W))\bigoplus
\mathfrak{v}_\infty$, $\zeta=\zeta_o+\zeta_v$, with
$\zeta_o=\partial_\alpha(X^\alpha)$ and $\zeta_v=\sum_{r\ge
0}\partial
y_j^{\alpha_1\cdots\alpha_r}(Y^j_{\alpha_1\cdots\alpha_r})$, where $Y^j_{\alpha_1\cdots\alpha_r}$
are given in (\ref{infty-contact-symmetries}).

\begin{definition}\label{fun-stable-PDE}
Let $\hat E_k\subset \hat J^k_{m|n}(W)$, where $\pi:W\to M$ is a
fiber bundle, in the category of quantum smooth supermanifolds. We
say that $\hat E_k$ is {\em functionally stable} if for any compact
regular solution $V\subset \hat E_k$, such that $\partial
V=N_0\bigcup P \bigcup N_1$ one has quantum solutions
$\widetilde{V}\subset \hat J^{k+s}_{m|n}(W)$, $s\ge 0$, such that
$\pi_{k+s,0}(\widetilde{N}_0\DU \widetilde{N}_1)=\pi_{k,0}(N_0\DU
N_1)\equiv X\subset W$, where $\partial
\widetilde{V}=\widetilde{N}_0\bigcup
\widetilde{P}\bigcup\widetilde{N}_1$.

We call the set $\Omega[V]$ of such solutions $\widetilde{V}$ the
{\em full quantum situs} of $V$. We call also each element
$\widetilde{V}\in \Omega[V]$ a {\em quantum fluctuation} of
$V$.\footnote{Let us emphasize that to $\Omega[V]$ belong also (non
necessarily regular) solutions $V'\subset E_k$ such that $N_0'\DU
N_1'=N_0\DU N_1$, where $\partial V'=N_0'\bigcup P'\bigcup N_1'$.}
\end{definition}

\begin{definition}\label{infinitesimal}
We call {\em infinitesimal bordism} of a regular solution $V\subset
\hat E_k\subset J\hat D^k(W)$ an element $\widetilde{V}\in\Omega[V]$,
defined in the proof of Theorem \ref{deformation}. We denote by
$\Omega_0[V]\subset \Omega[V]$ the set of infinitesimal bordisms of
$V$. We call $\Omega_0[V]$ the {\em infinitesimal situs} of $V$.

Let $\hat E_k\subset \hat J^k_{m|n}(W)$, where $\pi:W\to M$ is a fiber bundle, in
the category of quantum smooth supermanifolds. We say that a regular solution
$V\subset \hat E_k$, $\partial V=N_0\bigcup P \bigcup N_1$, is {\em
functionally stable} if the infinitesimal situs $\Omega_0[
V]\subset\Omega[V]$ of $V$ does not contain singular infinitesimal
bordisms.
\end{definition}

\begin{theorem}\label{main}
Let $\hat E_k\subset \hat J^k_{m|n}(W)$, where $\pi:W\to M$ is a fiber bundle, in
the category of quantum smooth supermanifolds. If $\hat E_k$ is formally integrable
and completely integrable, then it is functionally stable as well as
Ulam-extended superstable.

A regular solution $V\subset \hat E_k$ is stable iff it is functionally
stable.
\end{theorem}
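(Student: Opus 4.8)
The plan is to prove two distinct assertions in Theorem \ref{main}. The first is a stability statement about the equation itself (functional stability and Ulam-extended superstability), which I expect follows from the integral-bordism machinery developed in the preceding lemmas. The second is a characterization of stable regular solutions as precisely the functionally stable ones, which should follow from the deformation-theoretic picture set up in Theorem \ref{deformation} together with Definition \ref{infinitesimal}.

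\smallskip

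For the first assertion, I would argue as follows. Since $\hat E_k$ is formally integrable and completely integrable, Theorem \ref{crystal-structure-quantum-super-pdes} applies, so its integral bordism group $\Omega^{\hat E_k}_{m-1|n-1}$ is well-controlled as an extension of a crystallographic subgroup, and by the exact commutative diagram \eqref{relations-between-integral-bordism-groups-commutative-diagram} the various weak and singular bordism groups are all related. The key point is that complete integrability guarantees that any admissible Cauchy datum can be propagated to a genuine (possibly singular) solution. I would show that for any compact regular solution $V$ with $\partial V=N_0\bigcup P\bigcup N_1$, the infinitesimal deformations produced by Theorem \ref{deformation} — i.e. the flows of integrable vertical symmetries obtained from solutions $\nu$ of the linearized equation $\hat E_k[s]$ — fill out the full quantum situs $\Omega[V]$ in such a way that the boundary projection condition $\pi_{k+s,0}(\widetilde N_0\DU\widetilde N_1)=\pi_{k,0}(N_0\DU N_1)$ is automatically satisfied. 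Functional stability in the sense of Definition \ref{fun-stable-PDE} then follows directly, and Ulam-extended superstability should be a restatement of this bordism-level control, invoking the identification (announced in the introduction) between quantum bordism of PDE's and Ulam stability.

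\smallskip

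For the second assertion — a regular solution $V$ is stable iff it is functionally stable — I would use Theorem \ref{deformation} as the bridge in both directions. The content of that theorem is that integrable solutions $\nu$ of the linearized equation $\hat E_k[s]$ are in bijection (via prolongation $D^k\nu=\zeta\circ D^ks$) with the vertical symmetries $\zeta$ of $\hat E_k$ whose flows carry $V$ to nearby solutions $\widetilde V$. Stability of $V$, by the Definition of stable solutions, means every nearby $f'\in\underline{Sol}(\hat E_k)$ is equivalent to $V$ under integrable vertical symmetries; functional stability, by Definition \ref{infinitesimal}, means the infinitesimal situs $\Omega_0[V]$ contains no singular infinitesimal bordisms. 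The forward direction: if $V$ is stable, every infinitesimal bordism is realized by a flow staying within regular solutions, so $\Omega_0[V]$ is free of singular elements. The converse: if no infinitesimal bordism is singular, then the flows integrating the linearized solutions preserve regularity, and one upgrades the infinitesimal equivalence to a genuine neighborhood $W_f$ in $\underline{Sol}(\hat E_k)$ on which all solutions are equivalent to $V$.

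\smallskip

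I expect the main obstacle to be the converse direction of the second assertion, namely passing from the infinitesimal (linearized, symmetry-algebra) level to the genuine local-in-$\underline{Sol}$ statement. This requires an integration/exponentiation argument — that absence of singular infinitesimal bordisms ensures the flows $\{\phi_\lambda\}$ from Theorem \ref{deformation} remain defined and regularity-preserving on a whole neighborhood, so that they generate the orbit filling $W_f$. This is the exact quantum-super analogue of the classical equivalence between infinitesimal stability and stability (the Mather-type result invoked earlier in the excerpt), and the delicate part is controlling the locally convex manifold structure of $\underline{Sol}(\hat E_k)$ so that the infinitesimal transitivity integrates to genuine transitivity; here I would lean on the completeness/integrability hypotheses and on the contact-symmetry description in Lemma \ref{Horiz-symm}.
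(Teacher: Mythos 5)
Your treatment of the second assertion (stable $\Leftrightarrow$ functionally stable) follows the paper's route essentially exactly: both directions go through Theorem \ref{deformation}, the identification $T_s\underline{Sol}(\hat E_k)\cong\Omega_0[V]$, and the integration of regular infinitesimal bordisms to an open orbit $W_s$; you have also correctly isolated the converse direction as the delicate step. The gap is in your first assertion. Functional stability in the sense of Definition \ref{fun-stable-PDE} is \emph{not} obtained in the paper from the flows of Theorem \ref{deformation}, and your claim that those flows produce elements of $\Omega[V]$ for which the boundary condition $\pi_{k+s,0}(\widetilde N_0\DU\widetilde N_1)=\pi_{k,0}(N_0\DU N_1)$ is ``automatically satisfied'' does not hold: an integrable \emph{vertical} symmetry moves the fibre coordinates $y^j$, hence moves the projection $\pi_{k,0}(N_0\DU N_1)\subset W$ of the boundary, unless the generating field happens to vanish there. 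What the paper actually does is pass to the $s$-th prolongation $V^{(s)}\subset(\hat E_k)_{+s}$ (legitimate because $\hat E_k$ is formally and completely integrable) and invoke the exactness of
\begin{equation*}
\xymatrix{\Omega_{m-1|n-1}^{(\hat E_k)_{+s}}\ar[r]&\Omega_{m-1|n-1}((\hat E_k)_{+s})\ar[r]&0,}
\end{equation*}
i.e.\ the surjection of the integral bordism group onto the quantum bordism group, to produce a quantum solution $\widetilde V\subset\hat J^{k+s}_{m|n}(W)$ with $\widetilde N_i=N_i^{(s)}$; the boundary-projection identity then follows because prolongation does not change $\pi_{k,0}(N_i)$.

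Similarly, Ulam-extended superstability is not a loose ``restatement of bordism-level control'' via Theorem \ref{crystal-structure-quantum-super-pdes}: the paper derives it from the specific short exact sequence
\begin{equation*}
\xymatrix{0\ar[r]&\hat K_{m-1|n-1,s}^{\hat E_k}\ar[r]&\Omega_{m-1|n-1}^{\hat E_k}\ar[r]&\Omega_{m-1|n-1,s}^{\hat E_k}\ar[r]&0,}
\end{equation*}
which compares the integral bordism group for smooth solutions with the one for singular solutions and shows that every smooth solution has singular solutions in its neighborhood. So to repair your argument you should replace the deformation-flow step by the two exact sequences above; the crystal-structure theorem and the diagram (\ref{relations-between-integral-bordism-groups-commutative-diagram}) you cite are not the operative tools here.
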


\begin{proof}
In fact, if $\hat E_k$ is formally integrable and completely integrable,
we can consider, for any compact regular solution $V\subset \hat E_k$,
its $s$-th prolongation $V^{(s)}\subset (\hat E_k)_{+s}\subset
\hat J^{k+s}_{m|n}(W)$. Since one has the following short exact sequence

\begin{equation}
\xymatrix{\Omega_{m-1|n-1}^{(\hat E_k)_{+s}}\ar[r]&\Omega_{m-1|n-1}((\hat E_k)_{+s})\ar[r]&0\\}
\end{equation}

where $\Omega_{m-1|n-1}^{(\hat E_k)_{+s}}$, (resp.
$\Omega_{m-1|n-1}((\hat E_k)_{+s})$), is the integral bordism group,
(resp. quantum bordism group),\footnote{Here the considered bordism
groups are for admissible non-necessarily closed Cauchy
hypersurfaces.} we get that there exists a solution
$\widetilde{V}\subset \hat J^{k+s}_{m|n}(W)$ such that
\begin{equation}
\left\{\begin{array}{l}
\partial \widetilde{V}=\widetilde{N}_0\bigcup\widetilde{P}\bigcup\widetilde{N}_1;\quad
\partial V^{(s)}=N_0^{(s)}\bigcup P^{(s)}\bigcup N_1^{(s)}\\
\widetilde{N}_0=N_0^{(s)};\quad
\widetilde{N}_1=N_1^{(s)}.\\
\end{array}\right.
\end{equation}
Then, as a by-product we get also:
$\pi_{k+s,0}(\widetilde{N}_0\DU\widetilde{N}_1)=\pi_{k,0}(N_0\DU
N_1)\subset W$. Therefore, $\hat E_k$ is functionally stable.
Furthermore, $\hat E_k$ is also Ulam-extended superstable, since the
integral bordism group $\Omega_{m-1|n-1}^{\hat E_k}$ for smooth solutions and
the integral bordism group $\Omega_{m-1|n-1,s}^{\hat E_k}$ for singular
solutions, are related by the following short exact sequence:
\begin{equation}
\xymatrix{0\ar[r]&\hat K_{m-1|n-1,s}^{\hat
E_k}\ar[r]&\Omega_{m-1|n-1}^{\hat E_k}\ar[r]&
\Omega_{m-1|n-1,s}^{\hat E_k}\ar[r]&0.\\}
\end{equation}
This implies that in the neighborhood of each smooth solution there
are singular solutions.

Finally a regular solution $V\subset \hat E_k$ is stable iff the set
of solutions of the corresponding linearized equation $\hat E_k[V]$
does not contains singular solutions. But this is just the
requirement that $\Omega_0[V]$ does not contains singular solutions.
Therefore, $V$ is stable if it is functionally stable and vice
versa. More precisely if $f=D^ks:X\to \hat E_k$ is a stable solution
of $\hat E_k$, then there exists an open set $W_s\subset
\underline{Sol}(\hat E_k)$ such that for any $s'\in W_s$, $s'$ is
equivalent to $s$.\footnote{Recall that $Q_w^\infty(W)$ has a
natural structure of quantum smooth supermanifold modeled on locally
convex topological vector fields. $\underline{Sol}(\hat E_k)$ is a
closed submanifold of $\underline{Sol}(\hat E_k)\subset
Q_w^\infty(W)$. (For details see ref.\cite{PRA9}.)} Let us consider
the tangent space $T_s\underline{Sol}(\hat E_k)$. One has the
following isomorphism
\begin{equation}
T_s\underline{Sol}(\hat E_k)\cong\left\{\zeta\in
(Q_w^\infty)_0((D^ks)^*vT\hat E_k)\hskip 2pt|\hskip 2pt \exists \xi\in
T_sQ_w^\infty(W), \zeta=|_k\circ D^k\xi\right\}\cong\Omega_0[V]
\end{equation}
where $|_k$ is the canonical isomorphism $J\hat D^k(s^*vTW)\cong
(D^ks)^*vTJ\hat D^k(W)$, and $V=D^ks(X)\subset \hat E_k$. Since
$W_s$ is open in $\underline{Sol}(\hat E_k)$, one has also the
following isomorphism $T_{s'}W_s\cong T_s\underline{Sol}(\hat E_k)$.
Thus also to $s'$ there correspond vector fields $\zeta\in
T_{s'}W_s$ that must be regular ones, i.e., without singular points.
Therefore $\Omega_0[V]$ cannot contain singular solutions, hence $V$
is functionally stable. Vice versa, if $V$ is functionally stable,
then we can find an open neighborhood $W_s\subset
\underline{Sol}(\hat E_k)$ built by perturbing $V$ with all the
flows induced by the regular vector fields belonging to
$\Omega_0[V]$. This set is an open set of
$W_s\subset\underline{Sol}(\hat E_k)$ since its tangent space at any
of its point $s'$ is isomorphic to $T_{s'}\underline{Sol}(\hat
E_k)$, since this last is isomorphic to $\Omega_0[V]$. Furthermore,
any two of such points of such an open set are equivalent since they
can be related both to $s$ by local diffeomorphisms. Therefore, $V$
that is functionally stable, is also stable.
\end{proof}

\begin{remark}
Let us emphasize that  the definition of functionally stable quantum
super PDE interprets in pure geometric way the definition of Ulam
superstable functional equation just adapted to PDE's.\footnote{For
informations on the Ulam stability see Refs.\cite{HYE, PRA-RAS3,
ULA}.}
\end{remark}

\begin{definition}
We say that $\hat E_k\subset J\hat D^k(W)$ is a {\em stable extended
crystal quantum super PDE} if it is an extended crystal quantum super PDE that is functionally
stable and all its regular quantum smooth solutions are (functionally)
stable.

We say that $\hat E_k\subset J\hat D^k(W)$ is a {\em stabilizable
extended crystal quantum super PDE} if it is an extended crystal quantum super PDE and to $\hat E_k$
can be canonically associated a stable extended crystal quantum super PDE
${}^{(S)}\hat E_k\subset J\hat D^{k+s}(W)$. We call ${}^{(S)}\hat E_k$ just
the {\em stable extended crystal quantum super PDE of} $\hat E_k$.
\end{definition}

We have the following criteria for functional stability of solutions
of qunatum super PDE's and to identify stable extended crystal quantum super PDE's.
\begin{theorem}{\em(Functional stability criteria).}\label{criteria-fun-stab}
Let $\hat E_k\subset J\hat D^k(W)$ be a $k$-order formally integrable
and completely integrable quantum super PDE on the fiber bundle $\pi:W\to M$.

{\em 1)} If the symbol $\hat g_k=0$, then all the quantum smooth regular
solutions $V\subset \hat E_k\subset J\hat D^k(W)$ are functionally
stable, with respect to any non-weak perturbation. So $\hat E_k$ is a
stable extended crystal.

{\em 2)} If $\hat E_k$ is of finite type, i.e., $\hat g_{k+r}=0$,
for $r>0$, then all the quantum smooth regular solutions $V\subset
\hat E_{k+r}\subset J\hat D^{k+r}(W)$ are functionally stable, with
respect to any non-weak perturbation. So $\hat E_k$ is a
stabilizable extended crystal with stable extended crystal
${}^{(S)}\hat E_k=\hat E_{k+r}$.

{\em 3)} If $V\subset(\hat E_k)_{+\infty}\subset J\hat D^\infty(W)$ is a
smooth regular solution, then $V$ is functionally stable, with
respect to any non-weak perturbation. So any formally integrable end
completely integrable quantum super PDE $\hat E_k\subset J\hat D^k(W)$, is a
stabilizable quantum extended crystal PDE, with stable quantum extended crystal PDE
${}^{(S)}\hat E_k=(\hat E_k)_{+\infty}$.
\end{theorem}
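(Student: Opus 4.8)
The plan is to reduce all three parts to a single principle already isolated in the proof of Theorem \ref{main}: a compact regular solution $V\subset\hat E_k$ is functionally stable exactly when its infinitesimal situs $\Omega_0[V]$ contains no singular elements, and, via the isomorphism $T_s\underline{Sol}(\hat E_k)\cong\Omega_0[V]$ established there, this holds precisely when every integrable solution of the linearized equation $\hat E_k[V]$ is regular. Thus the whole statement becomes a question about when the linearization can carry a singular solution, and the symbol $\hat g_k$ is exactly the geometric object controlling this.

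First I would establish the key lemma: if a formally and completely integrable quantum super PDE has vanishing symbol, then every solution of its linearization is regular. The point is that the symbol of $\hat E_k[V]$ coincides with $\hat g_k$, since linearization does not alter the top-order symbol, and that the singular points $\Sigma(V)_S$ of a solution are precisely those at which it fails to be locally the $k$-prolongation of a section. By the splitting in Lemma \ref{Horiz-symm}, the obstruction to such a local representation lives in the fiber $Hom_Z(S^k(T_aN);\nu_a)$ appearing in (\ref{splitting-quantum-Cartan-distribution}), which is governed by the symbol; when $\hat g_k=0$ this fiber is forced to vanish, so only the Cauchy-characteristic (horizontal) part of the symmetry algebra survives and no degenerate-projection or discontinuity points can form. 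I expect this lemma to be the main obstacle, since it requires transporting the classical characteristic-variety argument into the category $\mathfrak{Q}_S$ and verifying that the quantum contact structure genuinely trivializes the vertical summand of the splitting (\ref{splitting-quantum-Cartan-distribution}).

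Granting the lemma, part (1) follows at once: with $\hat g_k=0$ the infinitesimal situs $\Omega_0[V]$ of every compact regular solution contains no singular infinitesimal bordisms, so by Definition \ref{infinitesimal} each such $V$ is functionally stable under non-weak perturbations. Combined with the extended-crystal structure supplied by Theorem \ref{crystal-structure-quantum-super-pdes} and the functional stability of $\hat E_k$ itself from Theorem \ref{main}, this exhibits $\hat E_k$ as a stable extended crystal. For part (2) I would prolong: the prolongation $\hat E_{k+r}$ is again formally and completely integrable and, by the finite-type hypothesis, has $\hat g_{k+r}=0$; applying part (1) to $\hat E_{k+r}$ shows it is a stable extended crystal, and since regular solutions of $\hat E_k$ correspond bijectively to regular solutions of $\hat E_{k+r}$ under prolongation, $\hat E_k$ is stabilizable with ${}^{(S)}\hat E_k=\hat E_{k+r}$.

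Finally, for part (3) I would pass to the infinite prolongation $(\hat E_k)_{+\infty}$, where the Cartan distribution $\mathbf{E}^\infty_{m|n}(W)$ is the involutive $(m|n)$-dimensional distribution described after Lemma \ref{Horiz-symm} and the symbol vanishes identically. A smooth regular solution $V\subset(\hat E_k)_{+\infty}$ is then an integral manifold of this involutive distribution, whose linearization again admits no singular solutions; the same argument yields functional stability of $V$, so any formally and completely integrable $\hat E_k$ is stabilizable with ${}^{(S)}\hat E_k=(\hat E_k)_{+\infty}$. Throughout, the only genuinely new input beyond Theorem \ref{main} is the symbol-vanishing lemma of the second paragraph; the remaining passages are bookkeeping with prolongations together with the short exact sequence relating smooth and singular integral bordism groups $\Omega_{m-1|n-1}^{\hat E_k}$ and $\Omega_{m-1|n-1,s}^{\hat E_k}$ recorded in the proof of Theorem \ref{main}.
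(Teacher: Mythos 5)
Your proposal is correct and follows essentially the same route as the paper: both reduce functional stability of $V$ to the absence of singular solutions of the linearized equation $\hat E_k[s]$ via Theorem \ref{main}, observe that $\hat g_k=0$ forces $\hat g_k[s]=0$ (and likewise for all prolongations, using that prolongation commutes with linearization), and then obtain parts (2) and (3) by applying part (1) to $\hat E_{k+r}$ and $(\hat E_k)_{+\infty}$ respectively. The only difference is that you sketch a justification (via the splitting of Lemma \ref{Horiz-symm}) for the step ``zero symbol excludes singular solutions of the linearization,'' which the paper simply asserts.
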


\begin{proof}
We shall use the following lemmas.

\begin{lemma}\label{eq-prolongations}
Let $\hat E_k\subset J\hat D^k(W)$ be a formally integrable and
completely integrable quantum super PDE the fiber bundle $\pi:W\to M$. Then for
any quantum smooth regular solution $s:M\to W$, one has the following
canonical isomorphism: $(\hat E_k[s])_{+h}\cong((\hat E_k)_{+h})[s]$, $\forall
h\ge 1, \infty$.
\end{lemma}

\begin{proof}
In fact one has the following commutative diagram.

\begin{equation}\label{prolongations}
\left\{\begin{array}{ll}
  (\hat E_k[s])_{+h}& =J\hat D^h((D^ks)^*vT\hat E_k)\bigcap J\hat D^{k+h}(s^*vTW) \\
  & \cong (D^{k+h}s)^*vTJ\hat D^{h}(\hat E_k)\bigcap(D^{k+h}s)^*vTJ\hat D^{k+h}(W)\\
& \cong (D^{k+h}s)^*vT\left(J\hat D^{h}(\hat E_k)\bigcap J\hat D^{k+h}(W)\right)\\
&\cong(D^{k+h}s)^*vT((\hat E_k)_{+h})=((\hat E_{k})_{+h})[s].\\
\end{array}
\right.
\end{equation}

\end{proof}

\begin{lemma}\label{symbol-prolongations}
Let $\hat E_k\subset J\hat D^k(W)$ be a formally integrable and
completely integrable PDE the fiber bundle $\pi:W\to M$. Let $\hat
g_k=0$. Then also the prolonged equations  $(\hat E_k)_{+r}$,
$\forall r\ge 1, \infty$, have their symbols zero: $(\hat
g_k)_{+r}=0$, $\forall r\ge 1, \infty$.
\end{lemma}

\begin{proof}
In fact, from the definition of symbol and prolonged symbols, it
follows that the prolonged symbols coincide with the symbols of the
corresponding prolonged equations.
\end{proof}

1) This follows from Lemma \ref{eq-prolongations} and from the fact
that if $\hat g_k=0$ is also $\hat g_k[s]=0$. This excludes that $\hat E_k[s]$
could have singular solutions. Furthermore, Lemma
\ref{symbol-prolongations} excludes also that there are singular
(nonweak) solutions in the prolonged equations $\hat E_k[s]_{+r}$,
$\forall r\ge 1, \infty$.

2) If $\hat E_k$ is of finite type, with $\hat g_{k+r}=0$, then it is also
$\hat g_{k+r}[s]=0$. Then $\hat E_{k+r}[s]$ cannot have singular (nonweak)
solutions.

3) $\hat E_\infty$ has zero symbol, hence also $\hat E_\infty[s]$ has zero
symbol and cannot have singular (nonweak) solutions.

(So the proof follows the same lines drawn for commutative PDE's.)
\end{proof}

\begin{theorem}{\em(Functional stable solutions and $(k+1)$-connections).}\label{Criterion-fun-stable-sol-conn}
Let $\hat E_k\subset \hat J^k_{m|n}(W)$ be a formally
integrable and completely integrable quantum super PDE.
Let $\rceil$ be a quantum flat $(k+1)$-connection, such that
$\rceil|_{\hat E_k}$ is a $Q^\infty_w$-section of the affine fiber
bundle $\pi_{k+1,k}:(\hat E_k)_{+1}\to \hat E_k$ . Then, the
sub-equation ${}^{\rceil}E_k\subset \hat E_k$ identified, by means
of the ideal $\mathfrak{H}(\rceil)|_{\hat E_k}$, is formally
integrable and completely integrable sub-equation with
zero symbol ${}^{\rceil}\hat g_k$. Then ${}^{\rceil}\hat E_k\subset
\hat E_k$ is functionally stable and Ulam-extended superstable.
Furthermore any regular quantum smooth solution $V\subset
{}^{\rceil}\hat E_k$ is also functionally stable in ${}^{\rceil}\hat
E_k$, with respect to any non weak perturbation.
\end{theorem}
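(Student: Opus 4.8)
The plan is to reduce everything to the functional stability criteria already proved, since the substantive content of the statement is the assertion that the connection-defined sub-equation has \emph{zero symbol}; once that is in hand, Theorem \ref{criteria-fun-stab} and Theorem \ref{main} apply essentially verbatim. The backbone of the argument is that a flat $(k+1)$-connection foliates $\hat J^k_{m|n}(W)$ by $(m|n)$-dimensional regular solutions of the horizontal contact ideal, and the restriction hypothesis forces this foliation to be tangent to $\hat E_k$; the cut-out sub-equation then inherits both integrability and vanishing symbol.

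First I would construct ${}^{\rceil}\hat E_k$ explicitly and check its integrability. By Lemma \ref{Horiz-symm}, flatness of $\rceil$ is equivalent to $\widehat{\mathfrak{H}}_k(\rceil)$ being a closed differential ideal, equivalently to involutivity of the horizontal distribution $\mathbf{H}_k(\rceil)$, and in that case $\hat J^k_{m|n}(W)$ is foliated by regular solutions $Z$ with $\widehat{\mathfrak{H}}_k(\rceil)|_Z=0$. The hypothesis that $\rceil|_{\hat E_k}$ is a $Q^\infty_w$-section of $\pi_{k+1,k}:(\hat E_k)_{+1}\to\hat E_k$ is precisely the statement that the horizontal leaves issuing from points of $\hat E_k$ remain inside $\hat E_k$, i.e. that $\mathbf{H}_k(\rceil)$ is tangent to $\hat E_k$. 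Hence ${}^{\rceil}\hat E_k$, identified by $\widehat{\mathfrak{H}}_k(\rceil)|_{\hat E_k}=0$, is foliated by the restricted leaves, each a regular solution through every point; this yields complete integrability, and since the restricted ideal stays closed it yields formal integrability as well.

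Next I would verify ${}^{\rceil}\hat g_k=0$. The point is that $\rceil$ determines the full $(k+1)$-jet over each point of $\hat J^k_{m|n}(W)$, so over ${}^{\rceil}\hat E_k$ the fibers of $\pi_{k+1,k}$ collapse to single points and no freedom survives in the top-order derivatives. Concretely, the identification $\mathbf{H}_k(\rceil)\cong\mathbf{C}_{har}(\widehat{\mathfrak{H}}_k(\rceil))$ from (\ref{quantum-contact-structure-connection}) exhibits the leaves as the characteristics, so the symbol map of ${}^{\rceil}\hat E_k$ is injective, whence ${}^{\rceil}\hat g_k=0$. With zero symbol together with formal and complete integrability, the first case of Theorem \ref{criteria-fun-stab} makes ${}^{\rceil}\hat E_k$ a stable extended crystal whose regular quantum smooth solutions are functionally stable under non-weak perturbations, and Theorem \ref{main} supplies Ulam-extended superstability. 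For a fixed regular solution $V\subset{}^{\rceil}\hat E_k$, Lemma \ref{symbol-prolongations} propagates the zero symbol through all prolongations, so by Lemma \ref{eq-prolongations} the linearized equation ${}^{\rceil}\hat E_k[V]$ has zero symbol and therefore no singular solutions; the final clause of Theorem \ref{main} then gives that $V$ is functionally stable, as claimed.

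The main obstacle I anticipate is the first step: rigorously equating the section hypothesis on $\rceil|_{\hat E_k}$ with tangency of $\mathbf{H}_k(\rceil)$ to $\hat E_k$, and confirming that the restricted ideal $\widehat{\mathfrak{H}}_k(\rceil)|_{\hat E_k}$ remains closed so that formal integrability of the sub-equation is not lost in the restriction. This is where the geometry of the affine bundle $\pi_{k+1,k}$ and the flatness of $\rceil$ must be used carefully; once the restricted foliation is shown to live in $\hat E_k$ with closed defining ideal, the vanishing of the symbol and all the stability conclusions follow as direct appeals to the earlier criteria.
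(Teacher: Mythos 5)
Your proposal is correct and follows essentially the same route as the paper: both arguments reduce the stability claims to the earlier criteria, namely the exact sequences of integral versus quantum bordism groups (which the paper displays as a commutative diagram of exact lines where you invoke Theorem \ref{main}) together with the observation that ${}^{\rceil}\hat g_k=0$ makes ${}^{\rceil}\hat E_k$ of finite type so that Theorem \ref{criteria-fun-stab} applies to its regular smooth solutions. The only difference is that you supply a justification, via the flat horizontal contact ideal and the tangency of its leaves to $\hat E_k$, for the integrability and zero-symbol assertions that the paper states without argument.
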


\begin{proof}
In fact, one has the commutative diagram (\ref{functional-stable-solutions-bordism-diagram}) of exact lines.

\begin{equation}\label{functional-stable-solutions-bordism-diagram}
\xymatrix{\Omega_{m-1|n-1}^{(\hat E_k)_{+s}}\ar[d]
\ar[r]&\Omega_{m-1|n-1}((\hat E_k)_{+s})\ar[d]
\ar[r]&0\\
\Omega_{m-1|n-1}^{({}^{\rceil}\hat E_k)_{+s}}\ar[d]
\ar[r]&\Omega_{m-1|n-1}(({}^{\rceil}\hat E_k)_{+s})\ar[d]
\ar[r]&0\\
0&0&\\}
\end{equation}

Furthermore, since ${}^{\rceil}\hat g_k=0$, ${}^{\rceil}\hat E_k$ is of finite
type, hence its smooth regular solutions are functionally stable.
\end{proof}

Taking into account the meaning that connections assume in any
physical theory, we can give the following definition.

\begin{definition}
Let $\hat E_k\subset \hat J^k_{m|n}(W)$ be a formally integrable and completely
integrable quantum super PDE. Let $\rceil$ be a flat quantum $(k+1)$-connection, such that
$\rceil|_{\hat E_k}$ is a $Q^\infty_w$-section of the affine fiber bundle
$\pi_{k+1,k}:(\hat E_k)_{+1}\to \hat E_k$ . We call the couple $(\hat E_k,\rceil)$
a {\em polarized quantum super PDE}. We call also {\em polarized quantum super PDE}, a couple
$(\hat E_k,{}^{\rceil}\hat E_k)$, where ${}^{\rceil}\hat E_k\subset \hat E_k$, is
defined in Theorem \ref{Criterion-fun-stable-sol-conn}. We call
${}^{\rceil}\hat E_k$ a {\em polarization} of $\hat E_k$.
\end{definition}

\begin{cor}
Any quantum smooth regular solutions of a polarization of a polarized couple
$(\hat E_k,{}^{\rceil}\hat E_k)$, is functionally stable, with respect to any
non-weak perturbation.
\end{cor}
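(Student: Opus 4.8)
The plan is to obtain the statement as an immediate consequence of the two results immediately preceding it, with essentially no new computation required. By the definition of a polarized couple, the polarization ${}^{\rceil}\hat E_k\subset\hat E_k$ is precisely the sub-equation singled out by the ideal $\mathfrak{H}(\rceil)|_{\hat E_k}$ in Theorem \ref{Criterion-fun-stable-sol-conn}. So the first thing I would do is record exactly what that theorem already asserts about this sub-equation: that ${}^{\rceil}\hat E_k$ is formally integrable and completely integrable, and---the decisive fact here---that its symbol vanishes, ${}^{\rceil}\hat g_k=0$.

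Having fixed that ${}^{\rceil}\hat E_k$ is a formally integrable and completely integrable quantum super PDE with zero symbol, I would then simply apply part 1) of Theorem \ref{criteria-fun-stab}, taking ${}^{\rceil}\hat E_k$ in the role of $\hat E_k$. That criterion states that for any formally integrable and completely integrable quantum super PDE whose symbol is zero, every quantum smooth regular solution is functionally stable with respect to any non-weak perturbation. Applied to ${}^{\rceil}\hat E_k$, this yields precisely the desired conclusion: each quantum smooth regular solution $V\subset{}^{\rceil}\hat E_k$ of the polarization is functionally stable, with respect to any non-weak perturbation.

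The only step meriting a moment's care---and it is a mild one---is checking that the hypotheses of Theorem \ref{criteria-fun-stab} are genuinely inherited by the polarization, and not merely enjoyed by the ambient equation $\hat E_k$. But this is exactly the content of Theorem \ref{Criterion-fun-stable-sol-conn}, whose whole purpose is to guarantee that the passage from $\hat E_k$ to ${}^{\rceil}\hat E_k$ produces an equation of finite type (indeed with vanishing symbol) while preserving both formal and complete integrability. Since the hypotheses transfer verbatim, no genuine obstacle arises, and the corollary follows at once by composing the two theorems: Theorem \ref{Criterion-fun-stable-sol-conn} supplies the structural input ${}^{\rceil}\hat g_k=0$, and Theorem \ref{criteria-fun-stab}(1) converts it into the functional stability of every regular solution of the polarization.
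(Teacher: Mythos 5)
Your proposal is correct and follows essentially the same route as the paper: the corollary is left without a separate proof there precisely because it is an immediate restatement, in the language of polarizations, of the final assertion of Theorem \ref{Criterion-fun-stable-sol-conn}, whose own proof argues exactly as you do (the vanishing symbol ${}^{\rceil}\hat g_k=0$ makes ${}^{\rceil}\hat E_k$ of finite type, and Theorem \ref{criteria-fun-stab} then gives functional stability of every quantum smooth regular solution with respect to any non-weak perturbation). No gap; your one point of care---that the hypotheses are inherited by the sub-equation---is indeed the content of Theorem \ref{Criterion-fun-stable-sol-conn}.
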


\begin{theorem}{\em(Finite stable quantum extended crystal super PDE's).}\label{finite-stable-extended-crystal-PDE}
Let $\hat E_k\subset J\hat D^k(W)$ be a formally integrable and
completely integrable quantum super PDE, such that the center $Z(A)$ of the quantum superalgebra $A$, model for $M$, is Noetherian. Then, under suitable {\em finite
ellipticity conditions}, there exists a stable extended crystal quantum super PDE
${}^{(S)}\hat E_k$ canonically associated to $\hat E_k$, i.e., $\hat E_k$ is a
stabilizable extended crystal.
\end{theorem}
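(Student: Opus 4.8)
The plan is to reduce the assertion to part 2) of Theorem \ref{criteria-fun-stab}. That result already tells us that if $\hat E_k$ is of \emph{finite type}, i.e. if there is a finite $r>0$ with prolonged symbol $\hat g_{k+r}=0$, then $\hat E_k$ is a stabilizable extended crystal with stable extended crystal ${}^{(S)}\hat E_k=\hat E_{k+r}$. Indeed, $\hat E_{k+r}$ inherits formal and complete integrability from $\hat E_k$ under prolongation, so by Theorem \ref{crystal-structure-quantum-super-pdes} it is an extended crystal quantum super PDE, and once its symbol vanishes part 2) of Theorem \ref{criteria-fun-stab} guarantees that $\hat E_{k+r}$ itself and all its regular quantum smooth solutions are functionally stable. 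Thus the entire burden of the proof is to show that the Noetherian hypothesis on $Z=Z(A)$, together with the finite ellipticity conditions, forces finite type.

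First I would assemble the prolonged symbols into a single graded object $\hat g_\bullet\equiv\bigoplus_{r\ge 0}\hat g_{k+r}$ and exhibit its natural structure of graded module over the symmetric superalgebra built on the quantum cotangent module of $M$. Since the symbols are defined over $A$ but their essential finiteness is governed by the centre, the relevant coefficient ring is the symmetric algebra $R\equiv S_Z^\bullet(T^*_Z)$ on the $Z$-dual of the cotangent directions; the hypothesis that $Z$ is Noetherian makes $R$ a Noetherian graded ring by the Hilbert basis theorem. Consequently $\hat g_\bullet$ is a finitely generated graded $R$-module whose graded pieces are exactly the prolonged symbols $\hat g_{k+r}$, and the prolongation maps act as the degree-shifting $R$-action. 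Care is needed here to check that these maps are genuinely $R$-linear and that the even/odd $m|n$ bigrading splits compatibly, so that each graded strand is itself a finitely generated module over the corresponding Noetherian strand of $R$.

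The key step is to read the finite ellipticity conditions as the statement that the characteristic variety of $\hat E_k$ — the support, over the projectivized $Z$-spectrum, of the symbol module $\hat g_\bullet$ — is empty; equivalently, that the annihilator of $\hat g_\bullet$ contains a power of the irrelevant ideal of $R$. By the graded Nullstellensatz for Noetherian graded rings, a finitely generated graded module supported only at the irrelevant ideal has finite length, and therefore vanishes in all sufficiently high degrees: $\hat g_{k+r}=0$ for $r\ge r_0$. This is precisely the noncommutative analogue of the classical fact that an overdetermined elliptic system, having empty characteristic variety, is of finite type. Setting ${}^{(S)}\hat E_k=\hat E_{k+r_0}$ and invoking part 2) of Theorem \ref{criteria-fun-stab} then completes the argument.

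The main obstacle I expect is exactly this last passage inside the category $\mathfrak{Q}_S$: one must verify that the symbol comodule truly carries a finitely generated graded module structure over a ring whose Noetherianity is inherited from that of the centre $Z$ \emph{alone}, and that the super/noncommutative grading does not obstruct the Hilbert basis theorem or the finite-length conclusion. In other words, the real work lies in constructing the module-theoretic dictionary so that Noetherianity of $Z(A)$ propagates to finiteness of the whole symbol tower; granting that the stated finite ellipticity conditions encode the emptiness of the characteristic variety, the remainder is bookkeeping.
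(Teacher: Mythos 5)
Your proposal is correct and follows essentially the same route as the paper: the paper also reduces everything to producing a finite prolongation order $s_0$ at which singular (non-weak) perturbations are excluded, and then sets ${}^{(S)}\hat E_k=(\hat E_k)_{+s_0}$, the finiteness of $s_0$ being exactly where the Noetherian hypothesis on $Z(A)$ and the finite ellipticity conditions enter. The only difference is one of explicitness: the paper black-boxes this step into a ``finite stability criterion'' lemma whose proof is deferred to the commutative case in \cite{PRA26}, whereas you reconstruct the intended mechanism (the graded symbol module over a Noetherian symmetric algebra on $Z$, empty characteristic variety, hence vanishing of $\hat g_{k+r}$ for large $r$) before invoking part 2) of Theorem \ref{criteria-fun-stab}.
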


\begin{proof}
In fact, we can use the following lemma.

\begin{lemma}{\em(Finite stability criterion).}\label{finite-stability-criterion}
Let $\hat E_k\subset J\hat D^k(W)$ be a formally integrable and
completely integrable quantum super PDE, such that the center $Z(A)$ of the quantum superalgebra $A$, model for $M$, is Noetherian. Then there exists an integer $s_0$ such
that, under suitable {\em finite ellipticity conditions}, any
regular quantum smooth solution $V\subset (\hat E_k)_{+s_0}$ is functionally
stable.
\end{lemma}

\begin{proof}
On the assumption that $Z(A)$ is Noetherian, the proof can be conduced by following the same lines of the commutative case. (See \cite{PRA26}.)
\end{proof}

Let us, now, use the hypothesis that $\hat E_k$ is formally integrable
and completely integrable. Then all its regular quantum smooth solutions are
all that of $(\hat E_k)_{+s_0}$. In fact, these are all the solutions of
$(\hat E_k)_{+\infty}\subset J\hat D^\infty(W)$. However, even if a
smooth regular solution $V\subset \hat E_k$, and their
$s_0$-prolongations, $V^{(s_0)}\subset (\hat E_k)_{+s_0}$, are equivalent
as solutions, they cannot be considered equivalent from the
stability point of view !!! In fact, $\hat E_k$ can admit singular
solutions, instead for $(\hat E_k)_{+s_0}$ these are forbidden.
Therefore, for $\hat E_k[s]$ singular perturbations are possible, i.e.
are possible infinitesimal vertical symmetries of $\hat E_k$, in a
neighborhood of the solution $s$, having singular points. Instead
for $(\hat E_k)_{+s_0}[s]$ all solutions are without singular points,
hence $s$ considered as solution of  $(\hat E_k)_{+s_0}$ necessitates to
be functionally stable.

By conclusions, $\hat E_k$, under the finite ellipticity conditions is a
stabilizable extended crystal quantum super PDE, and its stable extended crystal
quantum super PDE is ${}^{(S)}\hat E_k=(\hat E_k)_{+s_0}$, for a suitable finite number
$s_0$.
\end{proof}

\begin{remark}
With respect to a quantum frame \cite{PRA15, PRA21, PRA22}, we can
consider the perturbation behaviours of global solutions for
$t\to\infty$, where $t$ is the proper time of the quantum frame.
Then, we can talk about asymptotic stability by reproducing similar
situations for commutative PDE's. (See Refs.\cite{PRA24, PRA29}.) In
particular we can consider the concept of ''averaged stability''
also for solutions of quantum (super) PDE's. With this respect, let
us recall the following definition and properties of quantum
(pseudo)Riemannian supermanifold given in \cite{PRA15, PRA23}.
\end{remark}

\begin{definition}\cite{PRA15, PRA23}
A {\em quantum (pseudo)Riemannian supermanifold} $(M.\widehat{A})$
is a quantum supermanifold $M$ of dimension $(m|n)$ over a quantum
superalgebra $A$, endowed with a $Q^\infty_w$ section
$\widehat{g}:M\to Hom_Z(TM\otimes_ZTM;A)$ such that the induced
homomorphisms $T_pM\to(T_pM)^+$, $\forall p\in M$, are
injective.\end{definition}

\begin{proposition}\cite{PRA15, PRA23}
In quantum coordinates $\widehat{g}(p)$ is represented by a matrix
$\widehat{g}_{\alpha\beta}(p)\in\mathop{\widehat{A}}\limits^2{}_{00}(A)\times\mathop{\widehat{A}}
\limits^2{}_{10}(A)\times\mathop{\widehat{A}}\limits^2{}_{01}(A)\times\mathop{\widehat{A}}\limits^2{}_{11}(A)$.
The corresponding dual quantum metric gives
$\widehat{g}^{\alpha\beta}(p)\in\mathop{\widehat{A}}\limits^2{}^{00}(A)\times\mathop{\widehat{A}}\limits^2{}^{10}(A)\times\mathop{\widehat{A}}\limits^2{}^{01}(A)\times\mathop{\widehat{A}}\limits^2{}^{11}(A)$,
with $\mathop{\widehat{A}}\limits^2{}^{ij}(A)\equiv
Hom_Z(A;A_i\otimes_ZA_j)$, $i,j\in{\mathbb Z}_2$, such that
$\widehat{g}_{\gamma\beta}(p)\widehat{g}^{\alpha\beta}(p)=\delta^\alpha_\gamma\in\widehat{A}$,
$\widehat{g}^{\alpha\beta}(p)\widehat{g}_{\gamma\beta}(p)=\delta^\alpha_\gamma\in
Hom_Z(A\otimes_ZA;A\otimes_ZA)$.\end{proposition}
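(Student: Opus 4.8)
The plan is to prove the statement by unraveling the definition of the quantum metric $\widehat{g}$ in local quantum coordinates, and then exploiting the nondegeneracy (injectivity) hypothesis to produce the dual metric. First I would fix local quantum coordinates $(x^\alpha)_{1\le\alpha\le m+n}$ on $M$ over the quantum superalgebra $A$, so that the coordinate vector fields $\partial x_\alpha$ form a local $A$-basis of $TM$, each carrying a definite parity $|\alpha|\in\mathbb{Z}_2$ (the first $m$ even, the last $n$ odd). Evaluating the section $\widehat{g}:M\to Hom_Z(TM\otimes_Z TM;A)$ on pairs of such basis fields yields the components $\widehat{g}_{\alpha\beta}(p)\equiv\widehat{g}(p)(\partial x_\alpha\otimes\partial x_\beta)$.

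Since $\widehat{g}(p)$ is $Z$-linear in each factor while the coordinate coefficients range over all of $A$ rather than over $Z$ alone, each component is naturally read as an element of $\mathop{\widehat{A}}\limits^2(A)=Hom_Z(A\otimes_Z A;A)$; the parities $|\alpha|=i$, $|\beta|=j$ of the two arguments then place $\widehat{g}_{\alpha\beta}(p)$ in the summand $\mathop{\widehat{A}}\limits^2{}_{ij}(A)=Hom_Z(A_i\otimes_Z A_j;A)$. Collecting the four parity types $ij\in\{00,10,01,11\}$ gives exactly the product decomposition asserted in the first part. This step is routine bookkeeping of the $\mathbb{Z}_2$-grading compatible with the $Z$-module structure.

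The substantive step is the construction of the dual metric $\widehat{g}^{\alpha\beta}(p)$. Here I would invoke the hypothesis that the induced homomorphism $T_pM\to(T_pM)^+$, $v\mapsto\widehat{g}(p)(v,\cdot)$, is injective. Since $T_pM$ and its dual $(T_pM)^+\equiv Hom_Z(T_pM;A)$ are free $A$-modules of the same finite rank $m+n$, injectivity is the nondegeneracy condition that forces this homomorphism to be invertible, and its inverse, read off in the coordinate basis, produces $\widehat{g}^{\alpha\beta}(p)$. The point requiring attention is the codomain: whereas $\widehat{g}_{\alpha\beta}$ consumes two $A$-arguments and returns one element of $A$, its inverse must reverse this variance, carrying a single $A$-argument to a tensor in $A_i\otimes_Z A_j$, i.e.\ landing in $\mathop{\widehat{A}}\limits^2{}^{ij}(A)=Hom_Z(A;A_i\otimes_Z A_j)$, with $i,j$ the parities inherited from the raised indices.

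Finally, the two duality relations are simply the two orders of composition of $\widehat{g}$ with its inverse: contracting $\widehat{g}_{\gamma\beta}$ with $\widehat{g}^{\alpha\beta}$ returns an $A$-valued Kronecker symbol $\delta^\alpha_\gamma\in\widehat{A}$, while the opposite composition $\widehat{g}^{\alpha\beta}\widehat{g}_{\gamma\beta}$ returns the identity endomorphism $\delta^\alpha_\gamma\in Hom_Z(A\otimes_Z A;A\otimes_Z A)$, the discrepancy in codomain reflecting precisely the opposite variance of the two factors. I expect the main obstacle to be the noncommutative, $\mathbb{Z}_2$-graded inversion in the third paragraph: in the category $\mathfrak{Q}_S$ one cannot appeal to commutative determinant arguments, so both the existence and the correct parity placement of $\widehat{g}^{\alpha\beta}$ must be derived intrinsically from the injectivity of the metric map, carefully tracking the Koszul signs through the super-tensor contractions. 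This is where I would concentrate the argument, most directly by adapting the explicit inversion already carried out for quantum (pseudo)Riemannian structures in \cite{PRA15, PRA23}.
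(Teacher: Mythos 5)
First, note that the paper itself offers no proof of this Proposition: it is recalled verbatim from \cite{PRA15, PRA23} as part of the background on quantum (pseudo)Riemannian supermanifolds, so there is no internal argument to measure yours against. Your first two paragraphs --- evaluating $\widehat{g}(p)$ on coordinate basis fields and sorting the components $\widehat{g}_{\alpha\beta}(p)$ into $Hom_Z(A_i\otimes_ZA_j;A)$ by the parities of the indices --- are routine bookkeeping and consistent with the conventions the paper uses for $\mathop{\widehat{A}}\limits^{|\alpha|}(E)\equiv Hom_Z(A\otimes_Z\cdots\otimes_ZA;E)$.

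The genuine gap is in your third paragraph. You claim that, since $T_pM$ and $(T_pM)^+$ are free $A$-modules of the same finite rank, injectivity of $v\mapsto\widehat{g}(p)(v,\cdot)$ forces invertibility. Over a noncommutative quantum (super)algebra this implication fails: an injective homomorphism between free modules of equal finite rank need not be surjective (already over $\mathbb{Z}$, multiplication by $2$ is injective but not invertible), and no determinant or rank argument is available to rescue it. The definition quoted in the paper demands only injectivity, and the paper's own subsequent treatment is consistent with this being strictly weaker than invertibility: nondegeneracy of $\underline{\hat g}$ is recorded in (\ref{short-sequence-non-degeneration}) merely as exactness of $0\to A\otimes_{\mathbb{R}}\mathbf{N}\to(A\otimes_{\mathbb{R}}\mathbf{N})^+$, and the controvariant metric $\overline{\widehat{g}}$ is introduced in (\ref{local-coordinates-representation-full-quantum-metric}) as a section \emph{postulated} to satisfy the two $\delta^\beta_\omega$ identities, not derived from the injectivity hypothesis. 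So the existence of $\widehat{g}^{\alpha\beta}(p)$ with the stated two-sided inverse property is the substantive content of the Proposition, and your argument does not establish it; it must either be taken as an additional hypothesis or extracted from the explicit constructions in \cite{PRA15, PRA23}. Your final paragraph's analysis of the two codomains $\widehat{A}$ and $Hom_Z(A\otimes_ZA;A\otimes_ZA)$ for the two orders of composition is correct once existence is granted.
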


In fact we have the following definition.

\begin{definition}
Let $\hat E_k\subset J\hat D^k(W)$ be a formally integrable and completely
integrable quantum super PDE on the fiber bundle $\pi:W\to M$, and let
$V=D^ks(M)\subset E_k$ be a regular smooth solution of $\hat E_k$. Let
$\xi:M\to E_k[s]$ be the general solution of $E_k[s]$. Let us assume
that there is an Euclidean structure on the fiber of $E[s]\to M$.
Let $(\psi:\mathbb{R}\times N\to N; i:N\to M)$ be a quantum frame
\cite{PRA15, PRA21, PRA22}. Then, we say that $V$ is {\em average
asymptotic stable}, with respect to the quantum frame, if the
function of time $\mathfrak{p}[i](t)$ defined by the formula:
\begin{equation}\label{average-square-perturbation}
    \mathfrak{p}[i](t)=\frac{1}{2 vol(B_t)}\int_{B_t}i^*\xi^2\hskip 3pt \eta
\end{equation}
has the following behaviour:
$<\mathfrak{p}[i](t)>=<\mathfrak{p}[i](0)>e^{-ct}$ for some real number
$c>0$. Here $B_t\equiv N_t\bigcap supp(i^*\xi^2)$, where
$N=\bigcup_{t\in T}N_t$, is the fiber structure of $N$, over the
proper-time of the quantum frame, and the cuspidated bracket $<,>$ denotes expectation value,
(or evaluation with respect to any quantum state of the corresponding quantum (super)algebra). We call $\tau_0=1/c_0$ the {\em
characteristic stability time} of the solution $V$. If
$\tau_0=\infty$ it means that $V$ is average instable.\footnote{In
the following, if there are not reasons of confusion, we shall call
also stable solution a smooth regular solution of a quantum super PDE $\hat E_k\subset
J\hat D^k(W)$ that is average asymptotic stable. In this paper, as specified in Lemma \ref{quantum-PDE-lower-order-relation-bigraded-bordism}, we shall in general assume
that the fiber bundle $\pi:W\to M$, in the category $\mathfrak{D}_S$, has $\dim_AM=m|n$ and $\dim_BW=(m|n,r|s)$,
with respect to a quantum superalgebra $B=A\times E$, where $E$ is a quantum superalgebra that is also a $Z$-module, with $Z=Z(A)$ the center
of $A$. Therefore, $i^*\xi^2$ is a $E$-valued function on $N$ and the expectation value $<i^*\xi^2>$ is a numerical function on
$N$. Similar remarks hold for $<\mathfrak{p}[i](t)>$ and $<\mathop{\mathfrak{p}}\limits^{\bullet}[i](t)>$.}
\end{definition}

We have the following criterion of average asymptotic stability.
\begin{theorem}{\em(Criterion of average asymptotic stability).}\label{criterion-average-asymptotic-stability}
A regular global smooth solution $s$ of $\hat E_k$ is average stable,
with respect to the quantum frame $(\psi:\mathbb{R}\times N\to N;
i:N\to M)$, if the following conditions are satisfied:
\begin{equation}\label{stability-inequality}
   <\mathop{\mathfrak{p}}\limits^{\bullet}[i](t)>\hskip 3pt\le -c\hskip 3pt<\mathfrak{p}[i](t)>,\quad c\in\mathbb{R}^+, \forall t.
\end{equation}
where

\begin{equation}\label{average-square-perturbation}
    \mathfrak{p}[i](t)=\frac{1}{2\hskip 2pt vol(B_t)}\int_{B_t}i^*\xi^2\eta
\end{equation}
and
\begin{equation}\label{average-square-perturbation-rate}
  \mathop{\mathfrak{p}}\limits^{\bullet}[i](t)=\frac{1}{2\hskip 2pt vol(B_t)}
  \int_{B_t}\left(\frac{\delta i^*\xi^2}{\delta t}\right)\eta
 =\frac{1}{vol(B_t)}\int_{B_t}\left(\frac{\delta i^*\xi}{\delta t}.i^*\xi\right)\hskip 3pt \eta.
\end{equation}
Here $i^*\xi$ represents the integrable general solution of the
linearized equation $\hat E_k[s|i]$ of $\hat E_k$ at the solution $s$, and
with respect to the quantum frame. Let us denote by $c_0$ the
infimum of the positive constants $c$ such that inequality
{\em(\ref{stability-inequality})} is satisfied. Then we call
$\tau_0=1/c_0$ the {\em characteristic stability time} of the
solution $V$. If $\tau_0=\infty$ means that $V$ is
unstable.\footnote{$\tau_0$ has just the physical dimension of a
time.}

Furthermore, Let $s$ be a smooth regular solution of a formally
integrable and completely integrable quantum
super PDE $\hat E_k\subset J\hat{\it D}^k(W)$, where $\pi:W\to M$.
There exists a differential operator $\mathcal{P}[s|i](\xi)$, on
$\bar\pi:\hat E[s|i]\equiv i^*(s^*vTW)\to N$, canonically associated
to the solution $s$, and with respect to the quantum frame, such
that $s$ is average stable in $\hat E_k$, or in some suitable
prolongation $(\hat E_k)_{+h}$, $k+h=2s\ge k$, if the following
conditions are verified:

{\em(i)}  $\mathcal{P}[s|i](\xi)$ is self-adjoint (or symmetric) on
the constraint
\begin{equation}\label{constraint}
    (\hat E_k)_{(+r)}[s|i]\subset J\hat {\it D}^{k+r}(\hat E[s|i]),
\end{equation}
for some $r\ge 0$.

{\em(ii)} The smallest eigenvalue
$\overline{\lambda}_1=\overline{\lambda}_1(t)$ of
$\mathcal{P}[s|i](\xi)$ is positive for any $t\in T$ and lower
bounded: $\overline{\lambda}_1\ge\lambda_1>0$.

Furthermore, average stability can be also translated into a variational problem constrained by $(\hat E_k)_{(+h)}[s]$,
for some $h\ge 0$, such that $k+h=2s$.
\end{theorem}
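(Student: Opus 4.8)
The plan is to treat the statement in two stages, matching its two assertions. \emph{First}, I would show that the pointwise differential inequality (\ref{stability-inequality}) already forces the exponential decay demanded by the definition of average asymptotic stability. Setting $P(t)\equiv<\mathfrak{p}[i](t)>$, the dot in $<\mathop{\mathfrak{p}}\limits^{\bullet}[i](t)>$ is the proper-time derivative of $P$ along the quantum frame, so (\ref{stability-inequality}) reads $P'(t)\le -c\,P(t)$. Multiplying by the integrating factor $e^{ct}$ gives $\frac{d}{dt}\bigl(e^{ct}P(t)\bigr)\le 0$, hence $P(t)\le P(0)e^{-ct}$; since $P\ge 0$ this yields $<\mathfrak{p}[i](t)>\to 0$ at least exponentially, which is average stability. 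Taking the infimum $c_0$ over all admissible constants $c$ then produces the sharp rate, and $\tau_0=1/c_0$ is the characteristic stability time. The only care needed here is differentiation under the integral sign in $\mathfrak{p}[i](t)$ when the domain $B_t$ varies with $t$, which is exactly what produces the expression (\ref{average-square-perturbation-rate}).

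\emph{Second}, I would build the operator $\mathcal{P}[s|i](\xi)$ out of the linearized equation $\hat E_k[s|i]$. By Theorem \ref{deformation}, admissible perturbations of $s$ are integrable solutions $\xi$ of the linearized equation; reading this equation with respect to the quantum frame $(\psi,i)$ splits the proper time $t$ off the spatial directions, and after passing, if necessary, to the prolongation $(\hat E_k)_{+h}$ with $k+h=2s$, the evolution part can be put in the normal form $\frac{\delta i^*\xi}{\delta t}=-\mathcal{P}[s|i](\xi)$, where $\mathcal{P}[s|i]$ is a spatial differential operator on $\bar\pi:\hat E[s|i]\to N$. Substituting this into (\ref{average-square-perturbation-rate}) gives
\begin{equation}
\mathop{\mathfrak{p}}\limits^{\bullet}[i](t)=-\frac{1}{vol(B_t)}\int_{B_t}\bigl(\mathcal{P}[s|i](\xi).i^*\xi\bigr)\,\eta.
\end{equation}

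If $\mathcal{P}[s|i](\xi)$ is self-adjoint on the constraint $(\hat E_k)_{(+r)}[s|i]$, which is hypothesis (i), the right-hand side is controlled by the Rayleigh quotient of $\mathcal{P}$, and hypothesis (ii), namely $\overline{\lambda}_1(t)\ge\lambda_1>0$, gives $\int_{B_t}(\mathcal{P}[s|i](\xi).i^*\xi)\,\eta\ge\lambda_1\int_{B_t}i^*\xi^2\,\eta$. Since $\mathfrak{p}[i](t)=\frac{1}{2vol(B_t)}\int_{B_t}i^*\xi^2\,\eta$, this yields $\mathop{\mathfrak{p}}\limits^{\bullet}[i](t)\le-2\lambda_1\,\mathfrak{p}[i](t)$, and after taking expectation values it is precisely (\ref{stability-inequality}) with $c=2\lambda_1>0$, so the first stage applies and $s$ is average stable. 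The variational reformulation then follows by recognizing $\xi\mapsto\int_{B_t}(\mathcal{P}[s|i](\xi).i^*\xi)\,\eta$ as an energy functional whose constrained minimum over solutions of $(\hat E_k)_{(+h)}[s]$, normalized by $\int_{B_t}i^*\xi^2\,\eta$, equals $\overline{\lambda}_1(t)$; positivity of this minimum is equivalent to average stability.

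The main obstacle I anticipate is the second stage: producing $\mathcal{P}[s|i]$ as a genuine, well-defined differential operator and establishing its self-adjointness in the quantum-super setting. One must verify that the frame-splitting of $\hat E_k[s]$ really yields the first-order-in-$t$ normal form after prolonging to $k+h=2s$, that the integration by parts underlying self-adjointness survives when the fields are $E$-valued and the metric $\widehat{g}$ is noncommutative, and that a spectral theory guaranteeing a smallest eigenvalue $\overline{\lambda}_1(t)$ together with the Rayleigh-quotient estimate is available for $E$-valued sections evaluated through the expectation $<\,\cdot\,>$. These are exactly the points where the \emph{finite ellipticity} and Noetherian-centre hypotheses invoked in Theorem \ref{finite-stable-extended-crystal-PDE} are expected to enter, ensuring that the relevant operator is elliptic with discrete, bounded-below spectrum.
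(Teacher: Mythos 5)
Your proposal follows essentially the same route as the paper: the first stage is exactly Gr\"onwall's lemma (which the paper invokes explicitly, and which your integrating-factor argument reproves), and the second stage is the same reduction to the spectrum of the frame-evolution operator $\mathcal{P}[s|i]$ on $\bar\pi:\hat E[s|i]\to N$, with self-adjointness on the prolonged constraint giving a real spectrum, positivity and lower-boundedness of the smallest eigenvalue giving the decay inequality, and the same variational reformulation at order $k+h=2s$. Your Rayleigh-quotient estimate is a slightly cleaner and sign-consistent version of the paper's eigenvalue computation, but it is the same idea, and the difficulties you flag at the end (well-definedness and self-adjointness of $\mathcal{P}[s|i]$ in the noncommutative setting) are precisely the points the paper also leaves to the ellipticity hypotheses.
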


\begin{proof}
We shall use Theorem \ref{deformation} and the following lemma.

\begin{lemma}{\em(Gr\"onwall's lemma)\cite{GRON}}\label{Gronwall-lemma}
Suppose $f(t)$ is a real function whose derivative is bounded
according to the following inequality: $\frac{df}{dt}\le
g(t)f+h(t)$, for some real functions $g(t)$ and $h(t)$. Then, $f(t)$
is bounded pointwise in time according to $f(t)\le
f(0)e^{G(t)}+\int_{[0,t]}e^{G(t-s)}h(s) ds$, where
$G(t)=\int_{[0,t]}g(r)dr$.
\end{lemma}

Then a sufficient condition for the solution $V$ stability, with
respect to the quantum frame, is that inequality
(\ref{stability-inequality}) should be satisfied. In fact it is
enough to use Lemma \ref{Gronwall-lemma} with $g(t)=-c$ and
$h(t)=0$, to have $<\mathfrak{p}[i](t)>=<\mathfrak{p}[i](0)>e^{-ct}$.

Furthermore, condition (\ref{stability-inequality}) is satisfied iff
\begin{equation}\label{infimum-condition}
I[\xi|i]\equiv\left<-2\int_{B_t}<\frac{\delta i^*\xi}{\delta
t}+ci^*\xi,i^*\xi>\eta\right>\hskip 3pt\ge 0,
\end{equation}
for some constant $c>0$ and for any integrable solution $i^*\xi$ of
$\hat E_k[s|i]$. (The large cuspidated brackets $<,>$ in (\ref{infimum-condition}) denote expectation value.) So the problem is converted to study the
spectrum
of the differential operator, $\mathcal{P}[s|i](\xi)\equiv
\frac{\delta i^*\xi}{\delta t}$, on $\bar\pi:\hat E[s|i]\to N$,
constrained by $(\hat E_k)_{(+r)}[s]$, for some $r\ge 0$, since
$P[s|i](\xi)$ is of order $\ge k$. If this is self-adjoint, (or
symmetric), it follows that it has real spectrum and the stability
of the solution is related to the sign of the smallest
eigenvalue.\footnote{Really it should be enough to require that
$\mathcal{P}[s|i]$ is a symmetric operator in the Hilbert space
$\mathcal{H}_t$, canonically associated to $\hat E[s]|_{B_t}$. In
fact the point spectrum $Sp(A)_p$ of a symmetric linear operator $A$
on $\mathcal{H}_t$ is real $Sp(A)_p\subset \mathbb{R}$. (This is
true also for its continuous spectrum: $Sp(A)_c\subset \mathbb{R}$.)
In our case it is enough that $\mathcal{P}[s|i]$ should symmetric on
the space of $\hat E_k[s|i]$ solutions. However, it is well known in
functional analysis that every symmetric operator has a self-adjoint
extension, on a possibly larger space \cite{DU-SH}.} If such an
eigenvalue $\overline{\lambda}_1(t)$ is positive, $\forall t\in T$,
and $\lambda_1=\inf_{t\in T}\overline{\lambda}_1(t)>0$, then the ratio
$<-\mathop{\mathfrak{p}}\limits^{\bullet}[i](t)>/<\mathfrak{p}[i](t)>$
is higher than a positive constant, hence the solution $s$ is
average stable. In fact, we get

\begin{equation}
\left\{
\begin{array}{ll}
  -\mathop{\mathfrak{p}}\limits^{\bullet}[i](t)-\lambda_1\mathfrak{p}[i](t)& =
    \int_{B_t}[(\mathcal{P}[s|i](\xi).\xi)-\lambda_1i^*\xi^2]\eta\\
  & =\int_{B_t}(\overline{\lambda}_1(t)-\lambda_1)i^*\xi^2\eta=
  (\overline{\lambda}_1(t)-\lambda_1)\int_{B_t}i^*\xi^2\eta\\
\end{array}\right.
\end{equation}
for any $t\in T$. Thus we have also, (for any $\int_{B_t}i^*\xi^2\eta\not=0$),
\begin{equation}
\frac{<-\mathop{\mathfrak{p}}\limits^{\bullet}[i](t)>}{<\mathfrak{p}[i](t)>}-\lambda_1\hskip 3pt
=(\overline{\lambda}_1(t)-\lambda_1)\hskip 3pt\ge
0\hskip 3pt \Rightarrow\quad \frac{<-\mathop{\mathfrak{p}}\limits^{\bullet}[i](t)>}{<\mathfrak{p}[i](t)>}\ge\lambda_1,\quad\forall t\in T.
\end{equation}
So condition (\ref{stability-inequality}) is satisfied, hence the
solution $s$ is average stable. In order to complete the proof of
Theorem \ref{criterion-average-asymptotic-stability}, let us
emphasize that in general $\mathcal{P}[s|i](\xi)-ci^*\xi$ is not
identified with the quantum Euler-Lagrange operator for some quantum Lagrangian. In
fact, in general, the differential order of such an operator does
not necessitate to be even. By the way, since $\hat E_k$ is assumed
formally integrable and completely integrable,
we can identify any smooth solution $V\subset \hat E_k$, with its
$h$-prolongation $V^{(h)}\subset J\hat{\it D}^{k+h}(W)$, such that
$k+h=2s$. Thus the problem of average stability can be translated in
a variational problem, constrained by solutions of $(\hat
E_k)_{+(h)}[s|i]$.
\begin{equation}\label{constrained-Euler-Lagrange-solution-perturbation-operator}
\left\{-\frac{\delta i^*\xi}{\delta t}=2\lambda(t)i^*\xi, \quad
F_\alpha^I[s|i]=0,\quad 0\le|\alpha|\le h,\hskip 2pt
k+h=2s\right\}_{t=const}
\end{equation}
on the fiber bundle $\bar\pi:\hat E[s|i]\equiv i^*(s^*vTW)\to N$.
Here $F^I[s|i]=0$ are the equations encoding $\hat E_k[s|i]$. This
can be made not only locally but also globally. In fact one has the
following lemmas. (See for the terminology \cite{PRA18, PRA32} and
references quoted there.)

\begin{lemma}{\em\cite{PRA32}}\label{constrained-variational-problems}
Let $\pi:W\to M$, a fiber bundle in the category $\mathfrak{Q}_S$,
$\dim_A M=m|n $, $\dim_BW=(m|n,r|s)$. Let $L:\hat
J^k_{m|n}(W)\to\widehat{A}$ be a $k$-order quantum Lagrangian
function and $\theta\equiv L\eta\in\widehat{\Omega}^{m+n}(\hat
J_{m|n}^k(W))$, locally given by
$\theta=L\widehat{dx^1\triangle\cdots\triangle
dx^{m+n}}=l\hat\mu_*\circ dx^1\triangle\cdots\triangle dx^{m+n}$,
where $(x^\alpha,y^j)$ are fibered quantum coordinates on $W$, and
$\hat\mu_*:\dot T^{m+n}_0(A)\to A$ is the $Z$-homomorphism induced
by the product on $A$. Then, extremals for $\theta$, constrained by
$\hat E_k$, are solutions $f:X\to \hat E_k$, with $X$ a quantum
supermanifold of dimension $m|n$ with respect to $A$, such that the
following condition is satisfied:
\begin{equation}\label{variationa-action-integral}
\left<\sum_{1\le j\le r+s}\left[\nu^j\sum_{0\le |i|\le
k}(-1)^{|i|}\partial_i\left({{\partial L}\over{\partial
y^j_i}}\right)\right]\eta,X\right>=0,
\end{equation}

for any $\nu=\nu^j\partial y_j$, solution of the linearized equation
of $\hat E_k$ at the solution $s$. In particular, if $\hat E_k=\hat
J_{m|n}^k(W))$, then extremals are solutions of the following
equation {\em(quantum Euler-Lagrange super equation)}:

\begin{equation}
\hat E[\theta]\subset \hat
J^{2k}_{m|n}(W):\quad\left\{\sum_{0\le|i|\le
k}(-1)^{|i|}\partial_i\left({{\partial L}\over{\partial
y^j_i}}\right)=0\right\}_{1\le j\le r+s}.
\end{equation}
\end{lemma}

This completes the proof. \end{proof}

\section{\bf SUPERGRAVITY YANG-MILLS PDE's IN {\boldmath$\mathfrak{Q}_S$}}
\vskip 0.5cm

In a previous paper we have encoded quantum supergravity as suitable
quantum super Yang-Mills PDE's. Nowadays, there are experimental
evidences that nuclides can be considered as quark-gluon plasmas.
For example, in order to justify spin-nuclides, it is not enough to
consider them as simply made by quarks. In fact, collective effects
appear necessary to justify nuclides properties. (See, e.g., Refs.
\cite{C-N-Z1, C-N-Z2, RON}.) With this respect, we shall encode
nuclear nuclides with suitable quantum supergravity Yang-Mills PDE's. Then stable nuclides are
stable solutions with mass-gap. An existence theorem for solutions with mass-gap is given.
 A quantum super partial differential relation, $\widehat{(Goldstone)}$, ({\em quantum Goldstone-boundary}),
 contained in a quantum super Yang-Mills equation,
having the property to create, or destroy, mass is recognized and characterized. $\widehat{(Goldstone)}$ bounds an open
constraint $\widehat{(Higgs)}\subset\widehat{(YM)}$, where live all the solutions with mass-gap.
A stable quantum super PDE, where all the smooth solutions have mass-gap and are stable in finite times,
is obtained.

Let us introduce some fundamental geometric objects to encode
quantum supergravity. (See also our previous works on this subjects
that formulate quantum supergravity in the framework of our
geometric theory of quantum super PDE's \cite{PRA13, PRA21, PRA22, PRA23,
PRA30, PRA31, PRA32}.) The first geometric object to consider is an affine
$m$-dimensional Minkowsky space-time
$(N,\mathbf{N},\alpha;\underline{g})$, where $\mathbf{N}$ is a
$m$-dimensional $\mathbb{R}$-vector space, endowed with an
hyperbolic metric $\underline{g}\in S^0_2(\mathbf{N})$, with
signature $(+,---\cdots)$. $\alpha:\mathbf{N}\times N\to N$ is the
translation mapping. Then, we consider also a {\em quantum
Riemannian (super)manifold}, $(M,\widehat{g})$, of dimension $m$,
($m|n$), with respect to a quantum algebra $A$, where
$\widehat{g}:M\to Hom_Z(\dot T^2_0M;A)$ is a quantum metric. We
shall assume that $M$ is locally {\em quantum (super) Minkowskian},
i.e., there is a $Z$-isomorphism, {\em(quantum vierbein)}:
\begin{equation}\label{quantum-vierbein}
    \hat\theta(p):T_pM\cong A\otimes_{R}\mathbf{N}, \forall p\in M,
\end{equation}
where $T_pM$ is the tangent space at $p\in M$ to $M$. Equivalently a
quantum vierbein is a section $\hat\theta:M\to
Hom_Z(TM;E)\cong\widehat{E}\otimes_{\widehat{A}}(TM)^+$, where $E$
is the trivial fiber bundle $\bar\pi:E\equiv M\times
A\otimes_{R}\mathbf{N}$. Let us denote by  $\underline{g}$ a
$A$-valued scalar product, $\underline{\hat g}$, on
$A\otimes_{R}\mathbf{N}$, given by $\underline{\hat g}(a\otimes
u,b\otimes v)=ab\hskip
  3pt\underline{g}(u,v)\in A $. By using the canonical splitting $Hom_Z(\dot T^2_0(A\otimes_{\mathbb{R}}\mathbf{N});A)\cong Hom_Z(\dot S^2_0(A\otimes_{\mathbb{R}}\mathbf{N});A)\oplus
  Hom_Z(\dot \Lambda^2_0(A\otimes_{\mathbb{R}}\mathbf{N});A)$, we get also the split representation $\underline{\hat g}=\underline{\hat g}_{(s)}+\underline{\hat g}_{(a)}$. More precisely one has
 \begin{equation}
 \underline{\hat g}_{(s)}(a\otimes u,b\otimes v)=[a,b]_+\underline{g}(u,v),\quad
 \underline{\hat g}_{(a)}(a\otimes u,b\otimes v)=[a,b]_-\underline{g}(u,v).
  \end{equation}
  Furthermore, if $(e_\alpha)$ is a basis in $\mathbf{N}$, and $(e^\beta)$ is its dual, characterized by the conditions $e_\alpha e^\beta=\delta^\beta_\alpha$, let us denote respectively by $(\widehat{1\otimes e_\alpha})$ and
  $((1\otimes e^\beta)^+)$ the induced dual bases on the spaces $\widehat{A\otimes_{\mathbb{R}}\mathbf{N}}$ and $(A\otimes_{\mathbb{R}}\mathbf{N})^+$ respectively. Then one has the following representations
 \begin{equation}\label{basis-representation-quantum-extended-minkowsky-product}
\left\{   \begin{array}{l}
     \underline{\hat g}=\underline{\hat g}_{\alpha\beta}(1\otimes e^\alpha)^+\otimes(1\otimes e^\beta)^+,\quad
     \underline{\hat g}_{\alpha\beta}\in\mathop{\widehat{A}}\limits^{2}\\
     \underline{\hat g}_{(s)}=\underline{\hat g}_{(s)}{}_{\alpha\beta}(1\otimes e^\alpha)^+\bullet(1\otimes e^\beta)^+,\quad
     \underline{\hat g}_{(s)}{}_{\alpha\beta}\in\mathop{\widehat{A}}\limits^{2}\\
     \underline{\hat g}_{(a)}=\underline{\hat g}_{(a)}{}_{\alpha\beta}(1\otimes e^\alpha)^+\triangle(1\otimes e^\beta)^+,\quad
     \underline{\hat g}_{(a)}{}_{\alpha\beta}\in\mathop{\widehat{A}}\limits^{2}.\\
     \end{array}\right.
 \end{equation}

By means of the isomorphism $\hat\theta^{\otimes}$, we can induce on $M$ a quantum
  metric, i.e., the {\em quantum Minkowskian metric} of $M$,
$\widehat{g}=\underline{\hat g}\circ\hat\theta^{\otimes}$.
Conversely any quantum metric $\widehat{g}$ on $M$, induces on the
space $A\otimes_{R}\mathbf{N}$, scalar products, for any $p\in M$:
$\hat g(p)=\widehat{g}(p)\circ(\hat\theta^{\otimes}(p))^{-1}$. As a
by-product, we get that any quantum metric $\widehat{g}$ on $M$,
induces a quantum metric on the fiber bundle $\bar\pi:E\to M$, that
we call the {\em deformed quantum metrics} of $\bar\pi:E\to M$.
Therefore, when we talk about locally Minkowskian quantum manifold
$M$, we mean that on $M$ is defined a Minkowskian quantum metric.
Since
$Hom_Z(TM;A\otimes_{\mathbb{R}}\mathbf{N})\cong\widehat{A\otimes_{\mathbb{R}}\mathbf{N}}\otimes_{\widehat{A}}(TM)^+$,
we can locally represent a quantum vierbein in the following form:
\begin{equation}\label{local-quantum-vierbein}
    \hat\theta=\widehat{1\otimes e_\beta}\bigotimes
    \hat\theta^\beta_\alpha\hskip
  3pt dx^\alpha,
\end{equation}
where $\widehat{1\otimes e_\beta}\in
Hom_Z(A;A\otimes_{\mathbb{R}}\mathbf{N})$, is the full quantum
extension of a basis $(e_\alpha)_{0\le\alpha\le m-1}$ of
$\mathbf{N}$, i.e., $\widehat{1\otimes e_\beta}(a)=a\otimes
e_\beta$. Furthermore, $\hat\theta^\beta_\alpha(p)\in \widehat{A}$.
Then, if $\zeta:M\to \widehat{TM}\equiv Hom_Z(A;TM)$ is a full
quantum vector field on $M$, locally represented by $\zeta=\partial
x_\alpha \zeta^\alpha$, we get that its local representation by
means of quantum vierbein, is given by the following formula:
\begin{equation}\label{quantum-vierbein-full-quantum-vector-field}
    \hat\theta(\zeta)=\widehat{1\otimes e_\beta}\hat\theta^\beta_\alpha\zeta^\alpha,
\end{equation}
where the product is given by composition:
\begin{equation}
\xymatrix@1@C=50pt{A\ar[r]^{\zeta^\alpha}\ar@/_1pc/[rrr]_{\sum_{\alpha,\beta}=\hat\theta(\zeta)}&
A\ar[r]^{\hat\theta_\alpha^\beta}&A\ar[r]^{\widehat{1\otimes
e_\beta}}&A\otimes_Z\mathbf{N}\\}
\end{equation}
(For abuse of notation we
can also denote $\hat\theta(\zeta)$ by $\zeta$ yet.)  Whether
$\widehat{g}=\widehat{g}_{\alpha\beta}dx^\alpha\otimes dx^\beta$, is
the quantum Minkowskian metric of $M$, then its local representation
by means of the quantum vierbein is the following:
\begin{equation}\label{quantum-vierbein-full-quantum-metric}
\left\{\begin{array}{l}
         \widehat{g}=\widehat{g}_{\alpha\omega}dx^\alpha\otimes dx^\omega\\
       \widehat{g}_{\alpha\omega}=\hat\theta^\beta_\alpha\otimes\hat\theta^\gamma_\omega\hskip 3pt\underline{g}_{\beta\gamma}=
   \underline{g}_{\beta\gamma}\hat\theta^\beta_\alpha\otimes\hat\theta^\gamma_\omega,\quad
   \widehat{g}_{\alpha\omega}(p)\in\mathop{\widehat{A}}\limits^{2},\quad \forall p\in M.\\
   \end{array}\right.
   \end{equation}
where $\hat\theta^\beta_\alpha\otimes\hat\theta^\gamma_\omega(p)$,
can be identified with
$\hat\theta^\beta_\alpha\otimes\hat\theta^\gamma_\omega(p)\in
Hom_Z(\dot T^2_0(A);\dot T^2_0(A))$. In fact, one has the following
extension $\hat\theta^\otimes$ of $\hat\theta$:
\begin{equation}\label{tensor-extension-quantum-vierbein}
  \left\{
  \begin{array}{ll}
     \hat\theta^{\otimes}& \in
   Hom_Z(TM\otimes_ZTM;(A\otimes_{\mathbb{R}}\mathbf{N})\otimes_Z(A\otimes_{\mathbb{R}}\mathbf{N})) \\
    & \cong\widehat{(A\otimes_{\mathbb{R}}\mathbf{N})\otimes_Z(A\otimes_{\mathbb{R}}\mathbf{N})}\bigotimes_{\widehat{A}}
   (TM\otimes_ZTM)^+.\\
  \end{array}
\right.
\end{equation}
Locally one can write
\begin{equation}\label{local-tensor-extension-quantum-vierbein}
   \hat\theta^\otimes=\widehat{(1\otimes e_\gamma)\otimes(1\otimes
   e_\omega)}\otimes\hat\theta^\gamma_\alpha\otimes\hat\theta^\omega_\beta\hskip
  3pt
   dx^\alpha\otimes dx^\beta.
\end{equation}
In fact, we have
\begin{equation}\label{calculation-local-tensor-extension-quantum-vierbein}
\widehat{g}(\zeta,\xi) =  \widehat{g}(\widehat{1\otimes e_\beta}\hat\theta^\beta_\alpha\zeta^\alpha,
  \widehat{1\otimes e_\gamma}\hat\theta^\gamma_\omega\xi^\omega)
  =\hat\theta^\beta_\alpha\zeta^\alpha\hat\theta^\gamma_\omega\xi^\omega\hskip
  3pt \underline{g}(e_\beta,e_\gamma)
  =\hat\theta^\beta_\alpha\zeta^\alpha\hat\theta^\gamma_\omega\xi^\omega\hskip
  3pt
  \underline{g}_{\beta\gamma}.
\end{equation}
In the particular case that $(e_\beta)$ is an orthonormal basis,
then we get the following quantum Minkowskian representation for
$\widehat{g}$
\begin{equation}\label{minkowskian-representation-full-quantum-metric}
(\widehat{g}_{\alpha\omega})=\hat\theta^\beta_\alpha\otimes\hat\theta^\gamma_\omega\hskip
  3pt
  \eta_{\beta\gamma},\quad(\eta_{\beta\gamma}) = \left(
                                   \begin{array}{cccc}
                                     1& 0&\cdots&0 \\
                                     0& -1&\cdots&0 \\
                                     \cdots&\cdots &\cdots&\cdots \\
                                     0& 0&\cdots&-1\\
                                   \end{array}
                                 \right).
\end{equation}

The splitting in symmetric and skew-symmetric part of $\widehat{g}$,
i.e.,
\begin{equation}\label{splitting-full-quantum-metric}
\widehat{g}=\widehat{g}_{(s)}+\widehat{g}_{(a)}=\widehat{g}_{\alpha\beta}\hskip
  3ptdx^\alpha\bullet
dx^\beta+\widehat{g}_{\alpha\beta}\hskip
  3ptdx^\alpha\triangle dx^\beta
\end{equation}
can be written in term of quantum vierbein in the following way:
\begin{equation}\label{symmetric-skewsymmetric-vierbein}
\left\{
\begin{array}{l}
  \hat\theta^\otimes=\hat\theta^{\odot}+ \hat\theta^{\wedge}\\
  \hat\theta^{\odot}=\widehat{(1\otimes e_\gamma)\otimes(1\otimes
   e_\omega)}\otimes\hat\theta^\gamma_\alpha\otimes\hat\theta^\omega_\beta\hskip
  3pt
   dx^\alpha\bullet dx^\beta\\
\hat\theta^{\wedge}=\widehat{(1\otimes e_\gamma)\otimes(1\otimes
   e_\omega)}\otimes\hat\theta^\gamma_\alpha\otimes\hat\theta^\omega_\beta\hskip
  3pt
   dx^\alpha\triangle dx^\beta\\
\widehat{g}_{(s)}(\zeta,\xi)=[\hat\theta^\beta_\alpha\zeta^\alpha,\hat\theta^\gamma_\omega\xi^\omega]_+\hskip
  3pt
  \underline{g}_{\beta\gamma},\Rightarrow \widehat{g}_{(s)}{}_{\alpha\omega}=
  \hat\theta^\beta_\alpha\bullet\hat\theta^\gamma_\omega\hskip 3pt\underline{g}_{\beta\gamma}\\
  \widehat{g}_{(a)}(\zeta,\xi)=[\hat\theta^\beta_\alpha\zeta^\alpha,\hat\theta^\gamma_\omega\xi^\omega]_-\hskip
  3pt
  \underline{g}_{\beta\gamma},\Rightarrow
  \widehat{g}_{(a)}{}_{\alpha\omega}=\hat\theta^\beta_\alpha\triangle\hat\theta^\gamma_\omega\hskip
  3pt\underline{g}_{\beta\gamma}.\\
\end{array}
\right.
\end{equation}

Conversely, the local expression of the quantum deformed metrics on
$\bar\pi:E\to M$, induced by a quantum metrics $\widehat{g}$ on $M$,
is given by the following formulas:

\begin{equation}\label{symmetric-skewsymmetric-vierbein}
\left\{
\begin{array}{l}
  (\hat\theta^\otimes)^{-1}=(\hat\theta^{\odot})^{-1}+ (\hat\theta^{\wedge})^{-1}\\
  (\hat\theta^{\odot})^{-1}=\partial x_\alpha\bullet\partial x_\beta\otimes\hat\theta_\gamma^\alpha\otimes
  \hat\theta_\omega^\beta\hskip 3pt
   (1\otimes e^\gamma)^+\bullet(1\otimes e^\omega)^+\\
(\hat\theta^{\wedge})^{-1}=\partial x_\alpha\triangle\partial
x_\beta\otimes\hat\theta_\gamma^\alpha\otimes
  \hat\theta_\omega^\beta\hskip 3pt
   (1\otimes e^\gamma)^+\triangle(1\otimes e^\omega)^+\\
\hat{g}(\zeta^\alpha\otimes e_\alpha,\xi^\beta\otimes
e_\beta)=\widehat{g}_{\gamma\omega}\hat\theta^\gamma_\alpha\zeta^\alpha\otimes\hat\theta^\omega_\beta\xi^\beta,\Rightarrow
\hat{g}_{\alpha\beta}=\widehat{g}_{\gamma\omega}\hat\theta^\gamma_\alpha\otimes\hat\theta^\omega_\beta\\
\hat{g}_{(s)}(\zeta^\alpha\otimes e_\alpha,\xi^\beta\otimes
e_\beta)=\widehat{g}_{(s)}{}_{\gamma\omega}[\hat\theta^\gamma_\alpha\zeta^\alpha,\hat\theta^\omega_\beta\xi^\beta]_+,\Rightarrow
\hat{g}_{(s)}{}_{\alpha\beta}=\widehat{g}_{(s)}{}_{\gamma\omega}\hat\theta^\gamma_\alpha\bullet\hat\theta^\omega_\beta\\
\hat{g}_{(a)}(\zeta^\alpha\otimes e_\alpha,\xi^\beta\otimes
e_\beta)=\widehat{g}_{(s)}{}_{\gamma\omega}[\hat\theta^\gamma_\alpha\zeta^\alpha,\hat\theta^\omega_\beta\xi^\beta]_-,\Rightarrow
\hat{g}_{(a)}{}_{\alpha\beta}=\widehat{g}_{(a)}{}_{\gamma\omega}\hat\theta^\gamma_\alpha\triangle\hat\theta^\omega_\beta.\\
\end{array}
\right.
\end{equation}
In the particular case that $\widehat{g}$ is Minkowskian, then we
can use for $\widehat{g}_{\gamma\omega}$,
$\widehat{g}_{(s)}{}_{\gamma\omega}$ and
$\widehat{g}_{(a)}{}_{\gamma\omega}$ the corresponding expressions
in (\ref{quantum-vierbein-full-quantum-metric}), and by using the
property that
$\hat\theta^\delta_\gamma\hat\theta^\gamma_\alpha=\delta^\delta_\alpha$,
we get $\hat{g}_{\gamma\omega}=\underline{\hat{g}}_{\gamma\omega}$,
$\hat{g}_{(s)}{}_{\gamma\omega}=\underline{\hat{g}}_{(s)}{}_{\gamma\omega}$
and
$\hat{g}_{(a)}{}_{\gamma\omega}=\underline{\hat{g}}_{(a)}{}_{\gamma\omega}$.
The {\em controvariant full quantum metric} $\overline{\widehat{g}}$
of $\widehat{g}:M\to Hom_Z(\dot T^2_0M;A)$ is a section
$\overline{\widehat{g}}:M\to Hom_Z(A;\dot T^2_0M)$ such that the following conditions are satisfied:
\begin{equation}\label{local-coordinates-representation-full-quantum-metric}
\scalebox{0.9}{$\left\{
\begin{array}{l}
 \overline{\widehat{g}}=\partial x_\alpha\otimes\partial
x_\beta\hat{g}^{\alpha\beta},\quad
\widehat{g}=\hat{g}_{\gamma\omega}dx^\gamma\otimes dx^\omega,\quad
 \hat{g}^{\alpha\beta}(p)\in Hom_Z(A;A\otimes_Z A),\quad  \hat{g}_{\alpha\beta}(p)\in \mathop{\widehat{A}}\limits^{2}\\
\hat{g}_{\gamma\omega}(p)\hat{g}^{\gamma\beta}(p)=\delta^\beta_\omega \in\mathbb{R}\subset\widehat{A},\quad
\hat{g}^{\gamma\beta}(p)\hat{g}_{\gamma\omega}(p)=\delta^\beta_\omega \in\mathbb{R}\subset Hom_Z(A\otimes_Z A;A\otimes_Z A).\\
\end{array}
\right.$}
\end{equation}
The products in (\ref{local-coordinates-representation-full-quantum-metric}) are meant by composition:
\begin{equation}\label{composition-local-coordinates-representation-full-quantum-metric}
\xymatrix@1@C=50pt{A\ar[r]^{\hat g{}^{\alpha\beta}(p)}\ar@/_1pc/[rr]_{\delta^\beta_\gamma}&A\otimes_ZA\ar[r]^{\hat g{}_{\alpha\gamma}(p)}&A\\}
\quad
\xymatrix@1@C=50pt{A\otimes_ZA\ar[r]^{\hat g{}_{\alpha\gamma}(p)}\ar@/_1pc/[rr]_{\delta^\beta_\gamma}&A\ar[r]^{\hat g{}^{\alpha\beta}(p)}&A\otimes_ZA.\\}
\end{equation}
In the commutative diagram (\ref{commutative-diagram-pairing}) it is shown the pairing working between the fiber bundles $(\dot T^2_0M)^+$ and $\widehat{\dot T^2_0M}$ over $M$.
\begin{equation}\label{commutative-diagram-pairing}
\xymatrix{\widehat{\dot T^2_0M}\times_M(\dot T^2_0M)^+\ar[d]\ar[r]^{<,>}&\widehat{A}\\
Hom_Z(\dot T^2_0M;\dot T^2_0M)\cong \widehat{\dot T^2_0M}\bigotimes_{\widehat{A}}(\dot T^2_0M)^+\ar[r]_(0.77){\TR}&M\times\widehat{A}\ar[u]^{pr_2}\\}
\end{equation}
In particular, one has:
\begin{equation}\label{trace-pairing}
\frac{1}{s}<\overline{\widehat{g}},\widehat{g}>=\frac{1}{s}\hat g{}^{\alpha\beta}\hat g{}_{\alpha\beta}
=\frac{1}{s}\delta^\beta_\beta 1_{\widehat{A}}=1_{\widehat{A}},
\quad s=\left\{\begin{array}{l}
                 m,\hskip 3pt  \dim_AM=m\\
                 m+n,\hskip 3pt  \dim_AM=m|n.\\
               \end{array}\right.
\end{equation}

It is direct to verify that
$\widehat{g}^{\alpha\beta}=\hat\theta^\alpha_\omega\otimes\hat\theta^\beta_\epsilon\underline{\hat{g}}^{\omega\epsilon}$
is the controvariant expression of the full quantum metric
$\widehat{g}_{\alpha\beta}=\hat\theta_\alpha^\gamma\otimes\hat\theta^\delta_\beta\underline{\hat{g}}_{\gamma\delta}$,
when $\underline{\hat{g}}^{\omega\epsilon}$ is the controvariant one
of $\underline{\hat{g}}_{\gamma\delta}$. In other words if
$\underline{\hat{g}}_{\omega\delta}\underline{\hat{g}}^{\omega\epsilon}=\delta^\epsilon_\delta$,
then
$\widehat{g}_{\alpha\gamma}\widehat{g}^{\alpha\beta}=\delta^\beta_\gamma$.
This means that the full quantum metric $\underline{\hat{g}}$,
induced on $A\otimes_{\mathbb{R}}\mathbf{N}$ by $\underline{g}$, is
not degenerate, i.e. one has the following short exact sequence:
\begin{equation}\label{short-sequence-non-degeneration}
\xymatrix{0\ar[r]&A\otimes_{\mathbb{R}}\mathbf{N}\ar[r]^{{}'\underline{\hat
g}}&(A\otimes_{\mathbb{R}}\mathbf{N})^+.\\}
\end{equation}
In fact, one can see that $\ker({}'\underline{\hat{g}})=\{0\}$.
Really, ${}'\underline{\hat{g}}(a\otimes v)(b\otimes
u)=ab\underline{g}(v,u)=0$, for all $b\in A$ and $u\in \mathbf{N}$
iff $a=0$ or $v=0$. In fact we can take $b=1$ and $u$ any vector of
$\mathbf{N}$. So, since $\underline{g}$ is not degenerate, it
follows that cannot be  $\underline{g}(v,u)=0$, for a non zero $v$,
and $\forall u\in \mathbf{N}$. The nondegeneration of
$\underline{\hat{g}}$ induces also the following isomorphism
$\widehat{A\otimes_{\mathbb{R}}\mathbf{N}}\cong(A\otimes_{\mathbb{R}}\mathbf{N})^+$.

\begin{definition}{\em(Quantum SG-Yang-Mills PDE's).}\label{quantum-sg-ym}
A {\em quantum supergravity Yang-Mills PDE}, {\em(quantum SG-Yang-Mills PDE)}, is a quantum super
Yang-Mills PDE where the quantum super Lie algebra $\mathfrak{g}$ in
the configuration bundle $\pi:W\equiv Hom_Z(TM;\mathfrak{g})\to M$
is a quantum superextension of the Poincar\'e Lie algebra and admits
the following splitting of vector spaces:
\begin{equation}\label{split-quantum-algebra}
\mathfrak{g}=\mathfrak{g}_{\circledR}+\mathfrak{g}_{\copyright}+\mathfrak{g}_{\maltese}
\end{equation}
where $\mathfrak{g}_{\circledR}=A\otimes_{\mathbb{R}}\mathbf{N}$,
(resp. $\mathfrak{g}_{\copyright}$ is the quantum superextension of
the Lorentz part of the Poincar\'e algebra). Here $A$ is a quantum
(super)algebra on which is modeled the quantum (super)manifold $M$,
and $\mathbf{N}$ is the $4$-dimensional Minkowsky vector space.
Furthermore, one assumes that there exists a non-degenerate metric
$\underline{g}$ on $\mathfrak{g}$. Taking into account the canonical
splitting:
\begin{equation}\label{split-quantum-lie-algebra}
Hom_Z(TM;\mathfrak{g})\cong Hom_Z(TM;\mathfrak{g}_{\circledR})\times
Hom_Z(TM;\mathfrak{g}_{\copyright})\times
Hom_Z(TM;\mathfrak{g}_{\maltese})
\end{equation}
we get that the fundamental field $\hat\mu:M\to W$, in a quantum
supergravity Yang-Mills PDE, admits the following canonical
splitting:\footnote{We shall use also the following notation
$\hat\mu={}_{\circledR}\hat\mu+{}_{\copyright}\hat\mu+{}_{\maltese}\hat\mu$,
that can be useful when one must add some indexes, e.g.
${}_{\circledR}\hat\mu^K_A$.}
\begin{equation}\label{splitted-fundamental-quantum-field}
\hat
\mu=\hat\mu_{\circledR}+\hat\mu_{\copyright}+\hat\mu_{\maltese}.
\end{equation}
\end{definition}

\begin{definition}
We say that $\hat\mu$ is {\em non-degenerate} if
$\hat\mu_{\circledR}$ identifies, for any $p\in M$, an isomorphism
$\hat\mu_{\circledR}(p):T_pM\cong A\otimes_{\mathbb{R}}\mathbf{N}$,
hence $\hat\mu_{\circledR}$ can be identified with a quantum vierbein
on $M$: $\hat\mu_{\circledR}\equiv\hat\theta$. Then we define
$\hat\mu_{\circledR}$, (resp. $\hat\mu_{\copyright}$, resp.
$\hat\mu_{\maltese}$), the {\em vierbein-component}, (resp. {\em
Lorentz-component}, resp. {\em deviatory-component}), of $\hat\mu$.
\end{definition}

\begin{definition}
Similarly to the quantum connection $\hat\mu$, i.e., the fundamental quantum field, we get
the following splitting of the quantum curvature:
\begin{equation}\label{split-quantum-curvature}
\hat R={}_{\circledR}\hat R+{}_{\copyright}\hat R+{}_{\maltese}\hat R.
\end{equation}
We call also {\em quantum torsion} the component ${}_{\circledR}\hat
R$ of the quantum curvature. With this respect the
${}_{\circledR}\hat\mu$-component of the quantum field equation is
also called {\em quantum torsion equation}.
\end{definition}

\begin{theorem}{\em(SG-Yang-Mills PDE's)}\label{dynamic-equation}
The dynamic equation $\widehat{(YM)}$ for a second order
SG-Yang-Mills PDE assumes in quantum coordinates the expression
reported in Tab.1. In Tab.2 there is also its unified
expression.\footnote{For example for the case of quantum
gravity-Yang-Mills PDE, corresponding to systems considered in
Example \ref{quantum-gravity-Yang-Mills}, one obtains a quantum
gravity with quantum torsion.}
\end{theorem}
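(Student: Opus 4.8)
The plan is to obtain $\widehat{(YM)}$ as the quantum Euler--Lagrange equation attached to a Yang--Mills type quantum Lagrangian built out of the quantum curvature $\hat R$ and the non-degenerate metric $\underline{g}$ on $\mathfrak{g}$. First I would fix fibered quantum coordinates $(x^\alpha,y^j)$ on $W\equiv Hom_Z(TM;\mathfrak{g})$, representing the fundamental field $\hat\mu$ through its connection components, and recall that the curvature $\hat R=d\hat\mu+\frac12[\hat\mu,\hat\mu]$ is \emph{first order} in $\hat\mu$. Contracting $\hat R$ with itself by means of the dual quantum metric $\overline{\widehat{g}}$ on $M$ and of $\underline{g}$ on the fibers yields a first-order quantum Lagrangian density $\theta=L\eta\in\widehat{\Omega}^{m+n}(\hat J^1_{m|n}(W))$, so that its extremals, by Lemma~\ref{constrained-variational-problems}, are governed by a \emph{second order} quantum super PDE; this accounts for the order asserted in the theorem.

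The second step is to run the quantum Euler--Lagrange operator of Lemma~\ref{constrained-variational-problems}, namely $\sum_{0\le|i|\le 1}(-1)^{|i|}\partial_i(\partial L/\partial y^j_i)=0$, on this Lagrangian. Computing $\partial L/\partial y^j$ and $\partial L/\partial y^j_\alpha$ and applying the total derivatives $\partial_\alpha$, one obtains the quantum covariant divergence of the curvature set equal to the algebraic terms coming from the undifferentiated dependence of $L$ on $\hat\mu$; after raising and lowering indices with $\widehat{g}$ and $\underline{g}$, this is precisely the coordinate expression of Tab.~1. The unified form of Tab.~2 then follows by re-packaging the separate coordinate blocks into a single covariant object, using the isomorphism $\widehat{A\otimes_{\mathbb{R}}\mathbf{N}}\cong(A\otimes_{\mathbb{R}}\mathbf{N})^+$ guaranteed by the nondegeneracy of $\underline{\hat g}$ in (\ref{short-sequence-non-degeneration}).

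The delicate point will be the bookkeeping in the noncommutative, $\mathbb{Z}_2$-graded setting. Every contraction must respect the $Z$-module structure and the super sign rules, and the partial derivatives $\partial L/\partial y^j_i$ live in the $Hom_Z$-spaces $\mathop{\widehat{A}}\limits^{|i|}(E)$ rather than in $A$, so the Leibniz steps in applying $\partial_\alpha$ generate ordering and grading contributions that must be tracked exactly. Moreover, because the quantum metric $\widehat{g}=\underline{\hat g}\circ\hat\theta^{\otimes}$ is itself built from the vierbein component $\hat\mu_{\circledR}\equiv\hat\theta$, varying the metric factors couples the three sectors of the splitting $\hat\mu=\hat\mu_{\circledR}+\hat\mu_{\copyright}+\hat\mu_{\maltese}$ in (\ref{splitted-fundamental-quantum-field}); isolating the $\circledR$-component of the resulting equation is what produces the quantum torsion equation, and verifying that the whole system reorganizes into exactly the blocks of Tab.~1 is the main obstacle.
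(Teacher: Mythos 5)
Your proposal follows essentially the same route as the paper: one takes the first-order quantum Yang--Mills Lagrangian $L\circ Ds=\frac{1}{2}\hat R^K_{AB}\hat R^{AB}_K$ on $J\hat D(W)$ and applies the quantum Euler--Lagrange operator, computing $(\partial\hat\mu^A_K.L)=[\widehat{C}^H_{KR}\hat\mu^R_C,\hat R^{[AC]}_H]_+$ and $(\partial\hat\mu^{AB}_K.L)=\hat R^{BA}_K$, the splitting into the three blocks of Tab.~1 being induced directly by the decomposition $\mathfrak{g}=\mathfrak{g}_{\circledR}+\mathfrak{g}_{\copyright}+\mathfrak{g}_{\maltese}$ through the basis $(Z_K)=({}_{\circledR}Z_R,{}_{\copyright}Z_S,{}_{\maltese}Z_T)$ and the correspondingly split structure constants. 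The vierbein-dependence of $\widehat{g}$ that you flag as the main obstacle is not treated as such in the paper, which simply raises and lowers indices with the full quantum metrics $\widehat{g}$ and $\underline{g}$ and reads the three component equations off the splitting of the index $K$.
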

$$\begin{tabular}{|c|l|} \hline \multicolumn{2}{|c|}{\bsmall Tab.1 - Quantum Dynamic Equation {\boldmath$\scriptstyle
\widehat{(YM)}\subset J\hat D^{2}(W)$} and Quantum Bianchi identity.}\\
\hline\hline {\sevenSl Quantum fields equation}&
                   $\scriptstyle  (\partial{}_{\copyright}\hat\mu^A_K.L)-
                  \partial_B(\partial
                  {}_{\copyright}\hat\mu^{A
                  B}_K.L)=0$\\
                  &\hskip 3pt\hbox{  ({\sevenSl quantum curvature-Lorentz equation})}
                  \\  &$\scriptstyle  (\partial{}_{\circledR}\hat\mu^A_K.L)
                  -\partial_B(\partial{}_{\circledR}\hat\mu^{AB}_K.L)=0$\\
                 & \hskip 3pt\hbox{  ({\sevenSl quantum curvature-verbein equation})}
                  \\  $\scriptstyle (\hat E_2)$&
                  $\scriptstyle  (\partial{}_{\maltese}\hat\mu^A_K.L)
                  -\partial_B(\partial{}_{\maltese}\hat\mu^{A
                  B}_K.L)=0$\\
                  &\hskip 3pt\hbox{  ({\sevenSl quantum curvature-deviatory equation})}\\
                  \hline{\sevenSl Quantum Bianchi identity}&
                            $\scriptstyle (\partial x_{H}.{}_{\copyright}\hat R^{K}_{AB})
                            +\frac{1}{2}{}_{\copyright}\hat C^K_{IJ}[\hat\mu
                            ^I_H,\hat R^J_{AB}]_+=0$\\
                              &$\scriptstyle (\partial x_{H}.{}_{\circledR}\hat R^{K}_{AB})
                            +\frac{1}{2}{}_{\circledR}\hat C^K_{IJ}[\hat\mu ^I_H,\hat R^J_{AB}]_+=0$
                            \\  &$\scriptstyle (\partial x_{H}.{}_{\maltese}\hat R^{K}_{AB})
                            +\frac{1}{2}{}_{\maltese}\hat C^K_{IJ}[\hat\mu ^I_H,\hat R^J_{AB}]_+=0$\\
                            \hline{\sevenSl Quamtum fields}&
                   $\scriptstyle {}_{\circledR}\hat R^K_{BA}=(\partial x_B.{}_{\circledR}\hat
\mu^K_A)+{}_{\circledR}\hat C^K_{IJ}[\hat\mu^I_B,\hat\mu^J_A]_+$\\
                  & \hskip 3pt\hbox{\rsmall({\sevenSl quantum verbein-curvature})}\\
                   &$\scriptstyle {}_{\copyright}\hat R^K_{BA}=(\partial x_B.
{}_{\copyright}\hat\mu^K_A)+{}_{\copyright}C^K_{IJ}[\hat\mu^I_B,\hat\mu^J_A]_+$\\
                   &\hskip 3pt\hbox{\rsmall({\sevenSl quantum Lorentz-curvature})}\\
                   &$\scriptstyle {}_{\maltese}\hat R^K_{BA}=(\partial x_B.{}_{\maltese}
\hat\mu^K_A)+{}_{\maltese}\hat C^K_{IJ}[\hat\mu^I_B,\hat\mu^J_A]_+$\\
                   &\hskip 3pt\hbox{\rsmall({\sevenSl quantum deviatory-curvature})}\\
                   \hline\end{tabular}$$

\begin{proof}
Let $(Z_K\in Hom_Z(A;\mathfrak{g}))$ be the basis for the quantum
extension $\widehat{\mathfrak{g}}$ of $\mathfrak{g}$. Let us denote
$(Z_K)=({}_{\circledR}Z_R,{}_{\copyright}Z_S,{}_{\maltese}Z_T)$ the
split induced by the one in (\ref{split-quantum-lie-algebra}).
Similarly we get an induced notation on the quantum structure
constants:
\begin{equation}{\label{quantum-structure-constants}}
[Z_I,Z_J]={}_{\circledR}\hat
C^K_{IJ}{}_{\circledR}Z_K+{}_{\copyright}\hat
C^K_{IJ}{}_{\copyright}Z_K+{}_{\maltese}\hat
C^K_{IJ}{}_{\maltese}Z_K.
\end{equation}
 Then this property is represented, in local quantum
coordinates, by the fact that in the following formula
\begin{equation}
    \hat\mu=Z_K\otimes\hat\mu^K_A dx^A={}_{\circledR}Z_R\otimes{}_{\circledR}\hat\mu^R_Adx^A+
    {}_{\copyright}Z_S\otimes{}_{\copyright}\hat\mu^S_Adx^A+{}_{\maltese}Z_T\otimes{}_{\maltese}\hat\mu^T_Adx^A
\end{equation}
one has $({}_{\circledR}\hat\mu^K_A(p))\in GL(\widehat{A};4)$.

 The
curvature, corresponding to $\hat\mu$, can be locally written in the
form: $\hat R=Z_K\otimes \hat R^K_{AB}dx^A\triangle dx^B$, with
$\hat R^K_{BA}=(\partial x_B \hat\mu^K_A)+\hat
C^K_{IJ}[\hat\mu^I_B,\hat\mu^J_A]_+$. The quantum curvature also
admits the splitting induced by the quantum lie algebra, as well as
the corresponding Bianchi identities. Furthermore, we shall assume a
first order quantum Lagrangian $L:J\hat D(W)\to\widehat{A}$, $L\circ
Ds=\frac{1}{2}\hat R^K_{AB}\hat R_K^{AB}$, $\forall s\in
Q^\infty_w(W)$,\footnote{The rising and lowering of indexes is
obtained by means of the fullquantum metrics $\widehat{g}$ on $M$
and $\underline{g}$ on ${\frak g}$ respectively.} The local
expression of $\widehat{(YM)}$ is given in Tab.2. Note that the
quantum super Yang-Mills equation is now
$(\partial\hat\mu^A_K.L)-(\partial_B(\partial\hat\mu^{AB}_K.L))=0$.
Furthermore, it results
${(\partial\hat\mu^A_K.L)=[\widehat{C}^H_{KR}\hat\mu^R_C,\hat
R^{[AC]}_H]_+}$ and $(\partial\hat\mu^{AB}_K.L)=\hat R^{BA}_K$. (For
more details on quantum gauge theories see also Refs.\cite{PRA9,
PRA15, PRA23, PRA30, PRA32}.)
\end{proof}

\begin{remark}
So in a quantum SG-Yang-Mills PDE, the quantum Riemannian metric
$\widehat{g}$ is not a fundamental field, but a secondary field,
obtained by means of the quantum vierbein
$\hat\theta=\hat\mu_{\circledR}$, that, instead is a fundamental
dynamic field.\footnote{Another suitable name for $\hat\theta$ could
be {\em quantum dynamical fundamental solder form}. In fact, it
solders the quantum Minkowsky vector space
$A\otimes_{\mathbb{R}}\mathbf{N}$ at all the points $p\in M$. But
the previous name is more handable.} Of course since there is a
relation one-to-one between quantum vierbein and quantum metric, on a
locally Minkowskian quantum (super)manifold, one can choice also
quantum metric as a fundamental field, instead of the quantum
vierbein. However, in a quantum SG-Yang-Mills PDE it is more natural
to adopt quantum vierbein as independent field, since it is just
enclosed in the fundamental field $\hat\mu$. The dynamic equation
are resumed in Tab.2.\end{remark}

$$\begin{tabular}{|l|c|} \hline
\multicolumn {2}{|c|}{\bsmall Tab.2 - Local expression of
{\boldmath$\scriptstyle \widehat{(YM)}\subset J\hat
D^2(W)$} and Bianchi identity {\boldmath$\scriptstyle(B)\subset J\hat D^2(W)$}.}\\
\hline \hline $\scriptstyle\hbox{\rsmall(Field equations)}\hskip 2pt
E^A_{K}\equiv-(\partial_{B}.\hat R^{BA}_K)+[\widehat
C^H_{KR}\hat\mu^R_{C},\hat
R^{[AC]}_H]_+=0$&$\scriptstyle \widehat{(YM)}$\\
\hline $\scriptstyle\hbox{\rsmall(Fields)}\hskip 2pt \hat
F^K_{A_1A_2}\equiv \hat R^K_{A_1A_2}-\left[(\partial X_{
A_1}.\hat\mu^K_{A_2})+\frac{1}{2}\widehat{C}{}^K_{IJ}[\hat\mu^I_{A_1},\hat\mu^J_{A_2}]_+\right]=0$&\\
 $\scriptstyle\hbox{\rsmall(Bianchi
identities)}\hskip 2pt B^K_{HA_1A_2}\equiv(\partial X_{H}.\hat
R^K_{A_1A_2})+\frac{1}{2} \widehat{C}{}^K_{IJ}[\bar\mu^I_{H},\hat
R^J_{A_1A_2}]_+=0$&$\scriptstyle (B)$\\
\hline \multicolumn {2}{l}{$\scriptstyle
F^K_{A_1A_2}:\Omega_1\subset J\hat
D(W)\to\mathop{\widehat{A}}\limits^2;\quad
B^K_{HA_1A_2}:\Omega_2\subset J\hat
D^2(W)\to\mathop{\widehat{A}}\limits^3;\quad E^A_{K}:\Omega_2\subset
J\hat D^2(W)\to\mathop{\widehat{A}}\limits^{3}.$}\\ \end{tabular}$$
\begin{definition}\label{quantum-graviton}
We call {\em quantum graviton} a quantum metric $\widehat{g}$
obtained by a solution $\hat\mu$ of $\widehat{(YM)}$, via the
corresponding quantum vierbein.
\end{definition}

\begin{definition}
In relation to the splitting {\em(\ref{split-quantum-curvature})},
and with respect to the possible triviality of such quantum
curvatures, we can classify solutions of $\hat E_2$, as reported in
Tab.3.

$$\begin{tabular}{|l|c|} \hline
\multicolumn {2}{|c|}{\bsmall Tab.3 - Local quantum-curvature
clasification of {\boldmath$\scriptstyle \widehat{(YM)}$} solutions.}\\
\hline \hline $\scriptstyle\hbox{\rsmall Definition}$&$\scriptstyle \hbox{\rsmall Name}$\\
\hline $\scriptstyle \hat R^K_{AB}=0$&$\scriptstyle \hbox{\rsmall quantum full-flat}$\\
\hline $\scriptstyle {}_{\circledR}\hat R^K_{AB}=0$&$\scriptstyle \hbox{\rsmall quantum torsion-free}$\\
\hline $\scriptstyle {}_{\copyright}\hat R^K_{AB}=0$&$\scriptstyle \hbox{\rsmall quantum Lorentz-flat}$\\
\hline $\scriptstyle {}_{\maltese}\hat R^K_{AB}=0$&$\scriptstyle \hbox{\rsmall quantum deviatory-flat}$\\
\hline
\end{tabular}$$
\end{definition}

\begin{theorem}{\em(Quantum Cartan geometry).}\label{sg-ym-pde-cartan-geometry}
Any non-degenerate solution $\hat\mu$ of a quantum SG-Yang-Mills PDE, identifies on the base quantum
supermanifold $M$ a quantum {\em Cartan supergeometry}, i.e., a
Cartan geometry in the category $\mathfrak{Q}_S$.
\end{theorem}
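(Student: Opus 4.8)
The plan is to realize $\hat\mu$ as a quantum Cartan connection for the model Klein pair $(\mathfrak g,\mathfrak g_{\copyright})$, i.e. the quotient of the quantum Poincar\'e supergroup $\hat G$ by its quantum Lorentz subgroup $\hat H$, all objects being taken in $\mathfrak Q_S$. Recall that a Cartan geometry of type $(\hat G,\hat H)$ consists of a principal $\hat H$-bundle $P\to M$ together with a $\mathfrak g$-valued $1$-form $\omega$ which (i) is pointwise a linear isomorphism $T_pP\cong\mathfrak g$, (ii) is $\hat H$-equivariant, $R_h^*\omega=\mathrm{Ad}(h^{-1})\circ\omega$, and (iii) restricts to the Maurer--Cartan form along the fibres. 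The condition that distinguishes a Cartan connection from an ordinary principal connection is precisely the isomorphism requirement (i), and it is exactly this that the non-degeneracy hypothesis on $\hat\mu$ will supply.

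First I would record that the splitting (\ref{split-quantum-algebra}) is reductive with respect to $\mathfrak g_{\copyright}$: since $\mathfrak g$ is a quantum superextension of the Poincar\'e algebra, the translation block $\mathfrak g_{\circledR}=A\otimes_{\mathbb R}\mathbf N$ is $\mathrm{Ad}(\hat H)$-stable, and the inherited bracket relations give $[\mathfrak g_{\copyright},\mathfrak g_{\circledR}]\subset\mathfrak g_{\circledR}$. Under the canonical splitting (\ref{split-quantum-lie-algebra}), the fundamental field (\ref{splitted-fundamental-quantum-field}) decomposes as $\hat\mu=\hat\mu_{\circledR}+\hat\mu_{\copyright}+\hat\mu_{\maltese}$, and I would read off the roles of the pieces: $\hat\mu_{\copyright}$ is the $\mathfrak g_{\copyright}$-valued piece, i.e. the quantum Lorentz (spin) connection, while the vierbein $\hat\theta=\hat\mu_{\circledR}$ is the $\mathfrak g/\mathfrak g_{\copyright}$-valued soldering form.

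The heart of the argument is the observation that, by hypothesis, $\hat\theta(p)=\hat\mu_{\circledR}(p)\colon T_pM\xrightarrow{\ \cong\ }A\otimes_{\mathbb R}\mathbf N\cong\mathfrak g/\mathfrak g_{\copyright}$ is a $Z$-linear isomorphism at every $p\in M$ --- this is precisely the quantum vierbein (\ref{quantum-vierbein}). Passing to the principal $\hat H$-bundle $P\to M$ of quantum Lorentz coframes, I would assemble, via the reductive identification $\mathfrak g\cong\mathfrak g_{\copyright}\oplus(\mathfrak g/\mathfrak g_{\copyright})$, the pair $(\hat\mu_{\copyright},\hat\theta)$ into a single $\mathfrak g$-valued form $\omega$ on $P$. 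Its restriction to each tangent space is then an isomorphism: along the vertical it is the identity of $\mathfrak g_{\copyright}$ (normalization (iii)), and on a horizontal complement it is carried by the isomorphism $\hat\theta$, giving (i). The equivariance (ii) follows from the transformation law of the quantum vierbein together with the $\mathrm{Ad}(\hat H)$-stability of $\mathfrak g_{\circledR}$ established above, which is itself nothing but the gauge covariance of the Yang--Mills field $\hat\mu$ of Definition \ref{quantum-sg-ym}. Hence $(P\to M,\omega)$ is a Cartan geometry in $\mathfrak Q_S$.

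I expect the genuinely delicate point to be the bookkeeping of the deviatory component $\hat\mu_{\maltese}$. Because $\mathfrak g$ is only a superextension of the Poincar\'e algebra, one must check that $\mathfrak g_{\maltese}$ is organized --- as an $\mathrm{Ad}(\hat H)$-module --- either inside the isotropy part or inside $\mathfrak g/\mathfrak g_{\copyright}$, in such a way that the dimension match $\dim_A(\mathfrak g/\mathfrak g_{\copyright})=\dim_A M$ forced by non-degeneracy of $\hat\theta$ is respected and the pointwise isomorphism (i) survives. Once the splitting (\ref{split-quantum-algebra}) is arranged so that this holds, the remaining verifications of $\hat H$-equivariance and fibre-normalization are formal and transcribe the commutative Cartan construction (cf. \cite{SHA}) into the category $\mathfrak Q_S$ verbatim.
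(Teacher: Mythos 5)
Your overall strategy --- realize $\hat\mu$ as a quantum Cartan connection whose pointwise-isomorphism property is supplied by the non-degeneracy of the vierbein component --- is the same as the paper's, but there is a genuine gap at exactly the point you yourself flag as ``delicate,'' namely the choice of model Klein pair. You take the isotropy subgroup $\hat H$ to be the quantum Lorentz subgroup, so $\mathfrak h=\mathfrak g_{\copyright}$ and $\mathfrak g/\mathfrak h\cong\mathfrak g_{\circledR}\oplus\mathfrak g_{\maltese}$. Condition (i) of a Cartan geometry then requires a pointwise isomorphism $T_pP\cong\mathfrak g$, equivalently a soldering isomorphism $T_pM\cong\mathfrak g/\mathfrak h$; but the non-degeneracy hypothesis only supplies $\hat\theta(p)=\hat\mu_{\circledR}(p)\colon T_pM\cong A\otimes_{\mathbb R}\mathbf N=\mathfrak g_{\circledR}$. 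Whenever $\mathfrak g_{\maltese}\neq 0$ --- the generic supergravity case, with gravitinos and central charges as in Example \ref{quantum-n-superextensions-poincare-algebra} --- one has $\dim T_pP=\dim\mathfrak g_{\circledR}+\dim\mathfrak g_{\copyright}<\dim\mathfrak g$, so the form $\omega$ you assemble from the pair $(\hat\mu_{\copyright},\hat\theta)$ cannot be surjective onto $\mathfrak g$ and condition (i) fails. You observe that the splitting must be ``arranged so that this holds,'' but you never make the arrangement, and that arrangement is the entire content of the step; moreover your construction of $\omega$ discards $\hat\mu_{\maltese}$ altogether, whereas the theorem is about the full solution $\hat\mu$.

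The paper's resolution is to enlarge the isotropy algebra to $\mathfrak h={}_{\copyright}\mathfrak g\oplus{}_{\maltese}\mathfrak g$, i.e., to absorb the deviatory part into the structure group rather than into the quotient. Then $T_xM\cong{}_{\circledR}\mathfrak g\cong\mathfrak g/({}_{\copyright}\mathfrak g\oplus{}_{\maltese}\mathfrak g)$ holds on the nose by non-degeneracy of $\hat\mu_{\circledR}$, the vertical normalization (iii) lands in the enlarged $\mathfrak h$, and the whole field $\hat\mu=\hat\mu_{\circledR}+\hat\mu_{\copyright}+\hat\mu_{\maltese}$ is encoded in the Cartan connection $\omega$ via the diagram (\ref{commutative-diagram-quantum-pseudo-connection}), with $\omega$ a quantum pseudoconnection on $\pi\colon G\to M$ projecting onto $\hat\mu$. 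Keeping $\hat H$ equal to the Lorentz group, as you propose, would instead force you to show that $\hat\mu_{\maltese}$ extends the soldering form to an isomorphism $T_pM\cong\mathfrak g_{\circledR}\oplus\mathfrak g_{\maltese}$, which is impossible for dimension reasons. The fix is to redefine $\hat H$, not to rearrange $\hat\theta$; with that single correction the rest of your verification of equivariance and fibre normalization goes through as you describe.
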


\begin{proof}
A {\em quantum Cartan supergeometry} on a the quantum supermanifold
$M$, is the natural extension, in the category $\mathfrak{Q}_S$, of {\em Cartan geometry} in the
category of smooth finite dimensional manifolds \cite{SHA}. More
precisely it is a principal fiber bundle $\pi:G\to M$, with
structure group $H$, where $G$ is a group in the category $\mathfrak{Q}_S$, such that the
following conditions are satisfied: (i) $M$ admits as quantum model
the quantum Klein geometry $(G,H)$, i.e.,
$T_xM\cong\mathfrak{g}/\mathfrak{h}$, for any $x\in M$, and $(G,H)$
is a quantum Klein model in $\mathfrak{Q}_S$, i.e., $G/H$ is an homogeneus
space in $\mathfrak{Q}_S$, with $G$ containing the subgroup $H$; (ii) there exists a section $\omega:M\to Hom_Z(TG;\mathfrak{g})$, of class $Q^\infty_w$, such that $\omega(p)$ is an isomorphism
$\omega(p):T_pG\to\mathfrak{g}$, for all $p\in G$; (iii)
$\omega(p)|_{vT_pG}:vT_pP\cong\mathfrak{h}$; (iv)
$(R_h)^*\omega=Ad(h^{-1})\omega$, $\forall h\in H$, where $R_h$
denotes right multiplication translation for $h$. Then a quantum SG-Yang-Mills PDE identifies the following quantum
Cartan supergeometry : $\pi:G\to M$, where $G$ is any quantum superextension Lie group of the Poincar\'e group, such that its quantum super Lie algebra is just $\mathfrak{g}$, and containing a subgroup $H$, with quantum super Lie algebra $\mathfrak{h}={}_{\copyright}\mathfrak{g}\oplus{}_{\maltese}\mathfrak{g}$. Then $M$ admits as model the quantum Klein geometry $(G,H)$, since one has the isomorphism
$T_xM\cong{}_{\circledR}\mathfrak{g}\cong
\mathfrak{g}/({}_{\copyright}\mathfrak{g}\oplus{}_{\maltese}\mathfrak{g})$,
$\forall x\in M$. Furthermore, any quantum fundamental field
$\hat\mu$ comes from a quantum principal connection on such a principal
bundle, as it results from the commutative
diagram (\ref{commutative-diagram-quantum-pseudo-connection}).\footnote{Let us emphasize that a section $\omega$,
considered in the above point (ii), is just a {\em quantum
pseudoconnection} in the sense introduced in Refs.\cite{PRA15,
PRA22} . (See also Refs.\cite{PRA8, PRA-REGGE} for superclassical
analogous ones.) Then, one can see that $\omega$ is just a principal
connection, ({\em Ehresmann connection} \cite{EHR}), on the
$\mathfrak{g}$-principal fiber bundle $P\equiv
G\times\mathfrak{g}\to G$. (The proof can be copied from an
intrinsic previous one given in \cite{PRA-REGGE} for the
superclassical case.) Recall that in a Cartan geometry $\pi:G\to M$, the {\em
torsion} is obtained by composition $T=\widetilde{\pi}\circ
R:\Lambda^2_0G\to\mathfrak{g}\to\mathfrak{g}/\mathfrak{h}$, where
$R$ is the curvature associated to the connection $\omega$ and
$\widetilde{\pi}$ is the canonical projection.}
\begin{equation}\label{commutative-diagram-quantum-pseudo-connection}
    \xymatrix@C=3cm{G\ar[d]_{\pi}\ar[r]^(0.4){\omega}&Hom_Z(TG;\mathfrak{g})\\
    M\ar[r]_(0.4){\hat\mu}&Hom_Z(TM;\mathfrak{g})\ar[u]^{\pi_*}\\}
\end{equation}

\end{proof}

\begin{example}{\em(Quantum gravity-Yang-Mills PDE's in
$D=4$).}\label{quantum-gravity-Yang-Mills} This is the most simple
situation where
$\mathfrak{g}={}_{\circledR}\mathfrak{g}\oplus{}_{\copyright}\mathfrak{g}$.
In such a case the quantum Klein geometry is $(P(\hat
N),SO(A\otimes_{\mathbb{R}}\mathbf{N}))$, where
$A\otimes_{\mathbb{R}}\mathbf{N}$ is the $4$-dimensional quantum
Minkowsky vector space, extension of the Minkowsky vector space
$\mathbf{N}$, with respect to the quantum algebra $A$, endowed with
the quantum metric $\widehat{\underline{g}}\equiv
1\otimes\underline{g}$, natural extension of the Minkowsky metric
$\underline{g}$ on $\mathbf{N}$.
$SO(A\otimes_{\mathbb{R}}\mathbf{N})$, is the symmetry group of
$(A\otimes_{\mathbb{R}}\mathbf{N},\widehat{\underline{g}})$. One can
see that this is just isomorphic to the classic Lorentz group
$SO(A\otimes_{\mathbb{R}}\mathbf{N})\cong SO(1,3)$. In fact, any
element $\hat\Lambda\in SO(A\otimes_{\mathbb{R}}\mathbf{N})$ is
necessarily of the type $\hat\Lambda=1_A\otimes\Lambda$, with
$\Lambda\in SO(\mathbf{N})$. In fact, by the condition
$\underline{\hat g}(a\otimes u,b\otimes
v)=\underline{\hat g}(\hat\Lambda(a\otimes
u),\hat\Lambda(b\otimes v))$, we get

\begin{equation}\label{lorentz-group1}
\left\{
\begin{array}{ll}
  \underline{\hat g}(a\otimes u,b\otimes
v)&=\underline{\hat g}(a^\alpha 1\otimes e_\alpha,b^\beta 1\otimes e_b)=
a^\alpha b^\beta \underline{g}(e_\alpha,e_\beta)
=a^\alpha b^\beta g_{\alpha\beta}\\
  &=\underline{\hat g}(\hat\Lambda(a\otimes
u),\hat\Lambda(b\otimes v))=
\underline{\hat g}(\hat\Lambda(a^\alpha 1\otimes
e_\alpha),\hat\Lambda(b^\beta 1\otimes e_b))\\
&=\underline{\hat g}(\hat\Lambda^\gamma_\alpha(a^\alpha)
1\otimes e_\gamma,\hat\Lambda^\delta_\beta(b^\beta) 1\otimes
e_\delta)\\
&=\hat\Lambda^\gamma_\alpha(a^\alpha)\hat\Lambda^\delta_\beta(b^\beta)g_{\gamma\delta}\\
\end{array}
\right.
\end{equation}
with $a^\alpha,b^\beta\in A$,
$\hat\Lambda^\gamma_\alpha\in\widehat{A}$,
$g_{\alpha\beta}\in\mathbb{R}$. Therefore we get
\begin{equation}\label{lorentz-group2}
a^\alpha b^\beta
g_{\alpha\beta}=\hat\Lambda^\gamma_\alpha(a^\alpha)\hat\Lambda^\delta_\beta(b^\beta)g_{\gamma\delta}.
\end{equation}
For the arbitrariness of $a^\alpha$ and $b^\beta$, we get that must
be also
\begin{equation}\label{lorentz-group3}
g_{\alpha\beta}=\hat\Lambda^\gamma_\alpha\hat\Lambda^\delta_\beta
g_{\gamma\delta}.
\end{equation}
Since $g_{\alpha\beta}\in\mathbb{R}$, must necessarily be
$\hat\Lambda^\gamma_\alpha\in\mathbb{R}$. This means that it is
$\hat\Lambda=1_A\otimes\Lambda$, with $\Lambda\in SO(\mathbf{N})$.
Therefore one has the following isomorphisms:
\begin{equation}\label{lorentz-group4}
SO(A\otimes_{\mathbb{R}}\mathbf{N})\cong SO(\mathbf{N})\cong SO(1,3)
\Rightarrow (\hat\Lambda^\gamma_\alpha)=(\Lambda^\gamma_\alpha).
\end{equation}
$P(\hat N)$ is the symmetry group of the $4$-dimensional affine
quantum Minkowsky space-time $(\hat
N,A\otimes_{\mathbb{R}}\mathbf{N},\underline{\widehat{g}})$. One has
the following isomorphisms:
\begin{equation}\label{quantum-poincare-group}
P(\hat N)\cong A\otimes_{\mathbb{R}}\mathbf{N}\rtimes
SO(A\otimes_{\mathbb{R}}\mathbf{N})\cong
A\otimes_{\mathbb{R}}\mathbf{N}\rtimes SO(\mathbf{N})\cong
A\otimes_{\mathbb{R}}\mathbb{R}^{1,3}\rtimes SO(1,3).
\end{equation}
One has the following short exact sequence:
\begin{equation}\label{quantum-poincare-group-extension}
\xymatrix{0\ar[r]&A\otimes_{\mathbb{R}}\mathbf{N}\ar@<0.3ex>[r]&A\otimes_{\mathbb{R}}\mathbf{N}\rtimes
SO(A\otimes_{\mathbb{R}}\mathbf{N})\ar@<0.3ex>[r]&SO(A\otimes_{\mathbb{R}}\mathbf{N})\ar[r]&0\\}
\end{equation}
The semidirect product means that the product in $P(\hat N)$ is
given by the following:
\begin{equation}\label{semidirect-product}
\left\{
\begin{array}{ll}
 (a\otimes u,\hat\Lambda).(b\otimes v,\hat\Lambda')&=(a\otimes u+\hat\Lambda(b\otimes
 v),\hat\Lambda\hat\Lambda')\\
 &=(a\otimes u+b\otimes\Lambda(v),(1\otimes\Lambda)(1\otimes\Lambda'))\\
 &=(a\otimes u+b\otimes\Lambda(v),1\otimes\Lambda\Lambda').\\
\end{array}
\right.
\end{equation}
The quantum Cartan geometry is given by the
$SO(A\otimes_{\mathbb{R}}\mathbf{N})$-principal group, in the
category $\mathfrak{Q}$, $\pi:P(\hat N)\to M$, where $M$ is a
$4$-dimensional quantum manifold, with respect to the quantum
algebra $A$, whose tangent spaces $T_pM\cong
A\otimes_{\mathbb{R}}\mathbf{N}\cong P(\hat
N)/SO(A\otimes_{\mathbb{R}}\mathbf{N})$. Therefore $M$ is locally
quantum Minkowskian. The quantum Lie algebra $\mathfrak{g}$ of
$P(\hat N)$ has the following splitting:
\begin{equation}\label{quantum-poincare-lie-algebra}
\mathfrak{g}\cong
A\otimes_{\mathbb{R}}\mathbf{N}\oplus\mathfrak{s}\mathfrak{o}(1,3)
\equiv{}_{\circledR}\mathfrak{g}\oplus{}_{\copyright}\mathfrak{g}.
\end{equation}
Let us denote by $\hat P_\mu=1\otimes P_\mu$ the generators of
${}_{\circledR}\mathfrak{g}$, where $P_\mu$ are the corresponding
translation-generators in the Poincar\'e algebra. The can see that
one has
\begin{equation}\label{quantum-translation-commutator1}
[\hat P_\mu,\hat P_\nu]=1\otimes[P_\mu,P_\nu]=1\otimes
C^\alpha_{\mu\nu}P_\alpha=C^\alpha_{\mu\nu}1\otimes
P_\alpha=C^\alpha_{\mu\nu}\hat P_\alpha
\end{equation}
with $C^\alpha_{\mu\nu}\in\mathbb{R}$. In fact, one can put on
$A\otimes_{\mathbb{R}}\mathbf{N}$ quantum coordinates $\hat
x^A:A\otimes_{\mathbb{R}}\mathbf{N}\to A$, adapted to the structure
$A\otimes_{\mathbb{R}}\mathbf{N}$, i.e., $\hat x^A(a\otimes u)=a
x^A(u)=a u^A\in A$, where $x^A:N\to \mathbb{R}$ are coordinates on
the $4$-dimensional affine Minkowsky space-time $N$. Then, for any
function $f:A\otimes_{\mathbb{R}}\mathbf{N}\to A$ of class $Q^1_w$,
one has:
\begin{equation}\label{quantum-translation-commutator2}
\left\{
\begin{array}{ll}
P_A.f=&(\partial x_A.f)=(\partial \hat x_B.f)(\partial x_A.\hat
x^B)\\
&=(\partial \hat x_B.f)(1\otimes\delta^B_A)=(\partial \hat x_A.f)\\
&=(\hat P_A.f)\\
\end{array}
\right.
\end{equation}
since $(\partial x_A.\hat x^B)=(\partial x_A.(1\otimes
x^B))=(\partial x_A.1)\otimes x^B+1\otimes(\partial
x_A.x^B)=1\otimes\delta^B_A$.

By resuming all the quantum structure constants, for this quantum
gravity Yang-Mills PDE's, are real numbers. So all the generators of
the quantum Poincar\'e algebra just coincide with the ones of the
Poincar\'e algebra. The situation is summarized in Tab.4.
$$\begin{tabular}{|l|} \hline \multicolumn {1}{|c|}{\bsmall Tab.4 -
Quantum Poincar\'e algebra in {\boldmath$\scriptstyle D=4$}.}\\
\hline\hline $\scriptstyle [J_{\alpha\beta},J_{\gamma\delta}]=\eta
_{\beta\gamma} J_{\alpha\delta}+\eta
_{\alpha\delta}J_{\beta\gamma}-\eta
_{\alpha\gamma}J_{\beta\delta}-\eta_{\beta\delta}J_{\alpha\gamma}$\\
 $\scriptstyle [P_\alpha,P_\beta]=0,\quad
[J_{\alpha\beta},P_\gamma]=\eta _{\beta\gamma}P_\alpha-\eta
_{\alpha\gamma}P_\beta$\\
\hline \multicolumn {1}{l}{\rsmall Boosts: $\scriptstyle
K_k=J_{0k}$; Rotations: $\scriptstyle J_k=\epsilon_{ijk}J^{ij}$,
$\scriptstyle i,j,k\in\{1,2,3\}$.}\\
\end{tabular}$$

\end{example}

\begin{example}{\em(Quantum $N$-superextensions of the Poincar\'e algebra in
$D=4$).}\label{quantum-n-superextensions-poincare-algebra} In $D=4$,
the usual $N$-supersymmetric extension $\mathfrak{g}$ of the
Poincar\'e algebra $\mathfrak{p}=\mathfrak{s}\mathfrak{o}(1,3)\oplus
\mathfrak{t}$, is a $\mathbb{Z}_2$-graded vector space
$\mathfrak{g}=\mathfrak{g}_0\oplus\mathfrak{g}_1$, with a graded Lie
bracket, such that $\mathfrak{g}_0=\mathfrak{p}\oplus \mathfrak{b}$,
where $\mathfrak{b}$ is a reductive Lie algebra, such that its
self-adjoint part is the tangent space to a real compact Lie
group.\footnote{A {\em reductive} Lie algebra is the sum of a
semisimple and an abelian Lie algebra. Since a {\em semisimple} Lie
algebra is the direct sum of simple algebras, i.e., non-abelian Lie
algebras, $\mathfrak{l}_i$, where the only ideals are $\{0\}$ and
$\{\mathfrak{l}_i\}$, it follows that $\mathfrak{b}$ can be
represented in the form
$\mathfrak{b}=\mathfrak{a}\oplus\sum_i\mathfrak{l}_i$.} Furthermore
$\mathfrak{g}_1=(\frac{1}{2},0)\otimes
\mathfrak{s}\oplus(0,\frac{1}{2})\otimes\mathfrak{s}^*$, where
$(\frac{1}{2},0)$ and $(0,\frac{1}{2})$ are specific representations
of the Poincar\'e algebra. Both components are conjugate to each
other under the $*$ conjugation. $\mathfrak{s}$ is a $N$-dimensional
complex representation of $\mathfrak{b}$ and $\mathfrak{s}^*$ its
dual representation.\footnote{If $\rho:\mathfrak{g}\to L(V)$ is a
representation of Lie algebra, its dual $\bar\rho:\mathfrak{g}\to
L(\bar V)$, working on the dual space $\bar V$, is defined by
$\bar\rho(u)=\overline{-\rho(u)}$, $\forall u\in\mathfrak{g}$.} Note
also that the Lie bracket for the odd part is usually denoted by
$\{,\}$ in theoretical physics. Then with such a notation one has
\begin{equation}\label{bracket-odd-part}
\{Q^i_\alpha,Q^j_\beta\}=\delta^{ij}(\gamma^\mu
C)_{\alpha\beta}P_\mu+U^{ij}(C)_{\alpha\beta}+V^{ij}(C\gamma_5)_{\alpha\beta}
\end{equation}
where $U^{ij}=-U^{ji}$, $V^{ij}=-V^{ji}$ are the $(N-1)N$ central
charges, $C$ is the (antisymmetric) charge conjugation matrix,
$(Q^i_\alpha)_{i=1,\dots,N}$, are the $N$ Majorana spinor
supersymmetry charge generators. The dynamical components
$\hat\mu^i$, $i=1,\dots,N$, of the quantum fundamental field,
corresponding to the generators $Q^i$, are called {\em quantum
gravitinos}. So in a quantum $N$-SG-Yang-Mills PDE, one
distinguishes $N$ quantum gravitino types, (and $(N-1)N$ central
charges).\footnote{Since the central charges in (\ref{bracket-odd-part})
have physical dimension of mass, they cannot be carried by massless solutions.}

The more simple case are ones with $N=1$, and $N=2$. More precisely,
for $N=1$, with $\mathfrak{b}=\mathfrak{u}(1)$ and $\mathfrak{s}$
the $1D$ representation of $\mathfrak{u}(1)$. In such a case one has
an {\em electric charge}, (i.e., $\mathfrak{u}(1)$-charge), but
there are not central charges. In Tab.5 are reported the brackets in
the case $N=1$ and $N=2$.\footnote{For the case $N=2$, and with respect to equation (\ref{bracket-odd-part}), one should also have $[Q_{\beta
i},Q_{\mu j}]=(C\gamma^\alpha)_{\beta\mu}\delta_{ij}P_\alpha+C
_{\beta\mu}\epsilon_{ij}\overline{Z}+\epsilon_{ij}(C\gamma_5)_{\beta\mu}\overline{Z}'$.
But the term $\epsilon_{ij}(C\gamma_5)_{\beta\mu}\overline{Z}'$ can
always be rotated into $C _{\beta\mu}\epsilon_{ij}\overline{Z}$ by a
chiral transformations, and therefore does not represent a further
charge.} Then a quantum superextension of $\mathfrak{g}$ is
 $A\otimes_{\mathbb{R}}\mathfrak{g}$, where $A$ is a quantum superalgebra.
 This can be taken $A\subseteq L(\mathcal{H})$, where $\mathcal{H}$ is a super-Hilbert space.
 (See also Refs.\cite{PRA23, PRA32}.) With respect to the splitting
{\em(\ref{splitted-fundamental-quantum-field})} we get:
\begin{equation}\label{splitted-fundamental-quantum-field-example1}
  (N=1):  \left\{
\begin{array}{l}
{}_{\circledR}\hat\mu=P_\alpha\hat\theta^\alpha_\gamma dx^\gamma\\
{}_{\copyright}\hat\mu=J_{\alpha\beta}\hat\omega^{\alpha\beta}_\gamma dx^\gamma\\
{}_{\maltese}\hat\mu=Q_{\alpha}\phi^{\alpha}_\gamma dx^\gamma.\\
\end{array}
    \right\};\quad(N=2):  \left\{
\begin{array}{l}
{}_{\circledR}\hat\mu=P_\alpha\hat\theta^\alpha_\gamma dx^\gamma\\
{}_{\copyright}\hat\mu=J_{\alpha\beta}\hat\omega^{\alpha\beta}_\gamma dx^\gamma\\
{}_{\maltese}\hat\mu=[\overline{Z}\hat A_\gamma +Q_{\alpha i}\phi^{\alpha i}_\gamma] dx^\gamma.\\
\end{array}
    \right\}.
\end{equation}
$$\begin{tabular}{|l|} \hline \multicolumn {1}{|c|}{\bsmall Tab.5 - {\boldmath$\scriptstyle N=1, 2$}
Super Poincar\'e algebra in {\boldmath$\scriptstyle D=4$}.}\\
\hline\hline
\multicolumn {1}{|c|}{\bsmall {\boldmath$\scriptstyle N=1$}}\\
\hline\hline $\scriptstyle [J_{\alpha\beta},J_{\gamma\delta}]=\eta
_{\beta\gamma} J_{\alpha\delta}+\eta
_{\alpha\delta}J_{\beta\gamma}-\eta
_{\alpha\gamma}J_{\beta\delta}-\eta_{\beta\delta}J_{\alpha\gamma}$\\
 $\scriptstyle [P_\alpha,P_\beta]=0,\quad
[J_{\alpha\beta},P_\gamma]=\eta _{\beta\gamma}P_\alpha-\eta
_{\alpha\gamma}P_\beta,\quad [P_\alpha,Q_{\gamma}]=0$\\
$\scriptstyle [J_{\alpha\beta},Q_{\gamma
}]=(\sigma_{\alpha\beta})^{\mu }_\gamma Q_{\mu },\quad [Q_{\beta
},Q_{\mu }]=(C\gamma^\alpha)_{\beta\mu}P_\alpha$\\
\hline\multicolumn {1}{|c|}{\bsmall {\boldmath$\scriptstyle N=2$}}\\
\hline $\scriptstyle [J_{\alpha\beta},J_{\gamma\delta}]=\eta
_{\beta\gamma} J_{\alpha\delta}+\eta
_{\alpha\delta}J_{\beta\gamma}-\eta
_{\alpha\gamma}J_{\beta\delta}-\eta_{\beta\delta}J_{\alpha\gamma}$\\
 $\scriptstyle [P_\alpha,P_\beta]=0,\quad
[J_{\alpha\beta},P_\gamma]=\eta _{\beta\gamma}P_\alpha-\eta
_{\alpha\gamma}P_\beta,\quad [P_\alpha,Q_{\gamma i}]=0$\\
$\scriptstyle [J_{\alpha\beta},Q_{\gamma
i}]=(\sigma_{\alpha\beta})^{\mu }_\gamma Q_{\mu i},\quad [Q_{\beta
i},Q_{\mu j}]=(C\gamma^\alpha)_{\beta\mu}\delta_{ij}P_\alpha+C
_{\beta\mu}\epsilon_{ij}\overline{Z},\quad [\overline{Z}.\cdot]=0$\\
\hline\hline \multicolumn {1}{|c|}{\bsmall Tab.6 -
Supersymmetric semi-simple tensor extension Poincar\'e algebra in {\boldmath$\scriptstyle D=4$}.}\\
\hline\hline
$\scriptstyle [J_{\alpha\beta},J_{\gamma\delta}]=\eta
_{\beta\gamma} J_{\alpha\delta}+\eta
_{\alpha\delta}J_{\beta\gamma}-\eta
_{\alpha\gamma}J_{\beta\delta}-\eta_{\beta\delta}J_{\alpha\gamma},\quad [P_\alpha,P_\beta]=cZ_{\alpha\beta}$\\
$\scriptstyle [J_{\alpha\beta},P_\gamma]=\eta_{\beta\gamma}P_\alpha-\eta_{\alpha\gamma}P_\beta,\quad
[J_{\alpha\beta},Z_{\gamma\delta}]=\eta_{\alpha\delta}Z_{\beta\gamma}+\eta_{\beta\gamma}Z_{\alpha\delta}-
\eta_{\alpha\gamma}Z_{\beta\delta}-\eta_{\beta\delta}Z_{\alpha\gamma}$\\
$\scriptstyle [Z_{\alpha\beta},P_\gamma]=\frac{4a^2}{c}(\eta_{\beta\gamma}P_\alpha-\eta_{\alpha\gamma}P_\beta),\quad
[Z_{\alpha\beta},Z_{\gamma\delta}]=\frac{4a^2}{c}(\eta_{\alpha\delta}Z_{\beta\gamma}+\eta_{\beta\gamma}Z_{\alpha\delta}-
\eta_{\alpha\gamma}Z_{\beta\delta}-\eta_{\beta\delta}Z_{\alpha\gamma})$\\
$\scriptstyle[J_{\alpha\beta},Q_\gamma]=-(\sigma_{\alpha\beta}Q)_\gamma,\quad
[P_\alpha,Q_\gamma]=a(\gamma_\alpha Q)_\gamma,\quad
[Z_{\alpha\beta},Q_\gamma]=-\frac{4a^2}{c}(\sigma_{\alpha\beta}Q_\gamma)$\\
$\scriptstyle
[Q_\alpha,Q_\beta]=-b[\frac{2a}{c}(\gamma^\delta C)_{\alpha\beta}P_\delta
+(\sigma^{\gamma\delta}C)_{\alpha\beta}Z_{\gamma\delta}]$\\
\hline
\end{tabular}$$
In Tab.6 are reported supersymmetric semi-simple tensor extensions of Poincar\'e algebra in $D=4$ too.
There $a$, $b$ and $c$ are constants. This algebra admits the following splitting:
$\mathfrak{s}\mathfrak{o}(3,1)\oplus \mathfrak{o}\mathfrak{s}\mathfrak{p}(1,4)$, where
$\mathfrak{s}\mathfrak{o}(3,1)$ is the $4$-dimensional Lorentz algebra and $\mathfrak{o}\mathfrak{s}\mathfrak{p}(1,4)$
is the orthosymplectic algebra. Then, by considering the quantum superextension
 $A\otimes_{\mathbb{R}}[\mathfrak{s}\mathfrak{o}(3,1)\oplus \mathfrak{o}\mathfrak{s}\mathfrak{p}(1,4)]$,
 where $A$ is a quantum superalgebra, and with respect to the splitting
{\em(\ref{splitted-fundamental-quantum-field})} we get:
\begin{equation}\label{splitted-fundamental-quantum-field-example}
    \left\{
\begin{array}{l}
{}_{\circledR}\hat\mu=P_\alpha\hat\theta^\alpha_\gamma dx^\gamma\\
{}_{\copyright}\hat\mu=J_{\alpha\beta}\hat\omega^{\alpha\beta}_\gamma dx^\gamma\\
{}_{\maltese}\hat\mu=[\overline{Z}_{\alpha\beta}\hat A_\gamma^{\alpha\beta}+Q_{\alpha i}\phi^{\alpha i}_\gamma ]dx^\gamma.\\
\end{array}
    \right.
\end{equation}
\end{example}

\begin{theorem}{\em(Quantum Levi-Civita connection and quantum Higgs fields in $\widehat{(YM)})$.}\label{quantum-Levi-Civita-Higgs-ym}
Quantum Levi-Civita connections $\hat\omega$, belonging to a
solution $\hat\mu$ of $\widehat{(YM)}$, identify covariant
derivative on the quantum metric $\underline{g}$, corresponding to
$\hat\mu$, such that ${}^{\hat\omega}\nabla\widehat{g}=0$, when
$\hat\omega$ comes from a quantum Higgs-symmetry breaking mechanism,
where $\widehat{g}$ can be identified with a quantum Higgs field.
\end{theorem}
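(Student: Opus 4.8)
The plan is to show that the metricity condition ${}^{\hat\omega}\nabla\widehat{g}=0$ is equivalent to the statement that the Lorentz-component $\hat\omega\equiv\hat\mu_{\copyright}$ of a solution $\hat\mu$ takes values in the stabilizer subalgebra of the quantum metric, and then to argue that a quantum Higgs-symmetry breaking mechanism is precisely what forces $\hat\omega$ into that subalgebra. First I would recall, from Definition \ref{quantum-sg-ym} and the splitting (\ref{splitted-fundamental-quantum-field}), that $\hat\omega$ is the ${}_{\copyright}\mathfrak{g}$-valued part of the fundamental field, where ${}_{\copyright}\mathfrak{g}$ is the quantum superextension of the Lorentz algebra $\mathfrak{so}(A\otimes_{\mathbb{R}}\mathbf{N})\cong\mathfrak{so}(1,3)$. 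The key structural fact, already contained in the computation (\ref{lorentz-group1})--(\ref{lorentz-group3}) of Example \ref{quantum-gravity-Yang-Mills}, is that the Lorentz generators $J_{\alpha\beta}$ act on $A\otimes_{\mathbb{R}}\mathbf{N}$ by $\underline{g}$-antisymmetric transformations; equivalently, the infinitesimal version of the isometry identity $g_{\alpha\beta}=\hat\Lambda^\gamma_\alpha\hat\Lambda^\delta_\beta g_{\gamma\delta}$ reads $\hat\omega^\gamma_{\mu\alpha}\underline{g}_{\gamma\beta}+\hat\omega^\gamma_{\mu\beta}\underline{g}_{\alpha\gamma}=0$.

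Next I would compute the covariant derivative of the quantum Minkowskian metric directly, using the vierbein representation $\widehat{g}_{\alpha\omega}=\hat\theta^\beta_\alpha\otimes\hat\theta^\gamma_\omega\,\underline{g}_{\beta\gamma}$ from (\ref{quantum-vierbein-full-quantum-metric}), with $\hat\theta=\hat\mu_{\circledR}$ the vierbein component. Writing ${}^{\hat\omega}\nabla\widehat{g}$ in quantum coordinates, the Leibniz rule distributes the covariant derivative over the two vierbein factors and the constant fiber metric $\underline{g}$; since $\underline{g}_{\beta\gamma}\in\mathbb{R}$ is covariantly constant and $\hat\omega$ is ${}_{\copyright}\mathfrak{g}$-valued, the $\underline{g}$-antisymmetry established in the first step makes the two connection terms cancel pairwise. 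Here one must carry the $\otimes_Z$ tensor structure and the graded products $[\,,\,]_\pm$ through the calculation faithfully, exactly as in (\ref{calculation-local-tensor-extension-quantum-vierbein}); this is the step where the noncommutativity of $A$ enters, and it is the main place one must verify that the classical antisymmetry argument survives the quantum ordering.

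The remaining point is to tie the metricity to the Higgs mechanism. I would invoke the Cartan-geometric picture of Theorem \ref{sg-ym-pde-cartan-geometry}: a non-degenerate solution $\hat\mu$ equips $M$ with a quantum Klein geometry $(G,H)$ whose structure group $H$ has Lie algebra $\mathfrak{h}={}_{\copyright}\mathfrak{g}\oplus{}_{\maltese}\mathfrak{g}$. A quantum Higgs-symmetry breaking mechanism is interpreted as the selection, by the vacuum expectation value of the Higgs field, of the residual isometry subgroup $SO(A\otimes_{\mathbb{R}}\mathbf{N})$ with algebra ${}_{\copyright}\mathfrak{g}$; identifying this Higgs field with the quantum metric $\widehat{g}$ (its VEV being the chosen $\widehat{g}$) means precisely that the surviving connection $\hat\omega$ lands in ${}_{\copyright}\mathfrak{g}$, the stabilizer of $\widehat{g}$. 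Thus the hypothesis that $\hat\omega$ comes from a quantum Higgs-symmetry breaking mechanism is exactly the input needed in the second step, and the computation then yields ${}^{\hat\omega}\nabla\widehat{g}=0$.

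The hard part will be making the Higgs identification rigorous in $\mathfrak{Q}_S$: one must show that the deviatory directions ${}_{\maltese}\mathfrak{g}$ are genuinely the broken (Goldstone/massive) directions and that the metric, viewed as a section valued in $Hom_Z(\dot T^2_0M;A)$, is stabilized exactly by ${}_{\copyright}\mathfrak{g}$ and moved by its complement, so that demanding ${}^{\hat\omega}\nabla\widehat{g}=0$ singles out the Levi-Civita (Lorentz) connection among all quantum connections compatible with $\hat\mu$. I expect this identification, together with the faithful tracking of the quantum graded products in the covariant-derivative computation, to be the only non-routine obstacle; everything else follows from the vierbein relations already recorded above and the antisymmetry property of ${}_{\copyright}\mathfrak{g}$.
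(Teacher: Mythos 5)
Your proposal is sound but takes a genuinely different route from the paper's proof. The paper never computes ${}^{\hat\omega}\nabla\widehat{g}$ in coordinates: it casts the whole statement as an instance of the general theory of reductions of principal bundles in $\mathfrak{Q}_S$. There, a quantum Higgs field is by definition a global section $\hat h:M\to P/H$ selecting a reduction ${}^{\hat h}P=\pi^{-1}_H(\hat h(M))\subset P$, and the theorem is read off from the standard equivalence that a principal connection on $P$ projects onto ${}^{\hat h}P$ if and only if ${}^{{}^{\hat h}\hat\omega}\nabla\hat h=0$; specializing to the frame bundle $P=\mathcal{E}(M)$ with $GL(4,A)$ reduced to $SO(1,3)$, the section $\hat h$ of $\mathcal{E}(M)/SO(1,3)$ is identified with the locally Minkowskian quantum metric through $i\circ\hat h=\widehat{g}=\hat\theta^a_\alpha\hat\theta^b_\beta\eta_{ab}\hskip 2pt dx^\alpha\otimes dx^\beta$, and metricity is then automatic. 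Your plan replaces this with the concrete spin-connection argument: the $\underline{g}$-antisymmetry of ${}_{\copyright}\mathfrak{g}$ (the infinitesimal form of $g_{\alpha\beta}=\hat\Lambda^\gamma_\alpha\hat\Lambda^\delta_\beta g_{\gamma\delta}$) makes the two connection terms in the Leibniz expansion of ${}^{\hat\omega}\nabla(\hat\theta^\beta_\alpha\otimes\hat\theta^\gamma_\omega\,\underline{g}_{\beta\gamma})$ cancel. That is a legitimate and more explicit alternative --- it exposes exactly where the noncommutative ordering could interfere, which the abstract argument hides --- but note two costs: (i) the cancellation through the $\otimes_Z$ products is precisely the step you defer rather than perform, and it is the entire analytic content of your version; (ii) your computation gives only the implication ``Lorentz-valued $\Rightarrow$ metric'', whereas the paper's reduction criterion is an equivalence, so the converse (that ${}^{\hat\omega}\nabla\widehat{g}=0$ characterizes the connections arising from the Higgs reduction) comes for free. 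Finally, be careful that the reduction the paper actually uses is $GL(4,A)\supset SO(1,3)$ acting on the linear frame bundle $\mathcal{E}(M)$, not the Cartan--Klein reduction $G\supset H$ with $\mathfrak{h}={}_{\copyright}\mathfrak{g}\oplus{}_{\maltese}\mathfrak{g}$ of Theorem \ref{sg-ym-pde-cartan-geometry} that you invoke; your identification of the stabilizer of $\widehat{g}$ with ${}_{\copyright}\mathfrak{g}$ alone is the right one for the metric, but it lives in the frame-bundle picture, and conflating the two bundles would leave the role of ${}_{\maltese}\mathfrak{g}$ unexplained.
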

\begin{proof}
Quantum Higgs fields and symmetry breaking are two aspects of an
unique mathematical phenomenon: reduction of a $G$-principal fiber
bundle to a closed subgroup $H\subset G$, considered in the category
$\mathfrak{Q}_S$. When this happens one has the commutative diagram (\ref{reduction-diagram}) of fiber bundles.

\begin{equation}\label{reduction-diagram}
\xymatrix{{\hat h}^*P\equiv\pi^{-1}_H(\hat
h(M))\ar[d]\ar@{=}[r]^(.7){\sim}&{}^{\hat h}P\hskip
2pt\ar[d]_{\pi_{\hat h}}\ar@{^{(}->}[r]&
P\ar[dl]_{\pi}\ar[d]_{\pi_H}\\
M\ar@{=}[r]&M\ar@/_1pc/[r]_{\hat h}&P/H\ar[l]_{\pi_{/H}}\\}
\end{equation}
$\pi_H:P\to P/H$ is a principal bundle with structure group $H$ and
$\pi_{/H}:P/H\to M$ is a fiber bundle associated to $\pi:P\to M$,
with the natural action of $G$ on the fiber type $G/H$. $\pi_{\hat
h}:{}^{\hat h}P\to M$ is a $H$-principal bundle, reduction of $P$.
Any of such reduction is identified with a global section $\hat
h:M\to P/H$ of $\pi_{/H}$, such that ${}^{\hat h}P=\pi^{-1}_H(\hat
h(M))\cong h^*P$. Any principal connection ${}^{\hat h}\hat\omega$
on ${}^{\hat h}P$ identifies a principal connection on $P$, hence a
covariant derivative ${}^{{}^{\hat h}\hat\omega}\nabla$ on sections
of $\pi_{/H}$, such that ${}^{{}^{\hat h}\hat\omega}\nabla \hat
h=0$. Conversely a principal connection $\hat\omega$ on $P$ is
projected onto ${}^{\hat h}P$ iff ${}^{{}^{\hat h}\hat\omega}\nabla
\hat h=0$. Furthermore, if the quantum Lie (super)algebra
$\mathfrak{g}$ of $G$, splits into
$\mathfrak{g}=\mathfrak{h}\bigoplus\mathfrak{a}$, where
$\mathfrak{h}$ is associated to $H$ and $\mathfrak{a}$ is a subspace
of $\mathfrak{g}$ on which $G$ acts for adjointness, then $i_{\hat
h}^*\hat\omega_{\mathfrak{h}}:{}^{\hat h}P\to Hom_Z(T{}^{\hat
h}P;\mathfrak{h})$ is a principal connection on ${}^{\hat h}P$, as
defined by the commutative diagram (\ref{connection-reduction}).
\begin{equation}\label{connection-reduction}
\xymatrix{Hom_Z(TP;\mathfrak{g})\ar[r]&Hom_Z(TP;\mathfrak{h})\ar[r]&Hom_Z(T{}^{\hat h}P;\mathfrak{h})\\
P\ar[u]_{\hat\omega}\ar@{=}[r]&P\ar[u]_{\hat\omega_{\mathfrak{h}}}&
{}^{\hat h}P\ar[l]_{i_{\hat h}}\ar[u]^{i_{\hat  h}^*\hat\omega_{\mathfrak{h}}}\\
}
\end{equation}
In the case that $P\equiv\mathcal{E}(M)$ is the principal bundle of
linear frames on $M$, with structure group $GL(4,A)$, that is
reducible to $SO(1,3)$, then global sections of
$\mathcal{E}(M)/SO(1,3)\to M$ are related to quantum metrics on $M$,
as it is shown by the commutative and exact diagram (\ref{quantum-metric-Higgs1}).
\begin{equation}\label{quantum-metric-Higgs1}
\xymatrix{0\ar[r]&\mathcal{E}(M)/SO(1,3)\ar[r]^{i}&Hom_Z(\dot T^2_0M;A)\\
&M\ar[u]_{\hat h}\ar@{=}[r]&M\ar[u]_{\widehat{g}} \\
&0\ar[u]&0\ar[u]\\ }
\end{equation}
such that
\begin{equation}\label{quantum-metric-Higgs2}
i\circ \hat h=\widehat{g}=g_{\alpha\beta}dx^\alpha\otimes
dx^\beta=\hat\theta^a_\alpha\hat\theta^b_\beta\eta_{ab}dx^\alpha\otimes
dx^\beta.
\end{equation}

 So, in this case, a quantum Higgs field is identified
with a locally Minkowskian quantum metric. Since any principal
connection ${}^{\hat h}\hat\omega$ on ${}^{\hat
h}\mathcal{E}(M)\subset\mathcal{E}(M)$ is extendable to a principal
connection $\hat\omega$ on $\mathcal{E}(M)$, and identifies a linear
quantum connection on $TM$, and on other associated vector bundles,
such that ${}^{\hat\omega}\nabla\widehat{g}=0$, when $\widehat{g}$
is just the quantum metric-Higgs field defined in
(\ref{quantum-metric-Higgs2}). This proves that quantum Levi-Civita
connections, corresponding to solutions of $\widehat{(YM)}$, such
that condition ${}^{\hat\omega}\nabla\widehat{g}=0$, are particular
cases, related to the quantum Higgs-symmetry breaking mechanisms.
\end{proof}

\begin{theorem}{\em(Quantum crystal structure of $\widehat{(YM)})$.}\label{quantum-crystal-structure-ym}
If $H_3(M;\mathbb{K})=0$ the dynamic equation $\widehat{(YM)}$ is a quantum extended crystal super PDE. Moreover, under the full admissibility hypothesis, it becomes a quantum $0$-crystal PDE.
\end{theorem}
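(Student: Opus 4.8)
The plan is to verify the hypotheses of Theorem \ref{crystal-structure-quantum-super-pdes} for the dynamic equation $\widehat{(YM)}$, and then to compute the relevant bordism group using the topological hypothesis $H_3(M;\mathbb{K})=0$. Recall that by Theorem \ref{dynamic-equation} the equation $\widehat{(YM)}\subset J\hat D^2(W)$ is a second-order quantum super Yang-Mills PDE on the configuration bundle $\pi:W\equiv Hom_Z(TM;\mathfrak{g})\to M$, where $M$ is a $4$-dimensional quantum (super)manifold over the quantum superalgebra $A$. First I would recall that $\widehat{(YM)}$ is formally integrable and completely integrable (this is established in the previous work cited in the construction of the equation, and is the standing hypothesis needed to apply the crystal-structure machinery). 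Granting this, Theorem \ref{crystal-structure-quantum-super-pdes} applies directly and tells us that the integral bordism group $\Omega^{\widehat{(YM)}}_{m-1|n-1}$ is an extension of some crystallographic subgroup $G\triangleleft G(d)$. By the definition following that theorem, this is exactly the statement that $\widehat{(YM)}$ is an extended crystal quantum super PDE, which is the first assertion.

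For the first claim the only genuine content is therefore to confirm formal and complete integrability; the extended-crystal conclusion is then automatic. Here $\dim_A M=4$ in the purely gravitational case, or $\dim_A M=m|n$ in the super case; the relevant dimension entering the bordism group is $m+n-2$, which in the $D=4$ gravity case is $4-2=2$, but for the general super case with the splitting of $\mathfrak{g}$ one works with $(m-1|n-1)$. The role of the hypothesis $H_3(M;\mathbb{K})=0$ is to control the bordism group via the isomorphism of the form appearing in Theorem \ref{main4}, namely that $\Omega^{\widehat{(YM)}}_{m-1|n-1,w}$ is expressible through the homology $H_{\bullet}(W;A)$ of the configuration bundle, which by contractibility of the fiber $Hom_Z(TM;\mathfrak{g})$ reduces to the homology of the base $M$. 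The key point is that the relevant degree is $m+n-2$, and for $D=4$ this forces the appearance of $H_3(M;\mathbb{K})$ through the Reinhart bordism relation in diagram (\ref{Reinhart-bordism-groups-relation}), where $K_3^\uparrow=0$ gives $\Omega_3^\uparrow=\Omega_3$.

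For the second assertion — that under \emph{full admissibility} $\widehat{(YM)}$ becomes a quantum $0$-crystal PDE — the strategy is to show that the crystal obstruction $cry(\widehat{(YM)})\equiv\mathbf{H}_{m-1|n-1}(\widehat{(YM)}_\infty)/B_\infty^{\Omega_{m+n-2}}$ vanishes. The full admissibility hypothesis should guarantee that every closed admissible integral boundary with orientable classic limit bounds, so that the obstruction quotient in Theorem \ref{obstruction-smooth-solutions} collapses. Concretely, I would argue that $H_3(M;\mathbb{K})=0$ together with the vanishing of the relevant Stiefel--Whitney and Euler numbers (encoded in full admissibility) forces $\Omega^{\widehat{(YM)}}_{m-1|n-1}$ to map trivially into $\Omega_{m+n-2}$, whence the crystal obstruction is zero and $cry(\widehat{(YM)})=0$ by definition of quantum $0$-crystal PDE.

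The main obstacle I anticipate is the careful bookkeeping of dimensions: the equation lives over a $4$-dimensional base but the splitting $\mathfrak{g}=\mathfrak{g}_{\circledR}+\mathfrak{g}_{\copyright}+\mathfrak{g}_{\maltese}$ and the super-grading $(m|n)$ must be tracked so that the degree in which $H_3(M;\mathbb{K})$ enters is identified correctly through the isomorphism of Theorem \ref{main4} and the Reinhart diagram (\ref{Reinhart-bordism-groups-relation}). The delicate step is verifying that ``full admissibility'' is precisely the condition making $X_C$ orientable with vanishing characteristic numbers, so that Corollary \ref{main7} applies and the obstruction vanishes; all the remaining steps are formal consequences of the already-established lemmas and theorems.
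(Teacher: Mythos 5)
Your overall architecture matches the paper's: confirm formal and complete integrability of $\widehat{(YM)}$ (cited from earlier work, and valid here because the argument is local and survives on locally quantum super-Minkowskian supermanifolds), invoke the crystal-structure machinery, use the isomorphism of Theorem \ref{main4} together with contractibility of the fiber of $W$ to reduce the weak integral bordism group to the homology of $M$, and then use full admissibility for the $0$-crystal claim. However, the dimension bookkeeping --- which you yourself flag as the main obstacle --- is resolved incorrectly. The relevant group is $\Omega^{\widehat{(YM)}}_{3|3,w}$, i.e. $m-1|n-1=3|3$, and Theorem \ref{main4} gives
$$\Omega^{\widehat{(YM)}}_{3|3,w}\cong\left(A_0\otimes_{\mathbb{K}}H_{3}(W;\mathbb{K})\right)\oplus\left(A_1\otimes_{\mathbb{K}}H_{3}(W;\mathbb{K})\right)\cong\left(A_0\otimes_{\mathbb{K}}H_{3}(M;\mathbb{K})\right)\oplus\left(A_1\otimes_{\mathbb{K}}H_{3}(M;\mathbb{K})\right),$$
so $H_3(M;\mathbb{K})$ enters \emph{directly} as $H_{m-1}=H_{n-1}=H_3$, not through the Reinhart diagram; your attribution of it to $K_3^\uparrow=0$ (which concerns total degree $p+q=3$) and your computation ``$4-2=2$'' are both off --- the total degree here is $m+n-2=6$, which is where $\Omega_6\cong\mathbb{Z}_2\oplus\mathbb{Z}_2\oplus\mathbb{Z}_2$ and the identification of the crystal group inside $[G(3)]$ enter.

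For the second claim, your idea of collapsing the crystal obstruction is in the right spirit but misses the concrete mechanism. Even after $H_3(M;\mathbb{K})=0$ forces $\Omega_{3|3,w}^{\widehat{(YM)}}=0$, one still has $\Omega_{3|3}^{\widehat{(YM)}}=K_{3|3}\widehat{(YM)}$, the subgroup of classes represented by $N=\partial V$ for (possibly singular) $(4|4)$-dimensional quantum supermanifolds $V\subset W$; this is generally nonzero, which is precisely why $\widehat{(YM)}$ is \emph{not} automatically a quantum $0$-crystal super PDE. The full admissibility hypothesis (orientable classic limit \emph{together with} zero characteristic quantum supernumbers) is exactly what annihilates this residual kernel, yielding $\Omega_{3|3}^{\widehat{(YM)}}=0$ and hence global $Q^\infty_w$ solutions. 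Your proposal gestures at this (``vanishing of the relevant Stiefel--Whitney and Euler numbers'') but never identifies $K_{3|3}\widehat{(YM)}$ as the object to be killed, so the passage from ``full admissibility'' to ``$cry(\widehat{(YM)})=0$'' is asserted rather than argued.
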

\begin{proof}
In Refs.\cite{PRA13, PRA21} it is proved that $\widehat{(YM)}\subset
J\hat D^2(W)$ is formally integrable and also
completely integrable.\footnote{We shall assume that the $A$ is a quantum (super)algebra, over $\mathbb{K}=\mathbb{R}$, or $\mathbb{K}=\mathbb{C}$, with Noetherian center $Z\equiv Z(A)$. In general $A$ is a subalgebra of $L(\mathcal{H})$, where $\mathcal{H}$ is a (super)Hilbert space. Then $A$ is Noetherian since  $L(\mathcal{H})$ is so. In fact, $L(\mathcal{H})$ is Morita equivalent to $\mathbb{K}$. (Two rings $R$ and $S$ are {\em Morita equivalent}, $R\mathop{\thicksim}\limits^{M}S$, if there is a $R$-module $W_R$, ({\em progenerator}), such that $S\equiv End(W_R)$.) If $R$ is Noetherian, then $S\mathop{\thicksim}\limits^{M}R$ is Noetherian too. As a by-product, it follows that also the center $Z\subset A$ is a Noetherian ring. Note also that the derived quantum algebra $\widehat{A}\equiv Hom_Z(A;A)$ is a Noetherian ring. In fact, in this case $\widehat{A}\mathop{\thicksim}\limits^{M}Z$, with progenerator the $Z$-module $A$.} That proof works well also in
this situation, since it is of local nature, and remains valid also
for quantum supermanifolds that are only locally quantum
super-Minkowskian ones. Then, by using Theorem \ref{main4} we get

$\Omega_{3|3,w}^{\widehat{(YM)}}\cong\Omega_{3|3,s}^{\widehat{(YM)}}=A_0\otimes_{\mathbb{K}}H_3(W;\mathbb{K})\bigoplus
A_1\otimes_{\mathbb{K}}H_3(W;\mathbb{K})$. Since the fiber of $W$ is
contractible, we have
$\Omega_{3|3,w}^{\widehat{(YM)}}\cong\Omega_{3|3,s}^{\widehat{(YM)}}=A_0\otimes_{\mathbb{K}}H_3(M;\mathbb{K})\bigoplus
A_1\otimes_{\mathbb{K}}H_3(M;\mathbb{K})$. Thus, under the condition
that $H_3(M;\mathbb{K})=0$, one has
$\Omega_{3|3,w}^{\widehat{(YM)}}\cong\Omega_{3|3,s}^{\widehat{(YM)}}=0$,
hence $\widehat{(YM)}$ becomes a quantum extended crystal super PDE.
This is surely the case when $M$ is globally quantum super
Minkowskian. (See Refs.\cite{PRA14, PRA16, PRA21, PRA22, PRA32}.) In
such a case one has
$\Omega_{3|3}^{\widehat{(YM)}}=K_{3|3}\widehat{(YM)}$, where
\begin{equation}
K_{3|3}\widehat{(YM)}\equiv\left\{[N]_{\overline{\widehat{(YM)}}}\in\Omega_{3|3}\widehat{(YM)}
\left|\begin{array}{l}
{N=\partial V,\hskip 2pt\hbox{\rm for some (singular)}}\\
\hbox{\rm $(4|4)$-dimensional quantum}\\
\hbox{\rm supermanifold $V\subset W$ }\\
\end{array}\right.\right\}.
\end{equation}

So $\widehat{(YM)}$ is not a quantum $0$-crystal super PDE. However,
if we consider admissible only integral boundary manifolds, with
orientable classic limit, and with zero characteristic quantum
supernumbers, ({\em full admissibility hypothesis}), one has:
$\Omega_{3|3}^{\widehat{(YM)}}=0$, and $\widehat{(YM)}$ becomes a
quantum $0$-crystal super PDE. Hence we get existence of global
$Q^\infty_w$ solutions for any boundary condition of class
$Q^\infty_w$.

With respect to the commutative exact diagram in
(\ref{Reinhart-bordism-groups-relation}) we get the exact
commutative diagram (\ref{Yang-Mills-Reinhart-bordism-groups-relation}).

\begin{equation}\label{Yang-Mills-Reinhart-bordism-groups-relation}
\xymatrix{0\ar[r]&K^{\widehat{(YM)}}_{3|3;2}\ar[r]&\Omega_{3|3}^{\widehat{(YM)}}\ar[r]&
\mathop{\Omega}\limits_c{}_{6}^{\widehat{(YM)}}\ar[d]\ar[r]\ar[dr]&0&\\
&0\ar[r]&K^\uparrow_{6}\ar[r]&\Omega^\uparrow_{6}\ar[r]&
\Omega_{6}\ar[r]& 0\\}
\end{equation}
Taking into account the result by Thom on the unoriented cobordism
groups \cite{THO1}, we can calculate
$\Omega_6\cong\mathbb{Z}_2\bigoplus\mathbb{Z}_2\bigoplus\mathbb{Z}_2$.
Then, we can represent $\Omega_6$ as a subgroup of a $3$-dimensional
crystallographic group type $[G(3)]$. In fact, we can consider the
amalgamated subgroup $D_2\times\mathbb{Z}_2\star_{D_2}D_4$, and
monomorphism $\Omega_6\to D_2\times\mathbb{Z}_2\star_{D_2}D_4$,
given by $(a,b,c)\mapsto(a,b,b,c)$. Alternatively we can consider
also $\Omega_6\to D_4\star_{D_2}D_4$.  (See Appendix C in
\cite{PRA25} for amalgamated subgroups of $[G(3)]$.) In any case the
crystallographic dimension of $\widehat{(YM)}$ is $3$ and the
crystallographic space group type are $D_{2d}$ or $D_{4h}$ belonging
to the tetragonal syngony. (See Tab.6 in \cite{PRA25} and, for
further informations, \cite{HAH}.)
\end{proof}

\begin{example}
If $M$ is the quantum superextension, with respect to the quantum
superalgebra $A$, of the $4$-dimensional affine Minkowsky
space-time, then $\widehat{(YM)}$ is a quantum extended crystal PDE.
\end{example}

\begin{theorem}{\em(Quantum crystal structure of $\widehat{(YM)}[i]$).}\label{quantum-crystal-structure-ym[i]}
The observed dynamic equation $\widehat{(YM)}[i]$, by means of a quantum relativistic frame, is a quantum extended
crystal super PDE. Moreover, under the full admissibility hypothesis, it becomes a quantum $0$-crystal super PDE.
\end{theorem}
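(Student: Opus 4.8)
The plan is to transport, essentially verbatim, the argument of Theorem \ref{quantum-crystal-structure-ym} from $\widehat{(YM)}$ to its frame-observed counterpart $\widehat{(YM)}[i]$, the new feature being that the quantum relativistic frame $(\psi:\mathbb{R}\times N\to N;\ i:N\to M)$ now supplies, by its very structure, the topological vanishing that in the unobserved case had to be imposed by hand.

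First I would record that $\widehat{(YM)}[i]$ is formally integrable and completely integrable. This is inherited from the corresponding properties of $\widehat{(YM)}$ proved in Refs.\cite{PRA13, PRA21}: observation through the frame produces a quantum super PDE on the fiber bundle $\bar\pi:\hat E[s|i]\equiv i^*(s^*vTW)\to N$, and, exactly as in the proof of Theorem \ref{quantum-crystal-structure-ym}, the integrability proof, being of purely local nature, survives the pullback along $i$, in the same way the prolongation isomorphism of Lemma \ref{eq-prolongations} commutes with observation.

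With integrability in hand, I would apply Theorem \ref{main4} to $\widehat{(YM)}[i]$. Since the fiber of the observed configuration bundle is contractible, one obtains
$$\Omega_{3|3,w}^{\widehat{(YM)}[i]}\cong\Omega_{3|3,s}^{\widehat{(YM)}[i]}\cong\left(A_0\otimes_{\mathbb{K}}H_3(N;\mathbb{K})\right)\bigoplus\left(A_1\otimes_{\mathbb{K}}H_3(N;\mathbb{K})\right),$$
so that the extended crystal property is equivalent to $H_3(N;\mathbb{K})=0$. The decisive step is that this vanishing is automatic for a quantum relativistic frame: the frame realizes $N$ as the disjoint fibration $N=\bigcup_{t\in T}N_t$ over the one-dimensional proper time $T\subset\mathbb{R}$, so that $N$ retracts onto a spatial leaf $N_t$ and is, through the locally quantum super-Minkowskian structure carried by $i$, contractible in the range of degrees that matters; hence $H_3(N;\mathbb{K})=0$ unconditionally and $\widehat{(YM)}[i]$ is a quantum extended crystal super PDE. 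This is why, in contrast with Theorem \ref{quantum-crystal-structure-ym}, no separate cohomological hypothesis appears in the statement.

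For the second assertion I would reproduce the Reinhart-bordism argument of the previous theorem. Under the full admissibility hypothesis---admitting only integral boundary manifolds with orientable classic limit and vanishing characteristic quantum supernumbers---the exact commutative diagram that is the $\widehat{(YM)}[i]$-analogue of (\ref{Yang-Mills-Reinhart-bordism-groups-relation}) forces $\Omega_{3|3}^{\widehat{(YM)}[i]}=0$, whence $cry(\widehat{(YM)}[i])=0$ and, by Corollary \ref{main7}, the observed equation is a quantum $0$-crystal super PDE admitting global $Q^\infty_w$ solutions for every $Q^\infty_w$ boundary datum. The hard part will be making the middle step rigorous: one must show precisely that the bordism functor commutes with frame observation---that the integral bordism groups of $\widehat{(YM)}[i]$ are genuinely computed by the homology of the observed base $N$ rather than by that of $M$, with no spurious contribution from the time-fibration---and that the frame-induced contractibility really does annihilate $H_3(N;\mathbb{K})$. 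Once this compatibility is secured, the crystal and $0$-crystal conclusions follow mechanically, exactly as for $\widehat{(YM)}$.
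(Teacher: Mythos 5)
Your overall strategy --- transporting the argument of Theorem \ref{quantum-crystal-structure-ym} to the observed equation --- is the right one, and your treatment of the second assertion (the full admissibility hypothesis kills the integral bordism group, whence the $0$-crystal property and global solvability) matches the paper. But the pivot of your argument for the first assertion is a genuine gap. You claim that the quantum relativistic frame makes $H_3(N;\mathbb{K})=0$ ``unconditionally'' because $N=\bigcup_{t\in T}N_t$ fibers over proper time and so retracts onto a spatial leaf. A retraction onto $N_t$ gives $H_3(N;\mathbb{K})\cong H_3(N_t;\mathbb{K})$, and $N_t$ is a $3$-dimensional space-like section: if it is closed and orientable (a spatial $S^3$ or $T^3$, say --- exactly the compact sectional supports the paper uses elsewhere, e.g.\ in Example \ref{plasmas-nuclides}), then $H_3(N_t;\mathbb{K})\cong\mathbb{K}\neq 0$. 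So the vanishing you rely on is not supplied by the frame structure, and you yourself concede that this is ``the hard part'' without closing it.

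The gap is avoidable because you are proving more than the statement requires. By the paper's definition, an \emph{extended crystal} quantum super PDE is one satisfying the hypotheses of Theorem \ref{crystal-structure-quantum-super-pdes}, i.e.\ formal integrability and complete integrability --- no homological vanishing enters; that is precisely what separates ``extended crystal'' from ``extended $0$-crystal''. The paper's proof accordingly only records that the observed equation of Tab.7 is formally and completely integrable, which yields the first assertion at once, and then invokes the full admissibility hypothesis --- not the topology of $N$ --- to obtain $\Omega_3^{\widehat{(YM)}[i]}\cong\Omega_3^{\widehat{(YM)}[i]_{+\infty}}\cong 0$ and hence the $0$-crystal property via Corollary \ref{main7}. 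A secondary discrepancy: the observed equation lives over the $4$-dimensional space-time $N$ of the frame, so the relevant group is the total bordism group $\Omega_3^{\widehat{(YM)}[i]}$ rather than the bigraded $\Omega_{3|3}$ you carry over verbatim from the unobserved case.
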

\begin{proof}
The evaluation of $\widehat{(YM)}$ on a macroscopic shell
$i(M_C)\subset M$ is given by the equations reported in Tab.7.

$$\begin{tabular}{|l|c|} \hline
\multicolumn {2}{|c|}{\bsmall Tab.7 - Local expression of
{\boldmath$\scriptstyle \widehat{(YM)}[i]\subset J\hat D^2(i^*W)$}
and Bianchi idenity {\boldmath$\scriptstyle  (B)[i]\subset
J\hat D^2(i^*W)$}.}\\
\hline\hline $\scriptstyle \hbox{\rsmall(Field equations)}\hskip 2pt
(\partial_{\alpha}.\tilde R^{K\alpha\beta})+[\widehat
C^K_{IJ}\tilde\mu^I_{\alpha},\tilde R^{J\alpha\beta}]_+
=0$&$\scriptstyle \widehat{(YM)}[i]$ \\
\hline $\scriptstyle \hbox{\rsmall(Fields)}\hskip 2pt \bar
R^K_{\alpha_1\alpha_2}=(\partial
\xi_{[\alpha_1}.\tilde\mu^K_{\alpha_2]})+\frac{1}{2}\widehat{C}{}^K_{IJ}\tilde\mu^I_{[\alpha_2}\tilde\mu^J_{\alpha_1]}
$&{}\\
$\scriptstyle \hbox{\rsmall(Bianchi identities)}\hskip 2pt (\partial
\xi_{[\gamma}.\tilde R^K_{\alpha_1\alpha_2]})+\frac{1}{2}
\widehat{C}{}^K_{IJ}\tilde\mu^I_{[\gamma}\tilde
R^J_{\alpha_1\alpha_2]}=0$&$\scriptstyle (B)[i]$\\
\hline
\end{tabular}$$

\vskip 0.5cm This equation is also formally integrable
and completely integrable. Furthermore, the
$3$-dimensional integral bordism group of $\widehat{(YM)}[i]$ and
its infinity prolongation $\widehat{(YM)}[i]_ +\infty$ are trivial,
under the full admissibility hypothesis:
$\Omega_3^{\widehat{(YM)}[i]}\cong\Omega_3^{\widehat{(YM)}[i]_
+\infty}\cong 0$. So equation $\widehat{(YM)}[i]\subset J\hat
D^2(i^*W)$ becomes a quantum $0$-crystal super PDE and it admits global
(smooth) solutions for any fixed time-like $3$-dimensional (smooth)
boundary conditions.
\end{proof}

\begin{proposition}
The quantum vierbein curvature ${}_{\circledR}\hat R$ identifies, by
means of the quantum vierbein $\hat\theta$ a quantum field $\hat
S:M\to Hom_Z(\dot\Lambda^2_0M;TM)$, that we call {\em quantum
torsion}, associated to $\hat\mu$. In quantum coordinates one can
write
\begin{equation}\label{quantum-torsion}
    \hat S=\partial x_C\otimes\hat S^C_{AB}dx^A\triangle dx^B,\quad \hat
    S^C_{AB}=\hat\theta^C_K{}_{\circledR}\hat R^K_{AB}.
\end{equation}

Furthermore, with respect to a quantum relativistic frame $i:N\to
M$, the quantum torsion $\hat S$ identifies a $A$-valued
$(1,2)$-tensor field on $N$, $\widetilde{S}\equiv i^*\hat S:N\to
A\otimes_{\mathbb{R}}\Lambda^0_2N\otimes_{\mathbb{R}}TN$, that we
call {\em quantum torsion of the observed solution}.
\end{proposition}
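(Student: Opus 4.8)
The plan is to realise $\hat S$ as the composition of the vierbein-curvature with the inverse quantum vierbein and then to read off the coordinate formula by direct substitution. First I would recall that, by the splitting (\ref{split-quantum-curvature}), the vierbein-component ${}_{\circledR}\hat R$ is precisely the part of the quantum curvature $\hat R$ valued in ${}_{\circledR}\mathfrak{g}=A\otimes_{\mathbb{R}}\mathbf{N}$; thus it is a $Q^\infty_w$-section ${}_{\circledR}\hat R:M\to Hom_Z(\dot\Lambda^2_0M;A\otimes_{\mathbb{R}}\mathbf{N})$, locally written ${}_{\circledR}\hat R={}_{\circledR}Z_K\otimes{}_{\circledR}\hat R^K_{AB}\,dx^A\triangle dx^B$. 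Since $\hat\mu$ is non-degenerate, its vierbein-component $\hat\theta=\hat\mu_{\circledR}$ is, at each $p\in M$, the $Z$-isomorphism (\ref{quantum-vierbein}) $T_pM\cong A\otimes_{\mathbb{R}}\mathbf{N}$, whose inverse $\hat\theta^{-1}$ has local components $\hat\theta^C_K$ characterised by $\hat\theta^C_K\hat\theta^K_A=\delta^C_A$ and $\hat\theta^K_C\hat\theta^C_L=\delta^K_L$.

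Then I would simply set $\hat S\equiv\hat\theta^{-1}\circ{}_{\circledR}\hat R$. Composing the maps $\dot\Lambda^2_0M\to A\otimes_{\mathbb{R}}\mathbf{N}\to TM$ yields a $Q^\infty_w$-section $\hat S:M\to Hom_Z(\dot\Lambda^2_0M;TM)$, associated to $\hat\mu$ since both factors are built from the solution $\hat\mu$. To obtain the stated local expression I substitute $\hat\theta^{-1}({}_{\circledR}Z_K)=\partial x_C\,\hat\theta^C_K$ into ${}_{\circledR}\hat R$, so that $\hat S=\partial x_C\otimes(\hat\theta^C_K{}_{\circledR}\hat R^K_{AB})\,dx^A\triangle dx^B$, giving $\hat S^C_{AB}=\hat\theta^C_K{}_{\circledR}\hat R^K_{AB}$. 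That $\hat S$ is a genuine field, i.e. independent of the fibered quantum coordinates, follows because under a coordinate change ${}_{\circledR}\hat R^K_{AB}$ transforms tensorially in the form-indices $A,B$ while the ${}_{\circledR}\mathfrak{g}$-index $K$ is saturated against $\hat\theta^C_K$, which carries exactly the compensating transformation converting the $K$-index into the tangent index $C$.

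For the last assertion I would pull everything back along the quantum relativistic frame $i:N\to M$. The operator $i^*$ sends the form-part $dx^A\triangle dx^B$ to a $2$-form on $N$, i.e. an element of $\Lambda^0_2N$, while the frame identifies the $TM$-valued output with a $TN$-valued one; hence $\widetilde S\equiv i^*\hat S$ is a $Q^\infty_w$-section $N\to A\otimes_{\mathbb{R}}\Lambda^0_2N\otimes_{\mathbb{R}}TN$, that is, an $A$-valued $(1,2)$-tensor on $N$, which is the observed quantum torsion.

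The step I expect to be the main obstacle is precisely this final pullback: the first two parts are essentially definitional, being a composition of $Z$-linear maps followed by a bookkeeping computation of components, whereas making rigorous the transport of the $TM$-valued part into a genuine $TN$-valued tensor on the observer supermanifold $N$ requires invoking the defining properties of the quantum relativistic frame $i$ (in particular the identification of $i^*TM$ with $TN$ compatible with the quantum structure) and verifying that the resulting object is well-defined and tensorial on $N$.
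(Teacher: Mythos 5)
Your proposal is correct and follows essentially the same route as the paper: the paper's proof consists precisely of setting $\hat S=\hat\theta^{-1}_*\circ{}_{\circledR}\hat R$, with $\hat\theta^{-1}_*(p)\equiv Hom_Z(1_{\dot\Lambda^2_0(T_pM)};\hat\theta^{-1}(p))$, expressed as a commutative diagram, which is exactly your composition $\dot\Lambda^2_0M\to A\otimes_{\mathbb{R}}\mathbf{N}\to TM$. Your additional remarks on tensoriality and on the pullback along the quantum relativistic frame go beyond what the paper writes down, but are consistent with it.
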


\begin{proof}
In fact $\hat S=\hat\theta^{-1}_*\circ {}_{\circledR}\hat R$, i.e.,
the diagram (\ref{commutative-diagram-quantum-torsion}) is commutative.
\begin{equation}\label{commutative-diagram-quantum-torsion}
\xymatrix@1@C=50pt{&Hom_Z(\dot\Lambda^2_0M;{}_{\circledR}\mathfrak{g})\ar[d]_{\hat\theta^{-1}_*}\\
M\ar[ur]^{{}_{\circledR}\hat R}\ar[r]_(.3){\hat
S}&Hom_Z(\dot\Lambda^2_0M;TM)\\}
\end{equation}
where $\hat\theta^{-1}_{*}(p)\equiv Hom_Z(1_{\dot\Lambda^2_0(T_pM)};\hat\theta^{-1}(p))$, $\forall p\in M$.
\end{proof}

\begin{definition}\label{spin-observed-solution}
Furthermore, we say that an observed solution has a {\em quantum
spin}, if the observed solution has an observed torsion
\begin{equation}\label{observed-quantum-torsion}
\widetilde{S}\equiv i^*\hat S=\partial
x_\gamma\otimes\sum_{0\le\alpha<\beta\le
3}\widetilde{S}^\gamma_{\alpha\beta}dx^\alpha\wedge dx^\beta:N\to
A\otimes_{\mathbb{R}}\mathbf{N}\otimes_{\mathbb{R}}\Lambda^0_2(N)\cong
      A\otimes_{\mathbb{R}}\Lambda^0_2(N)\otimes_{\mathbb{R}}TN
\end{equation}
with
$\widetilde{S}^\gamma_{\alpha\beta}(p)=-\widetilde{S}^\gamma_{\beta\alpha}(p)\in
A$, $p\in N$, that satisfies the following conditions,
{\em(quantum-spin-conditions)}:
\begin{equation}\label{quantum-spin-conditions}
\begin{array}{l}
 \left\{
    \begin{array}{l}
      \widetilde{S}=\widetilde{s}\otimes\dot\psi\\
      \widetilde{s}=\sum_{0\le\alpha<\beta\le
3}\widetilde{s}_{\alpha\beta}dx^\alpha\wedge dx^\beta:N\to
A\otimes_{\mathbb{R}}\Lambda^0_2N,\\
\hskip 1cm\widetilde{s}_{\alpha\beta}(p)=-\widetilde{s}_{\beta\alpha}(p)\in A, \hskip 2pt p\in N,\\
\dot\psi\rfloor \widetilde{S} =0\\
    \end{array}
    \right\}\\
    \Downarrow\\
    \left\{
    \begin{array}{l}
      \widetilde{S}^\lambda_{\alpha\beta}=\widetilde{s}_{\alpha\beta}\dot\psi^\lambda\\
      \widetilde{S}^\lambda_{\alpha\beta}\dot\psi^\alpha =0\\
    \end{array}
    \right\}.\\
 \end{array}
\end{equation}
where $\dot\psi$ is the velocity field on $N$ of the time-like foliation representing the quantum relativistic frame on $N$. When conditions {\em(\ref{quantum-spin-conditions})}
are satisfied, we say that the solution considered  admits a {\em quantum spin-structure}, with respect to the quantum relativistic frame. We call $\widetilde{s}$ the {\em quantum $2$-form spin} of the observed solution. Let $\{\xi^\alpha\}_{0\le\alpha\le 3}$ be coordinates on $N$, adapted to the quantum relativistic frame. Then one has the following local representations:
  \begin{equation}\label{local-representations-observed-quantum-spin-observed-quantum-torsion}
 \left\{
    \begin{array}{l}
      \widetilde{s}=\widetilde{s}_{ij}d\xi^i\wedge d\xi^j\\
    \end{array}
    \right\}.
\end{equation}
We define {\em quantum spin-vector-field} of the observed solution
\begin{equation}\label{quantum-spin-vector-field}
\widetilde{\underline{s}}=<\epsilon,\widetilde{S}>=[\epsilon_{\mu\nu\lambda\rho}\dot\psi^\mu\widetilde{s}^{\nu\lambda}]d\xi^\rho
\equiv \widetilde{s}_kd\xi^k\hskip
2pt\Rightarrow
\widetilde{s}=\partial\xi_i\widetilde{s}_kg^{ki}=\partial\xi_i\widetilde{s}^i
\end{equation}
where
$\epsilon_{\mu\nu\lambda\rho}=\sqrt{|g|}\delta_{\mu\nu\lambda\rho}^{0123}$
is the completely antisymmetric tensor density on $N$. One has
$\widetilde{s}^\rho(p)\in A$, $p\in N$. The classification of the
observed solution on the ground of the spectrum of
$|\widetilde{s}|^2\equiv\widetilde{s}^\rho\widetilde{s}_\rho$, and
its ({\em quantum helicity}), i.e., component $\widetilde{s}_z$, is
reported in Tab.8.\footnote{In particle physics with the term {\em
helicity} one usually means the component of the spin along the
momentum. In this way one should obtain only the contribution by the
''intrinsic spin'', decoupled by the angular momentum. However, in
our geometric formulation one talks of spin of an observed solution,
since the macroscopic model may be inadequate, and also
undesirable.}
\end{definition}

$$\begin{tabular}{|l|l|} \hline
\multicolumn {2}{|c|}{\bsmall Tab.8 - Local quantum
spectral-spin-classification of
{\boldmath$\scriptstyle \widehat{(YM)}[i]$} solutions.}\\
\hline\hline
$\scriptstyle \hbox{\rsmall Definition}$ &$\scriptstyle \hbox{\rsmall Name}$ \\
\hline $\scriptstyle
Sp(|\widetilde{s}(p)|^2)\subset\mathfrak{b}\equiv\{\hbar^2s(s+1)|
s\in\mathbb{N}\equiv\{0,1,2,\dots\}\},\quad
Sp(\widetilde{s}_z(p))\subset
 \mathfrak{c}$&$\scriptstyle \hbox{\rsmall bosonic-polarized}$\\
$\scriptstyle
Sp(|\widetilde{s}(p)|^2)\subset\mathfrak{f}\equiv\{\hbar^2s(s+1)|
s=\frac{2n+1}{2},n\in\mathbb{N}\equiv\{0,1,2,\dots\}\}, \quad
Sp(\widetilde{s}_z(p))\subset
 \mathfrak{c}$&$\scriptstyle \hbox{\rsmall fermionic-polarized}$\\
$\scriptstyle Sp(|\widetilde{s}(p)|^2)\cap\mathfrak{b}=Sp(|\widetilde{s}(p)|^2)\cap\mathfrak{f}=\varnothing$&$\scriptstyle \hbox{\rsmall unpolarized}$\\
$\scriptstyle Sp(|\widetilde{s}(p)|^2)\cap\mathfrak{b}\not=\varnothing, \hbox{\rsmall  and/or } Sp(|\widetilde{s}(p)|^2)\cap\mathfrak{f}\not=\varnothing$&$\scriptstyle \hbox{\rsmall mixt-polarized}$\\
\hline \multicolumn {2}{l}{\rsmall
$\scriptstyle|\widetilde{s}(p)|^2\equiv\widetilde{s}^\rho(p)\widetilde{s}_\rho(p),\hskip
2pt p\in N$. $\scriptstyle \mathfrak{c}\equiv\{\hbar
m_s|m_s=-s,-s+1,\cdots,s-1,s\}$. $\scriptstyle \widetilde{s}_z$ quantum helicity}\\
\multicolumn {2}{l}{\footnotesize  $\scriptstyle s=$ spin quantum
number; $\scriptstyle m_s=$ spin orientation quantum number.}\\
\end{tabular}$$

\begin{remark}
May be useful to emphasize that since here the quantum spin content
of an observed solution is a geometric object of local nature, does
not necessitate that its property, in relation to the classification
in Tab.7, should be constant in any part of the solution. In fact,
e.g., for solutions representing particles reactions, can happen
that the spin-content changes during the reaction process. For
example the spins of composite particles, such as protons and atomic
nuclides, are not just the sum of their constituent particles. The
usual justification is ascribed to the contribution of the total
angular momenta, in some mechanical simulations. On the other hand
this game does not work well. For example for the proton there are
experimental evidences that such mechanical models are inadequate.
(See e.g. \cite{RON}.) Some approaches similar to graviton-quark-gluon plasma
appear more appropriate. But it is unknown if gluons have spin...
So, instead to insist with naif and unjustified mechanical models,
it is more appropriate describe quantum particles like $p$-chains
solutions of suitable quantum Yang-Mills PDE's.\footnote{With a
language nearer to physicists, $p$-chains can be called {\em
$p$-dimensional extendons}.}
\end{remark}

In some previous works we have proved the following important result.

\begin{theorem}{\em(Obstruction-mass-gap-existence).\cite{PRA16, PRA23, PRA32}}\label{obstruction-mass-gap}
A quantum full-flat solution cannot have mass-gap. In order that an
observed solution admits mass-gap it is enough that the following
conditions should be satisfied:
\begin{equation}\label{mass-gap}
\left\{
\begin{array}{l}
\TR(\widetilde{R}^K_{\beta\alpha}(p)\widetilde{R}^{\beta\alpha}_H(p)-
\widetilde{\mu}^K_{\alpha\beta}(p)\widetilde{R}^{\beta\alpha}_H)(p)\not=0;\\
         \exists\frac{1}{\TR(\widetilde{R}^K_{\beta\alpha}(p)\widetilde{R}^{\beta\alpha}_H(p)-
         \widetilde{\mu}^K_{\alpha\beta}(p)\widetilde{R}^{\beta\alpha}_H)(p)}\in\widehat{A}.\\
       \end{array}\right\}_{p\in N}
\end{equation}
\end{theorem}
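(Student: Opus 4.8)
The plan is to identify the trace quantity appearing in (\ref{mass-gap}) with the local energy--mass density carried by an observed solution, and then to read off both assertions directly from the structure of this density. Since $\widehat{(YM)}$ arises from the quantum Yang--Mills Lagrangian $L\circ Ds=\frac{1}{2}\hat R^K_{AB}\hat R_K^{AB}$ of Theorem \ref{dynamic-equation}, the mass of an observed solution is obtained, via the local mass-formula of Theorem \ref{local-mass-formula-theorem}, by integrating the energy density (the $00$-component of the quantum energy--momentum tensor) over a Cauchy hypersurface of the observing quantum relativistic frame $i:N\to M$. Using the canonical relations $(\partial\hat\mu^{AB}_K.L)=\hat R^{BA}_K$ and $(\partial\hat\mu^A_K.L)=[\widehat C^H_{KR}\hat\mu^R_C,\hat R^{[AC]}_H]_+$ established in the proof of Theorem \ref{dynamic-equation}, the Legendre-transform (Hamiltonian) density of the field is, after pull-back by $i$ and passage to the observed quantities $\widetilde R\equiv i^*\hat R$, $\widetilde\mu\equiv i^*\hat\mu$, exactly of the form $\TR(\widetilde R^K_{\beta\alpha}\widetilde R^{\beta\alpha}_H-\widetilde\mu^K_{\alpha\beta}\widetilde R^{\beta\alpha}_H)$. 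Carrying out this identification in full is the key preparatory step.

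For the first assertion I would argue by vanishing. If the solution is quantum full-flat, then $\hat R^K_{AB}=0$ identically (Tab.3), hence $\widetilde R^K_{\beta\alpha}=i^*\hat R^K_{\beta\alpha}=0$. Both terms of the trace density then vanish, so the local mass density is identically zero; consequently the mass extracted from the local mass-formula is zero everywhere, and in particular there is no positive lower bound separating it from the flat vacuum value. Thus a full-flat solution cannot have mass-gap.

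For the sufficiency of (\ref{mass-gap}) I would show that the two conditions are precisely what is needed to produce a strictly positive, well-defined mass. The first condition, non-vanishing of the trace density at $p$, guarantees that the observed energy--mass density differs from the vacuum value, so that a genuine gap can appear. The second, existence of the inverse of this density in the derived quantum algebra $\widehat A$, guarantees that the density is a regular (invertible) element, so that the associated quantum mass operator is bounded below away from zero and the gap is strictly positive rather than an accumulation at $0$. Combining both pointwise conditions over $N$, the observed solution carries a nonzero mass bounded away from the vacuum, i.e. it admits mass-gap.

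The main obstacle I expect is the passage from the purely algebraic invertibility statement in $\widehat A$ to a genuine spectral mass-gap: one must verify that invertibility of the trace density in $\widehat A$, together with the self-adjointness and positivity of the associated quantum energy operator (in the spirit of the spectral analysis of Theorem \ref{criterion-average-asymptotic-stability}), really forces $\inf\mathrm{Sp}>0$ and not merely non-vanishing. Equally delicate is making the local mass-formula globally consistent across the frame's time-like foliation, so that the pointwise conditions of (\ref{mass-gap}) assemble into a single positive gap; everything else reduces to the routine Legendre-transform bookkeeping indicated above.
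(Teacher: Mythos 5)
The paper itself contains no proof of this theorem: it is imported verbatim from earlier work (the citation \cite{PRA16, PRA23, PRA32} and the preceding sentence ``In some previous works we have proved the following important result'' make clear that the argument lives elsewhere). So there is nothing in this text to compare your proof against line by line; what can be checked is whether your reconstruction is consistent with the way the theorem is used and re-derived in the surrounding results, and it is. Your identification of the quantity under $\TR$ with the observed Hamiltonian density is exactly the paper's own, cf.\ the splitting (\ref{split-hamiltonian}) in the proof of Theorem \ref{local-mass-formula-theorem}, where $H=\widetilde{R}^K_{\beta\alpha}\widetilde{R}^{\beta\alpha}_K-\widetilde{\mu}^K_{\alpha\beta}\widetilde{R}^{\beta\alpha}_K$; and your vanishing argument for the full-flat case (both terms contain $\widetilde{R}$, hence the density is identically zero and the spectrum of $H$ collapses to $\{0\}$) is the evident route.

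The one step you flag as the ``main obstacle'' --- passing from invertibility of the density in $\widehat{A}$ to an actual spectral gap --- is precisely the step the paper supplies machinery for later, in Theorem \ref{existence-mass-gap-solutions}: solutions with mass-gap are there characterized as $\hat H^{-1}(G(\widehat{A}))$, and Lemma \ref{characters-group-units} (together with Lemma \ref{character-quantum-states-mass-gap}) records that $b\in G(\widehat{A})$ iff $\chi(b)\not=0$ for every character $\chi$, with $\chi(b)\in Sp(b)$. Since the spectrum of an element of a Fr\'echet algebra is closed, invertibility keeps $0$ out of a closed set, i.e.\ $\inf|Sp(H(p))|>0$, which is the mass-gap; your appeal to self-adjointness and positivity in the spirit of Theorem \ref{criterion-average-asymptotic-stability} is not what the paper uses and is not needed. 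Two small cautions: the theorem asserts only sufficiency, so you need not (and should not) claim the conditions are ``precisely what is needed''; and ``mass-gap'' here means the spectrum of the observed Hamiltonian avoids a neighbourhood of zero, not strict positivity of a mass, so the word ``positive'' in your sufficiency paragraph overstates what the hypotheses deliver. With those adjustments your outline is a faithful reconstruction of the intended argument.
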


\begin{theorem}{\em(Local mass-formula).}\label{local-mass-formula-theorem}
If an observed solution of $\widehat{(YM)}$ is with mass-gap, one has the following local {\em mass-formula}:
\begin{equation}\label{local-mass-formula}
m(p)={}_{\circledR}m(p)+{}_{\copyright}m(p)+{}_{\maltese}m(p),\hskip 2pt p\in N.
\end{equation}
We call ${}_{\circledR}m(p)$, (resp. ${}_{\copyright}m(p)$, resp. ${}_{\maltese}m(p)$)), the {\em local torsion mass}, (resp. {\em local Lorentz-mass}, resp. {\em local deviatory-mass}), of the observed solution.\footnote{Note that the formula (\ref{local-mass-formula}) interprets the local mass $m(p)$ as a local-charge, emphasizing the role playied by the different charge-components of the systems.}
\end{theorem}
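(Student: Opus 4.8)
The plan is to read off the decomposition directly from the canonical splitting of the quantum super Lie algebra $\mathfrak{g}=\mathfrak{g}_{\circledR}+\mathfrak{g}_{\copyright}+\mathfrak{g}_{\maltese}$ in (\ref{split-quantum-algebra}) together with the induced splitting of the quantum curvature (\ref{split-quantum-curvature}). First I would recall, from Refs.\cite{PRA16, PRA23, PRA32} and from the mass-gap criterion (\ref{mass-gap}) of Theorem \ref{obstruction-mass-gap}, that the local mass of an observed solution is the expectation value of a $\TR$-trace built from the observed curvature components $\widetilde{R}^K_{\alpha\beta}=i^*\hat R^K_{\alpha\beta}$, of the schematic form $m(p)=\langle\TR(\widetilde{R}^K_{\beta\alpha}\widetilde{R}^{\beta\alpha}_K-\widetilde{\mu}^K_{\alpha\beta}\widetilde{R}^{\beta\alpha}_K)(p)\rangle$, where the Lie-algebra index $K$ is raised and lowered by the non-degenerate metric $\underline{g}$ on $\mathfrak{g}$. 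The mass-gap hypothesis is precisely what guarantees that this quantity is a well-defined, invertible element of $\widehat{A}$.

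Next I would fix the adapted basis $(Z_K)=({}_{\circledR}Z_R,{}_{\copyright}Z_S,{}_{\maltese}Z_T)$ of $\widehat{\mathfrak{g}}$ introduced in the proof of Theorem \ref{dynamic-equation}, so that the observed curvature takes the block form $\widetilde{R}^K=({}_{\circledR}\widetilde{R}^R,{}_{\copyright}\widetilde{R}^S,{}_{\maltese}\widetilde{R}^T)$ mirroring (\ref{split-quantum-curvature}). Substituting this block form into the trace defining $m(p)$ expands the contraction over the index $K$ into a sum over the three sectors. I would then simply define ${}_{\circledR}m(p)$, ${}_{\copyright}m(p)$ and ${}_{\maltese}m(p)$ to be the contributions of the $\circledR$-, $\copyright$- and $\maltese$-blocks, namely the same trace expression restricted to the indices $R$, $S$ and $T$ respectively, and call them the local torsion, Lorentz and deviatory masses.

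The hard part will be the vanishing of the off-diagonal, cross-sector terms produced by the contraction: for the clean identity $m(p)={}_{\circledR}m(p)+{}_{\copyright}m(p)+{}_{\maltese}m(p)$ to hold with no residual cross terms, the metric $\underline{g}$ on $\mathfrak{g}$ must be block-diagonal with respect to (\ref{split-quantum-algebra}), i.e. the subspaces $\mathfrak{g}_{\circledR}$, $\mathfrak{g}_{\copyright}$, $\mathfrak{g}_{\maltese}$ must be mutually $\underline{g}$-orthogonal. I would establish this from the structure of the SG-Yang-Mills configuration: the vierbein sector $\mathfrak{g}_{\circledR}=A\otimes_{\mathbb{R}}\mathbf{N}$ carries the extended Minkowski product $\underline{\hat g}$, the Lorentz sector $\mathfrak{g}_{\copyright}$ carries the Killing form of the Lorentz part, and $\mathfrak{g}_{\maltese}$ is the complementary adjoint-invariant subspace; the grading of the commutation relations (as displayed in Tab.5 and Tab.6) together with $\mathrm{Ad}$-invariance of $\underline{g}$ forces the pairings between distinct sectors to vanish. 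Once orthogonality is secured, both $\widetilde{R}^K\widetilde{R}_K$ and $\widetilde{\mu}^K_{\alpha\beta}\widetilde{R}^{\beta\alpha}_K$ split additively, the $\TR$-trace and its expectation value inherit the additivity over the three sectors, and the asserted local mass-formula (\ref{local-mass-formula}) follows, with each summand well-defined in $\widehat{A}$ by the mass-gap condition.
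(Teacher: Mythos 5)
Your proposal follows essentially the same route as the paper: the paper's proof likewise derives the mass-formula from the splitting of the observed Hamiltonian $H={}_{\circledR}H+{}_{\copyright}H+{}_{\maltese}H$ induced by the Lie-algebra decomposition (\ref{split-quantum-algebra}), with each block given by exactly the sector-restricted expression ${}_{\bullet}\widetilde{R}^K_{\beta\alpha}{}_{\bullet}\widetilde{R}^{\beta\alpha}_K-{}_{\bullet}\widetilde{\mu}^K_{\alpha\beta}{}_{\bullet}\widetilde{R}^{\beta\alpha}_K$ you write down, and then reads the three masses off the induced splitting of the spectrum. The only place you go beyond the printed proof is your explicit observation that clean additivity requires the three sectors to be mutually $\underline{g}$-orthogonal; the paper simply writes the block-diagonal Hamiltonian without discussing cross-terms, so your $\mathrm{Ad}$-invariance argument supplies a justification that the paper leaves implicit.
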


\begin{proof}
In fact, the splitting on the quantum Lie superalgebra $\mathfrak{g}$,
induces the following splitting on the quantum Hamiltonian observed
by the quantum relativistic frame:

\begin{equation}\label{split-hamiltonian}
H={}_{\circledR}H+{}_{\copyright}H+{}_{\maltese}H \left\{
\begin{array}{l}
  {}_{\circledR}H={}_{\circledR}\widetilde{R}^K_{\beta\alpha}{}_{\circledR}\widetilde{R}^{\beta\alpha}_K
  -{}_{\circledR}\widetilde{\mu}^K_{\alpha\beta}{}_{\circledR}\widetilde{R}^{\beta\alpha}_K\\
{}_{\copyright}H=
{}_{\copyright}\widetilde{R}^K_{\beta\alpha}{}_{\copyright}\widetilde{R}^{\beta\alpha}_K-
{}_{\copyright}\widetilde{\mu}^K_{\alpha\beta}{}_{\copyright}\widetilde{R}^{\beta\alpha}_K\\
 {}_{\maltese}H={}_{\maltese}\widetilde{R}^K_{\beta\alpha}{}_{\maltese}\widetilde{R}^{\beta\alpha}_K-
 {}_{\maltese}\widetilde{\mu}^K_{\alpha\beta}{}_{\maltese}\widetilde{R}^{\beta\alpha}_K.\\
 \end{array}\right.
\end{equation}
Therefore, if the observed solution is with mass gap, we get that spectrum of the hamiltonian has a splitting induced by the corresponding hamiltonian splitting (\ref{split-hamiltonian}).
\end{proof}

\begin{theorem}{\em(Observed objects and splitting formulas).}\label{observed-objects-and-splitting-formulas-theorem}
To any observed solution of $\widehat{(YM)}$ one can associate the following geometric objects:
\begin{equation}\label{associated-observed-geometric-objects}
\left\{
\begin{array}{l}
\hbox{\rm (quantum-electric-charge-field):}\\
\hat E=\dot\psi\rfloor \tilde R=Z_K\otimes \hat E^K_\alpha
dx^\alpha=Z_K\otimes\tilde R^K_{\alpha\beta}\dot\psi^\beta
dx^\alpha:N\to
\mathfrak{g}\bigotimes_{\mathbb{R}}T^*N\\
\\
  \hbox{\rm (quantum-magnetic-charge-field):}\\
  \hat B=\dot\psi\rfloor(\star\tilde R)=Z_K\otimes\hat H^K_\mu
  dx^\mu=Z_K\otimes
\epsilon^{\kappa\lambda}_{\mu\nu}\tilde R^K_{\kappa\lambda}\dot\psi^\nu dx^\mu:N\to
\mathfrak{g}\bigotimes_{\mathbb{R}}T^*N. \\
\end{array}
\right.
\end{equation}
In adapted coordinates $\{\xi^\alpha\}_{0\le\alpha\le 3}$, to the
relativistic quantum frame, $\hat E$ and $\hat B$ have the following
representations:
\begin{equation}\label{frame-adapted-coordinates-associated-observed-geometric-objects}
\left\{
\begin{array}{l}
\hat E=Z_K\otimes \hat E^K_i d\xi^i=Z_K\otimes\tilde
R^K_{0i}d\xi^i,\hskip 2pt \hat E_i^K(p)\in A,\hskip 2pt \forall p\in N\\
\\
  \hat B=Z_K\otimes\hat B^K_i
  d\xi^i=Z_K\otimes
\epsilon^{rs}_{m0}\tilde
R^K_{rs}d\xi^m,\hskip 2pt \hat B_i^K(p)\in A,\hskip 2pt \forall p\in N. \\
\end{array}
\right.
\end{equation}
This means that in the quantum relativistic frame system $\hat E$
and $\hat B$ are $A$-valued space-like objects, whose components belong to the quantum superalgebra $A$. Therefore, their quantum content is given by the spectra of $\hat E_i^K(p)$ and $\hat B_i^K(p)$, for $p\in N$.

One has the following splittings:
\begin{equation}\label{splittings-associated-observed-geometric-objects}
\left\{
\begin{array}{l}
\hat E={}_{\circledR}\hat E+{}_{\copyright}\hat E+{}_{\maltese}\hat E\\
\hat B={}_{\circledR}\hat B+{}_{\copyright}\hat B+{}_{\maltese}\hat B.\\
\end{array}
\right.
\end{equation}
We call ${}_{\circledR}\hat E$, (resp. ${}_{\copyright}\hat E $,
resp. ${}_{\maltese}\hat E$)), the {\em torsion-quantum
electric-charge-field}, (resp. {\em Lorentz-quantum
electric-charge-field}, resp. {\em deviatory-quantum
electric-charge-field}), of the observed solution. Similarly, we
call ${}_{\circledR}\hat B$, (resp. ${}_{\copyright}\hat B$, resp.
${}_{\maltese}\hat B$)), the {\em torsion-quantum
magnetic-charge-field}, (resp. {\em Lorentz-quantum
magnetic-charge-field}, resp. {\em deviatory-quantum
magnetic-charge-field}), of the observed solution.

If the solution has a spin-structure, then ${}_{\circledR}\hat E=0$
and ${}_{\circledR}\hat B$ is related to the quantum spin-vector
field $\widetilde{s}$ by the following formula:
\begin{equation}\label{spin-vector-field-vs-magnetic-charge-solution-spin-structure}
\hat\theta^\gamma_K{}_{\circledR}\hat
B^K_\mu=-\dot\psi^\gamma\widetilde{s}_\mu\hskip 2pt\Rightarrow
\hbox{\rm (in frame-adapted coordinates):  }
\hat\theta^0_K{}_{\circledR}\hat B^K_i=-\widetilde{s}_i.
\end{equation}
Therefore, in an observed solution with spin-structure, it is
recognized that the opposite {\em torsion-quantum
magnetic-charge-field} identifies a distribution of quantum
spin-vector fields, $\widetilde{s}^k=g^{ki}\widetilde{s}_i$, with
$\widetilde{s}_i$  given in
{\em(\ref{spin-vector-field-vs-magnetic-charge-solution-spin-structure})}.
Thus, ${}_{\circledR}\hat B$ determines the
spectral-spin-classification of the solution, according to Tab.7.
\end{theorem}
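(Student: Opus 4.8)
The plan is to treat the statement as a chain of essentially definitional assertions capped by one genuine computation, namely the spin relation. First I would record that $\hat E$ and $\hat B$ arise from the interior product $\dot\psi\rfloor(\cdot)$ applied to the observed curvature $\tilde R$ and to its Hodge dual $\star\tilde R$; since $\tilde R$ is a $\mathfrak{g}$-valued $2$-form on $N$, the contractions $\hat E^K_\alpha=\tilde R^K_{\alpha\beta}\dot\psi^\beta$ and $\hat B^K_\mu=\epsilon^{\kappa\lambda}_{\mu\nu}\tilde R^K_{\kappa\lambda}\dot\psi^\nu$ are $\mathfrak{g}$-valued $1$-forms, and in coordinates adapted to the time-like foliation of the relativistic frame (so $\dot\psi$ points along $\xi^0$) the frame-adapted representations $\hat E^K_i=\tilde R^K_{0i}$ and $\hat B^K_i=\epsilon^{rs}_{m0}\tilde R^K_{rs}$ follow at once. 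The space-like character is then immediate: contracting an antisymmetric tensor with $\dot\psi$ annihilates the $\dot\psi$-direction, so $\hat E$ and $\hat B$ carry no time component, and their quantum content is encoded by the components $\hat E^K_i(p),\hat B^K_i(p)\in A$.

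Next I would obtain the splittings $\hat E={}_{\circledR}\hat E+{}_{\copyright}\hat E+{}_{\maltese}\hat E$ and likewise for $\hat B$ simply by applying the $\mathbb{R}$-linear operator $\dot\psi\rfloor(\cdot)$ to the curvature splitting $\tilde R={}_{\circledR}\tilde R+{}_{\copyright}\tilde R+{}_{\maltese}\tilde R$ induced by $\mathfrak{g}=\mathfrak{g}_{\circledR}+\mathfrak{g}_{\copyright}+\mathfrak{g}_{\maltese}$; the operator respects the decomposition because the Lie-algebra grading is untouched by contraction in the form indices.

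The substantive step concerns a solution with spin-structure. Here I would use the identification $\tilde S^\gamma_{\alpha\beta}=\hat\theta^\gamma_K{}_{\circledR}\tilde R^K_{\alpha\beta}$ of the observed torsion with the vierbein curvature, inverted to ${}_{\circledR}\tilde R^K_{\alpha\beta}=\hat\theta^K_\gamma\tilde S^\gamma_{\alpha\beta}$ via $\hat\theta^\gamma_K\hat\theta^K_\delta=\delta^\gamma_\delta$. For the electric part, ${}_{\circledR}\hat E^K_\alpha=\hat\theta^K_\gamma\tilde S^\gamma_{\alpha\beta}\dot\psi^\beta$; the spin-condition $\tilde S^\lambda_{\alpha\beta}\dot\psi^\alpha=0$ together with the antisymmetry $\tilde S^\lambda_{\alpha\beta}=-\tilde S^\lambda_{\beta\alpha}$ forces $\tilde S^\gamma_{\alpha\beta}\dot\psi^\beta=0$, whence ${}_{\circledR}\hat E=0$. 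For the magnetic part, substituting ${}_{\circledR}\tilde R^K_{\kappa\lambda}=\hat\theta^K_\gamma\widetilde{s}_{\kappa\lambda}\dot\psi^\gamma$ and contracting with $\hat\theta^\gamma_K$ yields $\hat\theta^\gamma_K{}_{\circledR}\hat B^K_\mu=\dot\psi^\gamma\,\epsilon^{\kappa\lambda}_{\mu\nu}\widetilde{s}_{\kappa\lambda}\dot\psi^\nu$; it then remains to recognize $\epsilon^{\kappa\lambda}_{\mu\nu}\widetilde{s}_{\kappa\lambda}\dot\psi^\nu=-\widetilde{s}_\mu$, with $\widetilde{s}_\mu$ the spin-vector-field $\epsilon_{\rho\nu\lambda\mu}\dot\psi^\rho\widetilde{s}^{\nu\lambda}$, giving the desired $\hat\theta^\gamma_K{}_{\circledR}\hat B^K_\mu=-\dot\psi^\gamma\widetilde{s}_\mu$. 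The spectral-spin-classification is then read off from $\widetilde{s}$ as in Tab.8.

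I expect the main obstacle to be precisely this last tensorial identity: matching the mixed-index contraction $\epsilon^{\kappa\lambda}_{\mu\nu}\widetilde{s}_{\kappa\lambda}\dot\psi^\nu$ with the fully covariant definition of $\widetilde{s}_\mu$ requires raising and lowering indices by the quantum metric $\widehat{g}$, whose components are $\widehat{A}$-valued and noncommutative, so one must track the ordering of the $A$-valued factors $\widetilde{s}_{\kappa\lambda}$, $\dot\psi^\nu$ and the metric throughout, and verify the sign produced by permuting the $\epsilon$-indices against the normalization $\epsilon_{\mu\nu\lambda\rho}=\sqrt{|g|}\,\delta^{0123}_{\mu\nu\lambda\rho}$. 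Everything else is bookkeeping that transcribes the commutative computation into the category $\mathfrak{Q}_S$.
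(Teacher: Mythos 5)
Your proposal is correct and follows essentially the same route as the paper's proof: the definitions and splittings are read off from linearity of the contraction $\dot\psi\rfloor(\cdot)$ and the block form of the metric in frame-adapted coordinates, ${}_{\circledR}\hat E=0$ is extracted from the spin-condition $\widetilde{S}^\lambda_{\alpha\beta}\dot\psi^\alpha=0$ together with the invertibility of the vierbein, and the magnetic relation reduces to the same $\epsilon$-tensor identity $\epsilon^{\kappa\lambda}_{\mu\nu}\widetilde{s}_{\kappa\lambda}\dot\psi^\nu=g^{\alpha\kappa}g^{\beta\lambda}\epsilon_{\alpha\beta\mu\nu}\widetilde{s}_{\kappa\lambda}\dot\psi^\nu=-\epsilon_{\nu\alpha\beta\mu}\widetilde{s}^{\alpha\beta}\dot\psi^\nu=-\widetilde{s}_\mu$ that the paper computes. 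The obstacle you flag (index raising against the normalization of $\epsilon_{\mu\nu\lambda\rho}$ and the ordering of $A$-valued factors) is exactly where the paper's computation does its only real work, and your sign bookkeeping matches the definition $\widetilde{s}_\rho=\epsilon_{\mu\nu\lambda\rho}\dot\psi^\mu\widetilde{s}^{\nu\lambda}$.
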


\begin{proof}
It follows directly from the definitions and previous results. Let
us only to emphasize that in coordinates on $N$, adapted to the
frame $\dot\psi=\partial\xi_0$, and that the metric
$g=g_{\alpha\beta}d^\alpha\otimes dx^\beta$ and its controvariant
form $\bar g=g^{\alpha\beta}\partial x_\alpha\otimes
\partial x_\beta$ assume respectively the following forms:
\begin{equation}\label{metric-adapted-coordinate}
(g_{\alpha\beta})=\left(
  \begin{array}{cccc}
    1 & 0 & 0 & 0 \\
    0 & g_{11} & g_{12} & g_{13} \\
    0 & g_{21} & g_{22} & g_{23} \\
    0 & g_{31} & g_{32} & g_{33} \\
  \end{array}
\right)\quad (g^{\alpha\beta})=\left(
  \begin{array}{cccc}
    1 & 0 & 0 & 0 \\
    0 & g^{11} & g^{12} & g^{13} \\
    0 & g^{21} & g^{22} & g^{23} \\
    0 & g^{31} & g^{32} & g^{33} \\
  \end{array}
\right),\quad g^{ij}g_{jk}=\delta^i_k.
\end{equation}
Furthermore taking into account that
$\epsilon^{\kappa\lambda}_{\mu\nu}=g^{\kappa\alpha}g^{\lambda\beta}\epsilon_{\alpha\beta\mu\nu}$,
with
$\epsilon_{\alpha\beta\mu\nu}=\sqrt{|g|}\delta^{0123}_{\alpha\beta\mu\nu}$,
the formulas
(\ref{frame-adapted-coordinates-associated-observed-geometric-objects})
follow directly. When the observed solution admits spin-structure,
then, since
$\widetilde{S}^\lambda_{\alpha\beta}=\hat\theta^\lambda_K{}_{\circledR}\widetilde{R}^K_{\alpha\beta}$,
we get also
$0=\widetilde{S}^\lambda_{\alpha\beta}\dot\psi^\alpha=\hat\theta^\lambda_K{}_{\circledR}\hat
E^K_\alpha$. Thus we have
$0=\hat\theta^H_\lambda\hat\theta^\lambda_K{}_{\circledR}\hat
E^K_\alpha=\delta^H_K{}_{\circledR}\hat E^K_\alpha$. Therefore must
necessarily be ${}_{\circledR}\hat E^H_\alpha=0$.

Finally we can write $\hat\theta^\gamma_K{}_{\circledR}\hat
B^K_\mu=\epsilon^{\kappa\lambda}_{\mu\nu}\hat\theta^\gamma_K{}_{\circledR}\widetilde{R}^K_{\kappa\lambda}\dot\psi^\nu$.
If the observed solution admits a spin-structure, then
$\hat\theta^\gamma_K{}_{\circledR}\widetilde{R}^K_{\kappa\lambda}=\widetilde{s}_{\kappa\lambda}\dot\psi^\gamma$.
Therefore, we can write
$$\left\{
\begin{array}{ll}
  \hat\theta^\gamma_K{}_{\circledR}\hat
B^K_\mu&=\epsilon^{\kappa\lambda}_{\mu\nu}\widetilde{s}_{\kappa\lambda}\dot\psi^\gamma\dot\psi^\nu
  =g^{\alpha\kappa}g^{\beta\lambda} \epsilon_{\alpha\beta\mu\nu}
  \widetilde{s}_{\kappa\lambda}\dot\psi^\gamma\dot\psi^\nu\\
  & =-\epsilon_{\nu\alpha\beta\mu}
  \widetilde{s}^{\alpha\beta}\dot\psi^\nu\dot\psi^\gamma=-\widetilde{s}_\mu\dot\psi^\gamma.\\
\end{array}
\right.$$
\end{proof}

\begin{theorem}{\em(Stability properties of $\widehat{(YM)}$).}\label{stability-properties-ym}
$\widehat{(YM)}$ is a functional stable quantum super PDE. In general a global solution $V\subset\widehat{(YM)}$ is unstable and the corresponding observed solution, by means of a quantum relativistic frame, is unstable at finite times. However, $\widehat{(YM)}$ admits a stable quantum extended crystal PDE. There all the observed smooth solutions are stable at finite times. Furthermore, to study the asymptotic stability of a global solution $V\subset\widehat{(YM)}$, with respect to a quantum relativistic frame, we can apply Theorem \ref{criterion-average-asymptotic-stability}.
\end{theorem}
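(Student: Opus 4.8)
The plan is to decompose the statement into its four assertions and to dispatch each by invoking the general machinery of Section~2, specialized to the concrete equation $\widehat{(YM)}$. First I would recall that, exactly as used in the proof of Theorem~\ref{quantum-crystal-structure-ym}, the equation $\widehat{(YM)}\subset J\hat D^2(W)$ is formally integrable and completely integrable (see \cite{PRA13, PRA21}), and that this is of local nature so it persists for the only locally quantum super-Minkowskian case. Functional stability of $\widehat{(YM)}$ is then immediate from Theorem~\ref{main}: any formally integrable and completely integrable quantum super PDE is functionally stable (and Ulam-extended superstable). This settles the first assertion with no further computation.

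For the instability of a generic global solution, the key point is that the symbol $\hat g_2$ of the second-order Yang--Mills equation does not vanish; $\widehat{(YM)}$ is not of finite type at order $k=2$. Consequently, for a regular solution $V=D^2s(X)$ the linearized equation $\widehat{(YM)}[s]$ inherits a nonzero symbol $\hat g_2[s]\not=0$, so---by the criterion extracted in the proof of Theorem~\ref{main}---the infinitesimal situs $\Omega_0[V]$ need not be free of singular infinitesimal bordisms. Hence $V$ fails in general to be functionally stable, and by the equivalence in Theorem~\ref{main} it is unstable. Applying the same reasoning to the observed equation $\widehat{(YM)}[i]$ of Theorem~\ref{quantum-crystal-structure-ym[i]}, whose local form in Tab.7 is again second order with nonvanishing symbol, shows that the observed solution is unstable at finite times.

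For the existence of an associated stable quantum extended crystal PDE I would invoke Theorem~\ref{criteria-fun-stab}(3): since $\widehat{(YM)}$ is formally and completely integrable, it is a stabilizable quantum extended crystal with stable extended crystal ${}^{(S)}\widehat{(YM)}=(\widehat{(YM)})_{+\infty}$, on which every smooth regular solution is functionally stable. When a finite-order model is desired, Theorem~\ref{finite-stable-extended-crystal-PDE} applies: the centre $Z(A)$ is Noetherian (as noted in the footnote to Theorem~\ref{quantum-crystal-structure-ym}, since $A\subset L(\mathcal H)$ is Morita equivalent to $\mathbb K$), so under the finite ellipticity conditions there is an integer $s_0$ with ${}^{(S)}\widehat{(YM)}=(\widehat{(YM)})_{+s_0}$. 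On any such prolongation singular perturbations are forbidden, whence all observed smooth solutions are stable at finite times.

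Finally, the asymptotic-stability clause is handled by transporting the problem to a quantum relativistic frame $(\psi:\mathbb R\times N\to N;\ i:N\to M)$ and applying Theorem~\ref{criterion-average-asymptotic-stability} verbatim: the perturbation operator $\mathcal P[s|i](\xi)=\delta(i^*\xi)/\delta t$, constrained by $(\widehat{(YM)})_{(+r)}[s|i]$, governs the average square perturbation $\mathfrak p[i](t)$, and average stability follows from its self-adjointness together with a positive lower bound on its smallest eigenvalue. The main obstacle I anticipate is the second assertion: verifying rigorously that $\Omega_0[V]$ genuinely \emph{contains} singular infinitesimal bordisms for the generic global solution---that is, that the nonvanishing of $\hat g_2$ really forces singular perturbations both on $\widehat{(YM)}$ and after pulling back along $i$, rather than merely failing to exclude them. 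The stabilization and asymptotic-stability clauses are then essentially formal consequences of the Section~2 theorems.
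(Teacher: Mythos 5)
Your route is essentially the paper's: functional stability from formal and complete integrability (the paper cites Theorem \ref{criteria-fun-stab}, you cite Theorem \ref{main}, but the content is the same), generic instability from the nonvanishing of the symbol $\hat g_2$, stabilization via the infinite prolongation $\widehat{(YM)}_{+\infty}$ by Theorem \ref{criteria-fun-stab}(3), and the asymptotic clause delegated to Theorem \ref{criterion-average-asymptotic-stability}. Your extra remark that a finite-order stabilization $(\widehat{(YM)})_{+s_0}$ is available under finite ellipticity via Theorem \ref{finite-stable-extended-crystal-PDE} goes beyond what the paper does here but is consistent with it. Your honest flag on the second assertion is also well taken: the paper's own proof only says a global solution ``can be unstable'' because the nontrivial symbol fails to exclude singular infinitesimal bordisms in $\Omega_0[V]$, so neither you nor the paper actually exhibits singular perturbations for a generic $V$; you have correctly identified the weakest link rather than created a new one.

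The one substantive step you skip is the verification that Theorem \ref{criterion-average-asymptotic-stability} is actually applicable: its setup (through the definition of average asymptotic stability) presupposes a Euclidean, i.e.\ non-degenerate, scalar product on the fibers of $\hat E[s]\to M$, without which $i^*\xi^2$ and hence $\mathfrak{p}[i](t)$ are not even defined. This is the only thing the paper checks in the last part of its proof: it observes that $\hat E[s]\cong W$ with $W_p=Hom_Z(T_pM;\mathfrak{g})$ a vector bundle, and that the rigid metric $\underline{g}$ on $\mathfrak{g}$ together with the quantum metric $\widehat{g}$ determined by the section $s$ give a non-degenerate product $\hat\xi(p)\cdot\hat\xi(p)'=\underline{g}_{KH}\widehat{g}^{AB}(p)\xi^K_A(p)\otimes\xi'^H_B(p)$ on each fiber. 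You should add this verification rather than applying the criterion ``verbatim.''
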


\begin{proof}
$\widehat{(YM)}$ is a functional stable quantum super PDE since it is completely integrable and formally integrable.
(See Theorem \ref{criteria-fun-stab}). For the same reason it admits
$\widehat{(YM)}_{+\infty}\subset J\hat{\it D}^\infty(W)$ like stable quantum extended crystal PDE. Furthermore,
since its symbol $\hat g_2$ is not trivial, any global solution $V\subset\widehat{(YM)}$ can be unstable, and the
corresponding observed solution, can appear unstable in finite times. However, global smooth solution, result stable
in finite times in $\widehat{(YM)}_{+\infty}$. Finally the asymptotic stability study of global solutions of
$\widehat{(YM)}$, with respect to a quantum relativistic frame, can be performed by means of
Theorem \ref{criterion-average-asymptotic-stability}, since, for any section $s:M\to W$, on the fibers of
$\hat E[s]\to M$ there exists a non-degenerate scalar product.
In fact, $\hat E[s]\cong W$, as $W$ is a vector bundle over $M$. Furthermore, for any section $s$, we can identify on
$M$ a non degenerate metric $\widehat{g}$,\footnote{It should be more precise to denote $\widehat{g}$ with the
symbol $\widehat{g}[s]$, since it is identified by means of the section $s$.} that beside the rigid
metric $\underline{g}$ on $\mathfrak{g}$, identifies a non-degenerate metric on each fiber $\hat E[s]_p\cong W_p=Hom_Z(T_pM;\mathfrak{g})$, $\forall p\in M$. In fact we get $\hat\xi(p)\cdot\hat\xi(p)'=\underline{g}_{KH}\widehat{g}^{AB}(p)\xi^K_A(p)\otimes\xi'{}^H_B(p)\in A$.
\end{proof}

\begin{example}{\em(Stable nuclear-charged plasmas and nuclides).}\label{plasmas-nuclides}
Of particular relevance are solutions of $\widehat{(YM)}$ that
encode nuclear-charged plasmas, or nuclides, dynamics. These are
described by solutions that, when observed by means of a quantum
relativistic frame have at any $t\in T$, i.e., frame-proper time,
compact sectional support $B_t\subset N$. The {\em global mass} at
the time $t$, i.e. the evaluation
$m_t=\int_{B_t}m(t,\xi^k)\sqrt{\det(g_{ij})}d\xi^1\wedge
d\xi^2\wedge d\xi^3$ of such mass on the space-like section $B_t$,
gives the global mass-contents of the nuclear-plasmas or nuclides,
in their ground-eigen-states, at the proper time $t$. Whether such
solutions are asymptotically stable, then they interpret the meaning
of  stable nuclear-plasmas or nuclides.
\end{example}

The following theorem shows how thermodynamic functions can be associated to solutions of $\widehat{(YM)}$.

\begin{definition}{\em(Thermodynamic states).}\label{observed-quantum-system-thermodynamics}
A {\em thermodynamic state} is characterized by a set of independent
parameters $(s,c_\alpha)_{1\le\alpha\le n}$, that identify the
internal energy: $e=e(s,c_\alpha,a^i)$, where $a^i$ identifies the
$i$-th material particle, $s$ is the {\em specific entropy}, (with
physical dimension energy /temperature mass), and $c_\alpha$ are
electric or mechanical parameters. The system is said to be {\em
thermodynamically homogeneous} if the following equations are
satisfied: $(\partial a_\alpha\cdot e)=0$.\footnote{One has
$\theta=\theta(s,c^\alpha,a^i)$,
$t^\alpha=t^\alpha(s,c^\alpha,a^i)$. Of course one can choose
$\theta$ and $c^\alpha$ as independent thermodynamic functions: $s=
s(\theta,c^\alpha,a^i)$, $t^\alpha=t^\alpha(\theta,c^\alpha,a^i)$
and $e=e(\theta,c^\alpha,a^i)$.} Important thermodynamic functions
and thermodynamic equations are given in Tab.9.\footnote{With
respect to the quantum relativistic frame it is identified on $N$ a
time-like vector field $v$, such that for any scalar
$\mathbb{R}$-valued thermodynamic function $f$ one has $\frac{\delta
f}{\delta t}=(\partial t.f)+v^k(\partial x_k.f)$.}
\end{definition}

$$\begin{tabular}{|c|c|}\hline
\multicolumn{2}{|c|}{\bsmall Tab.9 - Distinguished thermodynamic functions and equations.}\\
\hline\hline {\rsmall Name}&{\rsmall Definition}\\
\hline {\rsmall temperature}&$\scriptstyle\theta\equiv(\partial s\cdot e)$\\
\hline {\rsmall thermodynamic stresses}&$ \scriptstyle t^\alpha\equiv(\partial c^\alpha\cdot e)$\\
\hline {\rsmall thermodynamic pressure}&$ \scriptstyle  p\equiv t^1=-(\partial({{1}\over{\rho }})\cdot e) \hskip 2pt(*)$\\
\hline {\rsmall chemical potential}&$ \scriptstyle t^\alpha \equiv
p^\alpha\equiv(\partial e^\alpha\cdot e)\hskip 2pt(**) $\\
\hline {\rsmall specific heats}&$ \scriptstyle C \equiv{{\delta
q}\over{\delta\theta}}={{1}\over{\dot\theta}}(\dot e-t^\alpha{\dot c}_\alpha )$\\
\hline {\rsmall latent heats}&$ \scriptstyle {\widehat e}^\alpha
\equiv{{\delta q}\over{\delta t^\alpha}}={{1}\over{{\dot t}^\alpha}}
(\dot e-t^\beta{\dot c}_\beta) $\\
&$ \scriptstyle{\widehat m}_\alpha\equiv{{\delta q}\over{\delta
C_\alpha }} = {{1}\over{{\dot c}_\alpha }} ( \dot e-t^\beta{\dot c}_\beta)$\\
\hline {\rsmall specific heat at $\scriptstyle c^\alpha=$ cost.}&$
\scriptstyle C_{c}\equiv(\partial\theta\cdot e)_{{c^\alpha }} $\\
\hline {\rsmall specific heat at $\scriptstyle t^\alpha=$ cost.}&$
\scriptstyle C_t=C_c+[( \partial c_\beta\cdot e)_{\theta
,{c_\alpha}}-t^\beta]( \partial\theta\cdot c_\beta)_t $\\
&$\scriptstyle \gamma\equiv C_t/C_{c}$\\
\hline {\rsmall free energy density (Helmholtz density)}&$
\scriptstyle f=e-\theta s $\\
\hline
{\rsmall hentalpy density}&$\scriptstyle h=e-t^\alpha c_\alpha$\\
\hline
{\rsmall free hentalpy density (Gibbs function)}&$\scriptstyle g=h-\theta s$\\
\hline
{\rsmall Gibbs-equations of first type}&
$\scriptstyle de = \theta  ds + t^\alpha  dc_\alpha,\hskip 3pt \hbox{\rsmall fixed $\scriptstyle a$.} $\\
\hline {\rsmall Gibbs-equations of second type}& $\scriptstyle
{{\delta e}\over{\delta t}}=\theta {{\delta
s}\over{\delta t}}+t^\alpha{{\delta c_\alpha }\over{\delta t}}$\\
\hline
{\rsmall Thermodynamic evolution along thermodynamic curve}&
$\scriptstyle C={{1}\over{d\theta }}(de-t^\beta  dc_\beta)=\theta(\partial\theta\cdot s) $\\
&$\scriptstyle {\widehat e}_\alpha={{1}\over{dt^\alpha }}(de-t^\beta dc_\beta)=\theta(\partial t_\alpha\cdot s)$\\
&$\scriptstyle {\widehat m}_\alpha={{1}\over{dc_\alpha }}(de-t^\beta dc_\beta)=\theta(\partial c_\alpha\cdot s)$\\
\hline
\multicolumn{2}{l}{\rsmall $\scriptstyle (*)$\hskip 2pt One denotes $\scriptstyle -t^1$ if $\scriptstyle c_1 = {{1}\over{\rho }}$.}\\
\multicolumn{2}{l}{\rsmall $\scriptstyle (**)$\hskip 2pt
$\scriptstyle e_\alpha=c_\alpha$ is the concentration of the
$\scriptstyle \alpha$-component in a system with different components.}\\
\multicolumn{2}{l}{\rsmall An observed thermodynamic curve,
$\scriptstyle \lambda=\lambda(t)$ is identified at fixed $\scriptstyle a^\alpha$.}\\
\end{tabular}$$

\begin{theorem}{\em(Thermodynamics of $\widehat{(YM})$).}\label{thermodynamics-ym}
There exists a canonical way to characterize thermodynamic states of
observed solutions of $\widehat{(YM})$. Thermodynamic functions and
equations of observed solutions of $\widehat{(YM})$ can be encoded
as scalar-valued differential operator on the extended fiber bundle
$W[i]\times_NT^0_0N\cong
W[i]\times\mathbb{R}\equiv\widetilde{W[i]}$, over $N$, whose
sections $(\widetilde{\mu},\beta)$ over $N$, represent an {\em observed
quantum fundamental field}, $\widetilde{\mu}$, and a function $\beta:N\to \mathbb{R}$,
{\em thermal function}. If $\beta=\frac{1}{\kappa_B\theta}$,
where $\kappa_B$ is the Boltzmann constant and $\theta$ is the
temperature, then the observed solution encodes a system in
equilibrium with a heat bath.
\end{theorem}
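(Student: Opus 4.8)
The plan is to build the thermodynamic description directly on top of the observed Hamiltonian produced by the mass-formula of Theorem \ref{local-mass-formula-theorem}. First I would recall that an observed solution of $\widehat{(YM)}$, via a quantum relativistic frame $i:N\to M$, carries the $A$-valued observed Hamiltonian $H={}_{\circledR}H+{}_{\copyright}H+{}_{\maltese}H$ of (\ref{split-hamiltonian}), together with the charge-fields $\hat E,\hat B$ of Theorem \ref{observed-objects-and-splitting-formulas-theorem}. The idea is to take $H$ as the fundamental energy observable and to obtain the scalar internal energy $e=e(s,c_\alpha,a^i)$ of Definition \ref{observed-quantum-system-thermodynamics} as its expectation value $e=<H>$ with respect to a quantum state of $A$, the electric/mechanical parameters $c_\alpha$ being the expectation values of the components of $\hat E$ (and of the remaining mechanical observables attached to the section), evaluated at the material particle $a^i$. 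The specific entropy $s$ is then introduced as the variable conjugate to temperature through the Gibbs relation $de=\theta\,ds+t^\alpha\,dc_\alpha$ of Tab.9, so that $(s,c_\alpha)$ is a legitimate set of independent thermodynamic parameters and $\theta=(\partial s\cdot e)$, $t^\alpha=(\partial c^\alpha\cdot e)$.

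Second, I would realize all the remaining functions of Tab.9 as scalar-valued differential operators on the extended bundle $\widetilde{W[i]}=W[i]\times_NT^0_0N$. Since $T^0_0N$ is the trivial line bundle of $(0,0)$-tensors, one has $\widetilde{W[i]}\cong W[i]\times\R$ canonically, and a section is exactly a pair $(\widetilde{\mu},\beta)$ with $\widetilde{\mu}$ an observed fundamental field and $\beta:N\to\R$ a scalar thermal function. The point is that $H$ is a first-order differential expression in $\widetilde{\mu}$ (it is built from the observed curvature $\widetilde{R}$ and from $\widetilde{\mu}$ itself), so every entry of Tab.9 --- temperature, thermodynamic stresses, specific and latent heats, free energy, enthalpy, Gibbs function, and the Gibbs equations of first and second type --- becomes, after taking expectation values, a scalar differential operator in $(\widetilde{\mu},\beta)$ and their frame-derivatives $\frac{\delta(\cdot)}{\delta t}$. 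This is the canonical encoding asserted in the statement, and it is compatible with the observed splitting of $H$, hence with the local mass-formula.

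Third, for the equilibrium claim I would impose the canonical (Gibbs) form on the quantum state. A system in equilibrium with a heat bath is described by the density $\rho_\beta=e^{-\beta H}/\TR(e^{-\beta H})$, the unique KMS state for the one-parameter group generated by $H$ at inverse temperature $\beta$. With this state the expectation $<H>=\TR(\rho_\beta H)$ reproduces the internal energy, and differentiating $-\log\TR(e^{-\beta H})$ yields precisely the Gibbs relations of Tab.9 with $\theta$ identified by $\beta=\frac{1}{\kappa_B\theta}$; thus the scalar $\beta$ in the section $(\widetilde{\mu},\beta)$ is exactly the inverse Boltzmann temperature and the observed solution encodes a heat-bath equilibrium.

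The main obstacle I expect is the passage from the $A$-valued (noncommutative) observable $H$ to genuine scalar thermodynamic functions, and in particular the well-posedness of the Gibbs state: one must guarantee that $e^{-\beta H}$ is trace-class so that the partition function $\TR(e^{-\beta H})$ converges and $\rho_\beta$ is defined. This forces $H$ to be self-adjoint and bounded below with sufficiently sparse spectrum, which is exactly where the mass-gap analysis enters: I would invoke Theorem \ref{obstruction-mass-gap} and the local mass-formula (Theorem \ref{local-mass-formula-theorem}) to ensure that, on a mass-gap solution, each component ${}_{\circledR}H,{}_{\copyright}H,{}_{\maltese}H$ has a spectral gap and the convergence of the partition function holds componentwise. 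The remaining verification --- that the resulting scalar operators satisfy the integrability constraints of Tab.9 globally on $N$, rather than only pointwise --- can be handled as in Lemma \ref{constrained-variational-problems}, treating the thermal function $\beta$ as an additional constrained variable.
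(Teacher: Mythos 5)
Your proposal is essentially sound and lands on the same canonical state $e^{-\beta\widetilde{H}}/\TR(e^{-\beta\widetilde{H}})$ as the paper, but it gets there in the reverse logical order. The paper's proof is constructive from the start: it defines the \emph{local partition function} as the Laplace transform of the spectral degeneracy $N(E)=\TR\delta(E-\widetilde{H})$, obtaining $Z(\beta)=\TR e^{-\beta\widetilde{H}}$ with $\beta$ a priori just the transform variable (not yet a temperature), and then \emph{derives} every entry of Tab.10 by differentiating $\ln Z$: $e=-(\partial\beta.\ln Z)$, $\langle(\triangle E)^2\rangle=(\partial\beta\partial\beta.\ln Z)$, $s=\kappa_B(\ln Z+\beta e)$ from the Shannon-type formula $s=-\kappa_B\int P\ln P\,dE$, and $f=-\kappa_B\theta\ln Z$; the Gibbs relation $(\partial s.e)=\theta$ is then a \emph{theorem}, and only afterwards is $Z(\beta)$ identified with the normalization of the Gibbs canonical quantum state via spectral measures and the GNS construction. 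You instead postulate the Gibbs relation to define $s$ (which is mildly circular, since $\theta$ and $s$ are defined in terms of each other until the canonical state is imposed) and introduce the KMS/Gibbs state only for the equilibrium claim; this works, but the paper's route explains why $\beta$ is a free \emph{thermal function} on $N$ that need not equal $1/\kappa_B\theta$, which is precisely the content of the extended bundle $W[i]\times_N T^0_0N$. One caveat on your trace-class discussion: the paper does not invoke the mass-gap theorems here; it simply assumes, in the Gibbs-canonical-state lemma, that $\widetilde{H}(p)$ has pure point spectrum with eigenvalues growing fast enough. The condition of Theorem \ref{obstruction-mass-gap} is the invertibility of $\TR(\widetilde{R}^K_{\beta\alpha}\widetilde{R}^{\beta\alpha}_H-\widetilde{\mu}^K_{\alpha\beta}\widetilde{R}^{\beta\alpha}_H)$ in $\widehat{A}$, which is not the same as a spectral gap guaranteeing that $e^{-\beta\widetilde{H}}$ is trace-class; conditioning the whole construction on mass-gap would also needlessly restrict the theorem, which is stated for arbitrary observed solutions.
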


\begin{proof}
Let $\widetilde{H}$ be the Hamiltonian corresponding to an observed
solution of $\widehat{(YM)}$.\footnote{Let us recall that $\widetilde{H}$ is a $\widehat{A}$-valued
function on the $4$-dimensional space-time $N$, considered in the quantum relativistic frame.} Let us denote by $E\in
Sp(\widetilde{H})$. If $N(E)=\TR\delta(E-\widetilde{H})$ denotes the
degeneracy of $E$, let us define {\em local partition function} of
the observed solution the Laplace transform of the degeneracy
$N(E)$, with respect the spectrum $Sp(\widetilde{H})$ of
$\widetilde{H}$. We get
\begin{equation}\label{partition-function1}
    \left\{
\begin{array}{ll}
  Z(\beta)&= \int_{Sp(\widetilde{H})}e^{-\beta E}N(E) dE\\
  & = \int_{Sp(\widetilde{H})} e^{-\beta E}\TR\delta(E-\widetilde{H}) dE \\
  &=\TR e^{-\beta \widetilde{H} }.\\
\end{array}
    \right.
\end{equation}
So we get the following formula
\begin{equation}\label{partition-function2}
Z(\beta)=\TR e^{-\beta \widetilde{H} }
\end{equation}
where $\beta$ is the Laplace transform variable and it does not
necessitate to be interpreted as the ''inverse temperature'', i.e.,
$\beta=\frac{1}{\kappa_B\theta}$, where $\kappa_B$ is the
Boltzmann's constant.\footnote{If $\beta=\frac{1}{\kappa_B\theta}$
then the system encoded by the observed solution of
$\widehat{(YM)}$, is in equilibrium with a heat bath (canonical
system).} Note that all above objects are local functions on the
space-time $N$. The same holds for $\beta$. We can interpret $Z(\beta)$ as a normalization factor
for the local probability density
\begin{equation}\label{probability-density}
P(E)=\frac{1}{Z}N(E)e^{-\beta E}
\end{equation}
that
the system, encoded by the observed solution, should assume the
local energy $E$, with degeneration $N(E)$. In fact we have:
\begin{equation}\label{partition-function-normalization-factor}
1=\int_{Sp(\widetilde{H})} P(E)
dE=\frac{1}{Z}\int_{Sp(\widetilde{H})} N(E)e^{-\beta
E}dE=\frac{Z}{Z}.
\end{equation}
As a by-product we get that the {\em local average energy}
$<E>\equiv e$ can be written, by means of the partition function, in
the following way:

\begin{equation}\label{partition-function-average-energy1}
e=-(\partial\beta\ln Z).
\end{equation}
In fact, one has
\begin{equation}\label{partition-function-average-energy2}
    \left\{
\begin{array}{ll}
  e&= \int_{Sp(\widetilde{H})}E P(E) dE=\frac{1}{Z}\int_{Sp(\widetilde{H})}EN(E)e^{-\beta E}dE\\
  & = \frac{1}{Z}\int_{Sp(\widetilde{H})} E\TR\delta(E-\widetilde{H})e^{-\beta E} dE \\
  &=\frac{1}{Z}\TR(\widetilde{H} e^{-\beta \widetilde{H}})=-\frac{1}{Z}(\partial\beta.Z)=-(\partial\beta.\ln Z).\\
\end{array}
    \right.
\end{equation}

 When we can interpret
$\beta=\frac{1}{\kappa_B\theta}$, then one can write
\begin{equation}\label{partition-function-average-energy3}
e=\kappa_B\theta^2(\partial\theta.\ln Z).
\end{equation}
Then we get also that the local {\em energy fluctuation} is
expressed by means of the variance of $e$:
\begin{equation}\label{partition-function-energy-fluctuation}
<(\triangle E)^2>\equiv <(E-e)^2>=(\partial\beta\partial\beta.\ln
Z).
\end{equation}
Furthermore, we get for the {\em local heat capacity} $C_v$ the following
formula:
\begin{equation}\label{heat-capacity-v}
C_v=(\partial\theta.e)=\frac{1}{\kappa_B\theta^2}<(\triangle E)^2>.
\end{equation}
We can define the {\em local entropy} by means of the following
formula:
\begin{equation}\label{entropy1}
s=-\kappa_B\int_{Sp(\widetilde{H})}P(E) \ln P(E) dE.
\end{equation}
In fact one can prove that one has the usual relation by means of
the energy. (See Tab.9.) Really we get:

\begin{equation}\label{entropy2}
\left\{
\begin{array}{ll}
  s&  =-\kappa_B\int_{Sp(\widetilde{H})}P(E) \ln P(E) dE\\
  & =\kappa_B(\ln Z+\beta e)=(\partial\theta.(\kappa_B\theta \ln Z)).\\
\end{array}
\right.
\end{equation}
Then from the relation $s=\kappa_B(\ln Z+\beta e)$ we get
$e=\theta s-\kappa_B\ln Z$, hence also
$(\partial s.e)=\theta$. This justifies the definition of entropy
given in (\ref{entropy1}). Furthermore, from (\ref{entropy2}) we get also $\frac{1}{\beta}\ln Z=e-\theta s=f$,
 where $f$is the Helmoltz free
energy. It follows the following
expression of the local Helmoltz free energy, by means of the local
partition function $Z$:
\begin{equation}\label{free-energy-partition-function}
f\equiv e-\theta s=-\kappa_B\theta\ln Z.
\end{equation}
Conversely, from (\ref{free-energy-partition-function}) it follows that the partition function can be expressed by means
of the local Helmoltz free energy
\begin{equation}\label{partition-function-free-energy}
Z=e^{-\beta f}.
\end{equation}

So we see that the local thermodynamic functions, can be expressed as
scalar-valued differential operators on the fiber bundle
$W[i]\times_NT^0_0N\to N$. The situation is resumed in Tab.10.
$$\begin{tabular}{|l|l|l|l|} \hline
\multicolumn {4}{|c|}{\bsmall Tab.10 - Local thermodynamics functions of
{\boldmath$\scriptstyle
\widehat{(YM)}[i]$} solutions.}\\
\hline\hline $\scriptstyle \hbox{\rsmall Name}$&$\scriptstyle \hbox{\rsmall Definition}$ &
$\scriptstyle \hbox{\rsmall Remark}$&$\scriptstyle \hbox{\rsmall Order}$\\
\hline $\scriptstyle \hbox{\rsmall partition
function}$&$\scriptstyle Z=\TRS(e^{-\beta\widetilde{H}})$&
$\scriptstyle Z=Z(\beta,\widetilde{\mu}^K_\alpha)$&$\scriptstyle 1$\\
\hline $\scriptstyle \hbox{\rsmall interior energy}$&$\scriptstyle
e=-(\partial\beta.\ln Z)$&
$\scriptstyle e=\kappa_B\theta^2(\partial\theta.\ln Z)$&$\scriptstyle 1$\\
\hline $\scriptstyle \hbox{\rsmall fluctuation interior
energy}$&$\scriptstyle <(\triangle E)^2>=<(E-e)^2>$&
$\scriptstyle <(\triangle E)^2>=(\partial\beta\partial \beta.\ln Z)$&$\scriptstyle 2$\\
\hline $\scriptstyle \hbox{\rsmall entropy}$&$\scriptstyle
s=\kappa_B(\ln Z+\beta e)$&
$\scriptstyle s=\partial\theta.(\kappa_B\ln Z)$&$\scriptstyle 1$\\
\hline $\scriptstyle \hbox{\rsmall free energy}$&$\scriptstyle
f=e-\theta s$&$\scriptstyle f=-\kappa_B\theta\ln Z$&$\scriptstyle 1$\\
\hline
\multicolumn {3}{l}{\rsmall It is assumed a Lagrangian of first derivation order.}\\
\end{tabular}$$

The following lemmas relate spectral measures identified by $\widetilde{H}(p)$, $p\in N$, and the local partition
function introduced in (\ref{partition-function2}). It is useful to recall some definitions and results about quantum states.
More precisely a {\em quantum state} of a quantum algebra $A$, is a function $S:A\to\mathbb{C}$, that satisfies the following
properties: (i) $S$ is $\mathbb{C}$-linear; (ii) $S$ is self-adjoint; (iii) normalized by the constraint $\sup_{\|a\|\le 1}S(a^*a)=1$.
a {\em quantum pure state} is one which is not a linear combination with positive coefficients of two other states, otherwise
it is called {\em mixed}. A mixed state is described by its associated {\em density operator} $\rho=\sum_sp_s|\psi_s><\psi_s|$,
where $p_s$ is the fraction of the set in each pure state $|\psi_s>$. A criterion to see whether a density operator is describing
a pure or mixed state is that $\TR(\rho^2)=1$ for pure state, and $\TR(\rho^2)<1$ for mixed state.

\begin{lemma}{\em(Gelfand-Naimark-Segal construction).}\label{gns-construction}
If $A$ is a $C^*$-algebra, then every state on $A$ is of the following type $a\mapsto<\xi,\pi(a)(\xi)>$, where $\pi:A\to L(\mathcal{H})$
is a representation of $A$ in an Hilbert space $\mathcal{H}$, and $\xi\in\mathcal{H}$ is a {\em cyclic vector} for $\pi$, i.e.,
$\pi(A)(\xi)$ is norm dense in $\mathcal{H}$, hence $\pi$ is a {\em cyclic representation}.
\end{lemma}

A set of quantum states of $A$ is called {\em complete} if the only element of $A$ which vanishes in every state of the set is
zero. The {\em variance} of a self-adjoint element $a\in A$, in a quantum state $S$, is defined by $S(a^2)-S(a)^2$. $a$ is said
to {\em have the exact value} $S(a)$, in the state $S$, in the case its variance vanishes in this state. A {\em quantum value}
of $\widetilde{H}(p)$ is a point $\lambda(p)\in Sp(\widetilde{H}(p))$. The {\em probability to find a quantum value} of
$\widetilde{H}(p)$ in the Borel set $U\subset\mathbb{R}$, if the system is in the quantum state $S$, is given by the following
formula
\begin{equation}\label{probability-spectral-measure1}
    p(\widetilde{H}(p);S,U)=\TR(E_{\widetilde{H}(p)}(U)S)=\TR(SE_{\widetilde{H}(p)}(U)),
\end{equation}
where $E_{\widetilde{H}(p)}$ is the spectral measure of $E_{\widetilde{H}(p)}$. If $S=\psi\otimes\psi$, we have
\begin{equation}\label{probability-spectral-measure2}
    p(\widetilde{H}(p);S,U)=\int_Ud(E_{\widetilde{H}(p)})_\psi(\lambda),
\end{equation}
where $(E_{\widetilde{H}(p)})_\psi:\mathcal{B}(\mathbb{R})\to\mathbb{R}$ is the measure on the $\sigma$-algebra of Borel subsets
of $\mathbb{R}$, given by $(E_{\widetilde{H}(p)}(U))_\psi=<E_{\widetilde{H}(p)}(U)(\psi),\psi>$. Then the {\em mean value}, or {\em expectation value},
of $\widetilde{H}(p)$ in the state $S$ is given by the following formula:
\begin{equation}\label{mean-value1}
\left\{
\begin{array}{ll}
  <\widetilde{H}(p)>_S& =\int_{Sp(\widetilde{H}(p))}\lambda dE_{\widetilde{H}(p)}S\\
  & =\TR(\widetilde{H}(p)S)=\TR(S\widetilde{H}(p)).\\
\end{array}
\right.
\end{equation}
If $S=\psi\otimes\psi$ we have
\begin{equation}\label{mean-value2}
<\widetilde{H}(p)>_S=\int_{Sp(\widetilde{H}(p))}\lambda d(E_{\widetilde{H}(p)}(\lambda))_\psi.
\end{equation}
The corresponding {\em variance}, in the state $S$, is given by the following:
\begin{equation}\label{variance1}
\left\{
\begin{array}{ll}
\sigma^2(\widetilde{H}(p))_S&=\int_{Sp(\widetilde{H}(p))}(\lambda -<\widetilde{H}(p)>)^2dp(\widetilde{H}(p);S,\lambda)\\
&=\TR(\widetilde{H}(p)^2S)-\TR(\widetilde{H}(p)).\\
\end{array}
\right.
\end{equation}
If $S=\psi\otimes\psi$ we have

\begin{equation}\label{variance2}
\left\{
\begin{array}{ll}
\sigma^2(\widetilde{H}(p))_S&=\int_{Sp(\widetilde{H}(p))}(\lambda -<\widetilde{H}(p)>)^2d(E_{\widetilde{H}(p)})_\psi(\lambda)\\
&=\|\widetilde{H}(p)(\psi)\|^2-<\widetilde{H}(p)(\psi)|\psi>^2.\\
\end{array}
\right.
\end{equation}

\begin{lemma}{\em(Gibbs canonical quantum state).}\label{gibbs-canonical-state}
The local partition function given in {\em(\ref{partition-function1})} is normalization factor of a quantum state called
{\em Gibbs canonical quantum state}.
\end{lemma}

\begin{proof}
Let $E_{\widetilde{H}(p)}:(\mathbb{R},\mathcal{B})\SRA\widehat{A}$ be the spectral measure on the
$\sigma$-algebra $\mathcal{B}\equiv\mathcal{B}(\mathbb{R})$ of Borel subsets of $\mathbb{R}$, uniquely identified by $\widetilde{H}(p)$, $p\in N$. Then,
given a state $S$, the distribution of $\widetilde{H}(p)$ under $S$ is the probability measure on $\mathcal{B}$, given by
\begin{equation}\label{distribution}
    D_{\widetilde{H}(p)}(U)=\TR(E_{\widetilde{H}(p)}(U)S)
\end{equation}
and the expected value, in the state $S$, of $\widetilde{H}(p)$, is
\begin{equation}\label{expectation}
    <\widetilde{H}(p)>_S=\int_{\mathbb{R}}\lambda dD_{\widetilde{H}(p)}(\lambda).
\end{equation}
One has $<\widetilde{H}(p)>_S=\TR(\widetilde{H}(p)S)=\TR(S\widetilde{H}(p))$. If $S$ is a pure state corresponding to the vector $\psi$, then $<\widetilde{H}(p)>_S=<\psi|\widetilde{H}(p)|\psi>$. Furthermore, let $Sp(\widetilde{H}(p))=Sp(\widetilde{H}(p))_p$, i.e., let us assume that $\widetilde{H}(p)$ has a pure point-spectrum with eigenvalues $E_n$, that go to $+\infty$ as sufficiently fast, then $e^{-\beta(p)\widetilde{H}(p)}$ will be a non-negative trace-class operator for every positive $\beta(p)\in\mathbb{R}$. One defines {\em Gibbs canonical quantum state}
\begin{equation}\label{gibbs-canonical-quantum-state}
   S=\frac{e^{-\beta\widetilde{H}}}{\TR(e^{-\beta\widetilde{H}})}=\frac{e^{-\beta\widetilde{H}}}{\sum_ne^{-\beta E_n}}.
\end{equation}
Then, $\TR(S\widetilde{H})=e$, as obtained in (\ref{partition-function2}). Furthermore, the definition of entropy given in (\ref{entropy2}) just coincides with the von Neumann entropy of the state $S$:
$s_{von-Neumann}=\TR(S\ln S)$ \cite{NEU}.\footnote{The state $S$ can be diagonalized and one can write
$s_{von-Neumann}=-\sum_i\lambda_i\ln\lambda_i$, with the convention $0.\ln0=0$. This is a real number belonging
to $[0,+\infty]$. $s_{von-Neumann}$ measures the amount of randomness in the state $S$. (Larger entropy corresponds
to more dispersed eigenvalues.) The quantum state $S$ is called {\em maximally mixed quantum state} if it
maximalizes $s_{von-Neumann}$. $s_{von-Neumann}(S)=0$ iff $S$ is a pure state, i.e., $S=|\psi><\psi|$.}
\end{proof}
From above results it follows that the way to define local thermodynamic functions, by means of local partition function,
coincides with the expectation value of energy in a Gibbs canonical state, when the spectrum of the observed Hamiltonian
is only a point spectrum.
\end{proof}

\begin{remark}
The approach given here to formulate the local thermodynamics of quantum systems, differs from one actually adopted in the literature.
In fact, this last, usually necessitates to quantize classical systems and then characterizes
associated thermodynamic functions by means of the energy-momentum tensor anomaly $<\widetilde{T}^\mu_\mu>\equiv\epsilon$. (See, e.g., Refs.\cite{C-N-Z1, C-N-Z2}.)
Really the trace of the classic energy-momentum tensor is zero, but in the process of quantization this conservation is
not more assured and the trace of the quantized classic energy-momentum tensor can take a non-zero value:
$\epsilon\not=0$. This discrepancy between classic and quantum formulation is to ascribe to the fact that the process of
quantization is performed by means of a linearization of the dynamic in the neighborhood of the classic solution, hence
it discards non-linear effects. However, in our non-commutative framework, there is not discrepancy in the covariant
description of quantum systems, with respect to eventual classic or superclassic analogues. Therefore, we can directly implement thermodynamic function on the observed quantum solutions, characterizing their spectral properties.
\end{remark}

The concept of quantum states can be also related to a proof for existence of solutions with mass-gap. In fact, we have the following theorem, that completes Theorem \ref{obstruction-mass-gap}, and recognizes an interior constraint in $\widehat{(YM)}$, where live solutions with mass gap.

\begin{theorem}{\em(Existence of $\widehat{(YM)}$ solutions with mass-gap.)}\label{existence-mass-gap-solutions}
Equation $\widehat{(YM)}$ admits local and global solutions with mass-gap. These are contained into a sub-equation,
{\em(Higgs-quantum super PDE)},
$\widehat{(Higgs)}\subset\widehat{(YM)}$, that is formally integrable and completely integrable, and also
 a stable quantum super PDE. If $H_3(M;\mathbb{K})=0$, $\widehat{(Higgs)}$ is also a quantum extended crystal super PDE.
 In general
 solutions contained in $\widehat{(Higgs)}$ are not stable in finite times. However there exists an associated stabilized
 quantum super PDE, (resp. quantum extended crystal super PDE), where all global smooth solutions are stable in finite
 times.
 Furthermore, there exists a quantum super partial differential relation, {\em(quantum Goldstone-boundary)},
 $\widehat{(Goldstone)}\subset\widehat{(YM)}$, bounding $\widehat{(Higgs)}$, such that any global solution of
 $\widehat{(YM)}$,
 loses/acquires mass, by crossing $\widehat{(Goldstone)}$.\footnote{Since the symmetries properties of the
 Higgs-quantum super PDE $\widehat{(Higgs)}$ are different from the ones of the quantum super Yang-Mills equation
 $\widehat{(YM)}$, we can consider this theorem like a quantum dynamic generalization of the usual
 Higgs-breaking-symmetry mechanism. This justifies the name given, and the symbol, used to denote the constraint in $\widehat{(YM)}$ where live the solutions with mass-gap.}
\end{theorem}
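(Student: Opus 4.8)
The plan is to construct $\widehat{(Higgs)}$ explicitly as the locus inside $\widehat{(YM)}$ where the mass-gap obstruction of Theorem \ref{obstruction-mass-gap} is removed, and then to read off all of its integrability, stability and crystal properties from the results already established for $\widehat{(YM)}$. First I would define $\widehat{(Higgs)}\subset\widehat{(YM)}$ as the sub-equation carved out by imposing the quantum Higgs-symmetry-breaking constraint ${}^{\hat\omega}\nabla\widehat{g}=0$ of Theorem \ref{quantum-Levi-Civita-Higgs-ym}, i.e.\ the structure-group reduction realized by a quantum metric-Higgs field through the reduction diagram (\ref{reduction-diagram}). On this constraint the vierbein component ${}_{\circledR}\hat\mu=\hat\theta$ is non-degenerate and the Lorentz curvature ${}_{\copyright}\hat R$ is controlled by the associated quantum Levi-Civita connection, so that the trace combination appearing in (\ref{mass-gap}) is generically non-vanishing and invertible in $\widehat{A}$. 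By Theorem \ref{obstruction-mass-gap} every observed solution lying in $\widehat{(Higgs)}$ then admits mass-gap; this simultaneously yields the claimed local and global solutions with mass-gap, since on $\widehat{(Higgs)}$ one may prescribe Cauchy data realizing the invertibility condition pointwise.

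Next I would treat integrability and stability. Since $\widehat{(YM)}\subset J\hat D^2(W)$ is formally integrable and completely integrable (\cite{PRA13, PRA21}), and the Higgs constraint is a closed, involutive reduction condition compatible with the Cartan distribution, the prolongation-projection argument of Theorem \ref{quantum-crystal-structure-ym} restricts to $\widehat{(Higgs)}$; I would verify that the restricted symbol remains involutive, so $\widehat{(Higgs)}$ is itself formally and completely integrable. Stability then follows from Theorem \ref{criteria-fun-stab}: passing to the prolongation $\widehat{(Higgs)}_{+\infty}$ (or a finite prolongation obtained via Lemma \ref{finite-stability-criterion}) gives zero symbol, hence a stable quantum extended crystal super PDE in which all regular smooth solutions are functionally stable at finite times, which is the asserted stabilized equation. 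The crystal structure under $H_3(M;\mathbb{K})=0$ is obtained exactly as in Theorem \ref{quantum-crystal-structure-ym}, applying Theorem \ref{main4} to the restricted (still contractible-fiber) configuration bundle so that $\Omega^{\widehat{(Higgs)}}_{3|3,w}=0$.

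Then I would define $\widehat{(Goldstone)}$ as the topological boundary $\partial\widehat{(Higgs)}$ of $\widehat{(Higgs)}$ inside $\widehat{(YM)}$, i.e.\ the quantum super partial differential relation on which the trace expression of (\ref{mass-gap}) degenerates and loses invertibility in $\widehat{A}$. By Theorem \ref{obstruction-mass-gap} a quantum full-flat solution ($\hat R=0$) cannot carry mass-gap, while the local mass-formula of Theorem \ref{local-mass-formula-theorem} exhibits the mass $m(p)={}_{\circledR}m(p)+{}_{\copyright}m(p)+{}_{\maltese}m(p)$ as a continuous function of the curvature data that vanishes precisely on this degeneracy locus. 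Hence a global solution $V\subset\widehat{(YM)}$ crossing $\widehat{(Goldstone)}$ passes from the invertible (massive) open region $\widehat{(Higgs)}$ to the degenerate (massless) region, acquiring or losing mass accordingly, so that $\widehat{(Goldstone)}$ indeed bounds $\widehat{(Higgs)}$ as an open constraint.

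The hard part will be the second step: proving that $\widehat{(Higgs)}$, defined by combining the closed metric-compatibility constraint with the open invertibility condition of (\ref{mass-gap}), is a genuine formally and completely integrable quantum super PDE with involutive symbol rather than merely a partial differential relation. One must check that the Higgs reduction condition prolongs consistently with the field equations and Bianchi identities of Tab.2, and that the restricted symbol either vanishes or stabilizes after finitely many prolongations so that Theorem \ref{criteria-fun-stab} and Lemma \ref{finite-stability-criterion} apply. The delicate interplay between the open massive region and its closed Goldstone boundary is exactly what realizes $\widehat{(Higgs)}$ as an open constraint bounded by $\widehat{(Goldstone)}$, and controlling this interface is where the main technical effort lies.
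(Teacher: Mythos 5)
Your construction of $\widehat{(Higgs)}$ is not the paper's, and the difference is not cosmetic: it breaks the part of the statement that matters most. You carve $\widehat{(Higgs)}$ out of $\widehat{(YM)}$ by imposing the metric-compatibility constraint ${}^{\hat\omega}\nabla\widehat{g}=0$ of Theorem \ref{quantum-Levi-Civita-Higgs-ym}, i.e.\ by adding \emph{equations}. That produces (at best) a closed sub-PDE, whose topological frontier inside $\widehat{(YM)}$ cannot play the role of $\widehat{(Goldstone)}$: a closed constraint has no interior to be ``bounded'' by a Goldstone relation, and the picture of a solution crossing from a massive open region into a massless one collapses. The paper instead defines $\widehat{H}_{(Higgs)}\equiv\hat H^{-1}(G(\widehat{A}))\subset J\hat D(W)$, the preimage under the Hamiltonian of the group of units of $\widehat{A}$, and sets $\widehat{(Higgs)}=\bar\pi_{2,1}^{-1}(\widehat{H}_{(Higgs)})\subset\widehat{(YM)}$. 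The entire technical content is then to show $G(\widehat{A})$ is \emph{open} in $\widehat{A}$ (continuity of inversion in a Fr\'echet algebra, the $G_\delta$ characterization, and the character/quantum-state description of invertibility via Lemma \ref{characters-group-units}), so that $\widehat{(Higgs)}$ is an open quantum submanifold of $\widehat{(YM)}$. Openness is what makes everything else free: formal and complete integrability and the integral bordism groups are inherited from $\widehat{(YM)}$ with no symbol computation, and $\widehat{(Goldstone)}=\overline{\widehat{(Higgs)}}\setminus\widehat{(Higgs)}$ is a genuine nonempty frontier across which solutions are glued by surgery. Your own flagged ``hard part'' --- verifying that the restricted symbol of your closed constraint is involutive and that the prolongations are compatible with the field equations of Tab.2 --- is precisely the obstacle the paper's definition is designed to avoid, and nothing in Theorem \ref{quantum-Levi-Civita-Higgs-ym} guarantees that the mass-gap condition (\ref{mass-gap}) holds, even generically, on the locus ${}^{\hat\omega}\nabla\widehat{g}=0$; that theorem relates Levi-Civita connections to structure-group reductions and says nothing about invertibility of the Hamiltonian trace expression.

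Two further points would need repair even granting your setup. First, your argument that a solution acquires or loses mass on crossing $\widehat{(Goldstone)}$ rests on the claim that the local mass of Theorem \ref{local-mass-formula-theorem} is a continuous function vanishing exactly on the degeneracy locus; the paper establishes no such statement, and instead argues by density: every point of $\widehat{(Goldstone)}$ is a limit of points of $\widehat{(Higgs)}$, each carrying a matter-solution, and a surgery technique (applied also to the infinity prolongations to retain smoothness and finite-time stability) solders these to a matter-free solution through the boundary point. Second, the name ``Higgs'' in the theorem is justified only by analogy --- the footnote notes that the symmetry properties of $\widehat{(Higgs)}$ differ from those of $\widehat{(YM)}$ --- and should not be read as an instruction to impose the literal Higgs-field reduction of Theorem \ref{quantum-Levi-Civita-Higgs-ym} as the defining constraint.
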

\begin{proof}
The Hamiltonian of $\widehat{(YM)}$ can be identified with a $Q^\omega_w$-function
$\hat H:J\hat{\it D}(W)\to\widehat{A}$. Therefore, the set of solutions, with mass-gap, of the trivial equation
$J\hat{\it D}(W)\subseteq J\hat{\it D}(W)$ can be identified with the subset
$\widehat{H}_{(Higgs)}\equiv\hat H^{-1}(G(\widehat{A}))\subset J\hat{\it D}(W)$, where $G(\widehat{A})$ is the abelian group
of units of $\widehat{A}$. In general one has $G(\widehat{A})\subseteq \widehat{A}$, where the equality happens iff $\widehat{A}$ is
a division algebra. It is well known that the only associative division algebras are the real $\mathbb{R}$, complex $\mathbb{C}$,
and quaternion $\mathbb{H}$ numbers. Let us exclude these cases.\footnote{If $A$ is an associative division algebra,
 identified with $\mathbb{K}\equiv\mathbb{R}$ or $\mathbb{K}\equiv\mathbb{C}$, the constraint
$\widehat{H}_{(Higgs)}$ is trivially an open set
$\widehat{H}_{(Higgs)}\subset J\hat{\it D}(W)$. In fact, when
$A=\mathbb{K}$, one has $Z(A)=\mathbb{K}$ and
$\widehat{\mathbb{K}}=\mathbb{K}=A$, hence $G(\widehat{A})$ is the
open set $G(\widehat{A})=A\setminus\{0\}$. Therefore,
$\widehat{H}_{(Higgs)}\equiv H^{-1}(A\setminus\{0\})$, is
necessarily an open quantum submanifold of $J\hat{\it D}(W)$.} Therefore
solutions of $\widehat{(YM)}$ with mass-gap are all the solutions of
the following augmented partial differential relations:
\begin{equation}\label{augmented-differential-relation-mass-gap}
   \left\{ \widehat{(Higgs)}\equiv \widehat{(YM)}\bigcap  \widehat{H}_{(Higgs)}\subset J\hat{\it D}^2(W)\right\}.
\end{equation}
In order to study the topological-differential structure of $\widehat{H}_{(Higgs)}$, and hence that of $ \widehat{(Higgs)}$, let us consider the quantum states.
In fact, between quantum states, useful to characterize energetic contents of $\widehat{(YM)}$ solutions,
there are ones coming from characters of quantum algebras. More precisely we give the following definition.
\begin{definition}
We define {\em character-quantum state} a character of the quantum algebra $\widehat{A}$, i.e.,
a unitary multiplicative linear function
$\chi:\widehat{A}\to\mathbb{C}$. We denote by $Ch(\widehat{A})$ the set of such quantum states.\footnote{We call
$Ch(\widehat{A})$
also {\em spectrum} of $\widehat{A}$ and we write $Sp(\widehat{A})=Ch(\widehat{A})$.}
\end{definition}
One has the following lemma that gives an alternative way to identify solutions of $\widehat{(YM)}$ with mass-gap.

\begin{lemma}{\em(Character-quantum states and mass-gap)}.\label{character-quantum-states-mass-gap}
A solution of $\widehat{(YM)}$ has mass-gap in $p\in M$, iff for any character-quantum-state
$\chi\in Ch(\widehat{A})$, one has $\chi(H(p))\not=0$.
\end{lemma}

\begin{proof}
It is a direct consequence of the following lemma.

\begin{lemma}\label{characters-group-units}
Let $B$ be a ${\mathbb K}$-algebra. Then $ b\in G(B)$ iff
$\chi(b)\not=0$, $\forall \chi\in Ch(B)$, where $ Ch(B)$ is the
set of characters of $ B$. (One has
also $\chi(b)\in Sp(b)$, $\forall b\in B$.)\end{lemma}

\begin{proof}
It is standard. (See e.g., Ref.\cite{BOU}.)
\end{proof}
In fact we have $H(p)\in\widehat{A}$, for any $p\in N$.
\end{proof}

\begin{lemma}{\em(Topology of $G(\widehat{A})$).}\label{topology-units-group}
$G(\widehat{A})$ is an open set in $\widehat{A}$.
\end{lemma}
\begin{proof}
Let us emphasize that the group of units of a topological ring may not be a topological group using the subspace topology, as
inversion on the unit group need not be continuous with the subspace topology. However, we have the following lemma.
\begin{lemma}\label{inversion-quantum-algebra}
The operation of taking an inverse is continuous on quantum algebras.
\end{lemma}
\begin{proof}
In fact, a quantum (super)algebra is a Fr\'echet algebra, and for such algebras it is well known that the operation of taking an inverse is continuous \cite{HAZ}.
\end{proof}
Let us use, now, the following lemma.
\begin{lemma}
The operation of taking an inverse is continuous for a general $F$-algebra $A$ iff the group $G(A)$ of its invertible
elements is a $G_\delta$-set.\footnote{In a topological space, a $G_\delta$-set is one countable intersection of open sets.}
\end{lemma}
\begin{proof}
See, e.g., \cite{HAZ} and references quoted there.
\end{proof}
Then taking into account that a quantum (super)algebra is a Fr\'echet algebra, hence must necessarily $G(A)$ be a $G_\delta$-set,
therefore an open set in $A$. Note that this result could directly follow from Lemma \ref{characters-group-units} if we could
state that all characters of $\widehat{A}$ are continuous functions. But this is not assured for a topological algebra.
({\em Michael-Mazur's problem} \cite{MICH}.) By the way, we can state that there exists a countable subset
$Ch(\widehat{A})^0_c\subseteq Ch(\widehat{A})$ of continuous characters such that
$G(\widehat{A})=\bigcap_{\chi\in Ch(\widehat{A})^0_c}G(\widehat{A})_\chi$,
where $G(\widehat{A})_\chi\equiv \chi^{-1}(\mathbb{C}\setminus\{0\})$ is an open set
in $\widehat{A}$. This open set cannot be empty, since it surely contains the identity map $1_A\in \widehat{A}$.
\end{proof}
\begin{lemma}{\em(Topology of $\widehat{H}_{(Higgs)}$).}\label{topology-mass-gap-constraint}
$\widehat{H}_{(Higgs)}$ is an open quantum submanifold of $J\hat{\it D}(W)$.
\end{lemma}
\begin{proof}
From above lemma it follows that $\widehat{H}_{(Higgs)}$ is an open set in $J\hat{\it D}(W)$ since $H$ is a continuous
map as it is of class $Q^\omega_w$.
\end{proof}
Let us, now, remark that the restriction $\bar\pi_{2,1}$ to $\widehat{(YM)}$ of the canonical projection
$\pi_{2,1}:J\hat{\it D}^2(W)\to J\hat{\it D}(W)$, gives a surjective mapping $\bar\pi_{2,1}:\widehat{(YM)}\to J\hat{\it D}(W)$
too. Therefore, we can identify the
following constraint $\widehat{(Higgs)}\equiv \bar\pi_{2,1}^{-1}(\widehat{H}_{(Higgs)})\subset\widehat{(YM)}$. One has the
following commutative diagram, with vertical exact lines.

\begin{equation}\label{commutative-diagram-mass-gap-constraint}
    \xymatrix@1@C=60pt{\widehat{(Higgs)}\ar[d]^{\tilde\pi_{2,1}\equiv\bar\pi_{2,1}|_{\widehat{(Higgs)}}}\hskip 3pt\ar@{^{(}->}[r]&\widehat{(YM)}\ar[d]^{\bar\pi_{2,1}\equiv\pi_{2,1}|_{\widehat{(YM)}}}\hskip 3pt\ar@{^{(}->}[r]&
    J\hat{\it D}^2(W)\ar[d]^{\pi_{2,1}}\\
\widehat{H}_{(Higgs)}\ar[d]\hskip 3pt\ar@{^{(}->}[r]&J\hat{\it D}(W)\ar[d]\ar@{=}[r]&J\hat{\it D}(W)\ar[d]\\
0&0&0\\}
\end{equation}
Taking into account that $\bar\pi_{2,1}$ is a continuous mapping and that $\widehat{H}_{(Higgs)}$ is an open subset of $J\hat{\it D}(W)$,
it follows that also $\widehat{(Higgs)}$ is an open quantum submanifold of $\widehat{(YM)}$ over the open quantum submanifold
$\widehat{H}_{(Higgs)}$ of
$J\hat{\it D}(W)$. Since $\widehat{(YM)}$ is formally integrable and completely integrable, it follows that also
$\widehat{(Higgs)}$ is so. By conclusion, the situs of solutions of $\widehat{(YM)}$ with mass-gap is the
formally integrable and completely integrable quantum super PDE $\widehat{(Higgs)}\subset \widehat{(YM)}$. For any point
$q\in \widehat{(Higgs)}$ there exists a solution of $\widehat{(YM)}$, having mass-gap. The characterization of global solutions
is made by means of integral bordism groups of $\widehat{(Higgs)}$. Since this last equation is an open quantum submanifold of
$\widehat{(YM)}$ its integral bordism groups coincide with the ones of $\widehat{(YM)}$. (See Theorem \ref{quantum-crystal-structure-ym}.)
Therefore, if $H_3(M;\mathbb{K})=0$, equation $\widehat{(Higgs)}$ is a quantum extended crystal super PDE. Moreover, under the full
admissibility hypothesis, it becomes a quantum $0$-crystal super PDE. From Theorem \ref{criteria-fun-stab} it follows that
$\widehat{(Higgs)}$ is a stable quantum extended crystal super PDE. Since its symbol is not trivial, we get also that in general
such solution with mass-gap are not stable in finite times. However, we get also that for the infinity prolongation of
such an equation, one has $\widehat{(Higgs)}_{+\infty}\subset \widehat{(YM)}_{+\infty}$. Hence we can state that
$\widehat{(Higgs)}$ is a stabilizable quantum extended crystal super PDE, with stable quantum extended crystal super PDE
just $\widehat{(Higgs)}_{+\infty}$. There all global smooth solutions have mass-gap, and are stable in
finite times. In order to fix ideas, let us, before to complete the proof of Theorem \ref{existence-mass-gap-solutions},
consider the following example containing also some useful definitions.

\begin{example}
We call a global solution $V\subset\widehat{(YM)}$ a {\em quantum matter-solution} when
$V\cap\widehat{(Higgs)}\neq\varnothing$, otherwise we say that $V$ is a {\em quantum matter-free-solution}. Of the first type
are solutions describing, e.g., the following particles or nuclear reactions: $\gamma+p\to\pi^{\mathop{\pm}\limits^\circ}+p$,
$\pi^+\to\mu^++\nu_\mu$, $Li+H\to 2 He+22.4 Mev$. Instead examples of quantum matter-free-solutions are ones encoding
electro-magnetic fields or gluons fields. A generic global solution can be made by pieces that are completely
inside $\widehat{(Higgs)}$, {\em quantum pure-matter-solution}, and other ones that are outside $\widehat{(Higgs)}$,
{\em quantum matter-free-solution}. The case of photoproduction of pions
is an example of solutions of this type. Another one is a monopole-vortex chain in $SU(2)$ gauge theory.
Then Theorem 2.19 in \cite{PRA31} gives a decomposition of such solutions as union of a finite number of (integral)
bordisms between elementary bordisms. Some of these are quantum pure-matter-bordisms (resp. quantum matter-free-bordisms), since completely
inside, (resp. outside) $\widehat{(Higgs)}$. (See Fig.\ref{quantum-matter-solution}.) Let $V\subset \widehat{(YM)}$ be such a quantum matter-solution,
such that $\partial V=N_0\bigcup P\bigcup N_1$, with $N_0\subset \widehat{(Higgs)}$ and
$N_1\subsetneq\widehat{(Higgs)}$.
Set ${}_0V\equiv V\bigcap\widehat{(Higgs)}$ and ${}_\bullet V\equiv V\setminus {}_0V\NOTSUBSET \widehat{(Higgs)}$.
Then we can write $V={}_0V\bigcup {}_\bullet V$. Since $\widehat{(Higgs)}$ is open in $\widehat{(YM)}$, it follows that
${}_0V\bigcap (\overline{\widehat{(Higgs)}}\setminus\widehat{(Higgs)})=\varnothing$ and
${}_\bullet V\bigcap (\overline{\widehat{(Higgs)}}\setminus\widehat{(Higgs)})\equiv V_{\partial}\NOTSUBSET \widehat{(Higgs)}$.
Furthermore one has $V_{\partial}\in[N_1]=[N_0]\in Bor_{m-1|n-1}(\widehat{(YM)};A)$.

\begin{figure}
\centerline{\includegraphics[height=6cm]{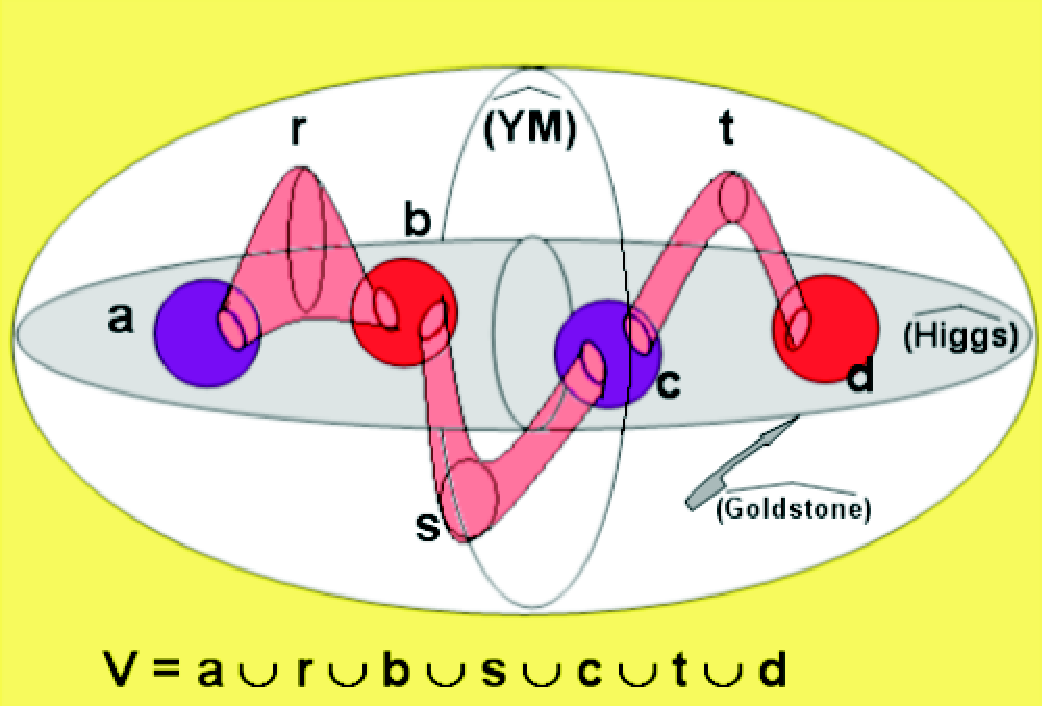}}
\caption{Quantum matter-solution and elementary bordism-decompositions. $a$, $b$, $c$ and $d$ are quantum-matter solutions.
$r$, $s$ and $t$ are quantum-matter-free solutions. All the solution $V\equiv a\cup r\cup b\cup s\cup t\cup c\cup d$ is
a quantum-matter-solution.\label{quantum-matter-solution}}
\end{figure}
\end{example}

Let us conclude the proof of the theorem by considering the boundary $\widehat{(Goldstone)}\equiv\partial\widehat{(Higgs)}$ of $\widehat{(Higgs)}$,
i.e., $\widehat{(Goldstone)}\equiv \overline{\widehat{(Higgs)}}\setminus\widehat{(Higgs)}$. Since
$\widehat{(Goldstone)}\subset\widehat{(YM)}$, for any point $q\in\widehat{(Goldstone)}$ there exists a solution
$r\subset\widehat{(YM)}$ of $\widehat{(YM)}$. $r$ can be a quantum matter-free-solution. However, since
$\widehat{(Goldstone)}$ is dense in $\widehat{(Higgs)}$,  $q$ is limit point of a sequences of points
$q_i\in\widehat{(Higgs)}$, $i\in J$. To each point $q_i$ there corresponds a quantum matter-solution
$s_i\subset \widehat{(Higgs)}$. This means that for any
quantum matter-free-condition $q\in\widehat{(Goldstone)}$, we can always find quantum matter-solutions
$s_i\subset\widehat{(Higgs)}$ converging to $q$. Then by a surgery technique we can prolong such
quantum matter-solutions to a solution $V$, across
$\widehat{(Goldstone)}$, soldering with the quantum matter-free solution $r$. Since $\widehat{(YM)}$ and $\widehat{(Higgs)}$
are formally integrable and completely integrable, we can repeat this surgery technique to the infinity prolongations
of these equations, i.e., by considering the sequence:
$\widehat{(Higgs)}_{+\infty}\subset\widehat{(Goldstone)}_{+\infty}\subset\widehat{(YM)}_{+\infty}$, by obtaining also
quantum smooth solutions that passing across $\widehat{(Goldstone)}_{+\infty}$ acquire/lose mass.
Such solutions, are therefore stable in finite times.
\end{proof}

\begin{definition}\label{goldstone-piece-solution}
Let $V\subset\widehat{(YM)}$ be a global solution crossing the quantum Goldstone-boundary $\widehat{(Goldstone)}$. Let
us call $V_{\partial}\equiv V\bigcap \widehat{(Goldstone)}$ the {\em Goldstone-piece} of $V$.
\end{definition}

\begin{cor}{\em(Goldstone-piece characterization of $\widehat{(YM)}$ solutions).}\label{goldstone-piece-characterization}
In any connected global solution $V\subset\underline{(YM)}$ bording Cauchy data
$N_0\subset \widehat{(YM)}\setminus\overline{\widehat{(Higgs)}}$ and $N_1\subset\widehat{(Higgs)}$, there exists a
Goldstone piece.\footnote{Theorem \ref{existence-mass-gap-solutions} and Corollary \ref{goldstone-piece-solution},
thanks to our geometric theory of quantum (super)PDE's, give a full quantum dynamical justification to mass
production/destruction mechanism in quantum solutions of $\widehat{(YM)}$. These results can be considered generalizations, in the geometric theory of quantum PDE's,
of the well-known Goldstone's theorem, and ''Higgs symmetry-breaking mechanism'' conjectured, in the second middle of the
last century, in order to justify mass in a gauge theory. (See Refs.\cite{GOLDST, GOLDST-SAL-WEIN, HIGGS1, HIGGS2}.)}
\end{cor}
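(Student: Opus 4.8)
The plan is to obtain the corollary as a purely topological consequence of the open/boundary structure established in Theorem \ref{existence-mass-gap-solutions}, exploiting the hypothesis that $V$ is connected. First I would record that, since $\widehat{(Higgs)}$ is an open quantum submanifold of $\widehat{(YM)}$ (Lemma \ref{topology-mass-gap-constraint} together with the commutative diagram (\ref{commutative-diagram-mass-gap-constraint})), the ambient equation splits as a disjoint union
\begin{equation}
\widehat{(YM)}=\widehat{(Higgs)}\ \sqcup\ \widehat{(Goldstone)}\ \sqcup\ \left(\widehat{(YM)}\setminus\overline{\widehat{(Higgs)}}\right),
\end{equation}
because $\overline{\widehat{(Higgs)}}=\widehat{(Higgs)}\cup\widehat{(Goldstone)}$ (the Higgs constraint, being open, coincides with its interior, while $\widehat{(Goldstone)}=\overline{\widehat{(Higgs)}}\setminus\widehat{(Higgs)}$ is precisely its topological boundary). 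The two extreme pieces, $\widehat{(Higgs)}$ and $\widehat{(YM)}\setminus\overline{\widehat{(Higgs)}}$, are then both open in $\widehat{(YM)}$ and mutually disjoint.

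The core of the argument is a separation/connectedness step. Suppose, for contradiction, that the Goldstone-piece $V_{\partial}\equiv V\cap\widehat{(Goldstone)}$ were empty. Then the inclusion $V\hookrightarrow\widehat{(YM)}$ would land entirely in the open set $\widehat{(Higgs)}\cup(\widehat{(YM)}\setminus\overline{\widehat{(Higgs)}})$, so that
\begin{equation}
V=\left(V\cap\widehat{(Higgs)}\right)\ \sqcup\ \left(V\cap(\widehat{(YM)}\setminus\overline{\widehat{(Higgs)}})\right)
\end{equation}
would exhibit $V$ as a disjoint union of two sets, each relatively open in $V$ by continuity of the inclusion. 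Now $N_1\subset\widehat{(Higgs)}$ and $N_0\subset\widehat{(YM)}\setminus\overline{\widehat{(Higgs)}}$, together with $N_0,N_1\subset\partial V\subset V$, force both of these relatively open pieces to be non-empty. This contradicts the connectedness of $V$; hence $V_{\partial}\neq\emptyset$, i.e. $V$ carries a Goldstone piece, which is the assertion.

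I expect the only delicate point to be the verification that this separation is genuinely by relatively open subsets of $V$ in the topology with respect to which $V$ is declared connected. For a regular solution this is immediate from the embedding $V\subset\widehat{(YM)}$; for a weak or singular solution (where $\Sigma(V)\neq\emptyset$) I would check that the subspace topology inherited from $\widehat{(YM)}$ still makes the pullbacks of $\widehat{(Higgs)}$ and of the exterior open in $V$, so that connectedness in the intrinsic solution topology rules out the splitting. This is the one spot requiring a little care, but it is routine given the openness of $\widehat{(Higgs)}$. The remainder follows from Theorem \ref{existence-mass-gap-solutions}: each point of $\widehat{(Higgs)}$ supports a mass-gap (quantum matter-) solution and each point of the exterior a quantum matter-free solution, so the Goldstone piece $V_{\partial}$ is exactly the locus across which $V$ acquires, respectively loses, mass.
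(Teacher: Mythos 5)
Your argument is correct and is essentially the one the paper intends: the corollary is left without a separate proof precisely because it follows from the openness of $\widehat{(Higgs)}$ and the definition of $\widehat{(Goldstone)}$ as $\overline{\widehat{(Higgs)}}\setminus\widehat{(Higgs)}$ established in the proof of Theorem \ref{existence-mass-gap-solutions}, via exactly the separation-by-disjoint-open-sets argument you give (the paper's own decomposition $V={}_0V\cup{}_\bullet V$ in the example preceding the corollary is the same device, stated less explicitly). Your explicit contradiction with connectedness, using that $N_0$ and $N_1$ lie in $\partial V\subset V$ and in the two disjoint open pieces respectively, is a faithful and slightly more careful rendering of the same proof.
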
.

\end{document}